\newtheorem{theorem}{Theorem}[section]
\newtheorem{definition}{Definition}[section]
\newtheorem{lemma}{Lemma}[section]
\newtheorem{remark}{Remark}[section]
\newtheorem{proposition}{Proposition}[section]
\newtheorem{corollary}{Corollary}[section]
\numberwithin{equation}{section}
\newcommand{\R}{{\mathbb R}}
\begin{document}
\title[Convergence Rate of the Hypersonic Similarity for Steady Potential Flows]
{Convergence Rate of Hypersonic Similarity for Steady Potential Flows Over Two-Dimensional Lipschitz Wedge}

\author{Jie Kuang }
\address{ Innovation Academy for Precision Measurement Science and
Technology, Chinese Academy of Sciences, Wuhan 430071, China;
Wuhan Institute of Physics and Mathematics,
Chinese Academy of Sciences,  Wuhan 430071, China}
\email{\tt jkuang@wipm.ac.cn,\ jkuang@apm.ac.cn }

\author{Wei Xiang}
\address{Department of Mathematics
City University of Hong Kong
Kowloon, Hong Kong, China}
\email{\tt  weixiang@cityu.edu.hk}

\author{Yongqian Zhang}
\address{School of Mathematical Sciences, Fudan University, Shanghai 200433, China}
\email{\tt  yongqianz@fudan.edu.cn}

\keywords{Hypersonic similarity laws,  Convergence rate, 
Lipschitz slender wedge, Modified wave front tracking scheme, 
Lipschitz continuous map.}

\subjclass[2010]{35B07, 35B20, 35D30; 76J20, 76L99, 76N10}
\date{}

\begin{abstract}
This paper is devoted to establishing the convergence rate of the hypersonic similarity for the inviscid steady irrotational Euler flow over a two-dimensional Lipschitz slender wedge in $BV\cap L^1$ space. The rate we established
is the same as the one predicted by Newtonian-Busemann law (see (3.29) in  \cite[Page 67]{anderson} for more details) 
as the incoming Mach number $\textrm{M}_{\infty}\rightarrow\infty$ for a fixed hypersonic similarity parameter $K$.
The hypersonic similarity, which is also called the Mach-number independence principle, is equivalent to the following Van Dyke's similarity theory: For a given hypersonic similarity parameter $K$,
when the Mach number of the flow is sufficiently large, the governing equations after the scaling are approximated by a simpler equation, that is called the hypersonic small-disturbance equation.
To achieve the convergence rate, we approximate the curved boundary by piecewisely straight lines and find a new Lipschitz continuous map $\mathcal{P}_{h}$ such that the trajectory can be obtained by piecing together the Riemann solutions near the approximated boundary. Next, we derive the $L^1$ difference estimates between the approximate solutions $U^{(\tau)}_{h,\nu}(x,\cdot)$
to the initial-boundary value problem for the scaled equations and the trajectories $\mathcal{P}_{h}(x,0)(U^{\nu}_{0})$ by piecing together all the Riemann solvers.
Then, by the uniqueness and the compactness of $\mathcal{P}_{h}$ and $U^{(\tau)}_{h,\nu}$,
we can further establish the $L^1$ estimates of order $\tau^2$ between the solutions to the initial-boundary value problem for the scaled equations and the solutions to the initial-boundary value problem for the hypersonic small-disturbance equations, if the total variations of the initial data and the tangential derivative of the boundary are sufficiently small.
Based on it, we can further establish a better convergence rate by considering the hypersonic flow past a two-dimensional Lipschitz slender wing and show that for the length of the wing with the effect scale order $O(\tau^{-1})$,
that is, the $L^1$ convergence rate between the two solutions is of order $O(\tau^{\frac{3}{2}})$
under the assumption that the initial perturbation 
has compact support.

\end{abstract}

\maketitle

\section{Introduction and Main result}\setcounter{equation}{0}
In this paper, as shown in Fig.\ref{fig1.1}, we will continue our recent work \cite{kuang-xiang-zhang}
to establish the convergence rate of the hypersonic similarity based on the problem of a uniformly hypersonic flow over a two-dimensional Lipschitz slender wedge governed by two-dimensional isentropic irrotational inviscid steady Euler equations:
\begin{eqnarray}\label{eq:1.1}
\left\{
\begin{array}{llll}
\partial_{x}(\rho u)+\partial_{y}(\rho v)=0,\\[5pt]
\partial_{x}v-\partial_{y}u=0,
\end{array}
\right.
\end{eqnarray}
together with the Bernoulli's law:
\begin{eqnarray}\label{eq:1.2}
\frac{1}{2}(u^2+v^2)+\frac{\rho^{\gamma-1}-1}{\gamma-1}=
\frac{1}{2}u_{\infty}^2+\frac{\rho_{\infty}^{\gamma-1}-1}{\gamma-1},
\end{eqnarray}
where $\rho$, $u$ and $v$ stand for the density, the horizontal and vertical velocities, respectively.
\vspace{5pt}
\begin{figure}[ht]
\begin{center}
\begin{tikzpicture}[scale=0.9]
\draw [thick](-2,1)--(2.7,1);
\draw [line width=0.06cm] (-2,1)to[out=20, in=180](2.7,1.8);
\draw [line width=0.06cm] (-2,1)to[out=-20, in=-180](2.7,0.2);
\draw [line width=0.02cm][red] (-2,1)to [out=30, in=180](2.5,2.4);
\draw [line width=0.02cm][red] (-2,1)to [out=-30, in=-180](2.5,-0.4);
\draw [thin][->](-4.0,1.8)--(-2.2,1.8);
\draw [thin][->](-4.0,1.3)--(-2.2,1.3);
\draw [thin][->](-4.0,0.8)--(-2.2,0.8);
\draw [thin][->](-4.0,0.3)--(-2.2,0.3);
\draw [thin](-1.5,0.9)--(-1.2,1.2);
\draw [thin](-1.3,0.85)--(-0.9,1.25);
\draw [thin](-1.1,0.80)--(-0.6,1.3);
\draw [thin](-0.9,0.75)--(-0.3,1.36);
\draw [thin](-0.6,0.7)--(0.2,1.5);
\draw [thin](-0.3,0.6)--(0.7,1.6);
\draw [thin](0,0.55)--(1.15,1.70);
\draw [thin](0.3,0.5)--(1.5,1.75);
\draw [thin](0.6,0.40)--(1.8,1.6);
\draw [thin](0.9,0.35)--(2.2,1.65);
\draw [thin](1.3,0.3)--(2.5,1.5);
\draw [thin](1.7,0.20)--(2.7,1.2);
\draw [line width=0.04cm][blue] (-1,1)to [out=-60, in=20](-1.1,0.7);
\node at (2.8,2.4) {$\mathcal{S}$};
\node at (2.8,-0.4) {$\mathcal{S}$};
\node at (-3.2,2.2) {$\textrm{M}_{\infty}\gg 1$};
\node at (-0.7,0.8) {$\theta$};
\node at (-2.0, -0.8) {$K=\textrm{M}_{\infty}\theta $ is a fixed constant};
\end{tikzpicture}
\end{center}
\caption{Hypersonic Potential Flows Past 2D Lipschitz Slender Wedge}\label{fig1.1}
\end{figure}
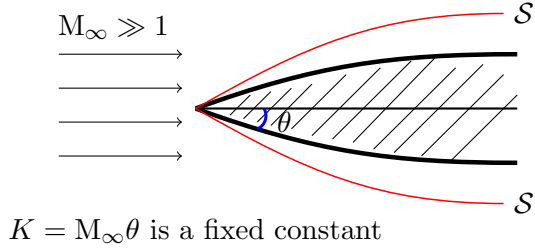

The Mach number $\textrm{M}$ is defined as
\begin{eqnarray}\label{eq-mach}
\textrm{M}=\frac{\sqrt{u^2+v^2}}{c},
\end{eqnarray}
where $c=\sqrt{\gamma \rho^{\gamma-1}}$ is the local sonic speed.

The flow is called hypersonic for $\textrm{M}>5$ (see Anderson \cite{anderson}), which is different from the supersonic flow (\cite{courant-friedrich}) and has many important applications in aerodynamics and engineering (see \cite{kuang-xiang-zhang}). In this paper, we consider the case that the Mach number $\textrm{M}_{\infty}$ of incoming flow is sufficiently large.
One of the useful observations to the hypersonic flow is the hypersonic similarity when the Mach number is sufficiently large: As shown in Fig. \ref{fig1.1}, let $\theta$ and $\textrm{M}_{\infty}$ be the wedge angle and Mach number of the incoming flow, respectively. The similarity parameter is defined as (see (127.3) in Landau-Lifschitz \cite[Page 482]{landau-lifschitz} for more details),
\begin{equation}\label{eq:1.4}
K=\textrm{M}_{\infty}\theta.
\end{equation}

Hypersonic similarity says that for any given fixed similarity parameter $K$, the flow structures are similar if $\textrm{M}_{\infty}$ is sufficiently large. Mathematically, it means that after scaling, the governed
equations for the same similarity parameter $K$ are approximated by the same hypersonic small-disturbance equations, which was first developed by Tsien in \cite{tsien} for the steady irrotational flow.
Due to the importance of this property from both the theoretical and experimental view (see \cite{anderson}), recently, we in \cite{kuang-xiang-zhang} justified Tsien's hypersonic similarity theory rigorously for the two-dimensional potential flow in the BV space but without a convergence rate. So in this paper, we will continue on this topic and find the convergence rate with respect to $\tau$.

Define
\begin{eqnarray}\label{eq-hypersonic}
a_{\infty}\doteq\tau \textrm{M}_{\infty}=\tau u_{\infty}\rho_{\infty}^{\frac{1-\gamma}{2}}.
\end{eqnarray}

Obviously, if $K$ is fixed, then $a_{\infty}$ is fixed too. So $a_{\infty}$
is also called the hypersonic similarity parameter (see Chapter 4 in \cite{anderson}).
Because the upper and lower half space domains can be treated similarly, let us only consider the lower half space domain, \emph{i.e.}, in the region that $x\geq0$ and $y\leq\tau b(x)$ with $b(x)<0$ in Fig.\ref{fig1.1}.
As done in \cite{kuang-xiang-zhang},
if $u_{\infty}$ is a sufficiently large number,
after defining the following scaling:
\begin{equation}\label{eq:1.6}
x=\bar{x},\quad y=\tau\bar{y}, \quad u=u_{\infty}(1+\tau^2\bar{u}) ,\quad v=u_{\infty}\tau \bar{v},\quad \rho=\rho_{\infty}\bar{\rho},
\end{equation}
we obtain by substituting \eqref{eq:1.6} into equations \eqref{eq:1.1} and \eqref{eq:1.2}, that
\begin{equation}\label{eq:1.7}
\begin{cases}
\partial_{\bar{x}}\big(\bar{\rho} (1+\tau^2\bar{u})\big)+\partial_{\bar{y}}(\bar{\rho} \bar{v})=0,\\[5pt]
\partial_{\bar{x}}\bar{v}-\partial_{\bar{y}}\bar{u}=0,\\[5pt]
\bar{u}+\displaystyle\frac{1}{2}(\bar{v}^2+\tau^2\bar{u}^2)+\frac{\bar{\rho}^{\gamma-1}-1}{(\gamma-1)a_{\infty}^2}=0.
\end{cases}
\end{equation}

Meanwhile, the corresponding fluid domain and
its boundary are given by (see Fig.\ref{fig1.2})
\begin{eqnarray*}
	\Omega=\{(\bar{x}, \bar{y}): \bar{x}>0,\ \bar{y}<b(\bar{x}) \},	
\end{eqnarray*}
and
\begin{eqnarray*}
	\Gamma=\{(\bar{x}, \bar{y}): \bar{x}>0,\ \bar{y}=b(\bar{x}) \}, \qquad \mathcal{I}=\{\bar{x}=0,\ \bar{y}\leq0\}.
\end{eqnarray*}

The initial condition is
\begin{eqnarray}\label{eq:1.8}
(\bar{\rho}, \bar{u}, \bar{v})=\big(\bar{\rho}_{0}, \bar{u}^{(\tau)}_{0},\bar{ v}_{0}\big)(\bar{y}),
&\quad\mbox{on}\quad\  \mathcal{I},
\end{eqnarray}
which satisfies equation $\eqref{eq:1.7}_{3}$. Along $\Gamma$, the boundary condition is
\begin{eqnarray}\label{eq:1.9}
\big((1+\tau^{2}\bar{u}), \bar{v}\big)\cdot \mathbf{n}=0, &\quad\mbox{on}\quad\  \Gamma,
\end{eqnarray}
where $\mathbf{n}=\mathbf{n}(\bar{x},b(\bar{x}))=\displaystyle {\frac{(b'(\bar{x}),-1)}{\sqrt{1+(b'(\bar{x}))^{2}}}}$ is the unit inner normal vector of $\Gamma$.

\vspace{5pt}
\begin{figure}[ht]
\begin{center}
\begin{tikzpicture}[scale=1.2]
\draw [line width=0.03cm][->] (-2,0)--(3.5,0);
\draw [line width=0.03cm][->] (-1,-2.7) --(-1,1);
\draw [line width=0.07cm](-1,0)to[out=-18, in=-185](3.2,-0.8);
\draw [line width=0.07cm](-1,0)--(-1,-2.7);
\draw [line width=0.02cm][red](-1,0)to[out=-27, in=-185](3.1,-1.5);
\draw [line width=0.04cm][blue](0.1,0)to [out=-60, in=20](0,-0.3);
\node at (3.4,-0.2) {$\bar{x}$};
\node at (-0.8,1) {$\bar{y}$};
\node at (-1.2,-0.2) {$O$};
\node at (3.5,-0.8) {$\Gamma$};
\node at (3.3,-1.5) {$\bar{\mathcal{S}}$};
\node at (-0.9,-2.9) {$\mathcal{I}$};
\node at (1.4,-0.25) {$\bar{\theta}=\arctan b'(\bar{x})$};
\node at (1.0,-1.5) {$\Omega$};
\node at (-1.9,-1.5) {$(\bar{\rho}_{0},\bar{u}_{0},\bar{ v}_{0})$};
\end{tikzpicture}
\end{center}
\caption{Hypersonic Similarity for 2D Steady Potential Flows}\label{fig1.2}
\end{figure}
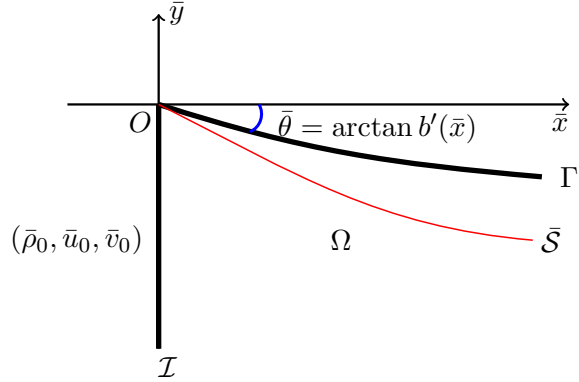

Mathematically, it follows from \eqref{eq:1.7} that, for a fixed parameter $a_{\infty}$, the hypersonic similarity means 
the structure of solutions of \eqref{eq:1.7}-\eqref{eq:1.9} should be similar and thus can be investigated by the hypersonic small-disturbance equations, derived via neglecting the terms involving $\tau^{2}$, 
\begin{equation}\label{eq:1.10}
\begin{cases}
\partial_{\bar{x}}\bar{\rho}+\partial_{\bar{y}}(\bar{\rho} \bar{v})=0, \\[5pt]
\partial_{\bar{x}}\bar{v}-\partial_{\bar{y}}\bar{u}=0, \\[5pt]
\bar{u}+\displaystyle \frac{1}{2}\bar{v}^2+\frac{\bar{\rho}^{\gamma-1}-1}{(\gamma-1)a_{\infty}^2}=0,
\end{cases}
\end{equation}
with initial data
\begin{eqnarray}\label{eq:1.10a}
(\bar{\rho}, \bar{u}, \bar{v})=\big(\bar{\rho}_{0}, \bar{u}_{0},\bar{ v}_{0}\big)(\bar{y}), &\quad\mbox{on}\quad\ \mathcal{I},
\end{eqnarray}
which satisfies the equation $\eqref{eq:1.10}_{3}$, and boundary condition that
\begin{eqnarray}\label{eq:1.11}
\bar{v}=b'(\bar{x}), &\quad\mbox{on}\quad\ \Gamma.
\end{eqnarray}

The hypersonic similarity is also called the Mach-number independence principle, which is equivalent to the Van Dyke's similarity theory(see \cite{dyke}).

Based on \cite{kuang-xiang-zhang}, the study of the two-dimensional steady irrotational hypersonic flow can be much simplified by studying of the simpler equations \eqref{eq:1.10}. 
Then, one followed-up question is whether we can find the 
convergence rate or not? 
It is useful to provide more accurate information in applications.

In this paper, we will answer this question by finding the 
convergence rate to the Van Dyke's similarity theory rigorously in $BV\cap L^1$ space.  The rate we established
is the same as the one predicted by Newtonian-Busemann law (see (3.29) in  \cite[Page 67]{anderson} for more details). First, it follows from the fact that the flow moves from the left to the right, \emph{i.e.}, $1+\tau^2\bar{u}>0$ and from \eqref{eq:1.7} that,
\begin{eqnarray}\label{eq:1.12}
\begin{split}
\bar{u}(\bar{\rho}, \bar{v}; \tau^{2})=\displaystyle\frac{1}{\tau^2}\Bigg(-1+\sqrt{1-\tau^2\Big(\bar{v}^2+\frac{2(\bar{\rho}^{\gamma-1}-1)}{(\gamma-1)a^{2}_{\infty}}\Big)}\Bigg).
\end{split}
\end{eqnarray}

Then, substituting \eqref{eq:1.12} into the first two equations of \eqref{eq:1.7}, we get
\begin{equation}\label{eq:1.13}
\begin{cases}
\partial_{\bar{x}}\big(\bar{\rho} (1+\tau^2\bar{u})\big)+\partial_{\bar{y}}(\bar{\rho} \bar{v})=0, &\quad\mbox{in}\quad \Omega, \\[5pt]
\partial_{\bar{x}}\bar{v}-\partial_{\bar{y}}\bar{u}=0, &\quad\mbox{in}\quad \Omega.
\end{cases}
\end{equation}

Similarly, it follows from \eqref{eq:1.10} that
\begin{equation}\label{eq:1.14}
\begin{cases}
\partial_{\bar{x}}\bar{\rho}+\partial_{\bar{y}}(\bar{\rho} \bar{v})=0,&\quad\mbox{in}\quad \Omega, \\[5pt]
\partial_{\bar{x}}\bar{v}+\partial_{\bar{y}}\displaystyle\bigg(\frac{1}{2}\bar{v}^2+\frac{\bar{\rho}^{\gamma-1}-1}{(\gamma-1)a_{\infty}^2}\bigg)=0,
&\quad\mbox{in}\quad \Omega,
\end{cases}
\end{equation}
where $(\bar{\rho},\bar{v})$ satisfies the initial condition \eqref{eq:1.10a} and the boundary condition \eqref{eq:1.11}.

To unify the notations in \eqref{eq:1.13} and \eqref{eq:1.14}, we rewrite $(\bar{\rho}, \bar{u}, \bar{v})$ as
$(\bar{\rho}^{(\tau)}, \bar{u}^{(\tau)}, \bar{v}^{(\tau)})$, with \eqref{eq:1.14} corresponding
to the case that $\tau=0$.
Let $U^{(\tau)}=(\bar{\rho}^{(\tau)}, \bar{v}^{(\tau)})^{\top}$ and
\begin{eqnarray}\label{eq:1.15}
G(U^{(\tau)}, \tau^{2})=\Big(\bar{\rho}^{(\tau)}\big(1+\tau^{2}\bar{u}^{(\tau)}\big), \bar{v}^{(\tau)}\Big)^{\top},\quad
F(U^{(\tau)}, \tau^{2})=\Big(\bar{\rho}^{(\tau)} \bar{v}^{(\tau)}, -\bar{u}^{(\tau)}\Big)^{\top}.
\end{eqnarray}

Then, \eqref{eq:1.13} and \eqref{eq:1.14} can be rewritten as
\begin{eqnarray}\label{eq:1.16}
\partial_{\bar{x}}G(U^{(\tau)}, \tau^{2})+\partial_{\bar{y}}F(U^{(\tau)}, \tau^{2})=0, &\quad \mbox{in}\quad \Omega,
\end{eqnarray}
with the initial condition
\begin{eqnarray}\label{eq:1.17}
U^{(\tau)}=U_{0}(y), &\quad \mbox{on}\quad\ \mathcal{I},
\end{eqnarray}
for $U_{0}=(\bar{\rho}_{0}, \bar{v}_{0})^{\top}$ and the boundary condition
\begin{eqnarray}\label{eq:1.18}
\Big((1+\tau^{2}\bar{u}^{(\tau)}),\bar{v}^{(\tau)}\Big)\cdot\mathbf{n}=0, &\quad \mbox{on}\quad\ \Gamma.
\end{eqnarray}

We finally remark that when $\tau=0$, $U=U^{(0)}=(\bar{\rho}, \bar{v})^{\top}$ and
\begin{eqnarray}\label{eq:1.19}
\begin{split}
G(U)=G(U,0)=U,\qquad F(U)=F(U,0)=\bigg(\bar{\rho} \bar{v},\ \displaystyle\frac{1}{2}\bar{v}^2+\frac{\bar{\rho}^{\gamma-1}-1}{(\gamma-1)a_{\infty}^2}\bigg)^{\top}.
\end{split}
\end{eqnarray}

Thus equation \eqref{eq:1.14} can be rewritten as the following hyperbolic conservation laws.
\begin{eqnarray}\label{eq:1.20}
\partial_{\bar{x}}U+\partial_{\bar{y}}F(U)=0, &\quad\mbox{in}\quad \Omega,
\end{eqnarray}
with the initial data
\begin{eqnarray}\label{eq:1.21}
U=U_{0}(y), &\quad \mbox{on}\quad\ \mathcal{I},
\end{eqnarray}
and the boundary condition \eqref{eq:1.11}.

A special case for the solutions of the initial-boundary value problems \eqref{eq:1.16}-\eqref{eq:1.18}, and \eqref{eq:1.20}-\eqref{eq:1.21} and \eqref{eq:1.11} are that
$b\equiv 0$ and $(\rho_{0}, u^{(\tau)}_0, v_0)(\bar{y})=\big(1, 0, 0\big)$, and $(\rho_{0}, u_0, v_0)(\bar{y})=\big(1, 0, 0\big)$.
In this case, the solutions are given by the same constant states
\begin{eqnarray}\label{eq-background}
\begin{split}
U^{(\tau)}=U=\underline{U}\doteq\big(1, 0\big)^{\top}, \quad \bar{u}^{(\tau)}=\bar{u}\equiv0, \quad   \ \forall (\bar{x}, \bar{y}) \in
\underline{\Omega},
\end{split}
\end{eqnarray}
where $\underline{\Omega}:=\{(\bar{x}, \bar{y}): \bar{x}>0, \bar{y}<0\}$. In this case, we call $\underline{U}$
as the \emph{background\ solution}.

Based on the background solution, we can introduce basic assumptions on the initial data $U_{0}(y)$ and boundary function $b(\bar{x})$
throughout the whole paper.\\

$\mathbf{(H1)}$ $U_{0}-\underline{U}\in (BV\cap L^{1})(\mathcal{I}; \mathbb{R}^{2})$ with $\inf_{\bar{y}\in \mathcal{I}}\bar{\rho}_0(\bar{y})>0$;\\

$\mathbf{(H2)}$ 
$b(\bar{x})\in 
Lip(\mathbb{R}_{+}; \mathbb{R})$ and $b'(\bar{x})\in (BV\cap L^{1})(\mathbb{R}_{+}; \mathbb{R})$ with
the properties that
\begin{equation}\label{eq:1.24x}
b(0)=0, \quad \ b'(0)\leq 0, \ \ \mbox{and}\ \ b(\bar{x})< 0, \ \ \mbox{for}\ \  \bar{x}>0,
\end{equation}
where $b'(\bar{x})$ represents the derivatives on the differential points of the boundary. Here, $Lip$ denotes the set of Lipschitz continuous functions.

Finally, before stating the main result of this paper, we introduce the definition of the entropy solutions of problem \eqref{eq:1.16}-\eqref{eq:1.18}.
\begin{definition}[Entropy solutions]\label{def:1.1}
	A weak solution $U^{(\tau)}\in \big(BV\cap L^{1}\big)((-\infty,b(x)); \mathbb{R}^2)$ of the initial-boundary value problem \eqref{eq:1.16}-\eqref{eq:1.18}
	is called an entropy solution, if for any convex entropy pair $(\mathcal{E},\mathcal{Q})$, that is, $\nabla \mathcal{Q}(G,\tau^2)=\nabla \mathcal{E}(G, \tau^{2})\nabla F(U(G),\tau^{2})$ and $\nabla^2\mathcal{E}(G,\tau^{2})\ge 0$, the following entropy inequality holds: For any $\phi \in C_0^{\infty}(\R^2)$ with $\phi\ge 0$,
	\begin{eqnarray}\label{eq:1.23}
	\begin{split}
	&\iint_{\Omega}\Big(\mathcal{E}(G,\tau^{2})\partial_{\bar{x}}\phi+ \mathcal{Q}(G, \tau^{2})\partial_{\bar{y}}\phi\Big)dxdy
	+ \int^{0}_{-\infty}\mathcal{E}(G_{0}, \tau^{2})\phi(0,y)dy\\[5pt]
	&\qquad+\int_{\Gamma}(\mathcal{E}(G,\tau^{2}),\mathcal{Q}(G,\tau^{2}))\cdot\mathbf{n}ds\geq 0,
	\end{split}
	\end{eqnarray}
	where $G_{0}=G(U_{0},\tau^{2})$ and $\mathbf{n}$ is the unit inner normal vector on boundary $\Gamma$.
\end{definition}

Then, the first main result for the wedge is stated as follows. 
\begin{theorem}\label{thm:1.1}
Assume $\mathbf{(H1)}$-$\mathbf{(H2)}$ hold. For a given fixed hypersonic similarity parameter $a_{\infty}$, let $U^{(\tau)}=(\bar{\rho}^{(\tau)}, \bar{v}^{(\tau)})^{\top}$ and $U=U^{(0)}=(\bar{\rho}, \bar{v})^{\top}$
be the global entropy solutions to the initial-boundary value problem \eqref{eq:1.16}-\eqref{eq:1.18} and
the initial-boundary value problem \eqref{eq:1.20}-\eqref{eq:1.21} and \eqref{eq:1.11}, respectively which are obtained by wave front tracking scheme.
There exist small parameters $\epsilon^{*}>0$ and $\tau^{*}>0$ depending only on $\underline{U}$ and $a_{\infty}$
such that for any $\tau\in(0,\tau^{*})$ and if
\begin{eqnarray}\label{eq:1.26}
T.V.\{U_{0}(\cdot); \mathcal{I}\}+|b'(0)|+T.V.\{b'(\cdot);\mathbb{R}_{+}\}<\epsilon,
\end{eqnarray}
for $\epsilon\in (0,\epsilon^*_{0})$, then it holds for $\bar{x}>0$ that
\begin{eqnarray}\label{eq:1.27}
\|U^{(\tau)}(\bar{x}, \cdot)-U(\bar{x}, \cdot)\|_{L^{1}((-\infty, b(\bar{x})))}
\leq C^*_1\bar{x} \tau^{2},
\end{eqnarray}
where constant $C^*_1>0$ depends on $T.V.\{U_{0}(\cdot); \mathcal{I}\}$, $|b'(0)|$, $T.V.\{b'(\cdot);\mathbb{R}_{+}\}$, $\underline{U}$, and $a_{\infty}$, but is independent on $\tau$ and $\bar{x}$.
Then, for $(\bar{\rho}^{(\tau)}, \bar{u}^{(\tau)}, \bar{v}^{(\tau)})$ being the solution
to problem \eqref{eq:1.7}-\eqref{eq:1.9}, and $(\bar{\rho}, \bar{u}, \bar{v})$ being the solution to problem \eqref{eq:1.10}-\eqref{eq:1.11}, there holds that
\begin{eqnarray}\label{eq:1.28}
\big\|\big(\bar{\rho}^{(\tau)}-\bar{\rho},\ \bar{u}^{(\tau)}-\bar{u},\ \bar{v}^{(\tau)}-\bar{v}\big)(\bar{x},\cdot)\big\|_{L^{1}((-\infty, b(\bar{x})))}
\leq C^*_2\bar{x} \tau^{2},
\end{eqnarray}
where constant $C^*_{2}>0$ depends on $T.V.\{U_{0}(\cdot); \mathcal{I}\}$, $|b'(0)|$, $T.V.\{b'(\cdot);\mathbb{R}_{+}\}$, $\underline{U}$ and $a_{\infty}$, but is independent on $\tau$ and $\bar{x}$.
\end{theorem}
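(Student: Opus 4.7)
The plan is to follow the strategy announced in the abstract, grounded on the modified wave front tracking scheme already developed by the authors in \cite{kuang-xiang-zhang} together with a Bressan-type trajectory/semigroup comparison. For each fixed $\tau\in[0,\tau^*)$ I first construct an $(h,\nu)$-approximate entropy solution $U^{(\tau)}_{h,\nu}$ of \eqref{eq:1.16}--\eqref{eq:1.18} by replacing $\Gamma$ with a polygonal boundary having vertices $\{\bar{x}_k=kh\}$ and replacing the initial datum $U_0$ by a piecewise constant $U_0^\nu$ with $\|U_0^\nu-U_0\|_{L^1}\to 0$ and $T.V.\{U_0^\nu\}\leq T.V.\{U_0\}$. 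Under $\mathbf{(H1)}$--$\mathbf{(H2)}$ and smallness \eqref{eq:1.26}, the Glimm-type functional built in \cite{kuang-xiang-zhang} is non-increasing uniformly in $(\tau,h,\nu)$, so $U^{(\tau)}_{h,\nu}$ exists globally in $\bar{x}$ with uniform $BV$ and $L^1$ bounds, and a subsequence converges in $L^1_{\mathrm{loc}}$ to the entropy solution $U^{(\tau)}$ of Definition \ref{def:1.1}.

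On the same polygonal boundary I then introduce the trajectory operator $\mathcal{P}_{h}$ associated with the limit system $(\tau=0)$, defined by piecing together the Riemann solutions of \eqref{eq:1.20} at wave interactions and at the mesh-boundary corners. The central structural properties I must verify are that $\mathcal{P}_{h}$ is well-defined on a $BV$-neighborhood of $\underline{U}$, that it is Lipschitz in the sense
\begin{equation*}
\|\mathcal{P}_{h}(\bar{x},\bar{s})V_1-\mathcal{P}_{h}(\bar{x},\bar{s})V_2\|_{L^1}\leq L\,\|V_1-V_2\|_{L^1},
\end{equation*}
with $L$ independent of $h$, and that as $h\to 0$ it converges to the Lipschitz semigroup generated by \eqref{eq:1.20}--\eqref{eq:1.21} and \eqref{eq:1.11}, whose trajectory starting from $U_0^\nu$ is $U$. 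The existence, uniqueness and $L^1$-stability needed here are the analogue in the present boundary value setting of Bressan's standard Riemann semigroup, and ultimately rest on the stability estimates already established in \cite{kuang-xiang-zhang}.

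The quantitative core is a local comparison of Riemann solvers. At each interaction point and at each boundary corner I compare the Riemann solution of \eqref{eq:1.16}--\eqref{eq:1.18}, with its $\tau$-dependent flux $G(U,\tau^2)$, $F(U,\tau^2)$ and the $\tau$-dependent slip condition, with the corresponding Riemann solution of \eqref{eq:1.20} together with \eqref{eq:1.11}. Since the definition \eqref{eq:1.15} of $G$ and $F$, together with \eqref{eq:1.12}, gives $G(U,\tau^2)-G(U,0)=O(\tau^2)$ and $F(U,\tau^2)-F(U,0)=O(\tau^2)$ smoothly in $U$, and since \eqref{eq:1.18} differs from \eqref{eq:1.11} by $O(\tau^2)$, an implicit-function-theorem expansion of the shock and rarefaction curves, of the boundary Riemann problem and of the interaction diamonds, yields that each elementary Riemann solver contributes an $L^1$ error of order $\tau^2$ per unit $\bar{x}$-length. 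Summing these local errors along the approximate trajectory, using the uniform Lipschitz bound on $\mathcal{P}_{h}$ to propagate them without amplification, and exploiting the $BV\cap L^1$ assumption on $b'$ to keep the total boundary contribution finite, one obtains
\begin{equation*}
\|U^{(\tau)}_{h,\nu}(\bar{x},\cdot)-\mathcal{P}_{h}(\bar{x},0)(U_0^\nu)\|_{L^1((-\infty,b(\bar{x})))}\leq C\,\bar{x}\,\tau^2,
\end{equation*}
with $C$ independent of $(h,\nu,\tau)$. Passing to the limit $h,\nu\to 0$ via the compactness and uniqueness of both schemes yields \eqref{eq:1.27}.

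Finally, estimate \eqref{eq:1.28} is deduced from \eqref{eq:1.27} using the explicit representation \eqref{eq:1.12}: writing
\begin{equation*}
\bar{u}^{(\tau)}-\bar{u}=\bigl[\bar{u}(\bar{\rho}^{(\tau)},\bar{v}^{(\tau)};\tau^2)-\bar{u}(\bar{\rho}^{(\tau)},\bar{v}^{(\tau)};0)\bigr]+\bigl[\bar{u}(\bar{\rho}^{(\tau)},\bar{v}^{(\tau)};0)-\bar{u}(\bar{\rho},\bar{v};0)\bigr],
\end{equation*}
the first bracket is pointwise $O(\tau^2)$ by Taylor expansion of the square root in \eqref{eq:1.12} (using the uniform positivity and boundedness of $(\bar{\rho}^{(\tau)},\bar{v}^{(\tau)})$ granted by the $BV$ bounds), while the second is controlled in $L^1$ by smooth dependence of $\bar{u}(\cdot,\cdot;0)$ on $(\bar{\rho},\bar{v})$ together with \eqref{eq:1.27}. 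The main obstacle I anticipate is the local Riemann comparison at the boundary corners, where the $\tau$-dependent flux and the $\tau$-dependent slip condition interact: one must show that their combined effect is genuinely $O(\tau^2)$ rather than $O(\tau)$, and this is precisely the step that forces the smallness of $|b'(0)|$ and $T.V.\{b'\}$ into the constant $C_1^*$ and makes the $O(\bar{x})$ linear growth, rather than a worse dependence, possible.
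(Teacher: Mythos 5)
Your proposal is correct and follows essentially the same route as the paper: wave front tracking approximations $U^{(\tau)}_{h,\nu}$ on a polygonal boundary, a uniformly Lipschitz trajectory map $\mathcal{P}_h$ for the $\tau=0$ system built by piecing together Riemann solutions, local comparison of the two Riemann solvers (including the Neumann-type versus Dirichlet-type boundary conditions at corners) giving $O(\tau^2)$ errors, summation via the Lipschitz error formula, passage to the limit $h\to 0$, $\nu\to\infty$, and finally \eqref{eq:1.28} from \eqref{eq:1.12}. You also correctly single out the corner comparison, where the paper's Proposition \ref{prop:3.4} yields $\beta_1=\alpha_1+O(1)(1+|\alpha_1|)\tau^2$, as the step that must be checked to be genuinely $O(\tau^2)$.
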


Some remarks are given below.
\begin{remark}\label{rem:1.1}
The order of $\tau^2$ in the Theorem \ref{thm:1.1} for finite length of the wedge is consistent with the one predicted by 
Newtonian-Busemann law, who expected the error term is of order $\tau^2$ (see (3.29) in  \cite[Page 67]{anderson} for more details)
\end{remark}

\begin{remark}\label{rem:1.2}
When $\tau=0$, the convex entropy pair $(\mathcal{E}(G,\tau^{2}), \mathcal{Q}(G,\tau^{2}))$ can be taken of the form
\begin{eqnarray}\label{eq:1.29}
\mathcal{E}(U,0)=\frac{\rho v^{2}}{2}+\frac{\rho^{\gamma-1}-1}{a_{\infty }\gamma (\gamma-1)},\quad \ \
\mathcal{Q}(U,0)=v\mathcal{E}(U,0).
\end{eqnarray}
So the entropy solution $(\bar{\rho},\bar{v})$ of the initial-boundary value problem \eqref{eq:1.20}-\eqref{eq:1.21} and \eqref{eq:1.11}
satisfies the entropy inequality
\begin{eqnarray}\label{eq:1.30}
\partial_{\bar{x}}\mathcal{E}(U,0)+\partial_{\bar{y}}\mathcal{Q}(U,0)\leq 0,
\end{eqnarray}
in the distribution sense.
\end{remark}

\begin{remark}\label{rem:1.3}
Once the solution of problem \eqref{eq:1.16}-\eqref{eq:1.18} and solution of problem \eqref{eq:1.20}-\eqref{eq:1.21} and \eqref{eq:1.11} are obtained,
it is easy to obtain solution $(\bar{\rho}^{(\tau)},\bar{u}^{(\tau)},\bar{v}^{(\tau)})$ of problem \eqref{eq:1.7}-\eqref{eq:1.9} and solution $(\bar{\rho},\bar{u},\bar{v})$ of problem \eqref{eq:1.10}-\eqref{eq:1.11}
by solving $\bar{u}^{(\tau)}$ and $\bar{u}$ directly from equation \eqref{eq:1.12} and the third equation of \eqref{eq:1.10}, respectively.
\end{remark}

\par As an application of Theorem \ref{thm:1.1}, we can further establish a better convergence rate by considering the hypersonic flow past over a two-dimensional Lipschitz wing (see Fig. \ref{fig1.2'} below).
In the $(\bar{x},\bar{y})$-coordinates, let $b_{+}(\bar{x})$ and $b_{-}(\bar{x})$ be the functions of the upper and lower boundaries of the wing with
\begin{equation}\label{eq:1.31}
b_{+}(\bar{x})> 0, \ \ b_{-}(\bar{x})<0, \ \  \mbox{and} \ \  b_{\pm}(0)=b_{\pm}(\ell_{w})=0, \quad  \bar{x}\in [0,\ell_{w}],
\end{equation}
for some constant $\ell_{w}>0$, and the corresponding boundaries for the wing are defined by
\begin{eqnarray*}
\Gamma_{+}=\big\{(\bar{x}, \bar{y}): \bar{y}=b_{+}(\bar{x}), \bar{x}\in [0,\ell_{w}] \big\},\quad \Gamma_{-}=\big\{(\bar{x}, \bar{y}): \bar{y}=b_{-}(\bar{x}), \bar{x}\in [0,\ell_{w}] \big\}.
\end{eqnarray*}

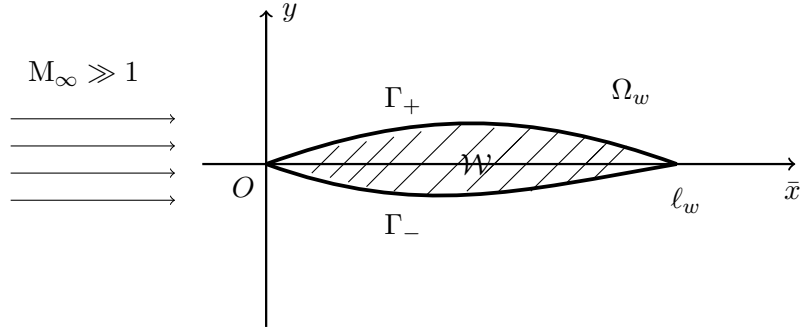
\begin{figure}[ht]
\begin{center}
\begin{tikzpicture}[scale=1.2]
\draw [line width=0.03cm][->](-2.7,1)--(3.8,1);
\draw [line width=0.03cm][->](-2,-0.8)--(-2,2.7);
\draw [line width=0.05cm] (-2,1)to[out=20, in=160](2.5,1.0);
\draw [line width=0.05cm] (-2,1)to[out=-20, in=-170](2.5,1.0);
\draw [thin][->](-4.8,1.5)--(-3.0,1.5);
\draw [thin][->](-4.8,1.2)--(-3.0,1.2);
\draw [thin][->](-4.8,0.9)--(-3.0,0.9);
\draw [thin][->](-4.8,0.6)--(-3.0,0.6);

\draw [thin](-1.5,0.9)--(-1.2,1.2);
\draw [thin](-1.3,0.85)--(-0.9,1.25);
\draw [thin](-1.1,0.80)--(-0.6,1.3);
\draw [thin](-0.9,0.75)--(-0.3,1.36);
\draw [thin](-0.6,0.7)--(0.15,1.45);
\draw [thin](-0.25,0.65)--(0.5,1.4);
\draw [thin](0.2,0.7)--(0.9,1.4);
\draw [thin](0.4,0.9)--(0.8,1.3);
\draw [thin](0.55,0.7)--(1.2,1.35);
\draw [thin](0.9,0.7)--(1.5,1.3);
\draw [thin](1.3,0.8)--(1.75,1.25);
\draw [thin](1.5, 1.0)--(1.7,1.2);
\draw [thin](1.6, 0.85)--(1.95, 1.20);
\node at (1.5, 1.5) {$$};
\node at (3.77,0.7) {$\bar{x}$};
\node at (-1.74,2.7) {$\bar{y}$};

\node at (2.0,1.8) {$\Omega_{w}$};
\node at (0.3,1.0) {$\mathcal{W}$};

\node at (-0.5,1.7) {$\Gamma_{+}$};
\node at (-0.5,0.3) {$\Gamma_{-}$};
\node at (2.6,0.6) {$\ell_{w}$};

\node at (-4.0,2.0) {$\textrm{M}_{\infty}\gg 1$};
\node at (-2.25,0.75) {$O$};
\end{tikzpicture}
\end{center}
\caption{Hypersonic Similarity for Potential Flows Past 2D Slender Wing}\label{fig1.2'}
\end{figure}

\par Then, for given functions $b_{\pm}(\bar{x})$, we can defined the domain which is formed by the wing and the fluids domain in following:
\begin{eqnarray*}
\mathcal{W}=\big\{(\bar{x}, \bar{y}): b_{-}(\bar{x})<\bar{y}< b_{+}(\bar{x}), \ \bar{x}\in[0, \ell_{w}] \big\},\quad  \Omega_{w}=(\mathbb{R}_{+}\times \mathbb{R})\setminus \mathcal{W}.
\end{eqnarray*}

The initial condition is
\begin{eqnarray}\label{eq:1.32}
(\bar{\rho}^{(\tau)}, \bar{u}^{(\tau)}, \bar{v}^{(\tau)})=(\bar{\rho}_{0}, \bar{u}^{(\tau)}_{0}, \bar{v}_{0})(\bar{y}), &\quad\mbox{on}\quad\ \mathbb{R},
\end{eqnarray}
where $\bar{\rho}_{0}$, $\bar{u}^{(\tau)}_{0}$ and $\bar{v}_{0}$ satisfy the equation $\eqref{eq:1.7}_{3}$ on $\mathbb{R}$.

On $\Gamma_{\pm}$, the boundary conditions are
\begin{eqnarray}\label{eq:1.33}
\big((1+\tau^{2}\bar{u}^{(\tau)}), \bar{v}^{(\tau)}\big)\cdot \mathbf{n}_{\pm}=0, &\quad\mbox{on}\quad\ \Gamma_{\pm},
\end{eqnarray}
where $\mathbf{n}_{\pm}=\mathbf{n}(\bar{x},b_{\pm}(\bar{x}))=\displaystyle\frac{(b'_{\pm}(\bar{x}),-1)}{\sqrt{1+(b'_{\pm}(\bar{x}))^{2}}}$ are the unit inner normal vectors of $\Gamma_{\pm}$.

Therefore, we can formulate the mathematical problem for the hypersonic flow moving over two-dimensional wing as the initial-boundary value problem \eqref{eq:1.32} and \eqref{eq:1.33}
to the following equations:
\begin{equation}\label{eq:1.34}
\begin{cases}
\partial_{\bar{x}}G(U^{(\tau)}, \tau^{2})+\partial_{\bar{y}}F(U^{(\tau)}, \tau^{2})=0, &\quad \mbox{in}\quad \Omega_{w},\\[5pt]
\bar{u}+\displaystyle\frac{1}{2}(\bar{v}^2+\tau^2\bar{u}^2)+\frac{\bar{\rho}^{\gamma-1}-1}{(\gamma-1)a_{\infty}^2}=0, &\quad \mbox{in}\quad \Omega_{w},
\end{cases}
\end{equation}
where $U^{(\tau)}=(\bar{\rho}^{(\tau)}, \bar{v}^{(\tau)})^{\top}$, and $G(U^{(\tau)}, \tau^{2})$, $F(U^{(\tau)}, \tau^{2})$ are defined by \eqref{eq:1.15}.

\par For $\tau=0$, the corresponding initial and boundary conditions are

\begin{eqnarray}\label{eq:1.35}
(\bar{\rho}, \bar{u}, \bar{v})=(\bar{\rho}_{0}, \bar{u}_{0}, \bar{v}_{0})(\bar{y}), &\quad\mbox{on}\quad \ \mathbb{R},
\end{eqnarray}
and
\begin{eqnarray}\label{eq:1.36}
\bar{v}=b'_{\pm}(\bar{x}), &\quad\mbox{on}\quad \ \Gamma_{\pm},
\end{eqnarray}
for $\bar{\rho}_{0}$, $\bar{u}_{0}$ and $\bar{v}_{0}$ satisfying $\eqref{eq:1.10}_{3}$ on $\mathbb{R}$.

The corresponding government equations for $\tau=0$ are given by
\begin{equation}\label{eq:1.37}
\begin{cases}
\partial_{\bar{x}}U+\partial_{\bar{y}}F(U)=0, &\quad \mbox{in}\quad \Omega_{w},\\[5pt]
\bar{u}+\displaystyle \frac{1}{2}\bar{v}^2+\frac{\bar{\rho}^{\gamma-1}-1}{(\gamma-1)a_{\infty}^2}=0, &\quad \mbox{in}\quad \Omega_{w},
\end{cases}
\end{equation}
for $U=(\bar{\rho}, \bar{v})^{\top}$, and $F(U)$ is defined by \eqref{eq:1.19}.

To unified the notations, let's denote the solutions to the initial-boundary value problem \eqref{eq:1.34} and \eqref{eq:1.32}-\eqref{eq:1.33} by $(\bar{\rho}^{(\tau)}_{w}, \bar{u}^{(\tau)}_{w}, \bar{v}^{(\tau)}_{w})$, and the solutions to the initial-boundary value problem
\eqref{eq:1.37} and \eqref{eq:1.35}-\eqref{eq:1.36} by $(\bar{\rho}_{w}, \bar{u}_{w}, \bar{v}_{w})$.

Set
\begin{eqnarray*}
U^{(\tau)}_{w}=(\bar{\rho}^{(\tau)}_{w},\ \bar{v}^{(\tau)}_{w})^{\top}, \qquad U_{w}=(\bar{\rho}_{w},\ \bar{v}_{w})^{\top},
\end{eqnarray*}
and
\begin{eqnarray*}
 U_{w,0}(\bar{y})=(\bar{\rho}_{0}, \bar{v}_{0})^{\top}(\bar{y}), \quad\ \mbox{for}\quad   \bar{y}\in \mathbb{R}.
\end{eqnarray*}

Then, $U^{(\tau)}_{w}$ and
$U_{w}$ are the solutions to the problem $\eqref{eq:1.34}_{1}$\eqref{eq:1.33} as well as the problem $\eqref{eq:1.37}_{1}$\eqref{eq:1.36}
with the same initial data $U_{w,0}(\bar{y})$.

When 
\begin{eqnarray*}
U_{w,0}(\bar{y})\equiv (1,0)^{\top},\quad\ \mbox{and}\quad\  b_{\pm}(\bar{x})\equiv 0,
\end{eqnarray*}
then the solutions to the initial-boundary value problem \eqref{eq:1.34} and \eqref{eq:1.32}-\eqref{eq:1.33} as well as the initial-boundary value problem
\eqref{eq:1.37} and \eqref{eq:1.35}-\eqref{eq:1.36} in this case are also the constant states, \emph{i.e.},
\begin{eqnarray}\label{eq:1.38}
\begin{split}
U^{(\tau)}_{w}=U_{w}=\underline{U}_{w}\doteq\big(1, 0\big)^{\top}, \quad\ \bar{u}^{(\tau)}_{w}=\bar{u}_{w}\equiv0, \quad\   \ \forall (\bar{x}, \bar{y}) \in
\underline{\Omega}_{w},
\end{split}
\end{eqnarray}
where $\underline{\Omega}_{w}=(\mathbb{R}_{+}\times \mathbb{R})\setminus \big\{(\bar{x}, \bar{y}): \bar{y}=0, \bar{x}\in [0,\ell_{w}]\big\}$.

Before given the second main result of the paper, we list some basic assumptions on the initial data $U_{w,0}(\bar{y})$ and boundary functions $b_{\pm}(\bar{x})$ in the following.\\

$\widetilde{\mathbf{(H1)}}$ The initial data $U_{w,0}(\bar{y})$ satisfies $U_{w,0}(\bar{y})-\underline{U}_{w}\in (BV\cap L^{1})(\mathbb{R}; \mathbb{R}^{2})$, $U_{w,0}(\bar{y})-\underline{U}$ has compact support, and $\inf_{\bar{y}\in \mathbb{R}}\bar{\rho}_0(\bar{y})>0$;\\
\smallskip

$\widetilde{\mathbf{(H2)}}$ The functions $b_{\pm}(\bar{x})$ defined by \eqref{eq:1.31} satisfy $b_{\pm}(\bar{x})\in Lip([0,\ell_{w}]; \mathbb{R})$ and $b'_{\pm}(\bar{x})\in (BV\cap L^{1})([0,\ell_{w}]; \mathbb{R})$ with
the properties that
\begin{equation}\label{eq:1.39}
\pm\ b'_{\pm}(0)\geq 0,\quad\  b_{-}(\bar{x})<0<b_{+}(\bar{x}), \quad\  \mbox{for}\quad  \bar{x}\in [0,\ell_{w}],
\end{equation}
where $Lip$ denotes the set of Lipschitz continuous functions.

Then, the second main result of the paper is stated as follows:
\begin{theorem}\label{thm:1.2}
Assume assumptions $\widetilde{\mathbf{(H1)}}$-$\widetilde{\mathbf{(H2)}}$ hold. For a given fixed hypersonic similarity parameter $a_{\infty}$, let $(\bar{\rho}^{(\tau)}_{w}, \bar{u}^{(\tau)}_{w}, \bar{v}^{(\tau)}_{w})$
be the solution to the initial-boundary value problem \eqref{eq:1.34} and \eqref{eq:1.32}-\eqref{eq:1.33}, and let $(\bar{\rho}_{w}, \bar{u}_{w}, \bar{v}_{w})$ be the solution to the initial-boundary value problem \eqref{eq:1.37} and \eqref{eq:1.35}-\eqref{eq:1.36} which can be obtained by the wave front tracking scheme. Then, there exist small parameters $\epsilon^{**}>0$ and $\tau^{**}>0$ depending only on $\underline{U}_{w}$ and $a_{\infty}$
such that for any $\tau\in(0,\tau^{**})$, if  
\begin{eqnarray}\label{eq:1.40}
T.V.\{U_{w,0}(\cdot); \mathbb{R}\}+|b'_{\pm}(0)|+T.V.\big\{b'_{\pm}(\cdot);[0,\ell_{w}]\big\}<\epsilon,
\end{eqnarray}
for $\epsilon\in (0,\epsilon^{**}_{0})$, then it holds that
\begin{eqnarray}\label{eq:1.41}
\sup_{\ell_{w}<\bar{x}<O(\tau^{-1})}\big\|\big(\bar{\rho}^{(\tau)}_{w}-\bar{\rho}_{w},\ \bar{u}^{(\tau)}_{w}-\bar{u}_{w},\ \bar{v}^{(\tau)}_{w}-\bar{v}_{w}\big)(\bar{x},\cdot)\big\|_{L^{1}(\mathbb{R}^{1})}
\leq C^*_3 \tau^{\frac{3}{2}}.
\end{eqnarray}
Here, the constant $C^*_{3}>0$ depends only on $T.V.\{U_{w,0}(\cdot); \mathbb{R}\}$, $|b'_{\pm}(0)|$, $T.V.\{b'_{\pm}(\cdot);\mathbb{R}_{+}\}$, $\underline{U}_{w}$ and $a_{\infty}$,
but is independent on $\tau$ and $\bar{x}$.
\end{theorem}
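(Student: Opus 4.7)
\emph{Proof plan for Theorem \ref{thm:1.2}.}

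The idea is to split $\{\bar{x} > 0\}$ into the wing region $[0, \ell_{w}]$ and the post-wing region $(\ell_{w}, \infty)$, and to exploit Glimm--Lax $N$-wave decay of the small-disturbance solution in the second regime. On $[0, \ell_{w}]$, the upper domain $\{\bar{y} > b_{+}(\bar{x})\}$ and the lower domain $\{\bar{y} < b_{-}(\bar{x})\}$ each reduce, after the reflection $\bar{y} \mapsto -\bar{y}$ for the upper side, to a wedge problem covered by Theorem \ref{thm:1.1}: the assumptions $\widetilde{\mathbf{(H1)}}$--$\widetilde{\mathbf{(H2)}}$ imply $\mathbf{(H1)}$--$\mathbf{(H2)}$ on each half (taking into account the sign conventions $\pm b'_{\pm}(0) \geq 0$). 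Summing the two bounds gives
\[
\|U^{(\tau)}_{w}(\bar{x}, \cdot) - U_{w}(\bar{x}, \cdot)\|_{L^{1}(\R)} \leq C\bar{x}\tau^{2} \leq C\ell_{w}\tau^{2}, \qquad \bar{x} \in [0, \ell_{w}],
\]
together with uniform BV bounds on both solutions. In particular, by $\widetilde{\mathbf{(H1)}}$ and finite propagation speed, $U_{w}(\ell_{w}, \cdot) - \underline{U}_{w}$ is small in BV and compactly supported in an interval of length $O(\ell_{w})$.

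For $\bar{x} > \ell_{w}$ both $U^{(\tau)}_{w}$ and $U_{w}$ solve Cauchy problems on $\R$. A direct iteration of Theorem \ref{thm:1.1} would only yield an error of order $(\bar{x} - \ell_{w})\tau^{2}$, hence $O(\tau)$ at $\bar{x} = O(\tau^{-1})$, which is insufficient. The extra gain comes from the decay of $U_{w}$ itself: since \eqref{eq:1.14} is a strictly hyperbolic, genuinely nonlinear $2 \times 2$ system and $U_{w}(\ell_{w}, \cdot) - \underline{U}_{w}$ is small in BV with compact support, the classical Glimm--Lax $N$-wave theory yields
\[
T.V.\{U_{w}(\bar{x}, \cdot); \R\} + \|U_{w}(\bar{x}, \cdot) - \underline{U}_{w}\|_{L^{\infty}(\R)} \leq \frac{C}{\sqrt{\bar{x}}}, \qquad \bar{x} \geq \ell_{w}.
\]
Repeating the $L^{1}$-comparison from the proof of Theorem \ref{thm:1.1} on the boundaryless strip, the flux residual produced by the $\tau^{2}$-terms distinguishing \eqref{eq:1.13} from \eqref{eq:1.14}---which schematically is $\tau^{2}\bigl[F'_{1}(U_{w}) - G'_{1}(U_{w})F'(U_{w})\bigr]\partial_{\bar{y}}U_{w}$ after using $\partial_{\bar{x}}U_{w} = -\partial_{\bar{y}}F(U_{w})$---has $L^{1}_{\bar{y}}$-norm bounded per unit $\bar{x}$ by $C\tau^{2}\, T.V.\{U_{w}(\bar{x}, \cdot); \R\}$. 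Integrating,
\[
\|U^{(\tau)}_{w}(\bar{x}, \cdot) - U_{w}(\bar{x}, \cdot)\|_{L^{1}(\R)} \leq C\tau^{2} + \int_{\ell_{w}}^{\bar{x}} \frac{C\tau^{2}}{\sqrt{s}}\, ds \leq C\tau^{2}\sqrt{\bar{x}},
\]
so that at $\bar{x} = O(\tau^{-1})$ one obtains $O(\tau^{3/2})$, as claimed. Transfer from $(\bar{\rho}^{(\tau)}_{w}, \bar{v}^{(\tau)}_{w})$ back to the triple $(\bar{\rho}^{(\tau)}_{w}, \bar{u}^{(\tau)}_{w}, \bar{v}^{(\tau)}_{w})$ is done through \eqref{eq:1.12} as in Remark \ref{rem:1.3}.

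The principal obstacle is establishing the $\bar{x}^{-1/2}$ decay of $T.V.\{U_{w}(\bar{x}, \cdot); \R\}$ at the level of the modified wave-front tracking approximations underlying Theorem \ref{thm:1.1}. This requires a Glimm--Lax style decomposition of each characteristic family into its positive and negative parts, with cross-family interactions absorbed into the quadratic interaction potential of size $O(\epsilon^{2})$ guaranteed by \eqref{eq:1.40}, followed by passage to the limit in the approximation parameter. A secondary technical issue is to verify that the Lipschitz map $\mathcal{P}_{h}$ and the $L^{1}$-stability estimates used in Theorem \ref{thm:1.1} admit a boundaryless counterpart whose Lipschitz constant is uniform in $\bar{x} \in [\ell_{w}, O(\tau^{-1})]$, so that the decaying total variation can be fed cleanly into the above error accumulation.
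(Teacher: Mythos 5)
Your proposal follows essentially the same route as the paper: apply Theorem \ref{thm:1.1} on the wing region to get an $O(\ell_{w}\tau^{2})$ error at $\bar{x}=\ell_{w}$, then on $\bar{x}>\ell_{w}$ compare the $\tau$-semigroup trajectory with $U_{w}$ via a local Riemann-solver estimate whose per-unit-length error is $O(\tau^{2}\,T.V.\{U_{w}(\bar{x},\cdot)\})$, and integrate against the $\bar{x}^{-1/2}$ total-variation decay to obtain $O(\tau^{2}\sqrt{\bar{x}})$, hence $O(\tau^{3/2})$ at $\bar{x}=O(\tau^{-1})$. The only difference is that the decay you flag as the ``principal obstacle'' is taken in the paper directly from Liu's decay theorem (Theorem 7.1 of \cite{liu}, quoted in Remark \ref{rem:4.3}) rather than re-derived by a Glimm--Lax argument, and the uniformity in $\tau$ of the semigroup Lipschitz constant is supplied by Remark \ref{rem:4.2}.
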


To show Theorem \ref{thm:1.1}, we will further develop the approaches given in \cite{chen-christoforou-zhang-1, chen-christoforou-zhang-2, chen-xiang-zhang, zhang-3} for the Cauchy problem, 
to the initial-boundary value problem with a curved boundary. As far as we know, there is no result on the comparison of two entropy solutions involving boundary.
One of the main difficulty is that the solutions of Riemann problem are more complicated near the curved boundary after scaling.
It leads to the standard Riemann semigroup does not exist in general.

To overcome this difficulty, we first approximate boundary $\bar{y}=b(\bar{x})$ by piecewise straight lines $\bar{y}=b_{h}(\bar{x})$, then follow the ideas in \cite{colombo-guerra} to establish a new Lipschitz continuous map $\mathcal{P}_{h}$, 
by piecing together the Riemann solutions near boundary $\bar{y}=b_{h}(\bar{x})$ for a short time
with piecewise constant initial data $U^{\nu}_{0}$.
Next, we compare the Riemann solvers between approximate solutions $U^{(\tau)}_{h,\nu}$ and $U_{h,\nu}$ case by case,
especially 
near the boundary, where one of the boundary conditions is the Neumann type while the other one is the Dirichlet type. Based on it,
we can establish the local $L^1$ difference estimate between the approximate solutions $U^{(\tau)}_{h,\nu}(\bar{x}+s,\cdot)$ and the trajectory $\mathcal{P}_{h}(\bar{x}+s,x)(U^{(\tau)}_{h,\nu}(\bar{x},\cdot))$ for $s>0$ sufficiently small. 
Then, we can further establish the global $L^1$ difference estimate between $U^{(\tau)}_{h,\nu}(\bar{x},\cdot)$
and $\mathcal{P}_{h}(\bar{x} ,0)(U^{\nu}_{0}(\cdot))$ by establishing the following error formula for the map $\mathcal{P}_{h}$ with approximate boundary $\bar{y}=b_{h}(\bar{x})$:
\begin{eqnarray*}
\begin{split}
&\big\|\mathcal{P}_{h}(\bar x,0)(W(0))-W(\bar x)\big\|_{L^{1}((-\infty, b_{h}(\bar x)))}\\[5pt]
&\ \ \ \ \  \leq L\int^{\bar x}_{0}\lim_{s\rightarrow0^{+}}\inf\frac{\big\|\mathcal{P}_{h}(\varsigma+s, \varsigma)(W(\varsigma))-W(\varsigma+s)\big\|_{L^{1}((-\infty, b_{h}(\varsigma+s)))}}{s}d\varsigma,
\end{split}
\end{eqnarray*}
with $W(0)=U^{\nu}_{0}(\cdot)$ and $W(\bar x)=U^{(\tau)}_{h,\nu}(\bar  x,\cdot)$. It will be given in Lemma \ref{lem:5.2}.
Finally, with the uniqueness and compactness of map $\mathcal{P}_{h}$ and approximate solutions $U^{(\tau)}_{h,\nu}$ and $U_{h,\nu}$, we can establish estimate \eqref{eq:1.27} by letting $\nu\rightarrow +\infty$ and $h\rightarrow 0$. Furthermore, by $\eqref{eq:1.10}_3$ and \eqref{eq:1.12}, we can show estimate \eqref{eq:1.28}, which justifies the Van Dyke's similarity theory rigorously with convergence rate in order $\tau^2$, that is consistent with the one predicted by Newtonian-Busemann law (see (3.29) in  \cite[Page 67]{anderson} for more details).

With the help of Theorem \ref{thm:1.1}, to prove Theorem \ref{thm:1.2}, we only need to consider the convergence rate between solutions $U^{(\tau)}_{w}$ and $U_{w}$ for the Cauchy problems $\eqref{eq:1.34}_{1}$ and $\eqref{eq:1.37}_{1}$, respectively, with the same initial data $U_{w,0}$.
Notice that by Theorem 7.1 in \cite{liu}, the total variation of solution $U_{w}$ of the Cauchy problem $\eqref{eq:1.37}_{1}$ with the initial data $U_{w,0}$
 will decay along the flow direction $x$. Then, we employ the semigroup formula with uniformly Lipschitz constant $L^{*}>0$ that independent of $\tau$ 
to compare the trajectory $\mathcal{P}^{(\tau)}_{*}(x-\ell_{w})(U^{\nu}_{w}(\ell_{w},\cdot))$ and the approximate solution $U^{\nu}_{w}(x,\cdot)$ to the Cauchy problem of system $\eqref{eq:1.37}_{1}$ with $x>\ell_{w}$. 
Finally, letting $\nu\rightarrow +\infty$, using decay property of $U_{w}$ for $x>\ell_{w}$, and applying Theorem \ref{thm:1.1} on the $L^1$ difference estimate between $U^{(\tau)}_{w}(x,\cdot)$ and $\mathcal{P}^{(\tau)}_{*}(x-\ell_{w})(U_{w}(x,\cdot))$ for $0<x<\ell_{w}$, we can complete the proof of Theorem \ref{thm:1.2}.

Recently, the authors in \cite{jin-qu-yuan, jin-qu-yuan2,qu-yuan,qu-yuan-zhao} systematically studied the hypersonic limit, which is a different problem from ours since there is no hypersonic similarity structure. The reason is that the wedge angle (or cone angle) $\theta$ in \cite{jin-qu-yuan, jin-qu-yuan2,qu-yuan,qu-yuan-zhao} is fixed such that the similarity parameter $K$ tends to the infinity as $\textrm{M}_{\infty}\rightarrow\infty$.
There are also many literatures on the BV solutions for the steady supersonic compressible Euler flows with free boundaries of small data such that steady supersonic flow past a Lipschitz wedge or moving over a Lipschitz bending wall 
(see \cite{chen-kuang-zhang, chen-li, chen-zhang-zhu, zhang-1, zhang-2} for more details)
which involving the stabilities of the shock wave and rarefaction wave. 

The rest of this paper is organized as follows. In section 2, we present the properties of the elementary wave curves for the system \eqref{eq:1.16} and the system \eqref{eq:1.20}. In section 3, we are devoted to studying and comparing the Riemann problems for the two systems by taking the boundary into account. Based on them, in section 4, we construct the approximate solutions
to the initial-boundary value problem \eqref{eq:1.17}-\eqref{eq:1.19} by the modified wave front tracking scheme and establish some properties. We will give the existence of the Lipschitz continuous map $\mathcal{P}_{h}$ with properties corresponding to the approximate boundary for the initial-boundary value  problem \eqref{eq:1.20}-\eqref{eq:1.21} and \eqref{eq:1.11}.
In section 5, the $L^1$ difference estimates between the approximate solutions $U^{(\tau)}_{h,\nu}$ and the trajectory of $\mathcal{P}_{h}(U^{\nu}_0)$ are derived. 
Then by the properties of
$U^{(\tau)}_{h,\nu}$ and $\mathcal{P}_{h}$ obtained in section 4, we can complete the proof of estimate \eqref{eq:1.27}. Therefore, we can conclude the proofs of Theorem \ref{thm:1.1} and Theorem \ref{thm:1.2} one by one in section 5.
Finally, in the appendix, we establish some results on the interaction of weak waves and the boundary, 
which are used in section 4.

In the rest contents of the paper, we will denote $(\bar{x}, \bar{y})$ and $(\bar{\rho}, \bar{u}, \bar{v})$, $(\bar{\rho}^{(\tau)}, \bar{u}^{(\tau)}, \bar{v}^{(\tau)})$ as $(x,y)$ and $(\rho, u, v)$,
$(\rho^{(\tau)}, u^{(\tau)}, v^{(\tau)})$, respectively for the simplicity of the notations.

\section{Elementary wave curves for the system \eqref{eq:1.16} and the system \eqref{eq:1.20}}\setcounter{equation}{0}
In this section, as preliminaries, we will present some basic structures of the elementary wave curves for both systems \eqref{eq:1.16} and \eqref{eq:1.20},
which will be used in the following sections.

\subsection{Wave curves for equation \eqref{eq:1.16} with $\tau\neq 0$}
First, for $u(\rho, v, \tau^{2})$, we have the following lemma.
\begin{lemma}\label{lem:2.1}
For fixed $\tau>0$, then there holds for $u^{(\tau)}$ that
\begin{eqnarray}\label{eq:2.1}
\begin{split}
 \frac{\partial u^{(\tau)}}{\partial\rho^{(\tau)}}=-\frac{(\rho^{(\tau)})^{\gamma-2}}{a^{2}_{\infty}(1+\tau^2u^{(\tau)})},\ \ \ \
\frac{\partial u^{(\tau)}}{\partial v^{(\tau)}}=-\frac{v^{(\tau)}}{1+\tau^{2}u^{(\tau)}}.
\end{split}
\end{eqnarray}

\end{lemma}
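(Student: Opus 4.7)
The plan is to treat the claim as a one-step implicit-function computation applied to the Bernoulli relation. The third equation of \eqref{eq:1.7}, namely
\begin{equation*}
u^{(\tau)} + \tfrac{1}{2}\bigl((v^{(\tau)})^{2} + \tau^{2}(u^{(\tau)})^{2}\bigr) + \frac{(\rho^{(\tau)})^{\gamma-1}-1}{(\gamma-1)a_{\infty}^{2}} = 0,
\end{equation*}
defines $u^{(\tau)}$ implicitly as a function of $(\rho^{(\tau)},v^{(\tau)})$ for fixed $\tau>0$; indeed, the explicit representation \eqref{eq:1.12} already exhibits $u^{(\tau)}$ as such a smooth function, and the derivative of the left-hand side with respect to $u^{(\tau)}$ equals $1+\tau^{2}u^{(\tau)}$, which is strictly positive in the hypersonic regime under consideration because the flow moves from left to right.

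With that in hand, I would simply differentiate the Bernoulli relation once with respect to $\rho^{(\tau)}$ and once with respect to $v^{(\tau)}$, treating the other independent variable as frozen. Differentiating in $\rho^{(\tau)}$ yields
\begin{equation*}
(1+\tau^{2}u^{(\tau)})\,\frac{\partial u^{(\tau)}}{\partial \rho^{(\tau)}} + \frac{(\rho^{(\tau)})^{\gamma-2}}{a_{\infty}^{2}} = 0,
\end{equation*}
and differentiating in $v^{(\tau)}$ yields
\begin{equation*}
(1+\tau^{2}u^{(\tau)})\,\frac{\partial u^{(\tau)}}{\partial v^{(\tau)}} + v^{(\tau)} = 0.
\end{equation*}
Dividing both identities by the nonzero factor $1+\tau^{2}u^{(\tau)}$ gives exactly the two formulas asserted in \eqref{eq:2.1}.

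There is no real obstacle here: the single point that needs to be flagged is that $1+\tau^{2}u^{(\tau)}>0$, which is guaranteed by the hypersonic flow assumption used already in deriving \eqref{eq:1.12}, and which legitimizes both the implicit-function step and the final division. Everything else is algebra. I would therefore present the lemma as a short one-paragraph verification rather than as a separate argument.
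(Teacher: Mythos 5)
Your proposal is correct and follows essentially the same route as the paper: implicit differentiation of the Bernoulli relation $\eqref{eq:1.7}_{3}$ in $\rho^{(\tau)}$ and $v^{(\tau)}$, then division by the nonvanishing factor $1+\tau^{2}u^{(\tau)}$. Your explicit remark that $1+\tau^{2}u^{(\tau)}>0$ justifies the division is a welcome (if minor) addition the paper leaves implicit.
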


\begin{proof}
First, 
%
taking derivative with respect to $\rho^{(\tau)}$ and $v^{(\tau)}$ on the third equation in \eqref{eq:1.7}, we have
\begin{eqnarray*}
\begin{split}
2\tau^2u^{(\tau)}\frac{\partial u^{(\tau)}}{\partial \rho^{(\tau)}}+2\frac{\partial u^{(\tau)}}{\partial \rho^{(\tau)}}
+\frac{2(\rho^{(\tau)})^{\gamma-1}}{a^{2}_{\infty}}=0,\quad
2\tau^2u^{(\tau)}\frac{\partial u^{(\tau)}}{\partial \rho^{(\tau)}}+2\frac{\partial u^{(\tau)}}{\partial \rho^{(\tau)}}
+2v^{(\tau)}=0.
\end{split}
\end{eqnarray*}
It gives \eqref{eq:2.1}.
\end{proof}

By Lemma \ref{lem:2.1}, the characteristic polynomial for the system \eqref{eq:1.13} is
\begin{eqnarray}\label{eq:2.2}
\begin{split}
\Big(a^{2}_{\infty}(1+\tau^{2}u^{(\tau)})^{2}-\tau^{2}(c^{(\tau)})^{2}\Big)\lambda^{2}
-2a^{2}_{\infty}(1+\tau^{2}u^{(\tau)})v^{(\tau)}\lambda
+a^{2}_{\infty}(v^{(\tau)})^{2}-(c^{(\tau)})^{2}=0,
\end{split}
\end{eqnarray}
which admits two roots (or the eigenvalues)
\begin{eqnarray}\label{eq:2.3}
\begin{split}
&\lambda_{j}(U^{(\tau)}, \tau^{2})\\[5pt]
&=\displaystyle\frac{a^{2}_{\infty}(1+\tau^2u^{(\tau)})v^{(\tau)}+(-1)^{j}c^{(\tau)}
\sqrt{a^{2}_{\infty}(1+\tau^2u^{(\tau)})^2+\tau^2\big(a^{2}_{\infty}(v^{(\tau)})^2-(c^{(\tau)})^2\big)}}
{a^{2}_{\infty}(1+\tau^2 u^{(\tau)})^2-\tau^2(c^{(\tau)})^2},
\end{split}
\end{eqnarray}
with corresponding right eigenvectors
\begin{eqnarray}\label{eq:2.4}
\begin{split}
\tilde{\boldsymbol{r}}_{j}(U^{(\tau)}, \tau^{2})=\Big(\frac{a^{2}_{\infty}\rho^{(\tau)}\lambda_{j}(U^{(\tau)}, \tau^{2})}{c^{(\tau)}}-v^{(\tau)}, \ c^{(\tau)}\Big)^{\top},
\end{split}
\end{eqnarray}
for $j=1,2$, where $c^{(\tau)}=(\rho^{(\tau)})^{\frac{\gamma-1}{2}}$.

\begin{remark}\label{rem:2.1}
When $U^{(\tau)}=\underline{U}$ defined in \eqref{eq-background}, by $\eqref{eq:1.12}$, $u^{(\tau)}=0$. Therefore, we have
\begin{eqnarray}\label{eq:2.5}
\begin{split}
\lambda_{j}(U^{(\tau)},\tau^2)\Big|_{U^{(\tau)}=\underline{U}}=\frac{(-1)^{j}\sqrt{a^{2}_{\infty}-\tau^2}}{a^{2}_{\infty}-\tau^2},
\quad \mbox{for} \ \  j=1,2.
\end{split}
\end{eqnarray}
\end{remark}

\begin{lemma}\label{lem:2.2}
For given $a_{\infty}$, there exist small constants $\epsilon_{0}>0$ and $\tau_{0}>0$ depending only on $\underline{U}$ and $a_{\infty}$, such that if $U^{(\tau)}\in \mathcal{O}_{\epsilon_{0}}(\underline{U})$ and $\tau\in(0,\tau_{0})$, then
\begin{eqnarray}\label{eq:2.6}
\begin{split}
\nabla_{U^{(\tau)}}\lambda_{j}(U^{(\tau)}, \tau^2)\cdot\tilde{\boldsymbol{r}}_{j}(U^{(\tau)}, \tau^{2})>0, \quad \mbox{for}\ \  j=1,2.
\end{split}
\end{eqnarray}
\end{lemma}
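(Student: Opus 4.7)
The plan is to verify genuine nonlinearity at the background state $(\underline{U}, \tau = 0)$ by an explicit evaluation and then extend the inequality to a small neighborhood by continuity, relying on the smoothness of the explicit formulas in (2.3)–(2.4).

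First, I would evaluate both $\lambda_j$ and $\tilde{\boldsymbol{r}}_j$ at $\tau = 0$. Setting $\tau = 0$ in (2.3) collapses the formula to $\lambda_j|_{\tau=0} = v^{(\tau)} + (-1)^j c^{(\tau)}/a_\infty$ with $c^{(\tau)} = (\rho^{(\tau)})^{(\gamma-1)/2}$. At $U^{(\tau)} = \underline{U} = (1,0)^\top$ this yields $\lambda_j = (-1)^j/a_\infty$ and from (2.4), $\tilde{\boldsymbol{r}}_j|_{(\underline{U},0)} = \bigl((-1)^j a_\infty,\, 1\bigr)^\top$. Direct differentiation of $\lambda_j|_{\tau=0}$ in $U$ gives
\[
\nabla_U \lambda_j \big|_{(\underline{U},0)} = \Bigl((-1)^j \tfrac{\gamma-1}{2 a_\infty},\; 1\Bigr),
\]
so that the two factors of $(-1)^j$ in the first components cancel and
\[
\nabla_U \lambda_j \cdot \tilde{\boldsymbol{r}}_j \big|_{(\underline{U},0)} = \frac{\gamma-1}{2} + 1 = \frac{\gamma+1}{2} > 0,
\]
independently of $j \in \{1,2\}$.

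Second, I would extend this by continuity. At $(\underline{U},0)$, the denominator $a_\infty^2(1+\tau^2 u^{(\tau)})^2 - \tau^2 (c^{(\tau)})^2$ in (2.3) equals $a_\infty^2 > 0$, the radicand $a_\infty^2(1+\tau^2 u^{(\tau)})^2 + \tau^2\bigl(a_\infty^2 (v^{(\tau)})^2 - (c^{(\tau)})^2\bigr)$ also equals $a_\infty^2 > 0$, and by Lemma 2.1 together with (1.12), $u^{(\tau)}$ is smooth in $(\rho^{(\tau)}, v^{(\tau)}, \tau^2)$ on a neighborhood of $(\underline{U},0)$. Hence both $\lambda_j(U^{(\tau)}, \tau^2)$ and $\tilde{\boldsymbol{r}}_j(U^{(\tau)}, \tau^2)$ are $C^1$ there, and the scalar
\[
\Lambda_j(U^{(\tau)}, \tau^2) := \nabla_{U^{(\tau)}} \lambda_j(U^{(\tau)}, \tau^2) \cdot \tilde{\boldsymbol{r}}_j(U^{(\tau)}, \tau^2)
\]
is continuous in $(U^{(\tau)}, \tau^2)$ and equals $(\gamma+1)/2$ at $(\underline{U},0)$. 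Continuity then furnishes constants $\epsilon_0, \tau_0 > 0$ depending only on $\underline{U}$ and $a_\infty$ such that $\Lambda_j(U^{(\tau)}, \tau^2) \geq (\gamma+1)/4$ for all $U^{(\tau)} \in \mathcal{O}_{\epsilon_0}(\underline{U})$ and $\tau \in (0, \tau_0)$, which is (2.6).

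I do not expect a serious obstacle since every quantity involved is given by an explicit algebraic formula. The only book-keeping care is tracking the signs $(-1)^j$: the crucial observation is that in the inner product $\nabla_U \lambda_j \cdot \tilde{\boldsymbol{r}}_j$, the $\rho$-derivative of $\lambda_j$ carries one factor of $(-1)^j$ while the first component of $\tilde{\boldsymbol{r}}_j$ carries another, so their product is $+1$ for both fields and the positive contribution $(\gamma-1)/2$ survives with a consistent sign, guaranteeing that both characteristic fields are genuinely nonlinear in the same sense.
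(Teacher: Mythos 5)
Your proof is correct. The evaluation at $(\underline{U},0)$ is right: at $\tau=0$ the eigenvalue formula \eqref{eq:2.3} collapses to $\lambda_j=v+(-1)^j c/a_\infty$, and the two factors of $(-1)^j$ indeed cancel in the inner product, giving $(\gamma+1)/2>0$ for both fields; the continuity step is legitimate because the denominator and radicand in \eqref{eq:2.3} equal $a_\infty^2>0$ at the background state and $u^{(\tau)}$ from \eqref{eq:1.12} extends smoothly in $(\rho^{(\tau)},v^{(\tau)},\tau^2)$ down to $\tau=0$. Your route differs from the paper's in a worthwhile way: the paper differentiates the characteristic polynomial \eqref{eq:2.2} to obtain closed-form expressions for $\partial\lambda_1/\partial\rho^{(\tau)}$ and $\partial\lambda_1/\partial v^{(\tau)}$ at \emph{general} $(U^{(\tau)},\tau)$, computes the inner product explicitly, and only then specializes to $\underline{U}$, obtaining the value $(\gamma+1)a_\infty^2/(2(a_\infty^2-\tau^2))$ and the explicit choice $\tau_0=a_\infty/2$; continuity is invoked only in the $U$-variable. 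Your argument is more elementary and avoids the ``complicated calculations,'' but it yields less: the paper's explicit formula is what produces the $\tau$-dependent constant $e_j|_{\underline{U}}=2(a_\infty^2-\tau^2)/((\gamma+1)a_\infty^2)$ recorded in Remark \ref{rem:2.2} and used later (e.g.\ in the proof of Lemma \ref{lem:3.2}), and it gives quantitative lower bounds and an explicit $\tau_0$ rather than ones furnished abstractly by continuity. For the lemma as stated, your proof suffices.
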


\begin{proof}
We only prove the case $j=1$, because the argument for the case $j=2$ is similar. 
Taking derivative on \eqref{eq:2.2} with respect to $\rho^{(\tau)}$ 
and noticing that
\begin{eqnarray*}
\begin{split}
&\big(a^{2}_{\infty}(1+\tau^2u^{(\tau)})^2-\tau^2(c^{(\tau)})^2\big)\lambda_{1}(U^{(\tau)}, \tau^2)-a^{2}_{\infty}(1+\tau^2u^{(\tau)})v^{(\tau)}\\[5pt]
&\qquad\qquad =-c^{(\tau)}\sqrt{a^2_{\infty}(1+\tau^2u^{(\tau)})^2+\tau^2(a^2_{\infty}(v^{(\tau)})^2-(c^{(\tau)})^2)},
\end{split}
\end{eqnarray*}
we have
\begin{eqnarray*}
\begin{split}
&\frac{\partial\lambda_{1}(U^{(\tau)}, \tau^2)}{\partial \rho^{(\tau)}} \\[5pt]
& =\displaystyle-\frac{(\gamma-1)(1+\tau^2u^{(\tau)})+\tau^2\Big((\gamma+1)(1+\tau^2u^{(\tau)})\lambda_{1}(U^{(\tau)}, \tau^2)-2v^{(\tau)}\Big)\lambda_{1}(U^{(\tau)}, \tau^2)}
{2(1+\tau^2u^{(\tau)})\sqrt{a^2_{\infty}(1+\tau^2u^{(\tau)})^2+\tau^2(a^2_{\infty}(v^{(\tau)})^2-(c^{(\tau)})^2)}}\frac{c^{(\tau)}}{\rho^{(\tau)}}.
\end{split}
\end{eqnarray*}

In the same way, we also have
\begin{eqnarray*}
\begin{split}
\frac{\partial\lambda_{1}(U^{(\tau)}, \tau^2)}{\partial v^{(\tau)}} =\displaystyle-\frac{a^2_{\infty}\Big((1+\tau^2u^{(\tau)})\lambda_{1}(U^{(\tau)}, \tau^2)-v^{(\tau)}\Big)\Big(1+\tau^2(u^{(\tau)}+v^{(\tau)}\lambda_{1}(U^{(\tau)}, \tau^2))\Big)}
{2c^{(\tau)}(1+\tau^2u^{(\tau)})\sqrt{a^2_{\infty}(1+\tau^2u^{(\tau)})^2+\tau^2(a^2_{\infty}(v^{(\tau)})^2-(c^{(\tau)})^2)}}.
\end{split}
\end{eqnarray*}

Then, combing the above estimates and by complicated calculations, it follows that
\begin{small}
\begin{eqnarray*}
\begin{split}
\nabla_{U^{(\tau)}}\lambda_{1}(U^{(\tau)}, \tau^2)\cdot \tilde{\boldsymbol{r}}_{1}(U^{(\tau)},\tau^2)=\displaystyle-\frac{(\gamma+1)a^2_{\infty}\Big(1+\tau^2\lambda^2_{1}(U^{(\tau)}, \tau^2)\Big)\Big((1+\tau^2u^{(\tau)})\lambda_{1}(U^{(\tau)}, \tau^2)-v^{(\tau)}\Big)}
{2\sqrt{a^2_{\infty}(1+\tau^2u^{(\tau)})^2+\tau^2(a^2_{\infty}(v^{(\tau)})^2-(c^{(\tau)})^2)}},
\end{split}
\end{eqnarray*}
\end{small}
and at the background state $U^{(\tau)}=\underline{U}$,
\begin{eqnarray*}
\begin{split}
\nabla_{U^{(\tau)}}\lambda_{1}(U^{(\tau)}, \tau^2)\cdot \tilde{\boldsymbol{r}}_{1}(U^{(\tau)},\tau^2)\Big|_{U^{(\tau)}=\underline{U}}=\frac{(\gamma+1)a^{2}_{\infty}}{2(a^{2}_{\infty}-\tau^2)}.
\end{split}
\end{eqnarray*}

Thus, we can choose $\tau_{0}=\frac{1}{2}a_{\infty}$, so that for $\tau\in(0,\tau_0)$, it holds
$$
\nabla_{U^{(\tau)}}\lambda_{1}(U^{(\tau)}, \tau^2)\cdot \tilde{\boldsymbol{r}}_{1}(U^{(\tau)},\tau^2)\Big|_{U^{(\tau)}=\underline{U}}>\frac{2(\gamma+1)}{3}>0.
$$
Hence we can further choose $\epsilon_{0}>0$
depending only on $\underline{U}$ and $a_{\infty}$, such that for $\epsilon\in(0, \epsilon_{0})$, 
$\nabla_{U^{(\tau)}}\lambda_{1}(U^{(\tau)}, \tau^2)\cdot \tilde{\boldsymbol{r}}_{1}(U^{(\tau)},\tau^2)>0$ if $U^{(\tau)}\in \mathcal{O}_{\epsilon_{0}}(\underline{U})$.
%
%
%
\end{proof}

By Lemma \ref{lem:2.2}, we can define
\begin{eqnarray}\label{eq:2.7}
e_{j}(U^{(\tau)},\tau^2)=\displaystyle\frac{1}{\nabla_{U^{(\tau)}}\lambda_{2}(U^{(\tau)},\tau^2)\cdot \tilde{r}_{2}(U^{(\tau)},\tau^2)}, \quad\mbox{for}\ \ j=1, 2.
\end{eqnarray}

Then, we can re-normalized the $\tilde{\boldsymbol{r}}^{(\tau)}_{j}, (j=1,2)$ by setting
\begin{eqnarray*}
\boldsymbol{r}_{j}(U^{(\tau)},\tau^2)=e_{j}(U^{(\tau)},\tau^2)\tilde{\boldsymbol{r}}^{(\tau)}_{j}(U^{(\tau)},\tau^2), \quad\mbox{for}\ \  j=1, 2,
\end{eqnarray*}
so that
\begin{eqnarray*}
\nabla_{U^{(\tau)}}\lambda_{j}(U^{(\tau)},\tau^2)\cdot \boldsymbol{r}_{j}(U^{(\tau)},\tau^2)\equiv1, \quad \mbox{for}\ \ j=1, 2.
\end{eqnarray*}

\begin{remark}\label{rem:2.2}
When $U^{(\tau)}=\underline{U}$, from the proof of the Lemma \ref{lem:2.2}, we can get
\begin{eqnarray}\label{eq:2.9}
\displaystyle e_{1}(U^{(\tau)},\tau^2)\Big|_{U^{(\tau)}=\underline{U}}= e_{2}(U^{(\tau)},\tau^2)\Big|_{U^{(\tau)}=\underline{U}}
=\frac{2(a^{2}_{\infty}-\tau^2)}{(\gamma+1)a^{2}_{\infty}}.
\end{eqnarray}
\end{remark}

It follows from Lemma \ref{lem:2.2} that system \eqref{eq:1.16} is strictly hyperbolic with two genuinely nonlinear characteristic fields.
Then, following the arguments in \cite{chen-wang, smoller}, we have
\begin{lemma}\label{lem:2.3}
Let $a_{\infty}$ be the hypersonic similarity parameter defined by \eqref{eq-hypersonic}. Then, there exist small constants $\tau_{1}\in(0,\tau_{0})$, $\epsilon_{1}\in(0,\epsilon_{0})$ and $\delta_{0}>0$ depending only on $\underline{U}$ and $a_{\infty}$, such that for any constant state
$U^{(\tau)}_{L}\in \mathcal{O}_{\epsilon_1}(\underline{U})$, the $k$th ($k=1,2$) physical admissible wave curve through $U^{(\tau)}_{L}$ can be
parameterized by $\alpha_{k}$ as $\alpha_{k}\mapsto \Phi_{k}(\alpha_{k}; U^{(\tau)}_{L},\tau^2)$ with
$\Phi_{k}\in C^{2}\big((-\delta_{0},\delta_{0})\times \mathcal{O}_{\epsilon_{1}}(\underline{U})\times(0,\tau_0)\big)$ and for $k=1, 2$
\begin{eqnarray}\label{eq:2.10}
\begin{split}
\displaystyle \left. \Phi_{k}(\alpha_{k}; U^{(\tau)}_{L},\tau^2)\right|_{\alpha_k=0}&= U^{(\tau)}_{L},\quad
\left. \frac{\partial\Phi_{k}(\alpha_{k}; U^{(\tau)}_{L},\tau^2)}{\partial\alpha_k}\right|_{\alpha_k=0}&=
r_{k}(U^{(\tau)}_{L},\tau^{2}).
\end{split}
\end{eqnarray}
Moreover, when $\alpha_{k}<0$, $\Phi_{k}(\alpha_{k}; U^{(\tau)}_{L},\tau^2)$ is called the shock wave and is denoted by $\mathcal{S}^{(\tau)}_{k}(U^{(\tau)}_{L})\cap \mathcal{O}_{\epsilon_{1}}(\underline{U})$; when $\alpha_{k}>0$, $\Phi_{k}(\alpha_{k}; U^{(\tau)}_{L},\tau^2)$ is called the rarefaction wave and is denoted by $\mathcal{R}^{(\tau)}_{k}(U^{(\tau)}_{L})\cap \mathcal{O}_{\epsilon_{1}}(\underline{U})$ for $k=1,2$.
\end{lemma}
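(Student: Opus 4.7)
The plan is to follow the classical Lax construction for genuinely nonlinear strictly hyperbolic $2\times 2$ systems, adapting it so that every constant depends only on $\underline{U}$ and $a_\infty$ and remains uniform in $\tau\in(0,\tau_0)$.

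First, for each $k\in\{1,2\}$ I would construct the rarefaction branch $\mathcal{R}_k^{(\tau)}(U_L^{(\tau)})$ by integrating the ODE
\[
\frac{dU}{d\alpha_k} = \boldsymbol{r}_k(U,\tau^2),\qquad U(0)=U_L^{(\tau)},
\]
for $\alpha_k\ge 0$. Lemma \ref{lem:2.2} and the normalization \eqref{eq:2.7} guarantee that $\boldsymbol{r}_k(\cdot,\tau^2)$ is smooth in $U$ and $\tau$ on $\mathcal{O}_{\epsilon_0}(\underline{U})\times(0,\tau_0)$, with bounds uniform in $\tau$ thanks to \eqref{eq:2.5} and \eqref{eq:2.9}. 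Standard smooth dependence on parameters for ODEs then yields a $C^2$ solution $\Phi_k^{R}(\alpha_k;U_L^{(\tau)},\tau^2)$ on some $(-\delta_0,\delta_0)\times\mathcal{O}_{\epsilon_1}(\underline{U})\times(0,\tau_0)$, staying inside $\mathcal{O}_{\epsilon_0}(\underline{U})$, and the two initial conditions in \eqref{eq:2.10} are immediate.

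Next, I would construct the shock branch from the Rankine--Hugoniot condition
\[
s_k\bigl(G(U,\tau^2)-G(U_L^{(\tau)},\tau^2)\bigr)=F(U,\tau^2)-F(U_L^{(\tau)},\tau^2).
\]
Eliminating $s_k$ yields a scalar equation whose Jacobian at $U=U_L^{(\tau)}$ has kernel spanned by the two eigenvectors $\boldsymbol{r}_1,\boldsymbol{r}_2$; by the implicit function theorem, applied uniformly in $\tau$ by virtue of the strict hyperbolicity gap in \eqref{eq:2.5}, each branch is represented by a $C^2$ map $\Phi_k^{S}(\alpha_k;U_L^{(\tau)},\tau^2)$ with shock speed $s_k(\alpha_k;U_L^{(\tau)},\tau^2)\in C^2$. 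Expanding the Rankine--Hugoniot relation in powers of $\alpha_k$ and using $\nabla_{U^{(\tau)}}\lambda_k\cdot\boldsymbol{r}_k\equiv 1$, I would verify the standard matching
\[
\Phi_k^S(0;\cdot)=\Phi_k^R(0;\cdot),\quad \partial_{\alpha_k}\Phi_k^S(0;\cdot)=\partial_{\alpha_k}\Phi_k^R(0;\cdot)=\boldsymbol{r}_k,\quad \partial_{\alpha_k}^2\Phi_k^S(0;\cdot)=\partial_{\alpha_k}^2\Phi_k^R(0;\cdot),
\]
so that gluing the two branches at $\alpha_k=0$ produces a single $C^2$ curve $\Phi_k$ on $(-\delta_0,\delta_0)\times\mathcal{O}_{\epsilon_1}(\underline{U})\times(0,\tau_0)$. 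The Lax entropy inequality $\lambda_k(\Phi_k^S)<s_k<\lambda_k(U_L^{(\tau)})$, combined with $\partial_{\alpha_k}\lambda_k(\Phi_k^R(0))=\partial_{\alpha_k}\lambda_k(\Phi_k^S(0))=1>0$, then forces the admissible shocks to live on $\alpha_k<0$ and the rarefactions on $\alpha_k>0$, which gives the labelling $\mathcal{S}_k^{(\tau)}$ and $\mathcal{R}_k^{(\tau)}$ stated in the lemma.

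The main obstacle I expect is keeping the constants $\delta_0,\epsilon_1,\tau_1$ uniform in $\tau$ as $\tau\to 0^+$, since the problem formally degenerates (the $\tau^2$-terms in $G$ vanish). I would handle this by writing every quantity in the proof as a rational expression in $\tau^2$, $u^{(\tau)}$, $v^{(\tau)}$, $\rho^{(\tau)}$ and $a_\infty$, evaluating at $\underline{U}$ via \eqref{eq:2.5} and \eqref{eq:2.9} to obtain explicit positive lower bounds on the relevant determinants (the spectral gap $\lambda_2-\lambda_1$ for the rarefaction ODE, and the shock-branch Jacobian), and then invoking continuity to extend these bounds to a $\tau$-uniform neighborhood $\mathcal{O}_{\epsilon_1}(\underline{U})\times(0,\tau_1)$. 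Once the bounds are uniform, both the ODE and the implicit function theorem deliver a single $\delta_0>0$ valid for the whole parameter range, completing the construction.
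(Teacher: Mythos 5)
Your proposal is correct and is essentially the same argument the paper invokes: the paper deduces Lemma \ref{lem:2.3} from strict hyperbolicity and genuine nonlinearity (established in Lemma \ref{lem:2.2} with the normalization $\nabla_{U^{(\tau)}}\lambda_{k}\cdot\boldsymbol{r}_{k}\equiv 1$) by citing the classical Lax wave-curve construction in \cite{chen-wang, smoller}, which is precisely the rarefaction-ODE plus Rankine--Hugoniot/implicit-function-theorem argument, with $C^2$ second-order matching at $\alpha_k=0$, that you carry out. Your additional care about $\tau$-uniformity of the constants via the explicit evaluations \eqref{eq:2.5} and \eqref{eq:2.9} at $\underline{U}$ is exactly what justifies the paper's claim that $\tau_1,\epsilon_1,\delta_0$ depend only on $\underline{U}$ and $a_\infty$.
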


\subsection{Wave curves for  equations \eqref{eq:1.20}}
The eigenvalues for system \eqref{eq:1.20} are
\begin{eqnarray}\label{eq:2.11}
\lambda_{j}(U)=v+(-1)^{j}\frac{c}{a_{\infty}}, \quad \mbox{for}\quad j=1, 2,
\end{eqnarray}
and the corresponding right eigenvectors are
\begin{eqnarray}\label{eq:2.12}
\boldsymbol{r}_{j}(U)=\frac{2}{(\gamma+1)c}\Big((-1)^{j}a_{\infty}\rho,\  c \Big)^{\top}, \quad \mbox{for}\ \ j=1, 2,
\end{eqnarray}
where $c=\rho^{\frac{\gamma-1}{2}}$.
By direct computation, we obtain that there exists a small parameter $\tilde{\epsilon}_{0}>0$ such that for $U\in O_{\tilde{\epsilon}_{0}}(\underline{U})$ (\emph{i.e.}, $c>0$), it holds
\begin{eqnarray}\label{eq:2.13}
\nabla_{U}\lambda_{j}(U)\cdot \tilde{\boldsymbol{r}}_{j}(U)=\frac{(\gamma+1)c}{2}\equiv1>0 \quad \mbox{for}\ j=1, 2.
\end{eqnarray}
It means that each characteristic fields for system \eqref{eq:1.20} are genuinely nonlinearly.
%
Then, following the ideas in \cite{chen-wang, smoller}, we also have
\begin{lemma}\label{lem:2.4}
There exist small constants $\tilde{\epsilon}_{1}\in(0,\tilde{\epsilon}_{0})$ and $\tilde{\delta}_{0}>0$ depending only on $\underline{U}$, such that for any constant state
$U_{L}\in \mathcal{O}_{\tilde{\epsilon}_1}(\underline{U})$, the $j$th ($j=1, 2$) physical admissible wave curve through $U_{L}$ can be
parameterized by $\tilde{\alpha}_{j}$ as $\tilde{\alpha}_{j}\mapsto \Phi_{j}(\tilde{\alpha}_{j}; U_{L})$ with
$\Phi_{j}\in C^{2}\big((-\tilde{\delta}_{0},\tilde{\delta}_{0})\times \mathcal{O}_{\tilde{\epsilon}_{1}}(\underline{U})\big)$ and
\begin{eqnarray}\label{eq:2.15}
\begin{split}
\displaystyle \left. \Phi_{j}\right|_{\tilde{\alpha}_j=0}&= U_{L},\quad\
\left. \frac{\partial\Phi_{j}}{\partial\tilde{\alpha}_j}\right|_{\tilde{\alpha}_j=0}&=
\boldsymbol{r}_{j}(U_{L}),\quad \mbox{for}\quad   j=1, 2.
\end{split}
\end{eqnarray}
Moreover, when $\tilde{\alpha}_{j}<0$, $\Phi_{j}(\tilde{\alpha}_{j}; U_{L})$ is called the shock wave, and is denoted by $\mathcal{S}_{j}(U_{L})\cap \mathcal{O}_{\tilde{\epsilon}_{1}}(\underline{U})$; when $\tilde{\alpha}_{j}>0$, $\Phi_{j}(\tilde{\alpha}_{j}; U_{L})$ is called the rarefaction wave, and is denoted by $\mathcal{R}_{j}(U_{L})\cap \mathcal{O}_{\tilde{\epsilon}_{1}}(\underline{U})$ for $j=1,2$.
\end{lemma}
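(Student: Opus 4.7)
The plan is to follow the classical Lax construction for genuinely nonlinear strictly hyperbolic $2\times 2$ systems, exactly as carried out in \cite{chen-wang, smoller} and as we did in Lemma \ref{lem:2.3} for the $\tau>0$ case. The situation here is in fact simpler, since the system \eqref{eq:1.20} no longer depends on $\tau$ and all smooth dependence on the $\tau$-parameter drops out; we only need to work locally in the neighbourhood $\mathcal{O}_{\tilde{\epsilon}_1}(\underline{U})$ of the background state. Strict hyperbolicity for $c>0$ is immediate from \eqref{eq:2.11}, since the two eigenvalues are separated by $2c/a_\infty>0$, and genuine nonlinearity of each characteristic field was already established in \eqref{eq:2.13}. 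It therefore suffices to build, for $j=1,2$, two $C^2$ arcs (a rarefaction arc and a shock arc) emanating from $U_L$ with a common tangent $\boldsymbol{r}_j(U_L)$ and to verify that they can be glued into a single $C^2$ curve.

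For the rarefaction portion ($\tilde{\alpha}_j>0$), I would define $\Phi_j(\tilde{\alpha}_j;U_L)$ as the solution of the ODE
$$
\frac{d\Phi_j}{d\tilde{\alpha}_j}=\boldsymbol{r}_j(\Phi_j),\qquad \Phi_j(0;U_L)=U_L.
$$
Standard existence/uniqueness and smooth dependence on initial data yield $\Phi_j\in C^2$ jointly in $(\tilde{\alpha}_j,U_L)$. The normalization \eqref{eq:2.13} makes $\lambda_j\circ\Phi_j$ strictly increasing in $\tilde{\alpha}_j$, so for $\tilde{\alpha}_j>0$ the state $\Phi_j(\tilde{\alpha}_j;U_L)$ can be connected to $U_L$ by an admissible centered $j$-rarefaction.

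For the shock portion ($\tilde{\alpha}_j<0$), I would solve the Rankine--Hugoniot relation
$$
s\bigl(U_R-U_L\bigr)=F(U_R)-F(U_L)
$$
with $F$ as in \eqref{eq:1.19}. By the implicit function theorem applied near the trivial solution $U_R=U_L$, $s=\lambda_j(U_L)$, the Hugoniot locus through $U_L$ is locally a $C^2$ one-manifold tangent to $\boldsymbol{r}_j(U_L)$ at $U_L$. Genuine nonlinearity \eqref{eq:2.13} then forces, via the classical computation, that along the Lax-compressive branch the shock speed $s$ strictly separates $\lambda_j(U_L)$ and $\lambda_j(U_R)$; choosing the parameterization so that $\tilde{\alpha}_j<0$ corresponds exactly to this branch and to $\partial_{\tilde{\alpha}_j}\Phi_j|_0=\boldsymbol{r}_j(U_L)$ produces the shock curve $\mathcal{S}_j(U_L)$.

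The main obstacle is the $C^2$ matching of the shock and rarefaction arcs at $\tilde{\alpha}_j=0$: separate $C^2$-smoothness on each side is not enough. The standard way around this is to Taylor-expand both constructions to second order at $U_L$ using the shock-strength parameter chosen above. For the rarefaction arc, $\partial^2_{\tilde{\alpha}_j}\Phi_j|_0=D\boldsymbol{r}_j(U_L)\boldsymbol{r}_j(U_L)$ directly from the ODE. For the shock arc, one expands the Rankine--Hugoniot condition to the second order in $\tilde{\alpha}_j$ and extracts the same value for $\partial^2_{\tilde{\alpha}_j}\Phi_j|_0$; this is the classical calculation which, in the present $\tau=0$ setting, is straightforward because $F$ is an explicit smooth function of $(\rho,v)$ alone. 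Once this second-order agreement is verified, the combined curve $\tilde{\alpha}_j\mapsto \Phi_j(\tilde{\alpha}_j;U_L)$ lies in $C^2\bigl((-\tilde{\delta}_0,\tilde{\delta}_0)\times\mathcal{O}_{\tilde{\epsilon}_1}(\underline{U})\bigr)$, the relations \eqref{eq:2.15} hold by construction, and the Lax entropy inequality $\lambda_j(U_L)>s>\lambda_j(U_R)$ together with the sign convention on $\tilde{\alpha}_j$ selects the physically admissible branches, completing the proof.
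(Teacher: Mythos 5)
Your proposal is correct and follows essentially the same route as the paper, which proves this lemma simply by invoking the classical Lax construction for strictly hyperbolic, genuinely nonlinear systems from \cite{chen-wang, smoller}: strict hyperbolicity and genuine nonlinearity are read off from \eqref{eq:2.11}--\eqref{eq:2.13}, the rarefaction branch is the integral curve of $\boldsymbol{r}_j$, the shock branch comes from the Rankine--Hugoniot locus via the implicit function theorem, and the two arcs have second-order contact at $\tilde{\alpha}_j=0$, yielding a single $C^2$ curve with the normalization \eqref{eq:2.15}. Nothing further is needed.
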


\begin{remark}\label{remark:2.3}
For $|\alpha_{k}|<\min\{\delta_0,\tilde{\delta}_{0}\}$ and $U\in \mathcal{O}_{\min\{\epsilon_{1},\tilde{\epsilon}_{1}\}}(\underline{U})$, the elementary wave curves $\Phi_{k}(\alpha_k; U,\tau^2)$ given in Lemma \ref{lem:2.3} and $\Phi_{k}(\alpha_k; U)$ given in Lemma \ref{lem:2.4} satisfy
\begin{eqnarray}\label{eq:2.16}
\Phi_{k}(\alpha_k; U,0)=\Phi_{k}(\alpha_k; U),\quad k=1,2.
\end{eqnarray}
The reason is that for $k=1, 2$, $r_{k}(U,0)=r_{k}(U)$,
\begin{eqnarray*}
\begin{split}
\left. \Phi_{k}(\alpha_{k}; U,\tau^2)\right|_{\alpha_k=0}=\left. \Phi_{k}(\alpha_{k}; U)\right|_{\alpha_k=0}=U,
\end{split}
\end{eqnarray*}
and
\begin{eqnarray*}
\begin{split}
\displaystyle \left. \frac{\partial\Phi_{k}(\alpha_{k}; U,\tau^2)}{\partial\alpha_k}\right|_{\alpha_k=0, \tau=0}=r_{k}(U,0),\quad\
\left. \frac{\partial\Phi_{k}(\alpha_{k}; U)}{\partial\alpha_k}\right|_{\alpha_k=0}=r_{k}(U).
\end{split}
\end{eqnarray*}
So, by the uniqueness of solutions of the ordinary differential equations, we have \eqref{eq:2.16}.
\end{remark}

\section{Comparison of solutions of Riemann problems between $\tau\neq$ and $\tau=0$}
In this section, we will compare the Riemann solutions between problem \eqref{eq:1.16}-\eqref{eq:1.18} and problem \eqref{eq:1.20}-\eqref{eq:1.21} and \eqref{eq:1.11} involving the boundary perturbation.

\subsection{Comparison of solutions to Riemann problems away from the boundary}
We first consider the Riemann problem away from the boundary for system \eqref{eq:1.16} and system \eqref{eq:1.20}, respectively, \emph{i.e.},
\begin{eqnarray}\label{eq:3.1}
\left\{
\begin{array}{llll}
\partial_{x}G(U^{(\tau)}, \tau^2)+ \partial_{y}F(U^{(\tau)},\tau^2)=0, \\[5pt]
\left.U\right|_{x=\hat{x}}=\left\{
\begin{array}{llll}
U_{R}, \qquad  &y>\hat{y},\\[5pt]
U_{L}, \qquad  &y<\hat{y}.
\end{array}
\right.
\end{array}
\right.
\end{eqnarray}
and
\begin{eqnarray}\label{eq:3.1b}
\left\{
\begin{array}{llll}
\partial_{x}U+ \partial_{y}F(U)=0, \\[5pt]
\left.U\right|_{x=\hat{x}}=\left\{
\begin{array}{llll}
U_{R}, \qquad  &y>\hat{y},\\[5pt]
U_{L}, \qquad  &y<\hat{y}.
\end{array}
\right.
\end{array}
\right.
\end{eqnarray}

First, by \cite[Theorem 5.3]{bressan}, we have
\begin{lemma}\label{lem:3.1}
For a given hypersonic similarity parameter $a_{\infty}$ defined in \eqref{eq-hypersonic},
there exist constants $\epsilon'_{1}\in (0, \min\{\epsilon_{1}, \tilde{\epsilon}_{1}\})$ and $\tau'_{1}\in (0, \tau_{1})$ depending only on $\underline{U}$ and $a_{\infty}$, such that for any $U_{L}$, $U_{R}\in \mathcal{O}_{\epsilon'_{1}}(\underline{U})$ and $\tau\in (0, \tau'_{1})$, Riemann problem \eqref{eq:3.1} and Riemann problem \eqref{eq:3.1b} admit unique admissible piecewise constant solutions $U^{(\tau)}_{k}$ and $U_{k}$ for $k=0, 1, 2$, respectively. Both of them are separated by two elementary waves $\boldsymbol{\alpha}=(\alpha_{1}, \alpha_{2})$ and $\boldsymbol{\beta}=(\beta_{1}, \beta_{2})$ satisfying
\begin{eqnarray}\label{eq:3.2}
U^{(\tau)}_{0}=U_{L}, \quad U^{(\tau)}_{2}=U_{R}, \quad U^{(\tau)}_{k}=\Phi_{k}(\alpha_{k};U^{(\tau)}_{k-1},\tau^2),\ \ k=1,2,
\end{eqnarray}
and
\begin{eqnarray}\label{eq:3.3}
U_{0}=U_{L}, \quad U_{2}=U_{R}, \quad U_{k}=\Phi_{i}(\beta_{k};U_{k-1}),\ \ k=1,2,
\end{eqnarray}
where $\Phi^{(\tau)}_{k}$, $\Phi_{k},\ (k=1,2)$ satisfy properties \eqref{eq:2.10} and \eqref{eq:2.15}.
\end{lemma}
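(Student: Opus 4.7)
The plan is to treat both Riemann problems as a direct application of the classical Lax solvability result for small--data Riemann problems (as formalized in Bressan's Theorem 5.3 quoted in the statement), noting that the two systems have already been placed in the required strictly hyperbolic, genuinely nonlinear framework by Lemmas \ref{lem:2.2}--\ref{lem:2.4}. The only point that needs care is uniformity in $\tau$.

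First, I would introduce, for each fixed $\tau \in [0, \tau_{1})$, the Lax composite wave map
\begin{equation*}
\Psi^{(\tau)}(\alpha_{1}, \alpha_{2}; U_{L})
\;:=\; \Phi_{2}\bigl(\alpha_{2};\ \Phi_{1}(\alpha_{1}; U_{L}, \tau^{2}),\ \tau^{2}\bigr),
\end{equation*}
using the wave curves of Lemma \ref{lem:2.3} when $\tau>0$ and the wave curves of Lemma \ref{lem:2.4} when $\tau=0$ (which by Remark \ref{remark:2.3} coincide with $\Phi_{k}(\cdot;\cdot,0)$). Solving the Riemann problem \eqref{eq:3.1} (resp.\ \eqref{eq:3.1b}) is then equivalent to solving $\Psi^{(\tau)}(\alpha_{1}, \alpha_{2}; U_{L}) = U_{R}$ for $(\alpha_{1}, \alpha_{2}) \in (-\delta_{0}, \delta_{0})^{2}$, with the sign of each $\alpha_{k}$ encoding the shock/rarefaction type as in Lemmas \ref{lem:2.3}--\ref{lem:2.4}.

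Next, I would check the hypotheses of the implicit function theorem uniformly in $\tau$. By \eqref{eq:2.10} together with the normalization following \eqref{eq:2.7},
\begin{equation*}
D_{(\alpha_{1},\alpha_{2})} \Psi^{(\tau)} \Big|_{(\alpha_{1},\alpha_{2}) = (0,0)}
= \bigl[\,\boldsymbol{r}_{1}(U_{L},\tau^{2}),\ \boldsymbol{r}_{2}(U_{L},\tau^{2})\,\bigr].
\end{equation*}
At $U_{L} = \underline{U}$ and $\tau=0$, the explicit formula \eqref{eq:2.12} (or equivalently \eqref{eq:2.4} combined with Remark \ref{rem:2.2}) shows that $\boldsymbol{r}_{1}(\underline{U},0)$ and $\boldsymbol{r}_{2}(\underline{U},0)$ have opposite first components and equal positive second components, hence are linearly independent. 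By the joint continuity of $(U,\tau) \mapsto \boldsymbol{r}_{k}(U,\tau^{2})$ on the compact set $\overline{\mathcal{O}}_{\epsilon_{1}}(\underline{U}) \times [0,\tau_{1}]$ (itself a consequence of the $C^{2}$ regularity of $\Phi_{k}$ stated in Lemma \ref{lem:2.3}), the determinant of this Jacobian is bounded away from zero by a constant independent of $\tau$ on a possibly smaller neighborhood $\mathcal{O}_{\epsilon'_{1}}(\underline{U}) \times [0,\tau'_{1})$. A quantitative implicit function theorem then yields a unique pair $(\alpha_{1},\alpha_{2})$ for every $U_{L},U_{R}\in\mathcal{O}_{\epsilon'_{1}}(\underline{U})$, and hence a unique intermediate state $U^{(\tau)}_{1}=\Phi_{1}(\alpha_{1};U_{L},\tau^{2})$.

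Finally, admissibility is already baked into the parameterizations: by Lemma \ref{lem:2.3} (resp.\ Lemma \ref{lem:2.4}) the branch $\alpha_{k}>0$ is a $k$-rarefaction satisfying the Lax entropy condition, while $\alpha_{k}<0$ is an admissible $k$-Lax shock. So the piecewise constant function with the three states $U_{L}, U^{(\tau)}_{1}, U_{R}$ separated by the $1$- and $2$-wave fans is the unique admissible solution of \eqref{eq:3.1}; the identical argument at $\tau = 0$, using Lemma \ref{lem:2.4} and Remark \ref{remark:2.3}, gives \eqref{eq:3.1b}. There is no genuine obstacle here: the only delicate point is to choose $\epsilon'_{1}$ and $\tau'_{1}$ small enough that both the invertibility constant of the Jacobian and the $C^{2}$ norm of $\Psi^{(\tau)}$ remain uniformly controlled in $\tau$, which is possible precisely because Lemma \ref{lem:2.3} provides $C^{2}$-regularity jointly in $(\alpha_{k},U,\tau)$.
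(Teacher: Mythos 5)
Your argument is correct and is exactly the standard Lax construction that the paper invokes by citing Bressan's Theorem 5.3 (the paper gives no independent proof of this lemma); your composite map $\Psi^{(\tau)}$, the Jacobian identity $D_{(\alpha_1,\alpha_2)}\Psi^{(\tau)}|_{0}=[\boldsymbol{r}_1,\boldsymbol{r}_2]$, and the nondegeneracy at $\underline{U}$ match the computation the authors themselves use later in the proof of Proposition \ref{prop:3.1}. The only addition you make is to spell out the uniformity in $\tau$ via the joint $C^2$ regularity from Lemma \ref{lem:2.3}, which is a worthwhile clarification but not a different route.
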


In the following, denote
\begin{eqnarray}\label{eq:3.4}
\Phi(\alpha_{1}, \alpha_{2}; U_{L},\tau^2)\doteq\Phi_{2}(\alpha_{2}; \Phi_{1}(\alpha_1;U_{L},\tau^2),\tau^2),\quad
\Phi(\beta_{1}, \beta_{2}; U_{L})\doteq\Phi_{2}(\beta_{2}; \Phi_{1}(\beta_1;U_{L})).
\end{eqnarray}

Then, we have
\begin{proposition}\label{prop:3.1}
Suppose $U_{L}=(\rho_L, v_{L})^{\top}$ and $U_{R}=(\rho_R, v_{R})^{\top}$ are two constant states with $U_{R},\ U_{L}\in \mathcal{O}_{\epsilon'_{1}}(\underline{U})$ for $\epsilon'_{1}>0$ given as in Lemma \ref{lem:3.1}. If
\begin{eqnarray}\label{eq:3.5}
U_{R}=\Phi(\beta_{1}, \beta_{2}; U_{L}),\quad U_{R}=\Phi_{k}(\alpha_{k}; U_{L},\tau^2),\quad k=1\mbox{ or }2,
\end{eqnarray}
then for $\tau\in(0,\tau'_{1})$ with $\tau'_{1}>0$ given as in Lemma \ref{lem:3.1},
\begin{eqnarray}\label{eq:3.7}
\begin{split}
\beta_{j}=\delta_{jk}\alpha_{k}+O(1)|\alpha_{k}|\tau^{2}, \quad \ \ \mbox{for}\quad j, k=1\mbox{ or } 2,
\end{split}
\end{eqnarray}
where $\delta_{jk}$ is the Kronecker symbol and the bounds of $O(1)$ depends only on $\underline{U}$ and $a_{\infty}$.
If
\begin{eqnarray}\label{eq:3.6}
U_{R}=\Phi(\beta_{1}, \beta_{2}; U_{L}), \quad  \alpha=|U_{R}-U_{L}|,
\end{eqnarray}
then for $\tau\in(0,\tau'_{1})$ with $\tau'_{1}>0$ given as in Lemma \ref{lem:3.1},
\begin{eqnarray}\label{eq:3.8}
\begin{split}
\beta_{j}=O(1)\alpha, \quad \mbox{for}\quad j=1, 2.
\end{split}
\end{eqnarray}
\end{proposition}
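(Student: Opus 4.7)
The plan rests on two observations established earlier: Remark~\ref{remark:2.3}, which shows that at $\tau=0$ the two wave curves $\Phi_k(\alpha_k;U,\tau^2)$ and $\Phi_k(\alpha_k;U)$ coincide, and the $C^2$ regularity from Lemma~\ref{lem:2.3} which permits expansion in the parameter $\tau^2$ that enters system~\eqref{eq:1.13}. Because $\tau$ appears only through $\tau^2$, there is a representation $\Phi_k(\alpha_k;U,\tau^2)=\Phi_k(\alpha_k;U)+\tau^2\Psi_k(\alpha_k;U,\tau^2)$ with $\Psi_k$ continuous. Since $\Phi_k(0;U,\tau^2)=U$ for every $\tau$, the residual $\Psi_k$ vanishes at $\alpha_k=0$, which lets me factor an extra $|\alpha_k|$ and conclude
\[
\bigl|\Phi_k(\alpha_k;U_L,\tau^2)-\Phi_k(\alpha_k;U_L)\bigr|\le C|\alpha_k|\tau^2,
\]
uniformly for $U_L$ in a small neighborhood of $\underline U$ and $|\alpha_k|<\delta_0$.

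For the first assertion, I would insert $U_R=\Phi_k(\alpha_k;U_L,\tau^2)=\Phi(\beta_1,\beta_2;U_L)$ and, noting the identity $\Phi_k(\alpha_k;U_L)=\Phi(\delta_{1k}\alpha_k,\delta_{2k}\alpha_k;U_L)$ that follows from~\eqref{eq:3.4} and Remark~\ref{remark:2.3}, rewrite the equation as
\[
\Phi(\beta_1,\beta_2;U_L)-\Phi(\delta_{1k}\alpha_k,\delta_{2k}\alpha_k;U_L)=O(1)|\alpha_k|\tau^2.
\]
The Jacobian of $(\beta_1,\beta_2)\mapsto\Phi(\beta_1,\beta_2;U_L)$ at the origin equals $[\boldsymbol r_1(U_L),\boldsymbol r_2(U_L)]$, which is nonsingular by the strict hyperbolicity of~\eqref{eq:1.20} and the explicit form in~\eqref{eq:2.12}. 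The implicit function theorem then yields $\beta_j-\delta_{jk}\alpha_k=O(1)|\alpha_k|\tau^2$, which is exactly~\eqref{eq:3.7}.

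The second assertion follows by a direct Lipschitz inversion of the same map: near the origin $(\beta_1,\beta_2)\mapsto\Phi(\beta_1,\beta_2;U_L)-U_L$ is a $C^2$ local diffeomorphism by Lemma~\ref{lem:2.4} and the nondegeneracy above, so $|\beta_j|\le C|U_R-U_L|=C\alpha$ for $j=1,2$. The main obstacle I anticipate is in the preparatory expansion: one must verify that the $C^2$-dependence on $\tau^2$ is genuinely uniform across both the shock branch ($\alpha_k<0$), defined by the Rankine-Hugoniot conditions, and the rarefaction branch ($\alpha_k>0$), defined through integral curves of $\boldsymbol r_k(U,\tau^2)$, and that the junction at $\alpha_k=0$ is smooth enough that the factorization $\Psi_k(\alpha_k;U,\tau^2)=\alpha_k\,\widetilde\Psi_k(\alpha_k;U,\tau^2)$ holds uniformly. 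Once this uniformity is in hand, the implicit function arguments for both assertions are routine.
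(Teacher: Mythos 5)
Your argument is correct and essentially coincides with the paper's: both invert $\Phi(\beta_1,\beta_2;U_L)=\Phi_k(\alpha_k;U_L,\tau^2)$ via the implicit function theorem using the nondegenerate Jacobian $[\boldsymbol r_1(U_L),\boldsymbol r_2(U_L)]$ (the paper computes its determinant as $-2\rho_L/(\gamma+1)$), and both extract the factor $|\alpha_k|\tau^2$ from the vanishing of the $\tau^2$-correction at $\alpha_k=0$ together with $\Phi_k(\alpha_k;U_L,0)=\Phi_k(\alpha_k;U_L)$ --- the paper merely Taylor-expands the solved function $\beta_j(\alpha_k,\tau^2;U_L)$ in $(\alpha_k,\tau^2)$ rather than the wave curve itself, which is the same computation. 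The uniformity of the $C^2$ dependence across the shock/rarefaction junction that you flag as the main obstacle is exactly what Lemma \ref{lem:2.3} supplies, and your treatment of the second assertion matches the paper's bi-Lipschitz estimate for $(\beta_1,\beta_2)\mapsto\Phi(\beta_1,\beta_2;U_L)-U_L$.
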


\begin{proof}
To show \eqref{eq:3.7}, we first need to solve the equation
\begin{eqnarray}\label{eq:3.9}
\Phi(\beta_{1}, \beta_{2}; U_{L})=\Phi_{k}(\alpha_{k}; U_{L},\tau^2).
\end{eqnarray}

Notice that for $U_{L}\in O_{\epsilon^{*}_{1}}(\underline{U})$,
\begin{eqnarray*}
\begin{split}
\det\bigg(\frac{\partial\Phi(\beta_{1}, \beta_{2}; U_{L})}{\partial(\beta_{1}, \beta_{2})}\bigg)\bigg|_{\beta_{1}=\beta_{2}=0}
=\det\big(r_{1}(U_{L}), r_{2}(U_{L})\big)
=-\frac{2\rho_{L}}{\gamma+1}\neq0.
\end{split}
\end{eqnarray*}

So we can apply the implicit function theorem to show that equation \eqref{eq:3.9}
admits a unique solution $\beta_{j}=\beta_{j}(\alpha_{k},\tau^2; U_{L})\in C^{2},\ j=1,2$ for fixed $k$.

Obvious, $\beta_{j}(0,0; U_{L})=\beta_{j}(0,\tau^2; U_{L})=0$.
On the other hand, notice that
\begin{eqnarray*}
\Phi_{k}(\alpha_{k};U_{L},\tau^2)\Big|_{\tau=0}=\Phi_{k}(\alpha_{k};U_{L}).
\end{eqnarray*}

So
\begin{eqnarray*}
\beta_{j}(\alpha_{k},0;U_{L})=\delta_{jk}\alpha_{k}.
\end{eqnarray*}

Then, by Taylor expansion formula and together with the above estimates, we have
\begin{eqnarray*}
\begin{split}
\beta_{j}(\alpha_{k},\tau^{2};U_{L})&=\beta_{j}(\alpha_{k},0;U_{L})+\beta_{j}(0,\tau^{2};U_{L})-\beta_{j}(0,0;U_{L})+O(1)|\alpha_{k}|\tau^2\\[5pt]
&=\delta_{jk}\alpha_{k}+O(1)|\alpha_{k}|\tau^2,
\end{split}
\end{eqnarray*}
which gives estimate \eqref{eq:3.7}.

For $U_{R},\ U_{L}\in \mathcal{O}_{\epsilon'_{1}}(\underline{U})$, there exists a constant $C>0$ depending only on $\underline{U}$
such that
\begin{eqnarray*}
C^{-1}\big(|\beta_{1}|+|\beta_{2}|\big)<\big|\Phi(\beta_{1}, \beta_{2}; U_{L})-U_{L}\big|<C\big(|\beta_{1}|+|\beta_{2}\big).
\end{eqnarray*}
So if $\alpha$ is given as in \eqref{eq:3.6}, then we have estimate \eqref{eq:3.8}.
\end{proof}

\begin{proposition}\label{prop:3.2}
Suppose $U_{L}=(\rho_L, v_{L})^{\top}$, $\hat{U}_{R}=(\hat{\rho}_R, \hat{v}_{R})^{\top}$ and $U_{R}=(\rho_R, v_{R})^{\top}$ are three constant states with $U_{R}$, $\hat{U}_{R}$, $U_{L}\in \mathcal{O}_{\epsilon'_{1}}(\underline{U})$ for $\epsilon'_{1}>0$ given as in Lemma \ref{lem:3.1}. If 
\begin{eqnarray}\label{eq:3.10}
U_{R}=\Phi(\beta_{1}, \beta_{2}; U_{L}),\quad U_{R}=\Phi(\alpha_{1}, \alpha_{2}; U_{L},\tau^2),
\end{eqnarray}
then for $\tau\in(0,\tau'_{1})$ with $\tau'_{1}>0$ given as in Lemma \ref{lem:3.1}, it holds that
\begin{eqnarray}\label{eq:3.12}
\begin{split}
\beta_{j}=\alpha_{j}+O(1)\big(|\alpha_{1}|+|\alpha_{2}|\big)\tau^{2}, \quad \ \ j=1,2,
\end{split}
\end{eqnarray}
where $O(1)$ depends only on $\underline{U}$ and $a_{\infty}$.
If
\begin{eqnarray}\label{eq:3.11}
U_{R}=\Phi(\beta_{1}, \beta_{2}; U_{L}),\quad \hat{U}_{R}=\Phi(\alpha_{1}, \alpha_{2}; U_{L},\tau^2), \quad  \alpha=|U_{R}-\hat{U}_{R}|,
\end{eqnarray}
then for $\tau\in(0,\tau'_{1})$ with $\tau'_{1}>0$ given as in Lemma \ref{lem:3.1},
\begin{eqnarray}\label{eq:3.13}
\begin{split}
\beta_{j}=\alpha_{j}+O(1)\big(|\alpha_{1}|+|\alpha_{2}|\big)\tau^{2}+O(1)\alpha, \quad \ \ j=1,2.
\end{split}
\end{eqnarray}
\end{proposition}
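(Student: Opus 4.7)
The plan is to reduce both claims to the implicit function theorem applied to $(\beta_1,\beta_2)\mapsto \Phi(\beta_1,\beta_2;U_L)$, combined with a Taylor expansion in $\tau^2$ and the identity $\Phi(\cdot,\cdot;U_L,0)=\Phi(\cdot,\cdot;U_L)$ from Remark \ref{remark:2.3}.

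\emph{Step 1 (Implicit function setup).} For \eqref{eq:3.10}, write the relation $\Phi(\beta_1,\beta_2;U_L)=\Phi(\alpha_1,\alpha_2;U_L,\tau^2)$. As computed in the proof of Proposition \ref{prop:3.1}, the Jacobian of $\Phi(\cdot,\cdot;U_L)$ in $(\beta_1,\beta_2)$ at the origin is $\det(\boldsymbol{r}_1(U_L),\boldsymbol{r}_2(U_L))=-2\rho_L/(\gamma+1)$, which is uniformly nonzero for $U_L\in\mathcal{O}_{\epsilon_1'}(\underline{U})$. By shrinking $\epsilon_1'$ and $\tau_1'$ if necessary, the implicit function theorem produces a unique $C^2$ solution $\beta_j=\beta_j(\alpha_1,\alpha_2,\tau^2;U_L)$ defined on a uniform neighborhood of the origin, jointly with uniform bounds on its second derivatives.

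\emph{Step 2 (Boundary identities).} Setting $\alpha_1=\alpha_2=0$ gives $\Phi(\beta_1,\beta_2;U_L)=U_L$, hence $\beta_j(0,0,\tau^2;U_L)\equiv 0$. Setting $\tau=0$ and invoking Remark \ref{remark:2.3}, we have $\Phi(\alpha_1,\alpha_2;U_L,0)=\Phi(\alpha_1,\alpha_2;U_L)$, so by uniqueness of the wave decomposition $\beta_j(\alpha_1,\alpha_2,0;U_L)=\alpha_j$.

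\emph{Step 3 (Taylor expansion).} Define $g_j(\alpha_1,\alpha_2,\tau^2):=\beta_j(\alpha_1,\alpha_2,\tau^2;U_L)-\alpha_j$. By Step 2, $g_j$ vanishes identically on the hyperplanes $\{\alpha_1=\alpha_2=0\}$ and $\{\tau=0\}$. Hence at the origin every pure $\alpha$-derivative and every pure $\tau^2$-derivative of $g_j$ vanishes, and the $C^2$ Taylor expansion of $g_j$ with respect to $(\alpha_1,\alpha_2,\tau^2)$ reduces to mixed terms of the form $\alpha_k\cdot\tau^2$ plus a uniformly bounded remainder. This yields $g_j=O(1)(|\alpha_1|+|\alpha_2|)\tau^2$ with a constant depending only on $\underline{U}$ and $a_\infty$, which is exactly \eqref{eq:3.12}.

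\emph{Step 4 (From \eqref{eq:3.12} to \eqref{eq:3.13}).} Define auxiliary parameters $\tilde\beta_1,\tilde\beta_2$ by $\hat U_R=\Phi(\tilde\beta_1,\tilde\beta_2;U_L)$; they exist by Step 1. Case 1 then gives $\tilde\beta_j=\alpha_j+O(1)(|\alpha_1|+|\alpha_2|)\tau^2$. Since the Jacobian of $\Phi(\cdot,\cdot;U_L)$ is uniformly invertible, the local inverse of $(\beta_1,\beta_2)\mapsto\Phi(\beta_1,\beta_2;U_L)$ is Lipschitz, hence $|\beta_j-\tilde\beta_j|\leq C|U_R-\hat U_R|=C\alpha$. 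Adding the two estimates produces \eqref{eq:3.13}. The main obstacle here is the bookkeeping in Step 3 to ensure the correct joint order $O(\tau^2|\alpha|)$; everything else follows from the implicit function theorem and the crucial coincidence $\Phi(\cdot,\cdot;U_L,0)=\Phi(\cdot,\cdot;U_L)$ supplied by Remark \ref{remark:2.3}.
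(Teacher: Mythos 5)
Your argument is correct, and for the first claim \eqref{eq:3.12} it takes a genuinely different route from the paper. The paper does not Taylor-expand the two-parameter map $(\alpha_{1},\alpha_{2},\tau^{2})\mapsto\beta_{j}$ directly: instead it splits the $\tau$-wave fan at the intermediate state $U_{M}=\Phi_{1}(\alpha_{1};U_{L},\tau^{2})$, applies Proposition \ref{prop:3.1} to each single wave to obtain $\tau=0$ decompositions $\boldsymbol{\beta}'$, $\boldsymbol{\beta}''$ with $\beta'_{j}=\delta_{j1}\alpha_{1}+O(1)|\alpha_{1}|\tau^{2}$ and $\beta''_{j}=\delta_{j2}\alpha_{2}+O(1)|\alpha_{2}|\tau^{2}$, and then recombines them with the Glimm local interaction estimate $\beta_{j}=\beta'_{j}+\beta''_{j}+O(1)\Delta(\boldsymbol{\beta}',\boldsymbol{\beta}'')$ from \cite[Theorem 19.2]{smoller}. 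Your Step 3 replaces that machinery with the observation that $g_{j}=\beta_{j}-\alpha_{j}$ vanishes identically on $\{\alpha_{1}=\alpha_{2}=0\}$ and on $\{\tau=0\}$, so a $C^{2}$ Taylor (or double integral-remainder) expansion leaves only mixed $\alpha_{k}\tau^{2}$ contributions; this is exactly the mechanism the paper itself uses in Proposition \ref{prop:3.1} for a single wave, here applied jointly in $(\alpha_{1},\alpha_{2})$. Your version is more self-contained (no appeal to the interaction estimate) and avoids having to check that the quadratic interaction term $\Delta(\boldsymbol{\beta}',\boldsymbol{\beta}'')$ is absorbed into $O(1)(|\alpha_{1}|+|\alpha_{2}|)\tau^{2}$; the paper's version buys a reduction to an already-proved single-wave statement at the cost of invoking Glimm's estimate. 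The only point to make explicit is that the uniform bound on the mixed second derivatives of $\beta_{j}$ comes from the implicit function theorem together with the $C^{2}$ dependence of $\Phi$ on $\tau^{2}$ (Lemma \ref{lem:2.3}), which is the same regularity the paper already relies on. Your Step 4 for \eqref{eq:3.13} coincides with the paper's: introduce $\hat{U}_{R}=\Phi(\hat{\beta}_{1},\hat{\beta}_{2};U_{L})$, apply the first claim, and use the uniformly Lipschitz inverse of $\Phi(\cdot,\cdot;U_{L})$ to control $|\beta_{j}-\hat{\beta}_{j}|$ by $\alpha$.
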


\begin{proof}
First, let us consider the case that $U_{L}$ and $U_{R}$ satisfy \eqref{eq:3.10}. From \eqref{eq:3.10}, we have
\begin{eqnarray*}
\Phi(\beta_{1}, \beta_{2}; U_{L})=\Phi(\alpha_{1}, \alpha_{2}; U_{L},\tau^2).
\end{eqnarray*}
As the argument in the proof of Proposition \ref{prop:3.1}, by applying the implicit function theorem, there exits a
$C^2$ solution $\beta_{j}=\beta_{j}(\alpha_1, \alpha_2,\tau^2; U_{L})$ for $j=1, 2$. To estimate $\beta_{j}$, set
\begin{eqnarray*}
U_{M}=\Phi_{1}(\alpha_1; U_{L},\tau^2),\quad U_{R}=\Phi_{2}(\alpha_2; U_{M},\tau^2),
\end{eqnarray*}
and choose $\beta'_{j}$ and $\beta''_{j}$ for $j=1,2$ such that
\begin{eqnarray*}
U_{M}=\Phi(\beta'_1, \beta'_{2}; U_{L}), \quad\ U_{R}=\Phi(\beta''_1, \beta''_{2}; U_{M}).
\end{eqnarray*}

Then, by Proposition \ref{prop:3.1}, we have
\begin{eqnarray}\label{eq:3.14}
\beta'_{j}=\delta_{j1}\alpha_{1}+O(1)|\alpha_1|\tau^2, \quad \beta''_{j}=\delta_{j2}\alpha_{2}+O(1)|\alpha_1|\tau^2, \quad j=1,2.
\end{eqnarray}

On the other hand, from the local interaction estimate as stated in \cite[Theorem 19.2]{smoller},
\begin{eqnarray}\label{eq:3.15}
\beta_{j}=\beta'_{j}+\beta''_{j}+O(1)\Delta(\boldsymbol{\beta}',\boldsymbol{\beta}''),
\end{eqnarray}
where $\Delta(\boldsymbol{\beta}',\boldsymbol{\beta}'')=|\beta'_{2}||\beta''_{1}|+\sum_{j=1,2}\Delta_{j}(\boldsymbol{\beta}',\boldsymbol{\beta}'')$
with
\begin{equation*}
\left. \Delta_{j}\right(\boldsymbol{\beta}',\boldsymbol{\beta}'')=\left\{
\begin{array}{llll}
|\beta'_{j}||\beta''_{j}|, \qquad  &\beta'_{j}<0\ \ \mbox{or}\ \ \beta''_{j}<0 ,\\[5pt]
0, \qquad  & \beta'_{j}>0\ \ \mbox{and}\ \ \beta''_{j}>0.
\end{array}
\right.
\end{equation*}

Combining \eqref{eq:3.14} and \eqref{eq:3.15}, we have \eqref{eq:3.12}.

Next, we assume \eqref{eq:3.11} holds. To derive \eqref{eq:3.13}, we choose
small parameters $\hat{\beta}_{1},\hat{\beta}_{2}$ so that $\hat{U}_{R}=\Phi(\hat{\beta}_{1},\hat{\beta}_{2}; U_{L})$.
Then, as done in the argument in the proof of Proposition \ref{prop:3.1}, it follows from equations $\Phi(\hat{\beta}_{1},\hat{\beta}_{2}; U_{L})=\Phi^{(\tau)}(\alpha_{1},\alpha_{2}; U_{L})$ that
\begin{eqnarray}\label{eq:3.16}
\hat{\beta}_{k}=\alpha_{k}+O(1)(|\alpha_1|+|\alpha_2|)\tau^2, \quad k=1,2.
\end{eqnarray}

On the other hand, following the argument in the proof of estimate \eqref{eq:3.8} in Proposition \ref{prop:3.1}, we obtain
\begin{eqnarray*}
\beta_{k}-\hat{\beta}_{k}=O(1)\alpha, \quad k=1,2,
\end{eqnarray*}
which implies \eqref{eq:3.12} by plugging \eqref{eq:3.16} into it.
\end{proof}

\subsection{Comparison of solutions of Riemann problems near the boundary}
Let $A_{k}=(x_k, b_k),(k=1,2, 3)$ be the corner points on the boundary with $x_{1}<x_{2}<x_3$ (see Fig. \ref{fig3.1}).
Denote
\begin{eqnarray*}
\theta_{1}=\arctan\Big(\frac{b_{2}-b_{1}}{x_{2}-x_1}\Big),\ \  \theta_{2}=\arctan\Big(\frac{b_{3}-b_{2}}{x_{3}-x_{2}}\Big),  \ \  \omega=\theta_{2}-\theta_{1},
\end{eqnarray*}
where $\omega$ represents the change of angles $\theta_1$, $\theta_{2}$ at the turning points $A_2$.
\par For $k=1,2$, we define
\begin{eqnarray*}
\begin{split}
&\Omega_{k}=\{(x,y):\ x_{k}\leq x< x_{k+1}, \ y<b_k+(x-x_{k})\tan(\theta_{k})\},\\[5pt]
&\Gamma_{k}=\{(x,y):\ x_{k}\leq x< x_{k+1}, \ y=b_k+(x-x_{k})\tan(\theta_{k})\}.
\end{split}
\end{eqnarray*}

Let $\textbf{n}_{k}$ be the outer unit normal vector to $\Gamma_{k}$ for $k=1,2$, as
\begin{equation*}
\textbf{n}_{k}=(\sin\theta_{k},-\cos\theta_{k}), \quad \mbox{for}\quad k=1,2.
\end{equation*}

\begin{figure}[ht]
\begin{center}
\begin{tikzpicture}[scale=1.0]
\draw [thick](-5.0,1.5)--(-3,1)--(-1,1.8);

\draw [line width=0.05cm](-5.0,1.5)--(-5.0,-1.5);
\draw [line width=0.05cm](-3,1)--(-3,-1.5);
\draw [line width=0.05cm](-1.0,1.8)--(-1.0,-1.5);

\draw [thin](-4.8,1.45)--(-4.5, 1.75);
\draw [thin](-4.5,1.38)--(-4.2, 1.68);
\draw [thin](-4.2,1.30)--(-3.9, 1.60);
\draw [thin](-3.9, 1.23)--(-3.6,1.53);
\draw [thin](-3.6, 1.16)--(-3.3,1.46);
\draw [thin](-3.3, 1.08)--(-3.0,1.38);
\draw [thin](-3.0, 1.0)--(-2.8,1.4);
\draw [thin](-2.7,1.12)--(-2.5, 1.52);
\draw [thin](-2.4,1.23)--(-2.2, 1.63);
\draw [thin](-2.1,1.35)--(-1.9, 1.75);
\draw [thin](-1.8,1.47)--(-1.6, 1.87);
\draw [thin](-1.5, 1.59)--(-1.3,1.99);
\draw [thin](-1.2,1.71)--(-1.0, 2.11);

\draw [thick][blue](-3,1)--(-1.8,0.6);
\draw [thick][blue](-3,1)--(-1.8,0.4);
\draw [thick](-3,1)--(-2.0,0.1);

\node at (-5.3, 1.6) {$A_{1}$};
\node at (-3.0, 1.5) {$A_{2}$};
\node at (-0.6, 1.8) {$A_{3}$};
\node at (-1.5, 0.5) {$\beta_{1}$};
\node at (-1.6, -0.1) {$\alpha_{1}$};
\node at (-2.0, 1.0) {$U_{R}$};
\node at (-2.4, -0.2) {$U_{L}$};
\node at (1, 2) {$$};

\node at (-4.0, -0.8) {$\Omega_{1}$};
\node at (-2.0, -0.8) {$\Omega_{2}$};

\node at (-5.0, -1.9) {$x_{1}$};
\node at (-3.0, -1.9) {$x_{2}$};
\node at (-0.9, -1.9) {$x_{3}$};
\end{tikzpicture}
\caption{Mixed Riemann problems}\label{fig3.1}
\end{center}
\end{figure}
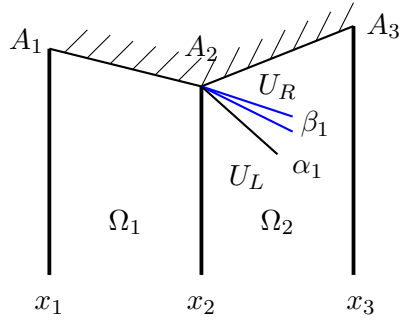

Let us consider the following two mixed Riemann problems.
\begin{eqnarray}\label{eq:3.17}
\left\{
\begin{array}{llll}
\partial_{x}G(U^{(\tau)}, \tau^2)+ \partial_{y}F(U^{(\tau)},\tau^2)=0, \quad & in\ \ \ \Omega_{2},\\[5pt]
U^{(\tau)}=U^{(\tau)}_L, \quad & on\ \ \ \{x=x_{2}\}\cap\Omega_{2},\\[5pt]
\big((1+\tau^2u^{(\tau)}),v^{(\tau)}\big)\cdot\textbf{n}_{2}=0, \quad & on\ \ \ \Gamma_{2},
\end{array}
\right.
\end{eqnarray}
and
\begin{eqnarray}\label{eq:3.18}
\left\{
\begin{array}{llll}
\partial_{x}U+ \partial_{y}F(U)=0, \quad & in\ \ \ \Omega_{2},\\[5pt]
U=U_L, \quad & on\ \ \ \{x=x_{2}\}\cap\Omega_{2},\\[5pt]
v=\tan(\theta_{2}), \quad & on\ \ \ \Gamma_{2},
\end{array}
\right.
\end{eqnarray}
where $U^{(\tau)}_L=(\rho^{(\tau)}_{L}, v^{(\tau)}_{L})^{\top}$ and $U_L=(\rho_{L}, v_{L})^{\top}$ are constant states.

We have the following lemma on the solvability of problem \eqref{eq:3.17} and problem \eqref{eq:3.18}.
\begin{lemma}\label{lem:3.2}
For a given hypersonic similarity parameter $a_{\infty}$, there exist small parameters $\epsilon'_{2}\in (0, \min\{\epsilon_{1}, \tilde{\epsilon}_{1}\})$ and $\tau'_{2}\in(0,\tau_1)$
depending only on $\underline{U}$, $a_{\infty}$, such that if $\big((1+\tau^2 u^{(\tau)}_{L}),v^{(\tau)}_{L}\big)\cdot \mathbf{n}_{1}=0$
and $v_{L}=\tan(\theta_1)$ for $U^{(\tau)}_{L}, U_{L}\in \mathcal{O}_{\epsilon'_{2}}(\underline{U})$,
and $|\omega|+|\theta_{1}|<\epsilon'_{2}$, then for $\tau\in(0,\tau'_{2})$, the mixed Riemann problem \eqref{eq:3.17} (or
\eqref{eq:3.18}) admits a unique solution $U^{(\tau)}$ (or $U$), which is connected to $U^{(\tau)}_{L}$ (or  $U_L$) by 1th waves
 $\alpha_1$ (or $\beta_{1}$, respectively). Moreover, it holds that
\begin{eqnarray}\label{eq:3.19}
\alpha_1=K^{(\tau)}_{c}\omega\qquad\mbox{and}\qquad  \beta_1=K_{c}\omega,
\end{eqnarray}
where 
$K^{(\tau)}_{c}>0$ and $K_{c}>0$ depend only on $\underline{U}$, but not on $\tau$.
\end{lemma}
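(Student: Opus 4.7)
The plan is to solve each mixed Riemann problem by issuing a single $1$-wave from the corner $A_2$ whose strength adjusts the post-wave state so that it satisfies the slip condition on the new wall $\Gamma_2$, and then to extract the linear dependence on $\omega$ via the implicit function theorem. Since $\lambda_1 < 0$ and $\lambda_2 > 0$ near $\underline U$ by \eqref{eq:2.5} and \eqref{eq:2.11}, only the slow ($1$st) family propagates into the fluid region $\Omega_2$ from $A_2$, so a single $1$-wave suffices; any attempt to include a $2$-wave would force it into the wall, violating admissibility.

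Setting $U^{(\tau)} = \Phi_1(\alpha_1; U_L^{(\tau)}, \tau^2)$ and enforcing $\eqref{eq:3.17}_3$ in the form $\bar v = (1+\tau^2 u^{(\tau)})\tan(\theta_1 + \omega)$ reduces problem \eqref{eq:3.17} to the scalar equation
\begin{equation*}
H^{(\tau)}(\alpha_1,\omega) \doteq \bar v\bigl(\Phi_1(\alpha_1; U_L^{(\tau)}, \tau^2)\bigr) - \bigl(1+\tau^2 u^{(\tau)}(\Phi_1(\alpha_1; U_L^{(\tau)}, \tau^2))\bigr)\tan(\theta_1 + \omega) = 0,
\end{equation*}
with $u^{(\tau)}$ as in \eqref{eq:1.12}; the analogous reduction of problem \eqref{eq:3.18} is $H(\beta_1, \omega) \doteq \bar v(\Phi_1(\beta_1; U_L)) - \tan(\theta_1 + \omega) = 0$. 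Both vanish at $(0,0)$ because $U_L^{(\tau)}$ and $U_L$ already satisfy the slip condition on $\Gamma_1$. Differentiating in the wave-strength variable and using \eqref{eq:2.10} with Lemma \ref{lem:2.1} gives
\begin{equation*}
\partial_{\alpha_1} H^{(\tau)}\big|_{(0,0)} = \bigl(r_1(U_L^{(\tau)}, \tau^2)\bigr)_{\bar v} - \tau^2 \tan\theta_1 \,\bigl(\nabla_{U^{(\tau)}} u^{(\tau)}\cdot r_1(U_L^{(\tau)}, \tau^2)\bigr),
\end{equation*}
which at the background (where $\theta_1 = 0$) reduces to $(r_1(\underline U, \tau^2))_{\bar v} = \frac{2(a_\infty^2-\tau^2)}{(\gamma+1)a_\infty^2}$ by \eqref{eq:2.4} and Remark \ref{rem:2.2}; the $\tau=0$ version likewise equals $\frac{2}{\gamma+1}$. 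By continuity, both stay uniformly bounded below on the whole parameter range after shrinking $\epsilon'_2, \tau'_2$, and the implicit function theorem supplies unique $C^2$ solutions $\alpha_1 = \alpha_1(\omega; U_L^{(\tau)}, \tau^2)$ and $\beta_1 = \beta_1(\omega; U_L)$ vanishing at $\omega = 0$, which yields existence and uniqueness of the mixed Riemann solutions.

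Since $\alpha_1$ and $\beta_1$ are $C^2$ in $\omega$ and vanish at $0$, I factor out $\omega$ by the fundamental theorem of calculus to obtain $\alpha_1 = K_c^{(\tau)}\omega$ and $\beta_1 = K_c\omega$, where $K_c^{(\tau)} \doteq \int_0^1 \partial_\omega \alpha_1(s\omega)\,ds$ and $K_c \doteq \int_0^1 \partial_\omega \beta_1(s\omega)\,ds$. Implicit differentiation of $H=0$ at $(0,0)$ gives the background values $K_c|_{\underline U} = \frac{\gamma+1}{2}$ and $K_c^{(\tau)}|_{\underline U} = \frac{(\gamma+1)a_\infty^2}{2(a_\infty^2-\tau^2)}$, both strictly positive and converging to the same leading order constant as $\tau\to 0$; positivity then propagates to the whole parameter range. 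The main obstacle is preserving the uniform lower bound on $\partial_{\alpha_1} H^{(\tau)}$ as $\tau \to 0$ and as $U_L^{(\tau)}$ ranges over $\mathcal O_{\epsilon'_2}(\underline U)$: both the $\tau^2$-correction arising from the $(1+\tau^2 u^{(\tau)})$ factor and the $\tau$-dependence of $r_1$ itself through \eqref{eq:2.3} must be kept a small perturbation of the positive leading term, which is achieved using Lemma \ref{lem:2.1} to bound $|\nabla_{U^{(\tau)}} u^{(\tau)}|$ uniformly in $\tau$.
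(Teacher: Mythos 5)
Your proposal is correct and follows essentially the same route as the paper: reduce each mixed Riemann problem to a scalar equation expressing the slip condition on $\Gamma_2$ for the state $\Phi_1(\alpha_1;U_L^{(\tau)},\tau^2)$ (your $H^{(\tau)}$ is the paper's $\mathcal{L}$ up to the factor $-\cos(\theta_1+\omega)$), verify the non-degeneracy $\partial_{\alpha_1}H^{(\tau)}|_{\underline U}=e_1(\underline U,\tau^2)=\tfrac{2(a_\infty^2-\tau^2)}{(\gamma+1)a_\infty^2}$ uniformly in $\tau$, apply the implicit function theorem, and factor out $\omega$ using $\alpha_1(0)=0$ to get $K_c^{(\tau)}=1/e_1>0$ bounded independently of $\tau$. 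The only (harmless) additions are your explicit admissibility argument for why only the $1$-family issues from the corner and the use of the fundamental theorem of calculus in place of the paper's Taylor expansion.
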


\begin{proof}
We only consider problem \eqref{eq:3.17} since problem \eqref{eq:3.18} can be proved in the same way.
To show the existence of the mixed Riemann problem \eqref{eq:3.17}, we denote
\begin{eqnarray}\label{eq:3.20}
\rho^{(\tau)}=\Phi^{(1)}_{1}(\alpha_1; U_{L},\tau^2), \ \ v^{(\tau)}=\Phi^{(2)}_{1}(\alpha_1; U_{L},\tau^2).
\end{eqnarray}
Obviously, $u^{(\tau)}$ satisfies
\begin{eqnarray}\label{eq:3.22}
\begin{split}
\tau^2(u^{(\tau)})^2+2u^{(\tau)}+\Big(\Phi^{(2)}_{1}(\alpha_1; U^{(\tau)}_{L},\tau^2)\Big)^2+\frac{2\Big(\big(\Phi^{(1)}_{1}(\alpha_1; U^{(\tau)}_{L},\tau^2)\big)^2-1\Big)}{(\gamma-1)a^2_{\infty}}=0.
\end{split}
\end{eqnarray}
Let
\begin{eqnarray}\label{eq:3.21}
\mathcal{L}(\alpha_{1}, \omega, \theta_1, \tau^2; U^{(\tau)}_{L} )
\doteq(1+\tau^2 u^{(\tau)})\sin(\theta_1+\omega)-\Phi^{(2)}_{1}(\alpha_1; U^{(\tau)}_{L},\tau^2)\cos(\theta_1+\omega),
\end{eqnarray}
then $\mathcal{L}(0, 0, 0, \tau^2; \underline{U})\equiv0$. Moreover, by Lemma \ref{lem:2.1}, we get
\begin{eqnarray*}
\begin{split}
&\frac{\partial\mathcal{L}(\alpha_{1}, \omega, \theta_1, \tau^2; U^{(\tau)}_{L} )}{\partial \alpha_1}\\[5pt]
=&\bigg(\frac{(\rho^{(\tau)})^{\gamma-2}}{a^2_{\infty}(1+\tau^2u^{(\tau)})}\frac{\partial\Phi^{(1)}_{1}(\alpha_1; U_{L},\tau^2)}{\partial\alpha_1}
-\frac{v^{(\tau)}}{1+\tau^2u^{(\tau)}}\frac{\partial\Phi^{(2)}_{1}(\alpha_1; U_{L},\tau^2)}{\partial\alpha_1}\bigg)\cdot \sin(\theta_1+\omega)\\[5pt]
&\ \ \ -\frac{\partial\Phi^{(2)}_{1}(\alpha_1; U_{L},\tau^2)}{\partial\alpha_1}\cos(\theta_1+\omega).
\end{split}
\end{eqnarray*}

Then, by Remark \ref{rem:2.2}, 
choosing a small constant $\tilde\tau_2>0$ depending only on $a_{\infty}$, 
for $\tau\in (0,\tilde\tau_2)$, we have
\begin{eqnarray*}
\begin{split}
\frac{\partial\mathcal{L}(\alpha_{1}, \omega, \theta_1, \tau^2; U^{(\tau)}_{L} )}{\partial \alpha_1}\bigg|_{\alpha_1=\theta_1=\omega=0,U_{L}=\underline{U}}=-e_{1}(U^{(\tau)},\tau^2)\Big|_{U^{(\tau)}=\underline{U}}<-\frac{3}{\gamma+1}.
\end{split}
\end{eqnarray*}

Therefore, by applying the implicit function theorem, we can get a unique $C^2$ function $\alpha_1=\alpha_{1}(\omega,\theta_1, \tau^{2}; U_{L})$
near $\alpha_1=\theta_1=\omega=0$ and $U^{(\tau)}_{L}=\underline{U}$ for $\tau$ sufficiently small, such that
$$
\mathcal{L}(\alpha_{1}, \omega, \theta_1, \tau^2; U^{(\tau)}_{L} )=0.
$$

Finally, since $\big((1+\tau^2 u^{(\tau)}_{L}),v^{(\tau)}_{L}\big)\cdot \textbf{n}_{1}=0$, $\alpha_{1}(0, \theta_1, \tau^{2}; U_{L})=0$. So estimate \eqref{eq:3.19} is from the Taylor expansion directly by taking $K^{(\tau)}_{c}=\frac{\partial\alpha_1}{\partial \omega}$.
Moreover, 
notice that $\frac{\partial\mathcal{L}(\alpha_{1}, \omega, \theta_1, \tau^2; U^{(\tau)}_{L} )}{\partial \omega}\bigg|_{\alpha_1=\theta_1=\omega=0,U^{(\tau)}_{L}=\underline{U}}=1$, hence
\begin{eqnarray*}
\begin{split}
\frac{\partial\alpha_1}{\partial \omega}\bigg|_{\alpha_1=\theta_1=\omega=0,U^{(\tau)}_{L}=\underline{U}}
&=\displaystyle-\frac{\frac{\partial\mathcal{L}(\alpha_{1}, \omega, \theta_1, \tau^2; U^{(\tau)}_{L} )}{\partial \omega}\bigg|_{\alpha_1=\theta_1=\omega=0,U^{(\tau)}_{L}=\underline{U}}}{\frac{\partial\mathcal{L}(\alpha_{1}, \omega, \theta_1, \tau^2; U^{(\tau)}_{L} )}{\partial \alpha_1}\bigg|_{\alpha_1=\theta_1=\omega=0,U^{(\tau)}_{L}=\underline{U}}}\\[5pt]
&=\frac{1}{e_{1}(U^{(\tau)},\tau^2)\Big|_{U^{(\tau)}}}>\frac{\gamma+1}{3}, \qquad \mbox{for}\quad \tau\in (0,\tau'_2).
\end{split}
\end{eqnarray*}

Thus, choose $\epsilon'_{2}\in (0, \min\{\epsilon_{1}, \tilde{\epsilon}_{1}\})$ small, then $K^{(\tau)}_{c}$
is bounded independently on $\tau$.
\end{proof}

Now, we will compare the Riemann solvers near the boundary and away from the corner points.
\begin{proposition}\label{prop:3.3}
Let $U_{b}=(\rho_b, v_{b})^{\top}$ and $U_{L}=(\rho_L, v_{L})^{\top}$ be the constant states satisfying
\begin{eqnarray*}
\begin{split}
U_{b}=\Phi_1(\alpha_1; U_{L},\tau^2), \quad U_{b}=\Phi(\beta_1,\beta_2; U_{L}),
\end{split}
\end{eqnarray*}
and
\begin{eqnarray*}
\begin{split}
\big((1+\tau^2 u_{L}), v_{L}\big)\cdot \mathbf{n}_{1}=0, \quad \big((1+\tau^2 u_{b}), v_{b}\big)\cdot \mathbf{n}_{2}=0,
\end{split}
\end{eqnarray*}
with $U_{b},U_{L}\in \mathcal{O}_{\epsilon'_{2}}(\underline{U})$ and $|\theta_1|+|\omega|<\epsilon'_{2}$, where $u_{L}=u_{L}(\rho_{L},v_{L},\tau^2)$ and $u_{b}=u_{b}(\rho_{b},v_{b},\tau^2)$ are given by \eqref{eq:1.12}, and $\mathbf{n}_k=(\sin(\theta_k),-\cos(\theta_k)),\,(k=1,2)$.
Then, for $\tau\in(0,\tau'_{2})$ with $\tau'_{2}>0$ being given as in Lemma \ref{lem:3.2}, it holds that
\begin{eqnarray}\label{eq:3.23}
\beta_{1}=\alpha_1+O(1)|\alpha_1|\tau^{2},\quad \beta_{2}=O(1)|\alpha_1|\tau^{2}.
\end{eqnarray}
Moreover,
\begin{eqnarray}\label{eq:3.24}
\beta_{1}=O(1)(1+|\omega|)\tau^{2}, \quad \beta_{2}=O(1)|\omega|\tau^{2},
\end{eqnarray}
where
$\omega=\theta_{2}-\theta_{1}$, and $O(1)$ depends only on
$\underline{U}$ and $a_{\infty}$ but not on $\epsilon'_2$ and $\tau$.
\end{proposition}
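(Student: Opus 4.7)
The plan is to establish \eqref{eq:3.23} via a Taylor-type expansion in $\tau^{2}$ using the implicit function theorem, paralleling the argument of Proposition \ref{prop:3.1}, and then derive \eqref{eq:3.24} by combining \eqref{eq:3.23} with the boundary-induced identity $\alpha_{1}=K^{(\tau)}_{c}\omega$ supplied by Lemma \ref{lem:3.2}. The two boundary conditions on $U_{L}$ and $U_{b}$ do not enter Step 1 directly; they serve only as input to Lemma \ref{lem:3.2} in Step 2.

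\textbf{Step 1 (proof of \eqref{eq:3.23}).} I rewrite the two hypotheses $U_{b}=\Phi_{1}(\alpha_{1};U_{L},\tau^{2})=\Phi(\beta_{1},\beta_{2};U_{L})$ as the single vector equation
\[
\Phi(\beta_{1},\beta_{2};U_{L})=\Phi_{1}(\alpha_{1};U_{L},\tau^{2}),
\]
viewed as a system of two equations in the unknowns $(\beta_{1},\beta_{2})$ with parameters $(\alpha_{1},\tau^{2},U_{L})$. The Jacobian $\det\bigl(\partial\Phi/\partial(\beta_{1},\beta_{2})\bigr)\big|_{(0,0)}=\det\bigl(\boldsymbol{r}_{1}(U_{L}),\boldsymbol{r}_{2}(U_{L})\bigr)$ is nonzero near $U_{L}=\underline{U}$ (as already computed in the proof of Proposition \ref{prop:3.1}), so the implicit function theorem yields unique $C^{2}$ functions $\beta_{j}=\beta_{j}(\alpha_{1},\tau^{2};U_{L})$, $j=1,2$, on a neighborhood of $(0,0,\underline{U})$. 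Next, I evaluate $\beta_{j}$ on two coordinate slices: at $\alpha_{1}=0$ both sides equal $U_{L}$, forcing $\beta_{j}(0,\tau^{2};U_{L})=0$; at $\tau=0$, Remark \ref{remark:2.3} gives $\Phi_{1}(\alpha_{1};U_{L},0)=\Phi_{1}(\alpha_{1};U_{L})=\Phi(\alpha_{1},0;U_{L})$, forcing $\beta_{1}(\alpha_{1},0;U_{L})=\alpha_{1}$ and $\beta_{2}(\alpha_{1},0;U_{L})=0$. Writing the elementary decomposition
\[
\beta_{j}(\alpha_{1},\tau^{2};U_{L})=\beta_{j}(\alpha_{1},0;U_{L})+\beta_{j}(0,\tau^{2};U_{L})-\beta_{j}(0,0;U_{L})+O(1)|\alpha_{1}|\tau^{2},
\]
where the remainder comes from the uniform $C^{2}$-boundedness of the mixed partial $\partial^{2}\beta_{j}/(\partial\alpha_{1}\partial\tau^{2})$, I obtain $\beta_{j}=\delta_{j1}\alpha_{1}+O(1)|\alpha_{1}|\tau^{2}$, which is exactly \eqref{eq:3.23}.

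\textbf{Step 2 (proof of \eqref{eq:3.24}).} The two boundary conditions on $U_{L}$ and $U_{b}$, together with the smallness $|\theta_{1}|+|\omega|<\epsilon'_{2}$, place us exactly in the setting of Lemma \ref{lem:3.2}, which asserts that the unique 1-wave connecting $U_{L}$ to $U_{b}$ has strength $\alpha_{1}=K^{(\tau)}_{c}\omega$, with $K^{(\tau)}_{c}$ bounded uniformly in $\tau\in(0,\tau'_{2})$. In particular $|\alpha_{1}|=O(|\omega|)$. Substituting this into \eqref{eq:3.23} immediately gives $\beta_{2}=O(1)|\omega|\tau^{2}$ and $\beta_{1}=K^{(\tau)}_{c}\omega+O(1)|\omega|\tau^{2}$, from which the bounds in \eqref{eq:3.24} follow.

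The main delicate point will be to track uniformity of the $O(1)$-constants: the Taylor remainder in Step 1 and the Lipschitz bound for $K^{(\tau)}_{c}$ in Step 2 must both be controlled independently of $U_{L}\in\mathcal{O}_{\epsilon'_{2}}(\underline{U})$ and $\tau\in(0,\tau'_{2})$. These are consequences of the $C^{2}$-regularity of the wave-curves in Lemmas \ref{lem:2.3}–\ref{lem:2.4} and of the implicit function computation in Lemma \ref{lem:3.2}, but both need to be handled carefully so that the final constants in \eqref{eq:3.23}–\eqref{eq:3.24} depend only on $\underline{U}$ and $a_{\infty}$, and not on $\epsilon'_{2}$ or $\tau$.
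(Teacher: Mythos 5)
Your argument is essentially the paper's own proof, expanded: the paper also obtains \eqref{eq:3.23} by applying the implicit function theorem to $\Phi(\beta_1,\beta_2;U_L)=\Phi_1(\alpha_1;U_L,\tau^2)$ and invoking the Taylor-slice argument of Proposition \ref{prop:3.1}, and then cites Lemma \ref{lem:3.2} for \eqref{eq:3.24}; your Step 1 and the $\beta_2$ half of Step 2 are complete and correctly note that the boundary conditions play no role until Lemma \ref{lem:3.2} is invoked. One caveat on the last line of Step 2: from $\alpha_1=K^{(\tau)}_c\omega$ and \eqref{eq:3.23} you obtain $\beta_1=K^{(\tau)}_c\omega+O(1)|\omega|\tau^2=O(1)|\omega|$, which is \emph{not} the bound $\beta_1=O(1)(1+|\omega|)\tau^2$ printed in \eqref{eq:3.24}, and it does not ``immediately follow'' as you assert — a state with $\beta_1\approx K^{(\tau)}_c\omega$ need not be $O(\tau^2)$. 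The paper's one-line justification has the same leap, and the printed $\beta_1$-bound appears to be a misstatement (that form of estimate belongs to Proposition \ref{prop:3.4}, where the $\tau=0$ state satisfies the Dirichlet condition $v_b=\tan\theta_2$ and so differs from $U_b$ by $O(\tau^2)$ even when $\alpha_1=0$); indeed, the later applications in Lemma \ref{lem:5.2} only use \eqref{eq:3.23} together with $|\alpha_1|=O(1)|\omega|$. So you should either record the bound you actually proved, $\beta_1=O(1)|\omega|$, or explicitly flag the discrepancy rather than claiming \eqref{eq:3.24} follows as stated.
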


\begin{proof}
By the assumptions, we know that $\beta_1$ and $\beta_2$ satisfy the following equation
\begin{eqnarray*}
\Phi(\beta_{1},\beta_2; U_{L})=\Phi(\alpha_{1}; U_{L}, \tau^2).
\end{eqnarray*}

Then, by the implicit function theorem and following the argument in the proof of Proposition \ref{prop:3.1}, we know that the above equation admits a unique $C^2$ solution $\beta_{k}=\beta_k(\alpha_{1}, \tau^{2}, U_{L})$ for $U_{L}\in \mathcal{O}_{\epsilon'_{2}}(\underline{U})$ and $\tau\in(0,\tau'_{2})$ for $k=1,2$, which satisfies \eqref{eq:3.23}. Moreover, by Lemma \ref{lem:3.2}, we further have \eqref{eq:3.24}.
\end{proof}

Next, we will compare the Riemann solutions of problem \eqref{eq:3.17} and problem \eqref{eq:3.18} with corner points.
\begin{proposition}\label{prop:3.4}
Let $U^{(\tau)}_{b}=(\rho^{(\tau)}_b, v^{(\tau)}_{b})^{\top}$, $U_{b}=(\rho_b, v_{b})^{\top}$ and $U_{L}=(\rho_L, v_{L})^{\top}$
be the three constant states satisfying
\begin{eqnarray*}
\begin{split}
U^{(\tau)}_{b}=\Phi(\alpha_1; U_{L},\tau^2), \quad U_{b}=\Phi(\beta_1; U_{L}),
\end{split}
\end{eqnarray*}
and
\begin{eqnarray*}
\begin{split}
\big((1+\tau^2 u_{L}), v_{L}\big)\cdot \mathbf{n}_{1}=0, \quad \big((1+\tau^2 u^{(\tau)}_{b}), v^{(\tau)}_{b}\big)\cdot \mathbf{n}_{2}=0,\quad
v_{b}=\tan(\theta_{2}),
\end{split}
\end{eqnarray*}
with $U^{(\tau)}_{b}, U_{b},U_{L}\in \mathcal{O}_{\epsilon'_{2}}(\underline{U})$ and $|\theta_1|+|\omega|<\epsilon''_{2}$, where $u_{L}=u_{L}(\rho_{L},v_{L},\tau^2)$ and $u^{(\tau)}_{b}=u_{b}(\rho_{b},v_{b},\tau^2)$ are given by \eqref{eq:1.12}, and $\mathbf{n}_k=(\sin(\theta_k),-\cos(\theta_k)),\,(k=1,2)$.
Then, for $\tau\in(0,\tau''_{1})$ with $\tau''_{1}>0$ being given in Lemma \ref{lem:3.2}, it holds that
\begin{eqnarray}\label{eq:3.25}
\beta_{1}=\alpha_1+O(1)(1+|\alpha_1|)\tau^{2}.
\end{eqnarray}
Moreover,
\begin{eqnarray}\label{eq:3.26}
\beta_{1}=O(1)(|\omega|+\tau^{2}),
\end{eqnarray}
where $\omega=\theta_{2}-\theta_{1}$, and $O(1)$ depends only
$\underline{U}$ and $a_{\infty}$ but not on $\epsilon'_2$ and $\tau$.
\end{proposition}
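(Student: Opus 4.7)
The plan is to reduce both estimates to an implicit function computation followed by a single--variable Taylor expansion in $\tau^{2}$. Mirroring the proof of Lemma \ref{lem:3.2}, I first introduce the scalar function
\begin{equation*}
\mathcal{L}(\alpha_{1},\omega,\theta_{1},\tau^{2};U_{L}) = (1+\tau^{2}u^{(\tau)}_{b})\sin(\theta_{1}+\omega)-\Phi^{(2)}_{1}(\alpha_{1};U_{L},\tau^{2})\cos(\theta_{1}+\omega),
\end{equation*}
where $u^{(\tau)}_{b}$ is read from $\Phi_{1}(\alpha_{1};U_{L},\tau^{2})$ through \eqref{eq:1.12}, so that the Neumann--type boundary condition on $\Gamma_{2}$ for $U^{(\tau)}_{b}$ is equivalent to $\mathcal{L}=0$. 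The Dirichlet--type condition $v_{b}=\tan\theta_{2}$ defining $\beta_{1}$ is in turn captured by
\begin{equation*}
\tilde{\mathcal{L}}(\beta_{1},\omega,\theta_{1};U_{L}) = \Phi^{(2)}_{1}(\beta_{1};U_{L})-\tan(\theta_{1}+\omega).
\end{equation*}
Since $\partial_{\beta_{1}}\tilde{\mathcal{L}}|_{\beta_{1}=0,\,U_{L}=\underline{U}}=\boldsymbol{r}^{(2)}_{1}(\underline{U})=\tfrac{2}{\gamma+1}\neq 0$, and $\partial_{\alpha_{1}}\mathcal{L}|_{\alpha_{1}=\theta_{1}=\omega=0,\,U_{L}=\underline{U}}$ is uniformly bounded away from zero for small $\tau$ (as already established in the proof of Lemma \ref{lem:3.2}), the implicit function theorem produces unique $C^{2}$ solutions $\alpha_{1}=\alpha_{1}(\omega,\theta_{1},\tau^{2};U_{L})$ and $\beta_{1}=\beta_{1}(\omega,\theta_{1};U_{L})$ on a small neighborhood of the origin.

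The crucial observation is the identity $\alpha_{1}(\omega,\theta_{1},0;U_{L})=\beta_{1}(\omega,\theta_{1};U_{L})$. Indeed, Remark \ref{remark:2.3} gives $\Phi_{1}(\alpha_{1};U_{L},0)=\Phi_{1}(\alpha_{1};U_{L})$, while the factor $\tau^{2}u^{(\tau)}_{b}$ vanishes at $\tau=0$, so
\begin{equation*}
\mathcal{L}(\alpha_{1},\omega,\theta_{1},0;U_{L}) = -\cos(\theta_{1}+\omega)\,\tilde{\mathcal{L}}(\alpha_{1},\omega,\theta_{1};U_{L}).
\end{equation*}
Since $\cos(\theta_{1}+\omega)\neq 0$ in the neighborhood under consideration, uniqueness of the implicit solution forces the two zero sets to coincide.

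With this identity in hand, \eqref{eq:3.25} follows from a Taylor expansion of $\alpha_{1}$ in $\tau^{2}$ about $\tau=0$,
\begin{equation*}
\alpha_{1}(\omega,\theta_{1},\tau^{2};U_{L}) = \beta_{1}(\omega,\theta_{1};U_{L})+\tau^{2}A_{1}(\omega,\theta_{1};U_{L})+O(\tau^{4}),
\end{equation*}
where $A_{1}=\partial_{\tau^{2}}\alpha_{1}|_{\tau=0}$ is uniformly bounded thanks to implicit differentiation of $\mathcal{L}=0$ together with the uniform non--degeneracy of $\partial_{\alpha_{1}}\mathcal{L}$. This already yields $|\alpha_{1}-\beta_{1}|=O(\tau^{2})$; the $(1+|\alpha_{1}|)$ factor in \eqref{eq:3.25} then accommodates both this leading $\tau^{2}$ term and the quadratic remainder, using the joint $C^{2}$ regularity of $\Phi_{1}$ in $(\alpha_{1},\tau^{2})$ to rewrite $O(\tau^{4})$ as $O(|\alpha_{1}|\tau^{2})$. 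Finally, \eqref{eq:3.26} is immediate from \eqref{eq:3.25} combined with Lemma \ref{lem:3.2}: the latter gives $\alpha_{1}=K^{(\tau)}_{c}\omega=O(|\omega|)$, whence
\begin{equation*}
|\beta_{1}|\leq |\alpha_{1}|+O(1)(1+|\alpha_{1}|)\tau^{2}=O(|\omega|+\tau^{2}).
\end{equation*}

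The main obstacle is the refined $(1+|\alpha_{1}|)$ bookkeeping in \eqref{eq:3.25}: one must track how the Taylor remainder couples with $\alpha_{1}$ through the nonlinear dependence of $u^{(\tau)}_{b}$ on $(\alpha_{1},\tau^{2})$ given by \eqref{eq:1.12}. Beyond this accounting, the argument is a direct adaptation of Propositions \ref{prop:3.1}--\ref{prop:3.3} to the setting where the two boundary conditions at the corner $A_{2}$ are of different types (Neumann for the $\tau$-system and Dirichlet for the $\tau=0$ system), the mechanism that makes them match at $\tau=0$ being precisely the identity highlighted above.
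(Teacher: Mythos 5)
Your argument is correct, but it is organized differently from the paper's. The paper combines the two boundary conditions on $\Gamma_{2}$ into the single scalar relation $\big(1+\tau^{2}u^{(\tau)}_{b}\big)\Phi^{(2)}_{1}(\beta_{1};U_{L})=\Phi^{(2)}_{1}(\alpha_{1};U_{L},\tau^{2})$ and solves it by the implicit function theorem for $\beta_{1}=\beta_{1}(\alpha_{1},\tau^{2};U_{L})$; estimate \eqref{eq:3.25} then follows from a two-variable Taylor expansion after computing the traces $\beta_{1}(\alpha_{1},0;U_{L})=\alpha_{1}$ and $\beta_{1}(0,\tau^{2};U_{L})=O(1)\tau^{2}$. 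You instead parametrize both wave strengths by the turning angle, solving $\mathcal{L}=0$ for $\alpha_{1}(\omega,\theta_{1},\tau^{2};U_{L})$ and $\tilde{\mathcal{L}}=0$ for $\beta_{1}(\omega,\theta_{1};U_{L})$, and your key observation --- the factorization $\mathcal{L}|_{\tau=0}=-\cos(\theta_{1}+\omega)\,\tilde{\mathcal{L}}$, which forces the two implicit solutions to coincide at $\tau=0$ --- plays the role of the paper's trace identity $\beta_{1}(\alpha_{1},0;U_{L})=\alpha_{1}$. Your route gives a cleaner conceptual explanation of why the Neumann-type and Dirichlet-type conditions match at $\tau=0$, at the cost of tying the whole estimate to the corner parametrization by $\omega$, whereas the paper's relation treats $\alpha_{1}$ as the free variable and only invokes the $\mathbf{n}_{1}$-condition on $U_{L}$ through Lemma \ref{lem:3.2} when deriving \eqref{eq:3.26}, exactly as in your last step. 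One cosmetic slip: the claim that the $O(\tau^{4})$ remainder can be ``rewritten as $O(|\alpha_{1}|\tau^{2})$'' is not correct as stated (take $\alpha_{1}=0$ with $\tau\neq 0$), but it is also unnecessary --- the target bound already contains an additive $O(1)\tau^{2}$ term coming from the ``$1$'' in ``$1+|\alpha_{1}|$'', so your stronger conclusion $|\alpha_{1}-\beta_{1}|=O(\tau^{2})$ implies \eqref{eq:3.25} directly.
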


\begin{proof}
Denote
\begin{eqnarray*}
\rho^{(\tau)}_{b}=\Phi^{(1)}_{1}(\alpha_1; U_{L},\tau^2),\quad v^{(\tau)}_{b}=\Phi^{(2)}_{1}(\alpha_1; U_{L},\tau^2),\quad v_{b}=\Phi^{(2)}_{1}(\beta_1; U_L).
\end{eqnarray*}

By assumptions, we have the relation
\begin{eqnarray}\label{eq:3.27}
\Big(1+\tau^{2}u^{(\tau)}_{b}\big(\Phi_{1}(\alpha_1; U_{L},\tau^2)\big)\Big)\cdot\Phi^{(2)}_{1}(\beta_{1}; U_{L})=\Phi^{(2)}_{1}(\alpha_{1}; U_{L}, \tau^2).
\end{eqnarray}

Since $\displaystyle \frac{\partial\Phi^{(2)}_{1}(\beta_{1}; U_{L})}{\partial \beta_1}\Big|_{\beta_1=0, U_{L}=\underline{U}}=\frac{2}{\gamma+1}>0$, it follows from the implicit function theorem that equation \eqref{eq:3.27} has a unique $C^2$ solution $\beta_{1}=\beta_1(\alpha_{1}, \tau^{2}, U_{L})$ for $U_{L}\in \mathcal{O}_{\epsilon'_{2}}(\underline{U})$ and $\tau\in(0,\tau'_{2})$. To estimate $\beta_1$, notice that by \eqref{eq:3.27} again, when $\alpha_1=0$, $\beta_1$ satisfies that
\begin{eqnarray*}
\big(1+\tau^{2}u_L\big)\Phi^{(2)}_{1}(\beta_{1}; U_{L})=v_{L}.
\end{eqnarray*}
Then $\beta_{1}(0,\tau^{2}, U_{L})=O(1)\tau^2$. On the other hand, for $\tau=0$, we have
\begin{eqnarray*}
\Phi^{(2)}_{1}(\beta_{1}; U_{L})=\Phi^{(2)}_{1}(\alpha_{1}; U_{L}, 0).
\end{eqnarray*}
So, $\beta_{1}(\alpha_1,0, U_{L})=\alpha_1$. With these two facts, we obtain from the Taylor expansion that
\begin{eqnarray*}
\begin{split}
\beta_1(\alpha_1, \tau^{2}, U_{L})&=\beta_1(\alpha_1, 0, U_{L})+\beta_1(0, \tau^{2}, U_{L})-\beta_1(0, 0, U_{L})+O(1)|\alpha_1|\tau^2\\[5pt]
&=\alpha_1+O(1)(1+|\alpha_1|)\tau^{2}.
\end{split}
\end{eqnarray*}

Finally, we can further deduce estimate \eqref{eq:3.26} by applying Lemma \ref{lem:3.2}.
\end{proof}

\section{Wave front tracking scheme and existence of weak solutions}
In this section, we will establish the global existence and $L^1$ stability of weak solutions of
problem \eqref{eq:1.16}-\eqref{eq:1.18} and problem \eqref{eq:1.20}-\eqref{eq:1.21} and \eqref{eq:1.11}.
First, let us introduce the $(h, \nu)$-approximate solutions $U^{(\tau)}_{h,\nu}$ to the initial-boundary value
problem \eqref{eq:1.16}-\eqref{eq:1.18} via the wave-front tracking scheme.

Choose a mesh length $h=\Delta x>0$ in the $x$-direction and let $A_{k}=(x_k, b_k)\doteq(kh, b(kh)), (k\geq 0)$
be the points on the boundary $y=b(x)$ and denote
\begin{eqnarray}\label{eq:4.1}
\begin{split}
&\theta_{k}=\arctan\bigg(\frac{b_{k+1}-b_{k}}{h}\bigg),\quad \omega_{0}=\arctan\bigg(\frac{b_{1}-b_{0}}{h}\bigg), \quad \omega_{k}=\theta_{k+1}-\theta_{k},\quad k\ge 1,
\end{split}
\end{eqnarray}
where $\omega_k$ represents the change of angle $\theta_{k}$ at the corner points $A_k$ for $k\ge 0$.

Since $T.V.\{b'(\cdot);\mathbb{R}_{+}\}<+\infty$ by assumption $\mathbf{(H2)}$, there exists a $b'_{\infty}$ such that $\lim_{x\rightarrow +\infty}b'(x)=b'_{\infty}$. So there exists $k_*\in\mathbb{N}_{+}$ such that $\|b'_{\infty}-b'(\cdot)\|_{L^{\infty}(\{x\geq k_*h\})}\leq h$.
Then, define 
\begin{eqnarray}\label{eq:4.2}
\begin{split}
b_{h}(x)=b_k+(x-x_k)\tan(\theta_{k}) \qquad \mbox{for any $x\in[x_{k}, x_{k+1}),\ k=0,\cdots,k_*-1$},
\end{split}
\end{eqnarray}
and
\begin{equation}\label{eq:4.2x}
b_h(x)=b_{k_*}+(x-x_{k_*})b'_{\infty} \qquad \mbox{for any $x\in[x_{k_{*}},+\infty)$}.
\end{equation}

Obviously,
\begin{eqnarray}\label{eq:4.3}
\begin{split}
\|b'_{h}-b'\|_{L^{1}(\mathbb{R}_{+})}\leq h,\quad \lim_{x\rightarrow +\infty}(b'_h-b')=0,
\quad\mbox{and}\quad T.V.\{b'_{h};\,\mathbb{R}_{+}\}\leq T.V.\{b';\,\mathbb{R}_{+}\}.
\end{split}
\end{eqnarray}

The corresponding approximate domain for $\Omega$ is defined by
\begin{eqnarray*}
\begin{split}
&\Omega_{h}=\bigcup_{k\geq0}\Omega_{h, k}, \quad\ \Omega_{h, k}=\{(x,y):  x_{k}\leq x< x_{k+1}, \ y<b_{h}(x)\},
\end{split}
\end{eqnarray*}
with its boundary being defined as
\begin{eqnarray*}\label{eq:2.19}
\begin{split}
\Gamma_{h}=\bigcup_{k\geq0}\Gamma_{h, k},\quad\ \Gamma_{h, k}=\{(x,y): x_{k}\leq x< x_{k+1}, \ y=b_{h}(x)\}.
\end{split}
\end{eqnarray*}

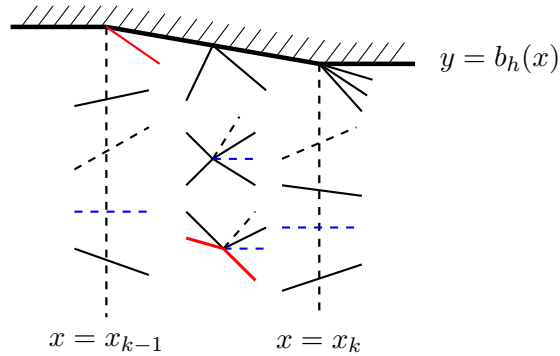
\begin{figure}[ht]
\begin{center}
\begin{tikzpicture}[scale=0.7]
\draw [line width=0.06cm](-5.8,2)--(-4,2);
\draw [line width=0.06cm](-4,2)--(0,1.3);
\draw [line width=0.06cm](0,1.3)--(1.8,1.3);
\draw [line width=0.03cm][dashed](-4,2)--(-4,-3.5);
\draw [line width=0.03cm][dashed](0,1.3)--(0,-3.5);

\draw [thin](-5.6,2)--(-5.3,2.4);
\draw [thin](-5.3,2)--(-5.0,2.4);
\draw [thin](-5,2)--(-4.7,2.4);
\draw [thin](-4.7,2)--(-4.4,2.4);
\draw [thin](-4.4,2)--(-4.1,2.4);
\draw [thin](-4.1,2)--(-3.8,2.4);
\draw [thin](-3.8,1.95)--(-3.5,2.35);
\draw [thin](-3.5, 1.9)--(-3.2,2.3);
\draw [thin](-3.2,1.85)--(-2.9,2.25);
\draw [thin](-2.9,1.80)--(-2.6,2.2);
\draw [thin](-2.6,1.75)--(-2.3,2.15);
\draw [thin](-2.3,1.70)--(-2.0,2.1);
\draw [thin](-2.0,1.65)--(-1.70,2.05);
\draw [thin](-1.7,1.60)--(-1.4,2.00);
\draw [thin](-1.4,1.55)--(-1.1,1.95);
\draw [thin](-1.1,1.50)--(-0.8,1.90);
\draw [thin](-0.8,1.45)--(-0.5,1.85);
\draw [thin](-0.5,1.40)--(-0.2,1.8);
\draw [thin](-0.2,1.35)--(0.1,1.75);
\draw [thin](0.1,1.30)--(0.4,1.70);
\draw [thin](0.4,1.30)--(0.7,1.70);
\draw [thin](0.7,1.30)--(1.0,1.70);
\draw [thin](1.0,1.30)--(1.3,1.70);
\draw [thin](1.3,1.30)--(1.6,1.70);

\draw [thick][red](-4,2)--(-3,1.3);

\draw [thick](-4.6,0.5)--(-3.2,0.8);
\draw [thick][dashed](-4.6,-0.7)--(-3.2,0.1);
\draw [thick][dashed][blue](-4.6,-1.5)--(-3.2,-1.5);
\draw [thick](-4.6,-2.2)--(-3.2,-2.7);

\draw [thick](-2.5,0.6)--(-2,1.65);
\draw [thick](-2,1.65)--(-1,0.8);

\draw [thick](-2.5,0)--(-2,-0.5);
\draw [thick](-2.5,-1)--(-2,-0.5);

\draw [thick][dashed][black](-2,-0.5)--(-1.5,0.3);
\draw [thick](-2,-0.5)--(-1.2,0);
\draw [thick][dashed][blue](-2,-0.5)--(-1.2,-0.5);
\draw [thick](-2,-0.5)--(-1.2,-1);

\draw [thick](-2.5,-1.5)--(-1.8,-2.2);
\draw [line width=0.04cm][red](-2.5,-2)--(-1.8,-2.2);

\draw [thick][dashed][black](-1.8,-2.2)--(-1.2,-1.5);
\draw [thick](-1.8,-2.2)--(-1.0,-1.8);
\draw [thick][dashed][blue](-1.8,-2.2)--(-1.0,-2.2);
\draw [line width=0.04cm][red](-1.8,-2.2)--(-1.2,-2.8);

\draw[thick](0,1.3)--(1,1);
\draw[thick](0,1.3)--(0.9,0.7);
\draw[thick](0,1.3)--(0.8,0.4);

\draw [thick][dashed](-0.7,-0.5)--(0.7,0.1);
\draw [thick](-0.7,-1)--(0.8,-1.2);
\draw [thick][dashed][blue](-0.7,-1.8)--(0.8,-1.8);
\draw [thick](-0.7,-3.0)--(0.8,-2.5);


\node at (3.4, 1.4) {$y=b_{h}(x)$};
\node at (2.6, 2) {$$};
\node at (-4, -4.0) {$x=x_{k-1}$};
\node at (0, -4.0) {$x=x_{k}$};
\end{tikzpicture}
\end{center}
\caption{Wave front tracking scheme}\label{fig4.1}
\end{figure}

Choose a  parameter $\nu\in \mathbb{N}_{+}$ and for the given initial data $U_{0}$, we define an sequence of piecewise constant functions $U^{\nu}_{0}$ such that
\begin{eqnarray}\label{eq:4.4}
\|U^{\nu}_{0}(\cdot)-U_{0}(\cdot)\|_{L^{1}(\mathcal{I})}<2^{-\nu}\quad \mbox{and}\quad T.V.\{U^{\nu}_{0}(\cdot); \mathcal{I}\}< T.V.\{U_{0}(\cdot); \mathcal{I}\}.
\end{eqnarray}

Then, the $(h,\nu)$-approximate solution $U^{(\tau)}_{h, \nu}$ in $\Omega_{h}$ is constructed in the following way:

At the initial line $x=0$, at each discontinuity point of $U^{\nu}_{0}$, a Riemann problem is solved with the solution consisting of shocks and rarefaction waves, according to Section 3. As done in \cite{bressan}, rarefaction waves need to be partitioned into several small central rarefaction fans with strength less than $\nu^{-1}$ (see \cite[pp.129--pp.132]{bressan} for the accurate Riemann solver, which is denoted by $(ARS)$ for simplicity, and the simplified Riemann solver, which is denoted by $(SRS)$ for simplicity). At the initial line, we always use the $(ARS)$.

The piecewise constant approximate solution $U^{(\tau)}_{h, \nu}$ can be prolonged until the interaction occur in $\Omega_{h}$, the reflection on non-corner points on $\Gamma_{h}$ (\emph{i.e.}, a 2th wave front hits the boundary and then a 1st reflected wave front is formed, see Appendix A), or the boundary corner (a weak wave will issue from the corner as constructed in Section 2.2). Then we can solve the Riemann problem again according to Section 3. When a wave hits the boundary on $\Gamma_{h}$ or when a weak wave generated at corner $A_{k} (k\geq 0)$, as done in \cite{amadori, colombo-guerra}, we always choose the $(ARS)$.
At the interaction in $\Omega_h$, to decide which Riemann solver is used, we introduce a threshold parameter $\varrho>0$ which is a function of $\nu$ (see  Remark \ref{rem:4.1} below). If the strength of the two wave fronts $\alpha$, $\beta$ satisfy that $|\alpha||\beta|>\varrho$, the $(ARS)$ is used otherwise we use the $(SRS)$. We remark that the simplified Riemann solver, in which all the new waves are lumped into a single non-physical wave traveling with a fixed speed $\hat{\lambda}$ larger than all the characteristic speed, is introduced to control the number of the wave fronts in $\Omega_{h}$.

We remark that in the above construction, we assume that no more than two wave fronts interact and that only one wave hits the boundary at the non-corner point, and that only one weak wave is generated by the corner point. 
It can be achieved by changing the speed of the corresponding single fronts slightly by a quality less than $2^{-\nu}$.

Denote all the fronts in $U^{(\tau)}_{h, \nu}$ by $\mathcal{J}(U^{(\tau)}_{h, \nu})=\mathcal{S}^{(\tau)}\cup \mathcal{R}^{(\tau)}\cup \mathcal{NP}^{(\tau)}$, \emph{i.e.}, shocks, rarefaction fans and non-physical wave. For each wave front, an order is introduced to count how many interactions were needed to produce such a front as follows. 

\rm (i) 
All weak wave fronts generated by the corner are order one.

\rm (ii) A wave front $\alpha$ of order $k_{\alpha}$ hits the boundary at the non-corner point $(\hat{x}, b_{h}(\hat{x}))$.
Then 
the order of the new wave from $(\hat{x}, b_{h}(\hat{x}))$ is set to be $k_{\alpha}$.

\rm(iii) The $i_{\alpha}$th wave front $\alpha$ of order $k_{\alpha}$, and $i_{\beta}$th wave
front $\beta$ of order $k_{\beta}$,
interact at $(\hat{x}, \hat{y})$. Assume $\alpha$ lies below $\beta$.
Then the orders of the new wave fronts are given as below:

$\quad$ $\rm (iii)_{1}$  If $k_{\alpha}, k_{\beta}<\nu$,
the outgoing wave fronts generated from $(\hat{x}, \hat{y})$
are constructed by the $(ARS)$, and
the order of the $j$th wave is given by
\begin{eqnarray}\label{eq:4.5}
\left\{
\begin{array}{llll}
\max\{k_{\alpha}, k_{\beta}\}+1,\quad &j \neq i_{\alpha}\mbox{ or }\ i_{\beta}, \\[5pt]
\min\{k_{\alpha}, k_{\beta}\}, \quad &j= i_{\alpha}= i_{\beta},\\[5pt]
k_{\alpha},\quad &j= i_{\alpha}\neq i_{\beta}, \\[5pt]
k_{\beta}, \quad &j= i_{\beta}\neq i_{\alpha}.
\end{array}
\right.
\end{eqnarray}

$\quad$ $\rm (iii)_{2}$ If $\max\{k_{\alpha}, k_{\beta}\}=\nu$, the outgoing wave fronts
generated from $(\hat{x}, \hat{y})$ are given by the $(SRS)$.
The order of the outgoing non-physical wave front is $\nu+1$.

$\quad$ $\rm (iii)_{3}$ If $k_{\alpha}=\nu+1$ (\emph{i.e.}, $\alpha$ is a non-physical wave front) and $k_{\beta}\leq\nu$,
then we use the $(SRS)$ to construct the outgoing wave fronts from $(\hat{x}, \hat{y})$.
The order of the outgoing non-physical wave front is $\nu+1$,
and the order of the outgoing physical wave front is $k_{\beta}$.


\begin{remark}\label{rem:4.1}
The small constant $\nu^{-1}$ controls the maximum strength of rarefaction fronts.
Moreover, the threshold parameter $\varrho$ is obviously a function of $\nu^{-1}$. 
\end{remark}

Applying the arguments in \cite{amadori,bressan,colombo-guerra} and Lemma \ref{lem-A1}, we have the following proposition for problem \eqref{eq:1.16}-\eqref{eq:1.18}.
\begin{proposition}\label{prop:4.1}
Under assumptions $\mathbf{(H1)}$-$\mathbf{(H2)}$, for a given hypersonic similarity parameter $a_{\infty}$,
there exist constants $\epsilon^*_{0}>0$ and $\tau^*_0>0$ depending only on $\underline{U}$ and $a_{\infty}$ such that if
$\tau\in(0,\tau^*_0)$ and
\begin{eqnarray}\label{eq:4.7}
T.V.\{U_{0}(\cdot); \mathcal{I}\}+|b'(0)|+T.V.\{b'(\cdot); \mathbb{R}_{+}\}<\epsilon^*_0,
\end{eqnarray}
then the wave front tracking scheme generates a global $(h,\nu)$-approximate solution $U^{(\tau)}_{h,\nu}$ satisfying
\begin{eqnarray}\label{eq:4.8}
\begin{split}
&\sup_{x>0}\big\|U^{(\tau)}_{h,\nu}(\cdot)-\underline{U}\big\|_{L^{\infty}((-\infty, b_{h}(x)))}
+T.V.\{U^{(\tau)}_{h,\nu}(x,\cdot); (-\infty, b_{h}(x))\}\\[5pt]
&\qquad \leq C_{1}\Big(T.V.\{U_{0}(\cdot); \mathcal{I}\}+|b'(0)|+T.V.\{b'(\cdot); \mathbb{R}_{+}\}\Big),
\end{split}
\end{eqnarray}
and
\begin{eqnarray}\label{eq:4.9}
\begin{split}
\big\|U^{(\tau)}_{h,\nu}(x',\cdot+b_{h}(x'))-U^{(\tau)}_{h,\nu}(x'',\cdot+b_{h}(x''))\big\|_{L^{1}((-\infty, 0))}\leq C_{2}|x'-x''|,
\end{split}
\end{eqnarray}
for any $x', x''>0$. Both the strength of each rarefaction front and the total strength of nonphysical waves in $U^{(\tau)}_{h,\nu}$ are small, i.e.,
\begin{eqnarray}\label{eq:4.10}
\max_{\alpha\in \mathcal{R}^{(\tau)}}|\alpha|\leq C_{2}\nu^{-1} \qquad \mbox{and} \qquad \sum_{\alpha\in \mathcal{NP}^{(\tau)}}|\alpha|\leq C_{2}2^{-\nu},
\end{eqnarray}
where constants $C_{1}>0$ and $C_{2}>0$ depend only on $\underline{U}$ and $a_{\infty}$.

Moreover, there exists a unique {$U^{(\tau)}\in (L^{1}_{loc}\cap BV_{loc})(\Omega)$} such that
\begin{eqnarray}\label{eq:4.11}
U^{(\tau)}_{h,\nu}(x,\cdot)-U^{(\tau)}(x,\cdot)\rightarrow 0, \quad \mbox{in}\quad L^{1}_{loc}(\Omega),  \quad \mbox{as}\quad h\rightarrow0, \ \nu\rightarrow+\infty,
\end{eqnarray}
where $U^{(\tau)}$ is the entropy solution of the initial-boundary value problem \eqref{eq:1.16}-\eqref{eq:1.18} with estimate \eqref{eq:4.8} with replacing $U^{(\tau)}_{h,\nu}$ in \eqref{eq:4.8} by $U^{(\tau)}$.
\end{proposition}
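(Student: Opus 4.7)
The plan is to run a boundary-adapted Glimm-type functional argument, modeled on the front-tracking analysis in Amadori--Guerra and Colombo--Guerra for initial-boundary value problems, combined with the wave interaction estimates assembled in Section~3 and the boundary reflection estimates from the Appendix (Lemma~A.1). Throughout, I would treat the sequence $\{\omega_k\}_{k\ge 0}$ encoding the angle jumps of $b_h$ as prescribed ``boundary source'' data whose total strength $|\omega_0|+\sum_{k\ge 1}|\omega_k|\le |b'(0)|+T.V.\{b'(\cdot);\mathbb{R}_+\}$ is controlled by the $\mathbf{(H2)}$ assumption and by \eqref{eq:4.3}.

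The first step is to define, for each $x>0$, the linear and quadratic functionals
\begin{align*}
V(x) &= \sum_{\alpha\in \mathcal{J}(U^{(\tau)}_{h,\nu}(x,\cdot))} |\alpha| + K_0\sum_{x_k\ge x} |\omega_k|,\\
Q(x) &= \sum_{(\alpha,\beta)\in \mathcal{A}(x)} |\alpha||\beta|,
\end{align*}
where $\mathcal{A}(x)$ denotes the set of approaching pairs at the level $x$ (with the convention that a physical wave and all fronts above a non-physical wave are approaching, and a wave front of the first family is approaching the boundary), and $K_0>0$ is a large constant to be tuned. I then set $\Upsilon(x)= V(x)+C_*\, Q(x)$ with $C_*$ chosen large, depending only on $\underline U$ and $a_\infty$. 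The second step is to verify that $\Upsilon(x)$ is non-increasing across every event: (i) an interior interaction of two physical fronts, handled via the classical local interaction estimate invoked in Proposition~\ref{prop:3.2}; (ii) the reflection of a $2$-wave from the flat portion of $\Gamma_h$, handled via Proposition~\ref{prop:3.3} together with Lemma~A.1, which shows $\beta_2 = O(1)|\alpha_1|\tau^2$ plus lower-order reflected strength; (iii) the generation of a new $1$-wave at a corner point $A_k$, handled via Lemma~\ref{lem:3.2} and Proposition~\ref{prop:3.4}, which gives strength $O(|\omega_k|+\tau^2)$. In all three cases the drop in $C_* Q$ absorbs the increase in $V$ once the total variation is small enough, giving $\Upsilon(x)\le \Upsilon(0+)$ and therefore \eqref{eq:4.8} with a constant $C_1$ depending only on $\underline U$ and $a_\infty$, provided $\epsilon^*_0$ and $\tau^*_0$ are sufficiently small.

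The third step handles well-posedness of the scheme itself: I would adapt the Bressan--Baiti counting argument to show that, under the threshold $\varrho=\varrho(\nu)$ separating the $(ARS)$ and $(SRS)$ regimes, the number of interior interactions, reflections from $\Gamma_h$, and non-physical fronts is finite on every compact $x$-interval, so that $U^{(\tau)}_{h,\nu}$ is globally defined on $\Omega_h$. Estimate \eqref{eq:4.9} then follows from the finite-speed-of-propagation and piecewise constant structure of $U^{(\tau)}_{h,\nu}$: the change between two $x$-slices is the sum of front strengths times $|x'-x''|$ times a Lipschitz bound on the characteristic speeds, giving a constant $C_2$ controlled by \eqref{eq:4.8}. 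The bounds \eqref{eq:4.10} on individual rarefaction strengths and on the total non-physical strength are built into the construction: rarefactions are split at level $\nu^{-1}$, and the cubic-type bound on $\sum|\alpha|$ for non-physical fronts (Bressan, cf.\ \cite{bressan}) together with a suitable choice of $\varrho$ gives the geometric decay $2^{-\nu}$.

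The final step is compactness and passage to the limit. Estimates \eqref{eq:4.8}--\eqref{eq:4.9} say that $\{U^{(\tau)}_{h,\nu}\}$ is uniformly bounded in $BV(\Omega_h)$ with uniform $L^1$-Lipschitz dependence on $x$, so Helly's theorem plus a diagonal argument yields a subsequence converging in $L^1_{loc}$ to some $U^{(\tau)}\in (L^1_{loc}\cap BV_{loc})(\Omega)$ satisfying the same uniform estimate. To verify that $U^{(\tau)}$ is an entropy solution in the sense of Definition~\ref{def:1.1}, I would test \eqref{eq:1.23} against a smooth nonnegative $\phi$: interior entropy production is controlled by the standard wave-front-tracking bound in terms of $\sum|\alpha|+\sum_{\mathcal{NP}^{(\tau)}}|\alpha|$, which tends to $0$ by \eqref{eq:4.10} as $\nu\to\infty$; boundary terms converge because $b_h\to b$ uniformly on compact sets by \eqref{eq:4.3} and the reflection construction preserves the solid-wall condition $((1+\tau^2 u),v)\cdot \mathbf{n}_h=0$ exactly on $\Gamma_h$. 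Uniqueness of the limit, asserted in \eqref{eq:4.11}, is then obtained by a standard Bressan-type $L^1$ semigroup argument for the approximate front tracking solutions.

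The main obstacle is handling the interaction between the boundary and the rest of the front-tracking machinery. Unlike the standard Cauchy problem, here the boundary continuously injects $1$-waves through its corner points at rate $O(|\omega_k|+\tau^2)$ and reflects incoming $2$-waves with a non-trivial strength change \emph{plus} an $O(\tau^2)$ error (this is exactly what Propositions~\ref{prop:3.3}--\ref{prop:3.4} and Lemma~\ref{lem:3.2} quantify). Choosing the weights $K_0$ in $V$ and $C_*$ in $Q$ so that these boundary-generated terms do not destroy monotonicity of $\Upsilon$, uniformly in $\tau\in(0,\tau^*_0)$, is the delicate piece; it is precisely here that the $\tau$-uniformity of the constants in Section~3 (e.g.\ $K^{(\tau)}_c$ uniformly bounded independently of $\tau$) is essential.
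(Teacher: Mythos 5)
Your proposal follows essentially the same route as the paper, which proves Proposition \ref{prop:4.1} by citing the Amadori/Bressan/Colombo--Guerra front-tracking machinery together with the interaction estimates of Section 3 and the boundary-reflection Lemma \ref{lem-A1}, and whose weighted functional is exactly the $\mathbf{V}+\mathcal{K}\mathbf{Q}$ appearing in the definition of $\mathcal{D}_{h,x}$ in \eqref{eq:4.14}. Two small corrections to your setup: since the fluid domain is $\{y<b_h(x)\}$ with $\lambda_2>0>\lambda_1$, it is the \emph{second}-family fronts that approach the boundary (not the first, as you wrote), and your linear functional needs the family-dependent weight $\mathcal{K}_b$ on the $2$-waves (as in the paper's $\mathbf{V}_1+\mathcal{K}_b\mathbf{V}_2+\mathcal{K}_c\mathbf{V}_c$) to absorb the reflection coefficient $K^{(\tau)}_b\approx 1$ of Lemma \ref{lem-A1}, since tuning only the corner weight $K_0$ and the quadratic weight $C_*$ does not control that first-order exchange between families.
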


\begin{remark}\label{rem:4.2}
If $U_{0}(y)-U_{\infty}$ is compactly supported, the study of initial-boundary value problem \eqref{eq:1.16}-\eqref{eq:1.18}  can be simplified by studying Cauchy problem \eqref{eq:1.16}-\eqref{eq:1.17}.
Since functions $G(U,\tau^2)$ and $F(U,\tau^2)$ in \eqref{eq:1.15} are continuous with respect to parameter $\tau^2$, under the assumptions in Proposition \ref{prop:4.1}, we can follow the argument in
\cite[Chapters 8-9]{bressan} to show that for $\tau\in(0,\bar{\tau}^{*}_{0})$ with $\bar{\tau}^{*}_{0}$ depending only on $\underline{U}$ and $a_{\infty}$,
there exist a domain $\bar{\mathcal{D}}^{*}\subseteq L^{1}(\mathbb{R}; \mathbb{R}^{2})$, a $L^{1}$-Lipschitz semigroup $\mathcal{P}^{(\tau)}_{*}(x):\mathbb{R}_{+}\times \mathcal{D}^{*}\mapsto \mathcal{D}^{*}$, and a Lipschitz constant $L^{*}>0$ which depends only on $\underline{U}$ and $a_{\infty}$ such that

\rm(1) For every $U\in\mathcal{D}^{*}$ and $x', x''>0$,
\begin{eqnarray*}
\mathcal{P}^{(\tau)}_{*}(0)(U)=U, \qquad \mathcal{P}^{(\tau)}_{*}(x'+x'')(U)=\mathcal{P}^{(\tau)}_{*}(x'')\mathcal{P}^{(\tau)}_{*}(x')(U),
\end{eqnarray*}
and for any $U',\ U''\in\mathcal{D}^{*}$, there holds
\begin{align*}
\begin{split}
\|\mathcal{P}^{(\tau)}_{*}(x')(U')-\mathcal{P}^{(\tau)}_{*}(x'')(U'')\|_{L^{1}(\mathbb{R})}\leq L^{*}\Big(\|U'-U''\|_{L^{1}(\mathbb{R})}+|x'-x''|\Big);
\end{split}
\end{align*}

\rm(2) $U^{(\tau)}(x)=\mathcal{P}^{(\tau)}_{*}(x)(U_{0})$ is the entropy solution of Cauchy problem \eqref{eq:1.16}-\eqref{eq:1.17};

\rm(3) If $U$ is a  piecewise constant function, then for $x>0$ sufficiently small, $\mathcal{P}^{(\tau)}_{*}(x)(U)$ coincides with the solution to Cauchy problem \eqref{eq:1.16}-\eqref{eq:1.17}
by piecing together the Lax solutions of Riemann problems determined by the jumps of $U$.
\end{remark}

Let
\begin{small}
\begin{eqnarray}\label{eq:4.14}
\mathcal{D}_{h, x}\doteq \mathbf{cl}\left\{U_h(x,\cdot):\,U_{h}-\underline{U}\in (L^{1}\cap BV)(\mathbb{R};\mathbb{R}^{2})\Bigg|
\begin{array}{l}
U_{h}=\underline{U}, \ \ \mbox{for} \ \ y>b_{h}(x)\\[5pt]
\mbox{and}\ \ \mathbf{V}(U_{h})+\mathcal{K}\mathbf{Q}(U_{h})<\epsilon
\end{array}
\right \},
\end{eqnarray}
\end{small}
where $\mathbf{cl}$ denotes the closure in $L^{1}$-topology, and $\mathbf{V}$, $\mathbf{Q}$ are defined by
\begin{eqnarray*}
	\begin{split}
		\mathbf{V}(U_{h})=\mathbf{V}_{1}(U_{h})+\mathcal{K}_{b}\mathbf{V}_{2}(U_{h})
		+\mathcal{K}_{c}\mathbf{V}_{c}(U_{h}),
	\end{split}
\end{eqnarray*}
with constants $\mathcal{K}>0$, $\mathcal{K}_{b}>0$, $\mathcal{K}_c>0$ depending on $\tilde{K}_b$ in Lemma \ref{lem-A2} and $K_{c}$ in Lemma \ref{lem:3.2}, and
\begin{eqnarray*}
\begin{split}
&\mathbf{V}_{k}(U_{h})=\sum\{|\alpha_{k}|: \alpha_{k}\ is\ the\ k-th\ physical\ wave\ in\ U_{h}\},\quad \mbox{for}\ k=1, 2,\\[5pt]
&\mathbf{V}_{c}(U_{h})=\sum_{k>[\frac{x}{h}]}\{|\omega_{k}|: \omega_{k}\ is\ the\ changed\ angle\ at\ corner\ points\ A_{k}\ for\ k\geq 0\},
\end{split}
\end{eqnarray*}
and
\begin{eqnarray*}
	\begin{split}
		\mathbf{Q}(U_{h})=\sum\{|\alpha_i||\beta_j|: both\ \alpha_i\ and\ \beta_{j}\ are\ physical\ approaching\ waves\ for\ i,j=1,2 \}.
	\end{split}
\end{eqnarray*}

Due to \cite{amadori,colombo-guerra}, and by Lemma \ref{lem:3.2} and Lemma \ref{lem-A2}, we have the following proposition for problem \eqref{eq:1.20}-\eqref{eq:1.21} and \eqref{eq:1.11}.
\begin{proposition}\label{prop:4.2}
Under assumptions $\mathbf{(H1)}$-$\mathbf{(H2)}$, there exists a constant $\epsilon^{*}_{1}>0$ depending only on $\underline{U}$
such that if $|b'(0)|+T.V.\{b(\cdot);\mathbb{R}_{+}\}< \epsilon^{*}_{1}$,
then for any sufficiently small $h>0$, there exists 
a unique uniformly Lipschitz continuous map $\mathcal{P}_{h}(x,x'_{0})$:
\begin{eqnarray}\label{eq:2.63}
\begin{split}
\mathcal{P}_{h}(x,x'_{0}):\mathcal{D}_{h,x'_{0}}\mapsto \mathcal{D}_{h, x}, \ \ \ \mbox{for}\ \mbox{all}\ x,\ x'_{0}\geq 0,
\end{split}
\end{eqnarray}
whose trajectory $U_{h}(x,\cdot)=\mathcal{P}_{h}(x,x'_0)(U_{h}(x'_0,\cdot))\in \mathcal{O}_{C_{3}\epsilon^*_1}(\underline{U})$ are obtained as the limit $\nu\rightarrow +\infty$ from $(h,\nu)$-approximate solutions $U_{h,\nu}$ constructed via the wave front tracking scheme. Moreover,

\rm (i)\  for any $x\geq 0$ and $U_{h}(x,\cdot)\in \mathcal{D}_{h, x}$,  $\mathcal{P}_{h}(x,x)(U_{h}(x,\cdot))=U_{h}(x,\cdot)$, and
for all $x\geq x'\geq x'_{0}\geq 0$ and $U_{h}(x'_{0},\cdot)\in \mathcal{D}_{h, x'_{0}}$
\begin{eqnarray}\label{eq:2.65}
\begin{split}
\mathcal{P}_{h}(x,x'_{0})(U_{h}(x'_{0},\cdot))=\mathcal{P}_{h}(x,x')\circ \mathcal{P}_{h}(x',x'_{0})(U_{h}(x'_{0},\cdot));
\end{split}
\end{eqnarray}

\rm(ii)\ 
for any $x''\geq x'>x'_{0}>0$, $U_{h}(x'_{0},\cdot)\in \mathcal{D}_{h, x'_{0}}$, and $V_{h'}(x'_0,\cdot)\in \mathcal{D}_{h,x'_0}$,
\begin{eqnarray}\label{eq:2.66}
\begin{split}
&\ \ \ \big\|\mathcal{P}_{h}(x',x'_0)(U_{h}(x'_0,\cdot))-\mathcal{P}_{h'}(x'',x_0)(V_{h'}(x'_0,\cdot))
\big\|_{L^{1}((-\infty, \hat{b}_{h}(x'')))}\\[5pt]
&\leq L\bigg(|x'-x''|+\|U_{h}(x_0,\cdot)-V_{h'}(x'_0,\cdot)\|_{L^{1}((-\infty, \hat{b}_{h}(x'_0)))}
+\|b'_{h}-b'_{h'}\|_{L^{1}(\mathbb{R}_{+})}\bigg),
\end{split}
\end{eqnarray}
where $\hat{b}_{h}=\max\{b_{h},b_{h'}\}$, and the positive constants $C_3$ and $L$ depend only on $\underline{U}$;

\rm(iii)\ for any $x'_0, x \in [x_{k}, x_{k+1})$ with $x>x'_0$ and $U_{h}(x'_0,\cdot)\in \mathcal{D}_{h,x'_0}$, $\mathcal{P}_{h}(x,x'_0)(U_{h}(x'_0,\cdot))$ is obtained by piecing all the Riemann solutions \eqref{eq:3.1b}
and \eqref{eq:3.18} together, respectively.
\end{proposition}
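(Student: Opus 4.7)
The plan is to carry out a wave front tracking construction for the hypersonic small-disturbance system \eqref{eq:1.20} on the domain bounded by the piecewise linear boundary $\{y=b_h(x)\}$, and then extract $\mathcal{P}_h$ as the $\nu\to\infty$ limit of the resulting $(h,\nu)$-approximate solutions. The scheme is strictly parallel to the one described before Proposition \ref{prop:4.1}: one starts by solving Riemann problems along $x=0$ via the accurate Riemann solver, propagates the fronts, and resolves each subsequent event either by the $(ARS)$ or by the $(SRS)$ according to a threshold $\varrho=\varrho(\nu^{-1})$. Three event types must be handled: interior binary interactions (treated via Proposition \ref{prop:3.2}), reflections at non-corner points of $\Gamma_h$ (treated via Lemma \ref{lem-A2}), and weak waves issued from corners $A_k$ (treated via Lemma \ref{lem:3.2}). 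The crucial uniform estimate is that the Glimm-type functional $\mathbf{V}(U_{h,\nu})+\mathcal{K}\mathbf{Q}(U_{h,\nu})$ defined in \eqref{eq:4.14} is non-increasing across every such event, once the weights $\mathcal{K}$, $\mathcal{K}_b$, $\mathcal{K}_c$ are tuned using the reflection constant $\tilde{K}_b$ from Lemma \ref{lem-A2} and the corner constant $K_c$ from Lemma \ref{lem:3.2} and once $|b'(0)|+T.V.\{b';\mathbb{R}_+\}<\epsilon_1^*$ is imposed.

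From this uniform BV bound one obtains, by bounded wave-speeds, an $L^1$-Lipschitz estimate in $x$ for $U_{h,\nu}$; Helly's theorem then produces a limit $U_h(x,\cdot)\in\mathcal{D}_{h,x}$ as $\nu\to\infty$ that lies in $\mathcal{O}_{C_3\epsilon_1^*}(\underline{U})$. Defining $\mathcal{P}_h(x,x'_0)(U_h(x'_0,\cdot)):=U_h(x,\cdot)$ yields the candidate map. Property (i) follows because the scheme can be restarted from the computed state at any intermediate $x'$, and the limit is independent of this choice thanks to the uniqueness part of the semigroup argument of \cite{amadori,colombo-guerra}. Property (iii) is also automatic: on each strip $[x_k,x_{k+1})$ the boundary $b_h$ is straight and no corner event occurs, so the construction reduces to piecing together the Riemann solvers \eqref{eq:3.1b} and \eqref{eq:3.18}.

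The main obstacle is the uniform Lipschitz estimate \eqref{eq:2.66}, which bundles three separate perturbations: the shift $|x'-x''|$, the change of initial datum, and the change of \emph{approximate boundary} through $\|b'_h-b'_{h'}\|_{L^1}$. The first is already absorbed into the $L^1$-Lipschitz continuity in $x$. For the remaining two I would follow the Colombo-Guerra pseudo-polygonal strategy: construct a piecewise constant path in $\mathcal{D}$ connecting $U_h(x'_0,\cdot)$ to $V_{h'}(x'_0,\cdot)$ and simultaneously interpolate linearly between the angles defining $b_h$ and $b_{h'}$, then bound the $L^1$-distance between the two trajectories by integrating an appropriate tangent-vector functional along this path. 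The contribution of the boundary portion of the path is governed precisely by Proposition \ref{prop:3.4} (restricted to $\tau=0$) together with Lemma \ref{lem-A2}, which give control of the reflected/corner-generated waves in terms of the local change in boundary angle and hence in terms of $\|b'_h-b'_{h'}\|_{L^1}$. The genuinely delicate point is that the two solutions are defined on different domains; I would work on the common domain $(-\infty,\hat{b}_h(x''))$ with $\hat{b}_h=\max\{b_h,b_{h'}\}$, and control the strip-contribution between the two boundaries using the uniform $L^\infty$ bound on $U_h-\underline{U}$, which again folds into the $\|b'_h-b'_{h'}\|_{L^1}$ term and produces the constant $L$ in \eqref{eq:2.66} depending only on $\underline{U}$.
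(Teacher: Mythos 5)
Your proposal is correct and follows essentially the same route as the paper, which establishes Proposition \ref{prop:4.2} by invoking the front-tracking/Lipschitz-semigroup machinery of \cite{amadori,colombo-guerra} together with Lemma \ref{lem:3.2} and Lemma \ref{lem-A2}, using exactly the weighted Glimm functional \eqref{eq:4.14} with constants tuned by $\tilde{K}_b$ and $K_c$ to control reflections and corner-generated waves. The only cosmetic difference is that for the $\tau=0$ interior interactions the standard Glimm estimates suffice (rather than Proposition \ref{prop:3.2}, which compares the two systems), but this does not affect the argument.
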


Based on the Proposition \ref{prop:4.2}, we can further deduce that
\begin{proposition}\label{prop:4.3}
Under assumptions $\mathbf{(H1)}$-$\mathbf{(H2)}$, there exists a constant $\epsilon^{*}_{2}\in(0,\epsilon^{*}_{1})$ depending only on $\underline{U}$
such that if $|b'(0)|+T.V.\{b'(\cdot);\mathbb{R}_{+}\}< \epsilon$ for $\epsilon\in(0,\epsilon^{*}_{2})$,
then there exists 
a unique uniformly Lipschitz continuous map $\mathcal{P}(x,x'_{0}):\mathcal{D}_{x'_{0}}\mapsto \mathcal{D}_{ x}$ for $ x\geq x'_{0}\geq 0$, which satisfies:

\rm (1)\ there exists a subsequence $\{\mathcal{P}_{h_k}(x,x'_{0})\}^{+\infty}_{k=0}$ given in Proposition \ref{prop:4.2} for $h_k$ with $h_k\rightarrow0$ as $k\rightarrow\infty$, such that
$\mathcal{P}_{h_k}(x,x'_{0})(U(x'_0,\cdot))$ converges to $\mathcal{P}(x,x'_{0})(U(x'_0,\cdot))$ in $L^{1}((-\infty, b(x)))$
as $k\rightarrow +\infty$ for any $U(x'_0,\cdot)\in \mathcal{D}_{x'_0}$;

\rm (2)\ $U(x,\cdot)=\mathcal{P}(x,0)(U_0(\cdot))$ is the unique entropy solution to the initial-boundary value problem \eqref{eq:1.20}-\eqref{eq:1.21} and \eqref{eq:1.11} by the modified wave front tracking scheme;

\rm(3)\ $\mathcal{P}(x,x)(U(x,\cdot))=U(x,\cdot)$ for $U(x,\cdot)\in \mathcal{D}_{x}$, and
for all $x\geq x'\geq x'_{0}\geq 0$ 
\begin{eqnarray}\label{eq:4.15}
\begin{split}
\mathcal{P}(x,x'_{0})(U(x'_{0},\cdot))=\mathcal{P}(x,x')\circ \mathcal{P}(x',x'_{0})(U(x'_{0},\cdot)), \quad\mbox{for all}\quad  U(x'_{0},\cdot)\in \mathcal{D}_{x'_{0}};
\end{split}
\end{eqnarray}

\rm(4)\ if $\tilde{\mathcal{P}}$ and $\tilde{\mathcal{D}}_{x}$ are the map and domain corresponding
to boundary function $\tilde{b}(x)$ with $\tilde{b}(0)=b(0)$, then for any $x''\geq x'>x'_{0}>0$
and $\tilde{U}(x'_0,\cdot)\in \tilde{\mathcal{D}}_{x'_0}$,
\begin{eqnarray}\label{eq:4.16}
\begin{split}
&\ \ \ \big\|\mathcal{P}(x',x'_0)(\tilde{U}(x'_0,\cdot))-\tilde{\mathcal{P}}(x'',x'_0)(\tilde{U}(x'_0,\cdot))
\big\|_{L^{1}((-\infty, \hat{b}(x'')))}\\[5pt]
&\leq L\bigg(|x'-x''|+\|U(x'_0,\cdot)-\tilde{U}(x'_0,\cdot)\|_{L^{1}((-\infty, \hat{b}(x'_0)))}
+\|b'-\tilde{b}'\|_{L^{1}(\mathbb{R}_{+})}\bigg),
\end{split}
\end{eqnarray}
where $\hat{b}=\max\{b, \tilde{b}\}$ and the constant $L$ depends only on $\underline{U}$.
\end{proposition}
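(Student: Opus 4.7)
The plan is to construct $\mathcal{P}(x,x'_{0})$ as a subsequential limit of the family $\{\mathcal{P}_{h}(x,x'_{0})\}_{h>0}$ from Proposition \ref{prop:4.2} as $h\to 0$, and to inherit the properties (1)--(4) by a standard passage to the limit, using the uniform Lipschitz bound in \eqref{eq:2.66} as the main lever.

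First, I would define $\mathcal{D}_{x'_{0}}$ by \eqref{eq:4.14} with $b_{h}$ replaced by $b$ itself, and produce, for each $U(x'_{0},\cdot)\in\mathcal{D}_{x'_{0}}$, piecewise constant approximants $U_{h}(x'_{0},\cdot)\in\mathcal{D}_{h,x'_{0}}$ with $\|U_{h}(x'_{0},\cdot)-U(x'_{0},\cdot)\|_{L^{1}}\to 0$; this uses \eqref{eq:4.3} and the fact that functions supported on $(-\infty,b_{h}(x'_{0})]$ approximate functions supported on $(-\infty,b(x'_{0})]$ since $b_{h}(x'_{0})\to b(x'_{0})$. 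Choose $\epsilon^{*}_{2}\in(0,\epsilon^{*}_{1})$ small enough that the hypothesis of Proposition \ref{prop:4.2} is met uniformly in $h$. By that proposition, the trajectories $U_{h}(x,\cdot)=\mathcal{P}_{h}(x,x'_{0})(U_{h}(x'_{0},\cdot))$ satisfy uniform $L^{\infty}$ and $\mathrm{BV}_{y}$ bounds and are uniformly $L$-Lipschitz in $x$ with respect to the $L^{1}$-norm. Helly's compactness theorem applied in $y$, combined with equicontinuity in $x$ and a diagonal argument over a countable dense set of $x$-values, yields a subsequence $h_{k}\to 0$ along which $\mathcal{P}_{h_{k}}(x,x'_{0})(U_{h_{k}}(x'_{0},\cdot))\to\mathcal{P}(x,x'_{0})(U(x'_{0},\cdot))$ in $L^{1}_{\rm loc}((-\infty,b(x)))$ for every $x\geq x'_{0}$. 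This proves (1).

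Next, property (4) follows from applying \eqref{eq:2.66} to the pair $\mathcal{P}_{h_{k}},\tilde{\mathcal{P}}_{h_{k}}$ corresponding to piecewise-linear boundaries $b_{h_{k}},\tilde{b}_{h_{k}}$ constructed via \eqref{eq:4.2}--\eqref{eq:4.2x} and satisfying $b_{h_{k}}(0)=\tilde{b}_{h_{k}}(0)=b(0)$ together with $\|b'_{h_{k}}-b'\|_{L^{1}}+\|\tilde{b}'_{h_{k}}-\tilde{b}'\|_{L^{1}}\to 0$; letting $k\to\infty$ in \eqref{eq:2.66} produces \eqref{eq:4.16}. Uniqueness of $\mathcal{P}$ is an immediate consequence of \eqref{eq:4.16} specialized to $\tilde{U}=U$ and $\tilde{b}=b$: any two subsequential limits coincide in $L^{1}$, so the full sequence converges. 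The semigroup identity (3) is obtained by passing to the limit in \eqref{eq:2.65}, where continuity of composition is guaranteed by the uniform Lipschitz bound (4). Finally, for (2) I would use that the $(h_{k},\nu)$-approximate solutions satisfy approximate entropy inequalities whose error tends to zero as $\nu\to\infty$ (standard for wave front tracking, as in Proposition \ref{prop:4.2}); the double limit $\mathcal{P}(x,0)(U_{0})$ therefore fulfils \eqref{eq:1.23} with $\tau=0$, and the boundary trace condition \eqref{eq:1.11} is inherited from the approximate traces because $b'_{h_{k}}\to b'$ in $L^{1}$. Uniqueness among entropy solutions in the class obtained by the wave front tracking scheme then follows again from (4).

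The main obstacle will be the passage to the limit in \eqref{eq:2.66} when $b\neq\tilde{b}$: the trajectories $\mathcal{P}_{h_{k}}(x,x'_{0})(U_{h_{k}})$ and $\tilde{\mathcal{P}}_{h_{k}}(x,x'_{0})(\tilde{U}_{h_{k}})$ live over moving domains $(-\infty,b_{h_{k}}(x))$ and $(-\infty,\tilde{b}_{h_{k}}(x))$, so the comparison region $(-\infty,\hat{b}_{h_{k}}(x''))$ varies with $k$ and, in the limit, only matches $(-\infty,\hat{b}(x''))$ up to a thin strip of width $|b_{h_{k}}(x'')-\tilde{b}_{h_{k}}(x'')-b(x'')+\tilde{b}(x'')|$. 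Controlling this mismatch requires the uniform $L^{\infty}$-bound on each trajectory near the boundary from Proposition \ref{prop:4.2}, together with the estimate $\|b_{h_{k}}-b\|_{L^{\infty}}+\|\tilde{b}_{h_{k}}-\tilde{b}\|_{L^{\infty}}\to 0$ implied by \eqref{eq:4.3}, ensuring that the strip contribution is $o(1)$. Once this technical point is handled, the other items are routine limiting arguments.
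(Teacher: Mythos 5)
Your proposal is correct and follows exactly the route the paper intends: the paper gives no explicit proof of Proposition \ref{prop:4.3}, presenting it as a direct consequence of Proposition \ref{prop:4.2} by letting $h\rightarrow 0$, and your compactness-plus-diagonal argument combined with passage to the limit in \eqref{eq:2.65}--\eqref{eq:2.66} supplies precisely the omitted details, including the correct treatment of the moving-domain mismatch via $\|b_{h}-b\|_{L^{\infty}}\leq\|b_{h}'-b'\|_{L^{1}}\leq h$. The only points worth tightening are that uniqueness (and full-family convergence) is cleanest when argued at the approximate level, applying \eqref{eq:2.66} to two parameters $h,h'$ to show the family is Cauchy rather than invoking the limit estimate \eqref{eq:4.16} on two putative limits, and that upgrading $L^{1}_{\mathrm{loc}}$ to $L^{1}((-\infty,b(x)))$ convergence for non-compactly-supported data needs the uniform tail control coming from finite propagation speed; both are routine.
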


\begin{remark}\label{rem:4.3}
When $U_{0}-\underline{U}$ is compactly supported, the study of the initial-boundary value problem \eqref{eq:1.20}-\eqref{eq:1.21} and \eqref{eq:1.11}
is equivalent to the study of the Cauchy problem \eqref{eq:1.20}-\eqref{eq:1.21}. So similarly as done in Remark \ref{rem:4.2},
we can also construct an approximate solution $U^{\nu}$ to Cauchy problem \eqref{eq:1.20}-\eqref{eq:1.21} via the wave front tracking scheme with its jumps being denoted by $\mathcal{J}(U^{\nu})=\mathcal{S}\cup\mathcal{R}\cup{\mathcal{NP}}$.
Then, under the assumptions given in Proposition \ref{prop:4.2}, we know that $\nu$-approximate solution $U^{\nu}$ satisfies
\begin{eqnarray*}
\begin{split}
&\sup_{x>0}\big\|U^{\nu}-\underline{U}\big\|_{L^{\infty}(\mathbb{R})}+T.V.\{U^{\nu}(x,\cdot); \mathbb{R}\}<+\infty,
\end{split}
\end{eqnarray*}
and
\begin{eqnarray*}
\begin{split}
\big\|U^{\nu}(x',\cdot)-U^{\nu}(x'',\cdot)\big\|_{L^{1}(\mathbb{R})}\leq C_{3}|x'-x''|, \quad \mbox{for}\quad x', x''>0.
\end{split}
\end{eqnarray*}

Both the strength of each rarefaction front and the total strength of nonphysical waves in $U^{\nu}$ are small, i.e.,
\begin{eqnarray*}
\max_{\alpha\in \mathcal{R}}|\alpha|\leq C_{4}\nu^{-1} \qquad \mbox{and} \qquad \sum_{\alpha\in \mathcal{NP}}|\alpha|\leq C_{4}2^{-\nu},
\end{eqnarray*}
where constants $C_{3}>0$ and $C_{4}>0$ depend only on $\underline{U}$ and $a_{\infty}$. Therefore, there exists a $U\in (L^{1}_{loc}\cap BV_{loc})(\mathbb{R}_{+}\times \mathbb{R})$ such that
\begin{eqnarray*}
U^{\nu}(x,\cdot)\rightarrow U(x,\cdot), \quad \mbox{in}\quad L^{1}_{loc}(\mathbb{R}_{+}\times \mathbb{R}),  \quad \mbox{as}\quad \nu\rightarrow+\infty,
\end{eqnarray*}
where $U$ is the entropy solution of problem \eqref{eq:1.20}--\eqref{eq:1.21}. Moreover, by Theorem 7.1 in \cite{liu}, we can further have that $U(x,y)$ satisfies
\begin{eqnarray*}
T.V.\{U(x,\cdot); \mathbb{R}\}<C_{5}x^{-\frac{1}{2}},
\end{eqnarray*}
where constant $C_{5}>0$ depends only on $\underline{U}$ and $a_{\infty}$.
\end{remark}

\section{$L^1$ difference estimates between solutions to problem \eqref{eq:1.16}-\eqref{eq:1.18} and to problem \eqref{eq:1.20}-\eqref{eq:1.21} and \eqref{eq:1.11}}
In this section, we will establish the $L^1$ difference between the entropy solutions to initial-boundary value problem
\eqref{eq:1.16}-\eqref{eq:1.18} and to problem \eqref{eq:1.20}-\eqref{eq:1.21} and \eqref{eq:1.11} for the initial data $U_{0}$ without (or with) compact support.
To make it, we will first consider the local $L^1$ difference estimate
and then the global $L^1$ difference estimate between the trajectory
of $\mathcal{P}_{h}(x,0)(U^{\nu}_{0}(y))$ and the approximate solution $U^{(\tau)}_{h,\nu}(x,y)$. Then, we pass the limit $h\rightarrow0$ and $\nu\rightarrow+\infty$.

\subsection{Local $L^1$ differences estimate between the approximate solution $U^{(\tau)}_{h,\nu}(x,y)$ and the trajectory $\mathcal{P}_{h}(x,0)(U^{\nu}_{0}(y))$}
Let us consider the approximate solution $U^{(\tau)}_{h,\nu}$ to the initial-boundary value problem \eqref{eq:1.16}-\eqref{eq:1.18}. First, assume that there is only one jump point at $(\varsigma,y_{I})$ away from the boundary when $x=\varsigma$, that is, if
\begin{eqnarray}\label{eq:5.1}
U_{L}\doteq U^{(\tau)}_{h,\nu}(\varsigma, y_{I}-),\quad U_{R}\doteq U^{(\tau)}_{h,\nu}(\varsigma, y_{I}+),
\end{eqnarray}
then
\begin{eqnarray}\label{eq:5.2}
U^{(\tau)}_{h,\nu}(\varsigma,y)=
\left\{
\begin{array}{llll}
U_{R}, \quad & y>y_{I},\\[5pt]
U_L, \quad & y<y_{I}.
\end{array}
\right.
\end{eqnarray}

\begin{lemma}\label{lem:5.1}
Assume $U^{(\tau)}_{h,\nu}(\varsigma)$ is a piecewise constant function defined as in \eqref{eq:5.2} with $U_{L}, U_{R}\in \displaystyle{\mathcal{O}_{\min\{C_1\epsilon^*_0, C_3\epsilon^*_1\}}(\underline{U})}$ for $\tau\in(0,\tau^{*}_{0})$.
Let $\mathcal{P}_{h}$ be the uniformly Lipschtiz continuous map given by Proposition \ref{prop:4.2}.
Let $\hat{\lambda}$ be a fixed constant satisfying $\hat{\lambda}>\max\{\lambda_{j}(U^{(\tau)},\tau^{2}), \lambda_{j}(U)\}$
for all $U^{(\tau)}, U\in \mathcal{O}_{\min\{C_1\epsilon^*_0, C_3\epsilon^*_1\}}(\underline{U})$, $\tau\in(0,\tau^*_0)$ and $j=1,2$.

Denote
\begin{eqnarray}\label{eq:5.3}
U^{(\tau)}_{h,\nu}(\varsigma+s, y)=
\left\{
\begin{array}{llll}
U_{R}, \quad & y>y_{I}+\dot{y}_{\alpha_k}s,\\[5pt]
U_L, \quad & y<y_{I}+\dot{y}_{\alpha_k}s,
\end{array}
\right.
\end{eqnarray}
where $|\dot{y}_{\alpha_k}|\leq \hat{\lambda}$ for $k=1,2$. Then, for sufficiently small $s>0$,

\rm (1)\ if $U_{L}$ and $U_{R}$ are connected by a $k$th-shock wave $\alpha_k\in \mathcal{S}^{(\tau)}_k,(k=1,2)$, that is, $U_{R}=\Phi_{k}(\alpha_k; U_{L},\tau^2)$ for $\alpha_k<0$, and if $|\dot{y}_{\alpha_k}-\dot{\mathcal{S}}_{k}(\alpha_k, \tau^2)|<2^{-\nu}$,$(k=1,2)$, then
\begin{eqnarray}\label{eq:5.4}
\begin{split} \int^{y_{I}+\hat{\lambda}s}_{y_{I}-\hat{\lambda}s}\big|\mathcal{P}_{h}(\varsigma+s,\varsigma)(U^{(\tau)}_{h,\nu}(\varsigma,\cdot))
-U^{(\tau)}_{h,\nu}(\varsigma+s,\cdot)\big|dy
\leq C_{4}|\alpha_{k}|\big(\tau^{2}+2^{-\nu}\big)s,
\end{split}
\end{eqnarray}
where $\dot{\mathcal{S}}_{k}(\alpha_k, \tau^2)$ is the speed of the $k$th-shock wave;

\rm (2)\ if $U_{L}$ and $U_{R}$ are connected by a $k$th-rarefaction front $\alpha_k\in \mathcal{R}^{(\tau)}_k,(k=1,2)$, that is, $U_{R}=\Phi_{k}(\alpha_k; U_{L},\tau^2)$ for $\alpha_k>0$, and if $|\dot{y}_{\alpha_k}-\lambda_{k}(U_{R}, \tau^2)|<2^{-\nu}$,$(k=1,2)$, then
\begin{eqnarray}\label{eq:5.5}
\begin{split} \int^{y_{I}+\hat{\lambda}s}_{y_{I}-\hat{\lambda}s}\big|\mathcal{P}_{h}(\varsigma+s,\varsigma)(U^{(\tau)}_{h,\nu}(\varsigma))
-U^{(\tau)}_{h,\nu}(\varsigma+s)\big|dy
\leq C_{4}\big(\tau^{2}+\nu^{-1}+2^{-\nu}\big)|\alpha_{k}|s,
\end{split}
\end{eqnarray}
where $\lambda_{k}(U_{R}, \tau^2)$ is the speed of the $k$th-rarefaction front;

\rm (3)\ if $U_{L}$ and $U_{R}$ are connected by a non-physical wave $\alpha_{\mathcal{NP}}\in \mathcal{NP}^{(\tau)}$, that is, $\alpha_{\mathcal{NP}}=|U_{R}-U_{L}|$ with speed $\hat{\lambda}$, then
\begin{eqnarray}\label{eq:5.6}
\begin{split} \int^{y_{I}+\hat{\lambda}h}_{y_{I}-\hat{\lambda}h}\big|\mathcal{P}_{h}(\varsigma+s,\varsigma)(U^{(\tau)}_{h,\nu}(\varsigma))
-U^{(\tau)}_{h,\nu}(\varsigma+s)\big|dy
\leq C_{4}\alpha_{\mathcal{NP}}s.
\end{split}
\end{eqnarray}
Here, constant $C_{4}>0$ depends only on $\underline{U}$ and $a_{\infty}$, but not on $\tau, h, \nu$.
\end{lemma}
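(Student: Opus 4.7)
The plan is to exploit property (iii) of Proposition \ref{prop:4.2}: since $(\varsigma,y_I)$ lies away from the boundary, for $s>0$ sufficiently small the trajectory $\mathcal{P}_{h}(\varsigma+s,\varsigma)(U^{(\tau)}_{h,\nu}(\varsigma,\cdot))$ restricted to the strip $(y_I-\hat{\lambda}s,y_I+\hat{\lambda}s)$ coincides with the classical Lax solution of the Riemann problem \eqref{eq:3.1b} for the $\tau=0$ system with data $(U_L,U_R)$. Thus in each case the task reduces to comparing a single outgoing wave of the $\tau$-system (or one non-physical jump) with the two-wave Lax fan of the $\tau=0$ system, and the key input is Proposition \ref{prop:3.1}, which quantifies the deviation between the two sets of wave strengths in terms of $\tau^2$.

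For Case (1), write $U_R=\Phi(\beta_{1},\beta_{2};U_L)$ for the $\tau=0$ Riemann solution. Proposition \ref{prop:3.1} gives $\beta_k=\alpha_k+O(|\alpha_k|\tau^2)$ and $\beta_{3-k}=O(|\alpha_k|\tau^2)$. Smoothness of the Rankine--Hugoniot speed in both $\tau^2$ and strength yields $|\dot{\mathcal{S}}_k(\beta_k,0)-\dot{\mathcal{S}}_k^{(\tau)}(\alpha_k,\tau^2)|=O(\tau^2)$, so combining with the tracking hypothesis $|\dot{y}_{\alpha_k}-\dot{\mathcal{S}}_k^{(\tau)}(\alpha_k,\tau^2)|<2^{-\nu}$ I obtain $|\dot{y}_{\alpha_k}-\dot{\mathcal{S}}_k(\beta_k,0)|=O(\tau^2+2^{-\nu})$. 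The strip $(y_I-\hat{\lambda}s,y_I+\hat{\lambda}s)$ then decomposes into two essential regions: the thin region between $y_I+\dot{\mathcal{S}}_k(\beta_k,0)s$ and $y_I+\dot{y}_{\alpha_k}s$, where the $\tau=0$ solution equals an intermediate state $U_1$ while the $\tau$-solution equals $U_L$ or $U_R$, contributing height $O(|\alpha_k|)$ times width $O((\tau^2+2^{-\nu})s)$; and the region around the secondary $\tau=0$ wave, where the pointwise difference is bounded by $|\beta_{3-k}|=O(|\alpha_k|\tau^2)$ over a width at most $2\hat{\lambda}s$, contributing $O(|\alpha_k|\tau^2 s)$. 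Summing gives \eqref{eq:5.4}.

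Case (2) follows the same template, but the $\tau=0$ Riemann solution now contains a genuine rarefaction fan of strength $\beta_k>0$ occupying a spatial interval of length $(\lambda_k(U_1,0)-\lambda_k(U_L,0))s=O(|\alpha_k|s)$; across this fan the $\tau=0$ solution differs pointwise from the piecewise constant $\tau$-solution by at most $O(|\alpha_k|)$, so the fan contribution is $O(|\alpha_k|^2 s)$, which by the rarefaction bound $|\alpha_k|\leq C\nu^{-1}$ from \eqref{eq:4.10} becomes $O(\nu^{-1}|\alpha_k|s)$. Adding the speed-mismatch contribution $O(2^{-\nu}|\alpha_k|s)$ from the front position and the secondary-wave contribution $O(\tau^2|\alpha_k|s)$ yields \eqref{eq:5.5}. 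For Case (3), estimate \eqref{eq:3.8} ensures the two waves resolving the Riemann problem with data $(U_L,U_R)$ have strengths $O(\alpha_{\mathcal{NP}})$, so both solutions are pointwise within $O(\alpha_{\mathcal{NP}})$ of each other on the strip of width $2\hat{\lambda}s$, which gives \eqref{eq:5.6}. The main technical subtlety lies in Case (1): the sharp bound $\beta_{3-k}=O(|\alpha_k|\tau^2)$ from \eqref{eq:3.7} (rather than the weaker crude bound $O(\tau^2)$) is essential, since without the extra factor of $|\alpha_k|$ the secondary-wave contribution would be $O(\tau^2 s)$ and would spoil the linear-in-$|\alpha_k|$ scaling of \eqref{eq:5.4}.
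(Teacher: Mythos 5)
Your proposal is correct and follows essentially the same route as the paper: identify $\mathcal{P}_{h}(\varsigma+s,\varsigma)$ locally with the exact $\tau=0$ Riemann solution via Proposition \ref{prop:4.2}(iii), convert the single $\tau$-wave into a two-wave $\tau=0$ fan with strengths controlled by Proposition \ref{prop:3.1}, and decompose the strip into a zero region, a thin speed-mismatch region of width $O((\tau^2+2^{-\nu})s)$, and a secondary-wave region of height $O(|\alpha_k|\tau^2)$, invoking the bound $|\alpha_k|\le C\nu^{-1}$ from \eqref{eq:4.10} for the rarefaction fan. The paper organizes case (2) as three sub-cases according to the position of $\dot{y}_{\alpha_k}$ relative to $\lambda_k(U_L)$ and $\lambda_k(U_M)$, but the estimates and the key inputs are the same as yours.
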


\begin{proof}
$\rm (1)$ \ Without loss of the generality, we assume $k=1$, because the case $k=2$ can be dealt with in the same way.
From Proposition \ref{prop:4.2}, we know that $\mathcal{P}_{h}(\varsigma+s,\varsigma)(U^{(\tau)}_{h,\nu}(\varsigma,y))$
is the approximate solution at $x=\varsigma+s$ obtained by Riemann solver with Riemann data $U_{L}$ and $U_{R}$, which can be derived by
\begin{eqnarray}\label{eq:5.7}
\Phi(\beta_1,\beta_2; U_L)=\Phi_{1}(\alpha_1;U_L,\tau^2).
\end{eqnarray}

Then, by Proposition \ref{prop:3.1}, equation \eqref{eq:5.7} admits a unique solution $\beta_{j}$, such that
\begin{eqnarray}\label{eq:5.8}
\beta_{1}=\alpha_1+O(1)|\alpha_{1}|\tau^2,\quad \beta_{2}=O(1)|\alpha_{1}|\tau^2,
\end{eqnarray}
where $\beta_1<0$ for $\tau$ and $\alpha_1$ being sufficiently small.

\vspace{-5mm}
\begin{figure}[ht]
\begin{center}
\begin{tikzpicture}[scale=0.7]
\draw [line width=0.05cm](-4,3.5)--(-4,-4.5);
\draw [line width=0.05cm](0.5,3.5)--(0.5,-4.5);

\draw [thick](-4,0)--(0.5, 2.5);
\draw [thick](-4,0)--(0.5, 0.6);
\draw [thick][blue](-4,0)--(0.5, -2.5);
\draw [thick][red](-4,0)--(0.5, -1.2);
\draw [thin](-4,0)--(0.5, -3.8);

\draw [thin][<->](-3.9,3.2)--(0.4,3.2);

\node at (2.6, 2) {$$};
\node at (-1.8, 3.5){$s$};
\node at (-5.0, 0) {$(\varsigma, y_{I})$};
\node at (-4, -4.9) {$x=\varsigma$};
\node at (0, -4.9) {$x=\varsigma+s$};

\node at (2.1, 2.4) {$y=y_{I}+\hat{\lambda}s$};
\node at (1.2, 0.5) {$\beta_{2}$};
\node at (1.2, -1.2) {$\alpha_1$};
\node at (1.2, -2.6) {$\beta_{1}$};
\node at (2.1, -3.8) {$y=y_{I}-\hat{\lambda}s$};

\node at (-1.5, 2.5){$U_{R}$};
\node at (-1.2, -0.2){$U_{M}$};
\node at (-1.5, -3.2){$U_{L}$};

\end{tikzpicture}
\end{center}
\caption{Comparison of Riemann solvers for $\alpha\in \mathcal{S}^{(\tau)}$}\label{fig5.1}
\end{figure}
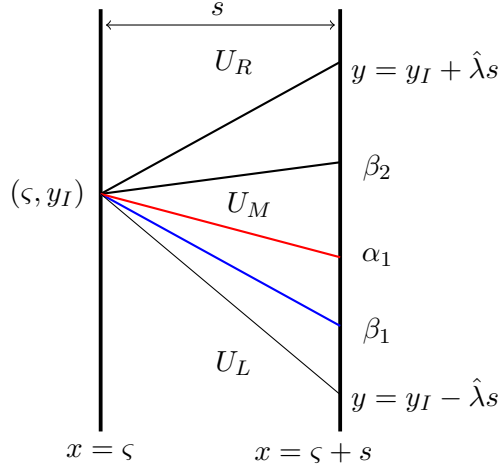

As shown in Fig.\ref{fig5.1},
let $U_{M}$ be the middle state of the Riemann solution $\mathcal{P}_{h}(\varsigma+s,y)(U^{(\tau)}_{h,\nu}(\varsigma,y))$.
By Lemma \ref{lem:3.1}, relation \eqref{eq:3.3} holds. Let $\dot{\mathcal{S}}_{1}(\alpha_1,\tau^2)$ and $\dot{\mathcal{S}}_{1}(\beta_{1})$
be the speeds of the shock fronts $\alpha_1$ and $\beta_{1}$, respectively.
From Lemma \ref{lem:3.1}, we have
\begin{eqnarray*}
\begin{split}
\dot{\mathcal{S}}_{1}(\alpha_1,\tau^2)\Big|_{\alpha_{1}=0}=\lambda_{1}(U_{L},\tau^2), \quad
\frac{\partial\dot{\mathcal{S}}_{1}(\alpha_1,\tau^2) }{\partial \alpha_1}\Bigg|_{\alpha_{1}=0}
=\frac{1}{2}\nabla_{U}\lambda_{1}(U_{L},\tau^2)\cdot \boldsymbol{r}_{1}(U_{L},\tau^2)=1,
\end{split}
\end{eqnarray*}
and
\begin{eqnarray*}
\begin{split}
\dot{\mathcal{S}}_{1}(\beta_1)\Big|_{\beta_1=0}=\lambda_{1}(U_{L}), \quad \frac{\partial\dot{\mathcal{S}}_{1}(\beta_1) }{\partial \beta_1}\Bigg|_{\beta_1=0}
=\frac{1}{2}\nabla_{U}\lambda_{1}(U_{L})\cdot \boldsymbol{r}_{1}(U_{L})=1.
\end{split}
\end{eqnarray*}

Thus, by the Taylor formula and \eqref{eq:5.8}, we obtain
\begin{eqnarray}\label{eq:5.9}
\begin{split}
\dot{\mathcal{S}}_{1}(\beta_1)-\dot{\mathcal{S}}_{1}(\alpha_1,\tau^2)
&=\dot{S}_{1}(\beta_1)\Big|_{\beta_{1}=0}
+\frac{\partial\dot{\mathcal{S}}_{1}(\beta_1) }{\partial \beta_1}\bigg|_{\beta_1=0}\beta_1+O(1)\beta^{2}_1\\[5pt]
&\ \ \ -\bigg(\dot{\mathcal{S}}_{1}(\alpha_{1},\tau^2)\Big|_{\alpha_{1}=0}+\frac{\partial\dot{\mathcal{S}}_{1}(\alpha_{1},\tau^2) }{\partial \alpha_{1}}\bigg|_{\alpha_{1}=0}\alpha_{1}+O(1)\alpha^{2}_{1}\bigg)\\[5pt]
&=\lambda_{1}(U_{L})-\lambda_{1}(U_{L},\tau^{2})+\frac{1}{2}\big(\beta_1-\alpha_{1}\big)
+O(1)\big(\beta_1-\alpha_{1}\big)^{2}+O(1)|\beta_1|^{2}\tau^2\\[5pt]
&=O(1)\big(1+|\alpha_{1}|\big)\tau^{2},
\end{split}
\end{eqnarray}
where we have used the fact that $\lambda_{1}(U_{L},\tau^{2})\Big|_{\tau=0}=\lambda_{1}(U_{L})$.

So, we have
\begin{eqnarray}\label{5.10}
\begin{split}
&\ \ \ \int^{y_{I}+\hat{\lambda}s}_{y_{I}-\hat{\lambda}h}\big|\mathcal{P}_{h}(\varsigma+s,\varsigma)(U^{(\tau)}_{h,\nu}(\varsigma,\cdot))
-U^{(\tau)}_{h,\nu}(\varsigma+s,\cdot)|dy\\[5pt]
& =\int^{\min\{y_{I}+\dot{\mathcal{S}}_{1}(\beta_1)s,y_{I}+\dot{y}_{\alpha_1}s\}}
_{y_{I}-\hat{\lambda}s}\big|\mathcal{P}_{h}(\varsigma+s,\varsigma)(U^{(\tau)}_{h,\nu}(\varsigma,\cdot))
-U^{(\tau)}_{h,\nu}(\varsigma+s,\cdot)|dy\\[5pt]
&\ \ \ +\int^{\max\{y_{I}+\dot{\mathcal{S}}_{1}(\beta_1)s,y_{I}+\dot{y}_{\alpha_1}s\}}
_{\min\{y_{I}+\dot{\mathcal{S}}_{1}(\beta_1)s,y_{I}+\dot{y}_{\alpha_1}s\}}
\big|\mathcal{P}_{h}(\varsigma+s,\varsigma)(U^{(\tau)}_{h,\nu}(\varsigma,\cdot))
-U^{(\tau)}_{h,\nu}(\varsigma+s,\cdot)|dy\\[5pt]
&\ \ \ +\int^{y_{I}+\hat{\lambda}h}
_{\max\{y_{I}+\dot{\mathcal{S}}_{1}(\beta_1)s,y_{I}+\dot{y}_{\alpha_1}s\}}
\big|\mathcal{P}_{h}(\varsigma+s,\varsigma)(U^{(\tau)}_{h,\nu}(\varsigma,\cdot))
-U^{(\tau)}_{h,\nu}(\varsigma+s,\cdot)|dy\\[5pt]
&\doteq I'_{\mathcal{S}}+I''_{\mathcal{S}}+I'''_{\mathcal{S}}.
\end{split}
\end{eqnarray}

Obviously, $I'_{\mathcal{S}}=0$.  For the second term $I''_{S}$, by Lemma \ref{lem:3.1}, we get either
\begin{eqnarray*}
\begin{split}
I''_{\mathcal{S}}&\leq \big|\dot{\mathcal{S}}(\beta_1)-\dot{y}_{\alpha_1}\big|\big|U_{M}-U_{L}\big|s\\[5pt]
&\leq \Big(\big|\dot{\mathcal{S}}_1(\beta_1)-\dot{\mathcal{S}}_1(\alpha_1,\tau^2)\big|
+\big|\dot{\mathcal{S}}_1(\alpha_1,\tau^2)-\dot{y}_{\alpha_1}\big|\Big)|\beta_1|s\\[5pt]
&\leq O(1)\Big(\big(1+|\alpha_{1}|\big)\tau^{2}+2^{-\nu}\Big)\big(1+O(1)\tau^2\big)|\alpha_1|s\\[5pt]
&\leq O(1)|\alpha_{1}|(\tau^{2}+2^{-\nu})s,
\end{split}
\end{eqnarray*}
or
\begin{eqnarray*}
\begin{split}
I''_{\mathcal{S}}&\leq \big|\dot{\mathcal{S}}(\beta_1)-\dot{y}_{\alpha_1}\big|\big|U_{R}-U_{L}\big|s\\[5pt]
&\leq O(1)\Big(\big(1+|\alpha_{1}|\big)\tau^{2}+2^{-\nu}\Big)|\alpha_1|s\\[5pt]
&\leq O(1)|\alpha_{1}|(\tau^{2}+2^{-\nu})s.
\end{split}
\end{eqnarray*}
For $I'''_{\mathcal{S}}$, by Lemma \ref{lem:3.1} again, we obtain
\begin{eqnarray*}
\begin{split}
I'''_{\mathcal{S}}\leq O(1)\big|U_{M}-U_{R}\big|s \leq O(1)|\alpha_{1}|\tau^{2}s.
\end{split}
\end{eqnarray*}

Therefore, estimate \eqref{eq:5.3} holds by combing the estimates on $I'_{\mathcal{S}}$, $I''_{\mathcal{S}}$, and $I'''_{\mathcal{S}}$, for $\tau$ sufficiently small.

\smallskip
$\rm(2)$ Without loss of the generality, we only consider the case $k=1$ again. Similar to \rm (1) in \eqref{eq:5.7}-\eqref{eq:5.8}, by Lemma \ref{lem:3.1}, we can get $\beta_j, (j=1,2)$ of Riemann solution $\mathcal{P}_{h}(\varsigma+s,\varsigma)(U^{(\tau)}_{h,\nu}(\varsigma,y))$,  satisfying \eqref{eq:5.8} with $\beta_1>0$. 

Let $U_M$ be the middle state of the Riemann solution $\mathcal{P}_{h}(\varsigma+s,\varsigma)(U^{(\tau)}_{h,\nu}(\varsigma,y))$ satisfying \eqref{eq:3.3} in Lemma \ref{lem:3.1}. Then
\begin{eqnarray*}
\begin{split}
\lambda_{1}(U_{M})&=\lambda_{1}(U_{L})+(\nabla_{U}\lambda_1\cdot \boldsymbol{r}_1)(U_L)\beta_1+O(1)\beta^{2}_{1}
=\lambda_{1}(U_{L})+\beta_1+O(1)\beta^{2}_{1},
\end{split}
\end{eqnarray*}
and
\begin{eqnarray*}
\begin{split}
\lambda_{1}(U_{R},\tau^2)&=\lambda_{1}(U_{L},\tau^2)+\alpha_1+O(1)\alpha^{2}_{1}.
\end{split}
\end{eqnarray*}

Thus,
\begin{align}\label{eq:5.10}
\begin{split}
\lambda_{1}(U_{M})-\lambda_{1}(U_{R},\tau^2)&=\lambda_{1}(U_L)-\lambda_{1}(U_L,\tau^2)+\beta_1-\alpha_1\\[5pt]
&\quad \ +O(1)(\beta_1-\alpha_1)^2+O(1)|\beta_1|^{2}\tau^2\\[5pt]
&=O(1)\big(1+|\alpha_1|\big)\tau^2.
\end{split}
\end{align}
and
\begin{eqnarray}\label{eq:5.11}
\begin{split}
\lambda_{1}(U_{R}, \tau^{2})-\lambda_1(U_L)&=\lambda_{1}(U_{L},\tau^{2})-\lambda_{1}(U_L)+\alpha_1+O(1)\alpha^{2}_1
=O(1)\big(\alpha_1+\tau^2\big),
\end{split}
\end{eqnarray}
where we used the fact that $\lambda_{1}(U_{L},\tau^{2})\big|_{\tau=0}=\lambda_{1}(U_L)$.

As done in \eqref{5.10}, we can rewrite $\int^{y_{I}+\hat{\lambda}s}_{y_{I}-\hat{\lambda}s}|\mathcal{P}_{h}(\varsigma+s, \varsigma)(U^{(\tau)}_{h,\nu}(\varsigma,\cdot))-U^{(\tau)}_{h,\nu}(\varsigma+s,\cdot)|dy$
into the summations of three terms. That is
\begin{eqnarray} \int^{y_{I}+\hat{\lambda}s}_{y_{I}-\hat{\lambda}h}\big|\mathcal{P}_{h}(\varsigma+s,\varsigma)(U^{(\tau)}_{h,\nu}(\varsigma,\cdot))
-U^{(\tau)}_{h,\nu}(\varsigma+s,\cdot)|dy\doteq I'_{\mathcal{R}}+I''_{\mathcal{R}}+I'''_{\mathcal{R}}.
\end{eqnarray}
First, by \eqref{eq:5.8}, we have
\begin{eqnarray*}
\begin{split}
I'_{\mathcal{R}}\doteq\int^{\min\{y_{I}+\dot{y}_{\alpha_1}s, y_{I}+\lambda_{1}(U_{L})s\}}_{y_\alpha-\hat{\lambda}s}
\big|\mathcal{P}_{h}(\varsigma+s,\varsigma)(U^{(\tau)}_{h,\nu}(\varsigma,\cdot))-U^{(\tau)}_{h,\nu}(\varsigma+s,\cdot)\big|dy=0,
\end{split}
\end{eqnarray*}
and
\begin{eqnarray*}
\begin{split}
I'''_{\mathcal{R}}&\doteq\int^{y_{I}+\hat{\lambda}s}_{\max\{y_{I}+\dot{y}_{\alpha_1}s, y_{I}+\lambda_{1}(U_{M})s\}}
\big|\mathcal{P}_{h}(\varsigma+s,\varsigma)(U^{(\tau)}_{h,\nu}(\varsigma,\cdot))-U^{(\tau)}_{h,\nu}(\varsigma+s,\cdot)\big|dy\\[5pt]
&\leq O(1)\hat{\lambda}|U_{M}-U_{R}|s\\[5pt]
&\leq O(1)|\beta_2|\leq O(1)|\alpha_{1}|\tau^{2}.
\end{split}
\end{eqnarray*}

Next, the second term $I''_{\mathcal{R}}$, that is
\begin{eqnarray*}
\begin{split}
I''_{\mathcal{R}}\doteq\int^{\max\{y_{I}+\dot{y}_{\alpha_1}s, y_{I}+\lambda_{1}(U_{M})s\}}_{\min\{y_{I}+\dot{y}_{\alpha_1}s, y_{I}+\lambda_{1}(U_{L})s\}}\big|\mathcal{P}_{h}(\varsigma+s, \varsigma)(U^{(\tau)}_{h,\nu}(\varsigma,\cdot))
-U^{(\tau)}_{h, \nu}(\varsigma+s, \cdot)\big|dy,
\end{split}
\end{eqnarray*}
will be considered by three cases.

$\emph{Case 1}$: $\dot{y}_{\alpha_1}\geq \lambda_{1}(U_{M})$ (see Fig.\ \ref{fig5.2}). By Proposition \ref{prop:4.1}, estimate \eqref{eq:5.10}, and the properties of rarefaction waves $\beta_1$, we have
\begin{eqnarray*}
\begin{split}
I''_{\mathcal{R}}&=\int^{y_{I}+\dot{y}_{\alpha_1}s}_{y_{I}+\lambda_{1}(U_{L})s}
\big|\mathcal{P}_{h}(\varsigma+s, \varsigma)(U^{(\tau)}_{h,\nu}(\varsigma,\cdot))
-U^{(\tau)}_{h, \nu}(\varsigma+s, \cdot)\big|dy\\[5pt]
&\leq|\dot{y}_{\alpha_1}-\lambda_1(U_L)||U_{L}-U_{M}|s\\[5pt]
&\leq O(1)\Big(\big|\lambda_{1}(U_{R},\tau^2)- \lambda_{1}(U_{L})\big|+2^{-\nu}\Big)|\beta_1|s\\[5pt]
&\leq O(1)\big(\tau^{2}+\nu^{-1}+2^{-\nu}\big)|\alpha_1|s.
\end{split}
\end{eqnarray*}

\vspace{-3mm}
\begin{figure}[ht]
\begin{center}
\begin{tikzpicture}[scale=0.7]
\draw [line width=0.05cm](-4,3.5)--(-4,-4.0);
\draw [line width=0.05cm](0.5,3.5)--(0.5,-4.0);

\draw [thin](-4,0)--(0.5, 2.5);
\draw [thick](-4,0)--(0.5, 0.6);
\draw [thick][blue](-4,0)--(0.5, -2.0);
\draw [thick][blue](-4,0)--(0.5, -2.3);
\draw [thick][blue](-4,0)--(0.5, -2.6);
\draw [thick][blue](-4,0)--(0.5, -2.9);
\draw [thick][red](-4,0)--(0.5, -1.2);
\draw [thin](-4,0)--(0.5, -3.8);

\draw [thin][<->](-3.9,3.0)--(0.4,3.0);

\node at (2.6, 2) {$$};
\node at (-1.8, 4.0){$s$};
\node at (-4.9, 0) {$(\varsigma, y_{I})$};
\node at (-4, -4.9) {$x=\varsigma$};
\node at (0.5, -4.9) {$x=\varsigma+s$};

\node at (2.1, 2.4) {$y=y_{I}-\hat{\lambda}s$};
\node at (1.0, 0.5) {$\beta_{2}$};
\node at (1.0, -1.2) {$\alpha_1$};
\node at (1.0, -2.6) {$\beta_{1}$};
\node at (2.1, -3.8) {$y=y_{I}-\hat{\lambda}s$};

\node at (-1.8, 2.2){$U_{R}$};
\node at (-1.6, -0.2){$U_{M}$};
\node at (-1.8, -3.0){$U_{L}$};

\end{tikzpicture}
\end{center}
\caption{Comparison of Riemann solvers for $\alpha_1\in \mathcal{R}^{(\tau)}$ and $\dot{y}_{\alpha_1}\geq \lambda_{1}(U_{M})$}\label{fig5.2}
\end{figure}

$\emph{Case 2}$: $\lambda_{1}(U_{L})<\dot{y}_{\alpha_1}< \lambda_{1}(U_{M})$ (see Fig.\ \ref{fig5.3}).
By estimate \eqref{eq:5.11}, Proposition \ref{prop:4.1}, and the properties of rarefaction waves $\beta_1$, we have
\begin{eqnarray*}
\begin{split}
I''_{\mathcal{R}}&=\int^{y_{I}+\lambda_{1}(U_M)s}_{y_{I}+\lambda_{1}(U_{L})s}
\big|\mathcal{P}_{h}(\varsigma+s,\varsigma)(U^{(\tau)}_{h,\nu}(\varsigma,y))-U^{(\tau)}_{h, \nu}(\varsigma+s, y)\big|dy\\[5pt]
&=\Bigg(\int^{y_{I}+\lambda_{1}(U_M)s}_{y_{I}+\dot{y}_{\alpha_1}s}
+\int^{y_{I}+\dot{y}_{\alpha_1}s}_{y_{I}+\lambda_{1}(U_{L})s}\Bigg)
\big|\mathcal{P}_{h}(\varsigma+s,\varsigma)(U^{(\tau)}_{h,\nu}(\varsigma,y))-U^{(\tau)}_{h, \nu}(\varsigma+s, y)\big|dy  \\[5pt]
&\leq |\dot{y}_{\alpha_1}-\lambda_1(U_L)||U_{M}-U_{L}|s+|\dot{y}_{\alpha_1}-\lambda_1(U_M)||U_{R}-U_{L}|s \\[5pt]
&\leq |\lambda_1(U_R, \tau^2)-\lambda_1(U_L)||\beta_1|s+|\lambda_1(U_R, \tau^2)-\lambda_1(U_M)||\alpha_1|s
+O(1)2^{-\nu}\big(|\beta_1|+|\alpha_1|\big)s\\[5pt]
&\leq O(1)\big(\tau^2+\nu^{-1}+2^{-\nu}\big)|\alpha_1|s.
\end{split}
\end{eqnarray*}

\vspace{-3mm}
\begin{figure}[ht]
\begin{center}
\begin{tikzpicture}[scale=0.7]
\draw [line width=0.05cm](-4,3.1)--(-4,-4.1);
\draw [line width=0.05cm](0.5,3.1)--(0.5,-4.1);

\draw [thin](-4,0)--(0.5, 2.2);
\draw [thick](-4,0)--(0.5, 0.6);
\draw [thick][blue](-4,0)--(0.5, -1.1);
\draw [thick][red](-4,0)--(0.5, -1.5);
\draw [thick][blue](-4,0)--(0.5, -2.3);
\draw [thick][blue](-4,0)--(0.5, -1.9);
\draw [thin](-4,0)--(0.5, -3.4);

\draw [thin][<->](-3.9,2.7)--(0.4,2.7);

\node at (2.6, 2) {$$};
\node at (-1.8, 3.2){$s$};
\node at (-4.9, 0) {$(\varsigma, y_{I})$};
\node at (-4, -4.9) {$x=\varsigma$};
\node at (0, -4.9) {$x=\varsigma+s$};

\node at (2.2, 2.2) {$y=y_{I}+\hat{\lambda}s$};
\node at (1.1, 0.5) {$\beta_{2}$};
\node at (1.1, -1.6) {$\alpha_1$};
\node at (1.1, -2.4) {$\beta_{1}$};
\node at (2.2, -3.8) {$y=y_{I}-\hat{\lambda}h$};

\node at (-1.6, 2.0){$U_{R}$};
\node at (-1.4, -0.2){$U_{M}$};
\node at (-1.6, -3.0){$U_{L}$};

\end{tikzpicture}
\end{center}
\caption{Comparison of Riemann solvers for $\alpha_1\in \mathcal{R}^{(\tau)}$  and $\lambda_{1}(U_{L})<\dot{y}_{\alpha_1}< \lambda_{1}(U_{M})$}\label{fig5.3}
\end{figure}
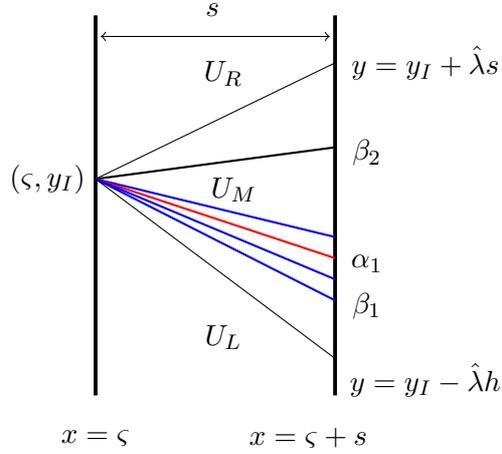

$\emph{Case 3}$: $\dot{y}_{\alpha_1}<\lambda_1(U_L)$ (see Fig.\ \ref{fig5.4}). By Proposition \ref{prop:4.1}, estimate \eqref{eq:5.10}, and the properties of rarefaction waves $\beta_1$ again, we have
\begin{eqnarray*}
\begin{split}
I''_{\mathcal{R}}&=\int^{y_{I}+\lambda_{1}(U_{M})s}_{y_{I}+\dot{y}_{\alpha_1}s}
\big|\mathcal{P}_{h}(\varsigma+s, \varsigma)(U^{(\tau)}_{h,\nu}(\varsigma,\cdot))-U^{(\tau)}_{h, \nu}(\varsigma+s, \cdot)\big|dy\\[5pt]
&\leq \big|\lambda_{1}(U_{M})-\dot{y}_{\alpha_1}\big|\Big(|U_{R}-U_{L}|+|U_{M}-U_{L}|\Big)s\\[5pt]
&\leq O(1)\Big(\big|\lambda_{1}(U_{M})-\lambda_{1}(U_{R},\tau^2)\big|+2^{-\nu}\Big)\big(|\alpha_1+|\beta_1|\big)s\\[5pt]
&\leq O(1)\big(\tau^2+2^{-\nu}\big)|\alpha_1|s.
\end{split}
\end{eqnarray*}

Combing all these three cases altogether, we have $I''_{\mathcal{R}}\leq O(1)\big(\tau^{2}+\nu^{-1}+2^{-\nu}\big)|\alpha_{1}|s$.
Hence, with the estimates on $I'_{\mathcal{R}}$, $I''_{\mathcal{R}}$ and $I'''_{\mathcal{R}}$, we can get estimate \eqref{eq:5.5}.

\begin{figure}[ht]
\begin{center}
\begin{tikzpicture}[scale=0.7]
\draw [line width=0.05cm](-4,3.2)--(-4,-4.3);
\draw [line width=0.05cm](0.5,3.2)--(0.5,-4.3);

\draw [thin](-4,0)--(0.5, 2.1);
\draw [thick](-4,0)--(0.5, 0.6);
\draw [thick][blue](-4,0)--(0.5, -1.3);
\draw [thick][blue](-4,0)--(0.5, -1.6);
\draw [thick][blue](-4,0)--(0.5, -1.9);
\draw [thick][blue](-4,0)--(0.5, -2.2);
\draw [thick][red](-4,0)--(0.5, -2.9);
\draw [thin](-4,0)--(0.5, -3.8);

\draw [thin][<->](-3.9,2.8)--(0.4,2.8);

\node at (2.6, 2) {$$};
\node at (-1.8, 3.2){$s$};
\node at (-4.9, 0) {$(\varsigma, y_{I})$};
\node at (-4, -4.9) {$x=\varsigma$};
\node at (0.5, -4.9) {$x=\varsigma+s$};

\node at (2.2, 2.3) {$y=y_{I}+\hat{\lambda}s$};
\node at (1.1, 0.5) {$\beta_{2}$};
\node at (1.1, -2.9) {$\alpha_1$};
\node at (1.1, -1.6) {$\beta_{1}$};
\node at (2.2, -3.8) {$y=y_{I}-\hat{\lambda}s$};

\node at (-1.6, 1.8){$U_{R}$};
\node at (-1.4, -0.2){$U_{M}$};
\node at (-1.6, -3.2){$U_{L}$};
\end{tikzpicture}
\end{center}
\caption{Comparison of Riemann solvers for $\alpha_1\in \mathcal{R}^{(\tau)}$ and $\dot{y}_{\alpha_1}<\lambda_{1}(U_{L})$}\label{fig5.4}
\end{figure}

$\rm(3)$ The front in $U^{(\tau)}_{h,\nu}(\varsigma+s, y)$ is a non-physical wave $\alpha_{NP}$. In this case, as shown in Fig. \ref{fig5.5}, the solution of Riemann problem $\mathcal{P}_h(\varsigma+s, \varsigma)(U^{(\tau)}_{h,\nu}(\varsigma, y))$ satisfies
\begin{eqnarray*}
\alpha_{\mathcal{NP}}=|U_R-U_{L}|, \quad \mbox{and}\quad U_{R}=\Phi(\beta_1,\beta_2; U_L).
\end{eqnarray*}

So, by Proposition \ref{prop:3.1}, we can further obtain
\begin{eqnarray}\label{eq:5.12}
\beta_{j}=O(1)\alpha_{NP},\quad j=1,2.
\end{eqnarray}

Denoted the middle state of $\mathcal{P}_h(\varsigma+s, \varsigma)(U^{(\tau)}_{h,\nu}(\varsigma, y))$ as $U_{M}$.
Then, by Proposition \ref{prop:4.2} and \eqref{eq:5.12}, we get
\begin{eqnarray*}
\begin{split}
&\int^{y_{I}+\hat{\lambda}s}_{y_{I}-\hat{\lambda}s}\big|\mathcal{P}(\varsigma+s, \varsigma)(U^{(\tau)}_{h,\nu}(\varsigma,\cdot))-U^{(\tau)}_{h,\nu}(\varsigma+s,\cdot)\big|dy\\[5pt]
&\ \ \qquad \leq O(1)\Big(|U_{R}-U_{L}|+|U_{M}-U_{L}|\Big)\\[5pt]
&\ \ \qquad \leq O(1)\Big(|\alpha_1|+\sum^{2}_{k=1}|\beta_{k}|\Big)s\\[5pt]
&\ \ \qquad \leq O(1)|\alpha_1|s.
\end{split}
\end{eqnarray*}

\vspace{-3mm}
\begin{figure}[ht]
\begin{center}
\begin{tikzpicture}[scale=0.7]
\draw [line width=0.05cm](-4,4)--(-4,-3.5);
\draw [line width=0.05cm](0.5,4)--(0.5,-3.5);

\draw [thick][red](-4,0)--(0.5, 2.5);
\draw [thick](-4,0)--(0.5, 0.6);
\draw [thick](-4,0)--(0.5, -1.3);
\draw [thin](-4,0)--(0.5, -2.9);

\draw [thin][<->](-3.9,3.5)--(0.4,3.5);

\node at (2.6, 2) {$$};
\node at (-1.8, 3.9){$s$};
\node at (-4.9, 0) {$(\varsigma, y_{I})$};
\node at (-4, -4.5) {$x=\varsigma$};
\node at (0, -4.5) {$x=\varsigma+s$};

\node at (1.2, 2.4) {$\alpha_{\mathcal{NP}}$};
\node at (1.2, 0.5) {$\beta_{2}$};
\node at (1.2, -1.6) {$\beta_{1}$};
\node at (2.2, -2.9) {$y=y_{I}-\hat{\lambda}s$};

\node at (-1.6, 2.4){$U_{R}$};
\node at (-1.4, -0.2){$U_{M}$};
\node at (-1.6, -2.8){$U_{L}$};

\end{tikzpicture}
\end{center}
\caption{Comparison of the Riemann solvers for $\alpha_{\mathcal{NP}}\in \mathcal{NP}^{(\tau)}$}\label{fig5.5}
\end{figure}
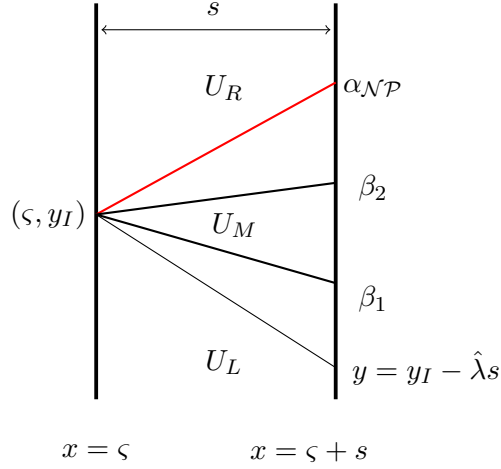

It completes the proof of this proposition.
\end{proof}

As a corollary, we consider the case that $U^{(\tau)}_{h,\nu}$ has more than one discontinuities.
\begin{corollary}\label{coro:5.1}
Let $U^{(\tau)}_{h,\nu}(\varsigma,y)$ be a piecewise constant function defined in \eqref{eq:5.1} with $U_{L}, U_M, \hat{U}_{R}, U_{R}\in \mathcal{O}_{\min\{C_1\epsilon^*_0, C_3\epsilon^*_1\}}(\underline{U})$
for $\tau\in(0,\tau^{*}_{0})$. Let $\mathcal{P}_{h}$ be the uniformly Lipschtiz continuous map given by Proposition \ref{prop:4.2}.
Let $\hat{\lambda}$ be a fixed constant satisfying $\hat{\lambda}>\max\{\lambda_{j}(U^{(\tau)},\tau^{2}), \lambda_{j}(U)\}$
for all $U^{(\tau)}, U\in \mathcal{O}_{\min\{C_1\epsilon^*_0, C_3\epsilon^*_1\}}(\underline{U})$, $\tau\in(0,\tau^*_0)$ and $j=1,2$.
Denotes
\begin{eqnarray}\label{eq:5.13}
U^{(\tau)}_{h,\nu}(\varsigma+s, y)=
\left\{
\begin{array}{llll}
U_{R}, \quad & y>y_{I}+\hat{\lambda}s,\\[5pt]
\hat{U}_{R}, \quad & y_{I}+\dot{y}_{\alpha_2}s<y<y_{I}+\hat{\lambda}s,\\[5pt]
U_{M},\quad & y_{I}+\dot{y}_{\alpha_1}s<y<y_{I}+\dot{y}_{\alpha_1}s,\\[5pt]
U_L, \quad & y<y_I+\dot{y}_{\alpha_1}s,
\end{array}
\right.
\end{eqnarray}
where $|\dot{y}_{\alpha_k}|\leq \hat{\lambda},\, (k=1,2)$. Then, for sufficiently small $s>0$,
if $U_{L}$, $U_{M}$ are connected by $1$th-shock wave $\alpha_1\in \mathcal{S}^{(\tau)}_1$ (or $1$th-rarefaction front $\alpha_1\in \mathcal{R}^{(\tau)}_1$),
and $U_{M}$, $U_{R}$ are connected by $2$th-shock wave $\alpha_2\in \mathcal{S}^{(\tau)}_2$ (or $2$th-rarefaction front $\alpha_2\in \mathcal{R}^{(\tau)}_2$),
and $\hat{U}_{R}$, $U_{R}$ are connected by non-physical wave front $\alpha_{\mathcal{NP}}\in \mathcal{NP}^{(\tau)}$ for $|\dot{y}_{\alpha_1}-\dot{\mathcal{S}}_{1}(\alpha_1,\tau^2)|<2^{-\nu}$ (or $|\dot{y}_{\alpha_1}-\lambda_{1}(U_{M}, \tau^2)|<2^{-\nu}$),
and $|\dot{y}_{\alpha_2}-\dot{\mathcal{S}}_{2}(\alpha_2,\tau^2)|<2^{-\nu}$ (or $|\dot{y}_{\alpha_2}-\lambda_{2}(\hat{U}_{R}, \tau^2)|<2^{-\nu}$),
then
\begin{eqnarray}\label{eq:5.14}
\begin{split}
&\int^{y_{I}+\hat{\lambda}s}_{y_{I}-\hat{\lambda}s}\big|\mathcal{P}_{h}(\varsigma+s,\varsigma)(U^{(\tau)}_{h,\nu}(\varsigma))
-U^{(\tau)}_{h,\nu}(\varsigma+s)\big|dy\\[5pt]
&\quad\quad\quad\quad  \leq C_{5}\Big(\big(\tau^{2}+\nu^{-1}+2^{-\nu}\big)\big(|\alpha_{1}|+|\alpha_2|\big)+\alpha_{\mathcal{NP}}\Big)s,
\end{split}
\end{eqnarray}
where $\dot{\mathcal{S}}_{1}(\alpha_1, \tau^2)$ (or $\lambda_{1}(U_{M}, \tau^2)$) and $\dot{\mathcal{S}}_{2}(\alpha_2, \tau^2)$ (or $\lambda_{2}(\hat{U}_{R}, \tau^2)$) are the speeds of the $1$th-shock wave (or $1$th-rarefaction front) and the $2$th-shock wave (or $2$th-rarefaction front), and the $\hat{\lambda}$
is the speed of the non-physical wave. Here, the constants $C_1>0$, $C_3>0$ are given by Proposition \ref{prop:4.1} and \ref{prop:4.3}, respectively,
and constant $C_{5}>0$ depends only on $\underline{U}$ and $a_{\infty}$ but not on $\tau, h, \nu$.
\end{corollary}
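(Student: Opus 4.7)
The plan is to reduce Corollary~\ref{coro:5.1} to Lemma~\ref{lem:5.1} via an auxiliary Riemann solution and a triangle inequality. The key observation is that although $U^{(\tau)}_{h,\nu}(\varsigma+s,\cdot)$ carries three fronts, at $x=\varsigma$ the function $U^{(\tau)}_{h,\nu}(\varsigma,\cdot)$ has only a single jump at $y_I$ with left state $U_L$ and right state $U_R$. Consequently, by Proposition~\ref{prop:4.2}, $\mathcal{P}_h(\varsigma+s,\varsigma)(U^{(\tau)}_{h,\nu}(\varsigma,\cdot))$ is the Lax admissible solution of the reduced $(\tau=0)$ Riemann problem with data $(U_L,U_R)$, whose two outgoing elementary waves $\beta_1,\beta_2$ satisfy $\Phi(\beta_1,\beta_2;U_L)=U_R$. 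Since $\hat{U}_R=\Phi(\alpha_1,\alpha_2;U_L,\tau^2)$ and $|U_R-\hat{U}_R|=\alpha_{\mathcal{NP}}$, estimate~\eqref{eq:3.13} of Proposition~\ref{prop:3.2} gives
\begin{equation*}
\beta_j=\alpha_j+O(1)(|\alpha_1|+|\alpha_2|)\tau^2+O(1)\alpha_{\mathcal{NP}},\qquad j=1,2.
\end{equation*}

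I would then introduce an auxiliary piecewise constant function $W(y)$ on $(y_I-\hat{\lambda}s,\,y_I+\hat{\lambda}s)$ defined as the Lax admissible solution of the reduced $(\tau=0)$ Riemann problem with data $(U_L,\hat{U}_R)$. Its two outgoing waves $\tilde{\beta}_1,\tilde{\beta}_2$ are of the same types (shock or rarefaction) as $\beta_1,\beta_2$ provided $\alpha_{\mathcal{NP}}$ is small, and by estimate~\eqref{eq:3.12} they satisfy $\tilde{\beta}_j=\alpha_j+O(1)(|\alpha_1|+|\alpha_2|)\tau^2$. By the triangle inequality,
\begin{equation*}
\int_{y_I-\hat{\lambda}s}^{y_I+\hat{\lambda}s}\bigl|\mathcal{P}_h(\varsigma+s,\varsigma)(U^{(\tau)}_{h,\nu}(\varsigma,\cdot))-U^{(\tau)}_{h,\nu}(\varsigma+s,\cdot)\bigr|\,dy\le\mathcal{A}+\mathcal{B},
\end{equation*}
where $\mathcal{A}$ and $\mathcal{B}$ denote the $L^1$-differences between $\mathcal{P}_h(\varsigma+s,\varsigma)(U^{(\tau)}_{h,\nu}(\varsigma,\cdot))$ and $W$, and between $W$ and $U^{(\tau)}_{h,\nu}(\varsigma+s,\cdot)$, respectively. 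For $\mathcal{A}$, the two functions are Lax solutions of the same reduced system whose right states differ by $\alpha_{\mathcal{NP}}$; applying Proposition~\ref{prop:3.2} with $\tau=0$ yields $|\beta_j-\tilde{\beta}_j|=O(1)\alpha_{\mathcal{NP}}$, the corresponding wave speeds differ by the same order, and the piecewise constant state values therefore differ by $O(1)\alpha_{\mathcal{NP}}$, so integrating over the strip of width $2\hat{\lambda}s$ produces $\mathcal{A}\le O(1)\alpha_{\mathcal{NP}}s$.

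To handle $\mathcal{B}$, I exploit that near $\underline{U}$ the two characteristic families are strictly separated in speed by Lemma~\ref{lem:2.2} and Remark~\ref{rem:2.1}, so there exists $\lambda^*$ lying in the gap between the 1-family and 2-family speeds. Splitting the domain at $y=y_I+\lambda^*s$ isolates the $\alpha_1,\tilde{\beta}_1$-fronts on the lower piece and the $\alpha_2,\tilde{\beta}_2$-fronts on the upper piece, while the non-physical front at the very top contributes nothing since $\dot{y}_{\alpha_{\mathcal{NP}}}=\hat{\lambda}$ makes the slab $\{y>y_I+\dot{y}_{\alpha_{\mathcal{NP}}}s\}$ empty inside the integration strip. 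On each of the two pieces the comparison reduces to precisely the single-wave situation of Lemma~\ref{lem:5.1}: with $|\tilde{\beta}_k-\alpha_k|=O((|\alpha_1|+|\alpha_2|)\tau^2)$ and the speed-threshold hypothesis $|\dot{y}_{\alpha_k}-\dot{\mathcal{S}}_k(\alpha_k,\tau^2)|<2^{-\nu}$ (or its rarefaction analogue), case (1) or case (2) of Lemma~\ref{lem:5.1} delivers a contribution of $O(1)(\tau^2+\nu^{-1}+2^{-\nu})|\alpha_k|s$. Summing gives $\mathcal{B}\le O(1)(\tau^2+\nu^{-1}+2^{-\nu})(|\alpha_1|+|\alpha_2|)s$, and combining with the bound on $\mathcal{A}$ establishes~\eqref{eq:5.14}. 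I expect the principal bookkeeping obstacle to be the rarefaction case of Lemma~\ref{lem:5.1}(2), where $\alpha_k\in\mathcal{R}^{(\tau)}_k$ is partitioned into sub-fronts of strength at most $\nu^{-1}$ and three sub-cases must be distinguished according to where each front speed $\dot{y}_{\alpha_k}$ sits relative to the $\tilde{\beta}_k$-fan of $W$; however, this is handled identically to the single-wave proof once the two characteristic families have been separated.
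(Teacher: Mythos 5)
Your proposal is correct and is essentially the argument the paper intends: the paper states Corollary \ref{coro:5.1} without a written proof, as a direct extension of Lemma \ref{lem:5.1} in which Proposition \ref{prop:3.2} (estimate \eqref{eq:3.13}) supplies the relation $\beta_j=\alpha_j+O(1)(|\alpha_1|+|\alpha_2|)\tau^2+O(1)\alpha_{\mathcal{NP}}$ and the two characteristic families are treated separately exactly as you do. Your auxiliary Riemann solution $W$ with data $(U_L,\hat{U}_R)$ is a clean bookkeeping device that isolates the $O(1)\alpha_{\mathcal{NP}}s$ contribution before reducing each family to the single-front cases of Lemma \ref{lem:5.1}, and the resulting bound matches \eqref{eq:5.14}.
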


Now, we turn to the comparison between the Riemann solutions with the boundary. We first consider 
the location near the approximate boundary $\Gamma_{h}$ but away from the corner point. 
Assume the approximate $U^{(\tau)}_{h,\nu}$
of the initial-boundary value problem \eqref{eq:1.16}-\eqref{eq:1.18} has only one front $y_{\alpha_1}$ with strength $\alpha_1$, coming from corner point $A_{k}=(x_k, b_k)$ with $\omega_{k}=\theta_{k+1}-\theta_{k}$ 
for $k\geq 0$.
Let $(\varsigma, y_B)$ be a point on the front $y_{\alpha_1}$ away from $A_k$. 
Set
\begin{eqnarray}\label{eq:5.15}
U_{L}=(\rho_{L}, v_{L})^{\top}\doteq U^{(\tau)}_{h,\nu}(\varsigma, y_{B}-),\quad U_{B}=(\rho_{B}, v_{B})^{\top}\doteq U^{(\tau)}_{h,\nu}(\varsigma, {\color{red}y_{B}}+),
\end{eqnarray}
with
\begin{eqnarray}\label{eq:5.16}
\big(1+\tau^2 u_{L}, v_{L}\big)\cdot \mathbf{n}_{k}=0,\quad \big(1+\tau^2 u_{B}, v_{B}\big)\cdot\mathbf{n}_{k+1}=0,
\end{eqnarray}
where $\mathbf{n}_{k}=(\sin(\theta_{k}),-\cos(\theta_{k})), (k\geq 0)$ and $u_{j}=u_{j}(\rho_{j}, v_j,\tau^2), (j=L, B)$, are given by relation \eqref{eq:1.12}.

Define
\begin{eqnarray}\label{eq:5.17}
U^{(\tau),B}_{h,\nu}(\varsigma,y)=
\left\{
\begin{array}{llll}
U_{B}, \quad & y_{B}<y<b_{h}(\varsigma),\\[5pt]
U_L, \quad & y<y_{B}.
\end{array}
\right.
\end{eqnarray}

Then, we have
\begin{lemma}\label{lem:5.2}
Let $U^{(\tau), B}_{h,\nu}(\varsigma,y)$ be a piecewise constant function defined in \eqref{eq:5.17} with $U_{L}, U_{B}\in \mathcal{O}_{\min\{C_{1}\epsilon^*_0, C_3\epsilon^*_1\}}(\underline{U})$ satisfying \eqref{eq:5.16} for $\tau\in(0,\tau^{*}_{0})$.
Let $\mathcal{P}_{h}$ be a uniformly Lipschtiz continuous map obtained by Proposition \ref{prop:4.2}.
Let $\hat{\lambda}$ be a fixed constant with $\hat{\lambda}>\max\{\lambda_{j}(U^{(\tau)},\tau^{2}), \lambda_{j}(U)\}$
for all $U^{(\tau)}, U\in \mathcal{O}_{\min\{C_1\epsilon^*_0, C_3\epsilon^*_1\}}(\underline{U})$, $\tau\in(0,\tau^*_0)$ and $j=1,2$.
Denotes
\begin{eqnarray}\label{eq:5.18}
U^{(\tau),B}_{h,\nu}(\varsigma+s, y)=
\left\{
\begin{array}{llll}
U_{B}, \quad & y_{B}+\dot{y}_{\alpha_1}s<y<b_{h}(\varsigma+s),\\[5pt]
U_L, \quad & y<y_{B}+\dot{y}_{\alpha_1}s,
\end{array}
\right.
\end{eqnarray}
where $|\dot{y}_{\alpha_1}|\leq \hat{\lambda}$. Then, for sufficiently small $s>0$,

\rm (1b)\ if $\omega_{k}=\theta_{k+1}-\theta_{k}<0$, that is, $U_{L}$ and $U_{B}$ are connected by $1$-shock wave $\alpha_1\in \mathcal{S}^{(\tau)}_1$ with $U_{B}=\Phi_{1}(\alpha_1; U_{L},\tau^2)$ for $\alpha_1<0$ and $|\dot{y}_{\alpha_1}-\dot{\mathcal{S}}_{1}(\alpha_1, \tau^2)|<2^{-\nu}$,
then
\begin{eqnarray}\label{eq:5.19}
\begin{split} \int^{y_B+\hat{\lambda}s}_{y_B-\hat{\lambda}s}\big|\mathcal{P}_{h}(\varsigma+s,\varsigma)(U^{(\tau),B}_{h,\nu}(\varsigma,\cdot))
-U^{(\tau),B}_{h,\nu}(\varsigma+s,\cdot)\big|dy
\leq C_{5}|\omega_{k}|\big(\tau^{2}+2^{-\nu}\big),
\end{split}
\end{eqnarray}
where $\dot{\mathcal{S}}_{1}(\alpha_1, \tau^2)$ is the speed of the $1$th-shock wave,

\rm (2b)\ if $\omega_{k}=\theta_{k+1}-\theta_{k}>0$, that is, $U_{L}$ and $U_{B}$ are connected by $1$th-rarefaction front $\alpha_1\in \mathcal{R}^{(\tau)}_1$ with $U_{B}=\Phi_{1}(\alpha_1; U_{L},\tau^2)$ for $\alpha_1>0$ and $|\dot{y}_{\alpha_1}-\lambda_{1}({\color {red} U_{B}}, \tau^2)|<2^{-\nu}$, then
\begin{eqnarray}\label{eq:5.20}
\begin{split} \int^{y_B+\hat{\lambda}s}_{y_B-\hat{\lambda}s}\big|\mathcal{P}_{h}(\varsigma+s,\varsigma)(U^{(\tau),B}_{h,\nu}(\varsigma))
-U^{(\tau),B}_{h,\nu}(\varsigma+s)\big|dy
\leq C_{5}|\omega_{k}|\big(\tau^{2}+\nu^{-1}+2^{-\nu}\big)s,
\end{split}
\end{eqnarray}
where $\lambda_{1}(U_{B}, \tau^2)$ is the speed of the $1$th-rarefaction front.
Here, constant $C_{5}>0$ depends only on $\underline{U}$ and $a_{\infty}$, but not on $\tau, h, \nu$.
\end{lemma}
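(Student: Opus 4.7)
The plan is to reduce Lemma \ref{lem:5.2} to the two interior estimates already handled in Lemma \ref{lem:5.1} by (i) using Proposition \ref{prop:3.3} to quantify the two-wave $\tau=0$ Riemann solution at $y_B$ that $\mathcal{P}_h$ builds from $(U_L,U_B)$, and (ii) verifying a geometric separation that keeps the wall-generated $1$-wave produced by $\mathcal{P}_h$ out of the integration window $(y_B-\hat\lambda s,\,y_B+\hat\lambda s)$ for all sufficiently small $s$.

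First, since both $U_L$ and $U_B$ are $\tau$-compatible with their respective wall slopes $\theta_k$ and $\theta_{k+1}$ and are connected by the single $\tau$-wave $U_B=\Phi_1(\alpha_1;U_L,\tau^2)$, Proposition \ref{prop:3.3} produces unique $(\beta_1,\beta_2)$ solving $U_B=\Phi(\beta_1,\beta_2;U_L)$ with
\begin{equation*}
\beta_1=\alpha_1+O(|\alpha_1|)\tau^2,\qquad \beta_2=O(|\alpha_1|)\tau^2.
\end{equation*}
Combining this with $\alpha_1=K^{(\tau)}_c\omega_k$ from Lemma \ref{lem:3.2} yields $|\beta_1|=O(|\omega_k|)$ and $|\beta_2|=O(|\omega_k|\tau^2)$. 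Moreover, since $(\varsigma,y_B)$ lies on the front strictly away from $A_k$, the gap $b_h(\varsigma)-y_B$ is bounded below by a positive constant, so for $s>0$ small the wall-emitted $1$-wave (traveling downward at speed $\lambda_1(U_B)+O(\tau^2)$) stays strictly above $y_B+\hat\lambda s$, and only the interior waves $\beta_1,\beta_2$ enter the integration window. On this window, $\mathcal{P}_h(\varsigma+s,\varsigma)(U^{(\tau),B}_{h,\nu}(\varsigma,\cdot))$ therefore coincides with the $\tau=0$ three-state Lax solution $U_L\mapsto U_M\mapsto U_B$ with $U_M=\Phi_1(\beta_1;U_L)$.

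With this reduction, case (1b) ($\omega_k<0$) repeats the shock computation in the proof of Lemma \ref{lem:5.1}(1): a Taylor expansion of the shock-speed map combined with $\beta_1-\alpha_1=O(|\alpha_1|)\tau^2$ gives $\dot{\mathcal{S}}_1(\beta_1)-\dot{\mathcal{S}}_1(\alpha_1,\tau^2)=O((1+|\alpha_1|)\tau^2)$. Splitting the window into the three strips determined by $\dot{\mathcal{S}}_1(\beta_1)$ and $\dot{y}_{\alpha_1}$, the middle strip contributes $O(|\alpha_1|(\tau^2+2^{-\nu})s)$ via this speed discrepancy together with the $2^{-\nu}$ speed perturbation, while the $U_M$-to-$U_B$ strip contributes $|\beta_2|\hat\lambda s=O(|\omega_k|\tau^2 s)$, producing \eqref{eq:5.19}. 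Case (2b) ($\omega_k>0$) repeats the rarefaction argument of Lemma \ref{lem:5.1}(2), distinguishing whether $\dot{y}_{\alpha_1}$ lies below $\lambda_1(U_L)$, between $\lambda_1(U_L)$ and $\lambda_1(U_M)$, or above $\lambda_1(U_M)$; the $\nu^{-1}$ loss enters through the rarefaction-fan partition, and the total bound becomes $O(|\omega_k|(\tau^2+\nu^{-1}+2^{-\nu})s)$, giving \eqref{eq:5.20}.

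The only step genuinely new compared with Lemma \ref{lem:5.1} is the geometric separation that isolates the wall-generated wave from the integration window; I expect this to be the main—though mild—obstacle, since it pins down the admissible smallness of $s$ in terms of the gap $b_h(\varsigma)-y_B$. Once it is in place, the conversion from $|\alpha_1|$-weighted estimates to $|\omega_k|$-weighted ones is automatic via Lemma \ref{lem:3.2}, and no new cancellation or structural identity is required beyond what is already established in Section 3.
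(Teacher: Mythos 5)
Your proposal is correct and follows essentially the same route as the paper: the paper likewise obtains $(\beta_1,\beta_2)$ from $\Phi(\beta_1,\beta_2;U_L)=\Phi_1(\alpha_1;U_L,\tau^2)$ with the estimates \eqref{eq:3.7}/\eqref{eq:3.23}, repeats verbatim the shock/rarefaction case analysis of Lemma \ref{lem:5.1} on the window $(y_B-\hat\lambda s,\,y_B+\hat\lambda s)$, and converts $|\alpha_1|$ into $|\omega_k|$ via Lemma \ref{lem:3.2}. Your explicit verification that the wall-emitted $1$-wave stays outside that window for $s$ small is a point the paper leaves implicit, and your bound correctly carries the factor $s$ that appears to have been dropped by typo from the right-hand side of \eqref{eq:5.19}.
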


\begin{proof}
For \rm (1b), according to the construction of $\mathcal{P}_{h}$, with the help of the Proposition \ref{prop:3.1}, the solution $\mathcal{P}_{h}(\varsigma+s,\varsigma)(U^{(\tau),B}_{h,\nu}(\varsigma))$ can be solved from the equation
\begin{eqnarray*}
\Phi(\beta_1,\beta_2; U_{L})=\Phi_1(\alpha_1; U_L, \tau^2),
\end{eqnarray*}
which admits a unique $C^2$ solutions $\beta_1, \beta_2$ with respect to $\alpha_1, \tau^2$, satisfying estimates \eqref{eq:3.7}.
Then, we can follow the argument if the proof of Lemma \ref{lem:5.1} to have
\begin{eqnarray*}
\begin{split} \int^{y_B+\hat{\lambda}s}_{y_B-\hat{\lambda}s}\big|\mathcal{P}_{h}(\varsigma+s,\varsigma)(U^{(\tau),B}_{h,\nu}(\varsigma,\cdot))
-U^{(\tau),B}_{h,\nu}(\varsigma+s,\cdot)\big|dy
\leq O(1)|\alpha_{1}|\big(\tau^{2}+2^{-\nu}\big),
\end{split}
\end{eqnarray*}
where constant $O(1)$ depends only on $\underline{U}$ and $a_{\infty}$. Moreover, by \eqref{eq:5.16}, it follows from Lemma \ref{lem:3.2} that $|\alpha_1|=O(1)|\omega_{k}|$
for a constant $O(1)$ depending only on $\underline{U}$ and $a_{\infty}$.
So combing them together, we can get \eqref{eq:5.19} for case of shock front.

In the same way, we can also show the estimate \eqref{eq:5.20} for \rm (2b).
\end{proof}

Next, let us consider the comparison of
two approximate solutions $U^{(\tau)}_{h,\nu}$ to problem \eqref{eq:1.16}-\eqref{eq:1.18} and $U_{h,\nu}$ to problem \eqref{eq:1.20}-\eqref{eq:1.21} and \eqref{eq:1.11}, respectively, with the jump at the corner point $A_{k}=(x_k, b_k)$ 
for $k\geq 0$.
Denote
\begin{eqnarray}\label{eq:5.21}
U_{L}=(\rho_L, v_{L})^{\top}\doteq U^{(\tau)}_{h,\nu}(x_k, b_{k}-),\quad  U^{(\tau)}_{B}=(\rho^{(\tau)}_{B}, v^{(\tau)}_{B})^{\top}\doteq U^{(\tau)}_{h,\nu}(x_{k}, b_{k}),
\end{eqnarray}
and
\begin{eqnarray}\label{eq:5.22}
U_{B}=(\rho_{B}, v_{B})^{\top}\doteq U_{h,\nu}(x_{k}, b_{k}),
\end{eqnarray}
where
\begin{eqnarray}\label{eq:5.23}
(1+\tau^2 u_L, v_{L})\cdot \mathbf{n}_{k}=0, \quad (1+\tau^2 u^{(\tau)}_B, v^{(\tau)}_{B})\cdot \mathbf{n}_{k+1}=0, \quad
v_{B}=\tan\theta_{k+1},
\end{eqnarray}
$u_L=u_{L}(\rho_L, v_{L}, \tau^2)$ and $u^{(\tau)}_{B}=u^{(\tau)}_{B}(\rho^{(\tau)}_{B}, v^{(\tau)}_{B}, \tau^2)$ are given by \eqref{eq:1.12}, and $\mathbf{n}_{j}=(\sin\theta_j, -\cos\theta_j)$ for $j=k,k+1$.

Define
\begin{eqnarray}\label{eq:5.24}
U^{(\tau),B}_{h,\nu}(x_k,y)\doteq
\left\{
\begin{array}{llll}
U_{B}, \quad & y=b_{k},\\[5pt]
U_L, \quad & y<b_{k}.
\end{array}
\right.
\end{eqnarray}
Then, we have
\begin{lemma}\label{lem:5.3}
Let approximate solution
$U^{(\tau),B}_{h,\nu}(x_{k},y)$ be a piecewise constant function defined by \eqref{eq:5.24} satisfying \eqref{eq:5.23} with $U_{L}, U_{B}\in \mathcal{O}_{\min\{C_{1}\epsilon^*_0, C_3\epsilon^*_1\}}(\underline{U})$ for $\tau\in(0,\tau^{*}_{0})$.
Let $\mathcal{P}_{h}$ be a uniformly Lipschitz continuous map given by Proposition \ref{prop:4.2}, and
$\hat{\lambda}$ be a fixed constant satisfying $\hat{\lambda}>\max\{\lambda_{j}(U^{(\tau)},\tau^{2}), \lambda_{j}(U)\}$
for all $U^{(\tau)}, U\in \mathcal{O}_{\min\{C_1\epsilon^*_0, C_3\epsilon^*_1\}}(\underline{U})$, $\tau\in(0,\tau^*_0)$ and $j=1,2$.
Denote
\begin{eqnarray}\label{eq:5.25}
U^{(\tau),B}_{h,\nu}(x_k+s, y)=
\left\{
\begin{array}{llll}
U^{(\tau)}_{B}, \quad & b_{k}+\dot{y}_{\alpha_1}s<y<b_{k}+\tan(\theta_{k+1})s,\\[5pt]
U_L, \quad & y<b_k+\dot{y}_{\alpha_1}s,
\end{array}
\right.
\end{eqnarray}
where $|\dot{y}_{\alpha_1}|\leq \hat{\lambda}$. Then, for sufficiently small $s>0$,

\rm (1c)\ if $\omega_{k}=\theta_{k+1}-\theta_{k}<0$, that is, $U_{L}$ and $U^{(\tau)}_{B}$ are connected by $1$th-shock wave $\alpha_1\in \mathcal{S}^{(\tau)}_1$, with $U^{(\tau)}_{B}=\Phi_{1}(\alpha_1; U_L,\tau^2)$ for $\alpha_1<0$ and $|\dot{y}_{\alpha_1}-\dot{\mathcal{S}}_{1}(\alpha_1, \tau^2)|<2^{-\nu}$,
then
\begin{eqnarray}\label{eq:5.26}
\begin{split}
\int^{b_{k}+\tan(\theta_{k+1})s}_{b_k-\hat{\lambda}s}&\big|\mathcal{P}_{h}(x_k+s,x_k)(U^{(\tau),B}_{h,\nu}(x_k,y))
-U^{(\tau),B}_{h,\nu}(x_k+s,y)\big|dy\\[5pt]
&\leq C_{6}\Big( \big(1+|\omega_{k}|\big)\tau^2+\big(|\omega_{k}|+\tau^{2}\big)2^{-\nu}\Big)s,
\end{split}
\end{eqnarray}
where $\dot{\mathcal{S}}_{1}(\alpha_1, \tau^2)$ is the speed of the $1$-shock wave;

\rm (2c)\ if $\omega_{k}=\theta_{k+1}-\theta_{k}>0$, that is, $U_{L}$ and $U^{(\tau)}_{B}$ are connected by $1$th-rarefaction front $\alpha_1\in \mathcal{R}^{(\tau)}_1$ with $U^{(\tau)}_{B}=\Phi_{1}(\alpha_1; U_L,\tau^2)$ for $\alpha_1>0$ and $|\dot{y}_{\alpha_1}-\lambda_{1}(U^{(\tau)}_B, \tau^2)|<2^{-\nu}$, then
\begin{eqnarray}\label{eq:5.27}
\begin{split}
\int^{b_{k}+\tan(\theta_{k+1})s}_{b_k-\hat{\lambda}s}&\big|\mathcal{P}_{h}(x_k+s,x_k)(U^{(\tau),B}_{h,\nu}(x_k,y))
-U^{(\tau),B}_{h,\nu}(x_k+s,y)\big|dy\\[5pt]
&\leq C_{6}\Big( \big(1+|\omega_{k}|\big)\tau^2+\big(|\omega_{k}|+\tau^{2}\big)(\nu^{-1}+2^{-\nu})\Big)s,
\end{split}
\end{eqnarray}
where $\lambda_{1}(U^{(\tau)}_B, \tau^2)$ is the speed of the $1$-rarefaction front.
Here, constant $C_{6}>0$ depends only on $\underline{U}$ and $a_{\infty}$, but does not on $\tau, h, \nu$.
\end{lemma}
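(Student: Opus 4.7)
The plan is to adapt the argument of Lemma \ref{lem:5.1} and Lemma \ref{lem:5.2} to the corner situation, using Proposition \ref{prop:3.4} as the key input that links the $\tau\neq 0$ and $\tau=0$ Riemann solvers across the change of angle. First I would apply Proposition \ref{prop:4.2}(iii) to write the trajectory $\mathcal{P}_{h}(x_{k}+s,x_{k})(U^{(\tau),B}_{h,\nu}(x_{k},\cdot))$ at the corner as the unique admissible piecewise constant Riemann solution of problem \eqref{eq:3.18} with left state $U_{L}$ and angle change $\omega_{k}$; by Lemma \ref{lem:3.2} it is connected to $U_{L}$ by a single $1$-wave of strength $\beta_{1}=K_{c}\omega_{k}$ issuing into the state $U_{B}$ with $v_{B}=\tan\theta_{k+1}$. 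On the other hand $U^{(\tau),B}_{h,\nu}(x_{k}+s,\cdot)$ already contains the $1$-wave of strength $\alpha_{1}=K^{(\tau)}_{c}\omega_{k}$ joining $U_{L}$ to $U^{(\tau)}_{B}$. Proposition \ref{prop:3.4} then furnishes the two estimates
\begin{equation*}
\beta_{1}-\alpha_{1}=O(1)(1+|\alpha_{1}|)\tau^{2},\qquad |\beta_{1}|+|\alpha_{1}|=O(1)(|\omega_{k}|+\tau^{2}),
\end{equation*}
and hence $|U_{B}-U^{(\tau)}_{B}|=O(1)(1+|\omega_{k}|)\tau^{2}$.

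For the shock case (1c) I would split the integral over $(b_{k}-\hat{\lambda}s,\,b_{k}+\tan(\theta_{k+1})s)$ into three regions, exactly as in \eqref{5.10}: below $\min\{b_{k}+\dot{\mathcal{S}}_{1}(\beta_{1})s,\,b_{k}+\dot{y}_{\alpha_{1}}s\}$ both functions equal $U_{L}$ and the contribution vanishes; between the two shock positions one is $U_{L}$ and the other equals $U_{B}$ or $U^{(\tau)}_{B}$, and the Taylor expansion used to derive \eqref{eq:5.9} gives $|\dot{\mathcal{S}}_{1}(\beta_{1})-\dot{\mathcal{S}}_{1}(\alpha_{1},\tau^{2})|=O(1)(1+|\omega_{k}|)\tau^{2}$, so combined with the jump hypothesis $|\dot{y}_{\alpha_{1}}-\dot{\mathcal{S}}_{1}(\alpha_{1},\tau^{2})|<2^{-\nu}$ and with $|U_{B}-U_{L}|=O(|\beta_{1}|)=O(|\omega_{k}|+\tau^{2})$ this middle piece is bounded by $O(1)(|\omega_{k}|+\tau^{2})\bigl((1+|\omega_{k}|)\tau^{2}+2^{-\nu}\bigr)s$; above $\max\{b_{k}+\dot{\mathcal{S}}_{1}(\beta_{1})s,\,b_{k}+\dot{y}_{\alpha_{1}}s\}$ up to the new boundary position $b_{k}+\tan(\theta_{k+1})s$ the two functions are $U_{B}$ and $U^{(\tau)}_{B}$ on an interval of length $O(s)$, contributing $O(1)(1+|\omega_{k}|)\tau^{2}\cdot s$. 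Summing and absorbing the higher-order cross terms gives \eqref{eq:5.26}.

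For the rarefaction case (2c), I would replace the single shock speed in the trajectory by the rarefaction fan with head and tail speeds $\lambda_{1}(U_{L})$ and $\lambda_{1}(U_{B})$, and repeat the three-subcase analysis of Lemma \ref{lem:5.1}(2) (the cases $\dot{y}_{\alpha_{1}}\geq\lambda_{1}(U_{B})$, $\lambda_{1}(U_{L})<\dot{y}_{\alpha_{1}}<\lambda_{1}(U_{B})$, and $\dot{y}_{\alpha_{1}}<\lambda_{1}(U_{L})$). The Taylor computation analogous to \eqref{eq:5.10}-\eqref{eq:5.11} now yields $\lambda_{1}(U_{B})-\lambda_{1}(U^{(\tau)}_{B},\tau^{2})=O(1)(1+|\omega_{k}|)\tau^{2}$ and $\lambda_{1}(U^{(\tau)}_{B},\tau^{2})-\lambda_{1}(U_{L})=O(1)(|\omega_{k}|+\tau^{2})$, which combined with the hypothesis $|\dot{y}_{\alpha_{1}}-\lambda_{1}(U^{(\tau)}_{B},\tau^{2})|<2^{-\nu}$ and the uniform rarefaction-strength bound $|\beta_{1}|\leq C\nu^{-1}$ from Proposition \ref{prop:4.1} produces the extra $\nu^{-1}$ factor appearing in \eqref{eq:5.27}. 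The added upper region of width $O(s)$ between the lumped rarefaction front and $b_{k}+\tan(\theta_{k+1})s$ again contributes $|U_{B}-U^{(\tau)}_{B}|\cdot O(s)=O(1)(1+|\omega_{k}|)\tau^{2}s$.

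The only genuinely new ingredient compared with Lemmas \ref{lem:5.1}-\ref{lem:5.2} is the mixed boundary-type correction: the upper region, bounded above by the moving boundary $b_{k}+\tan(\theta_{k+1})s$, carries a residual jump $|U_{B}-U^{(\tau)}_{B}|$ that does not shrink with the wave strength and is governed purely by the incompatibility between the Dirichlet-type condition $v_{B}=\tan\theta_{k+1}$ and the Neumann-type condition $\bigl(1+\tau^{2}u^{(\tau)}_{B},v^{(\tau)}_{B}\bigr)\cdot\mathbf{n}_{k+1}=0$. Controlling this by the $(1+|\omega_{k}|)\tau^{2}$ bound from Proposition \ref{prop:3.4} (rather than by a strength-weighted bound) is the main obstacle, and it is precisely what forces the factor $(1+|\omega_{k}|)$ -- instead of just $|\omega_{k}|$ -- in front of $\tau^{2}$ in \eqref{eq:5.26}-\eqref{eq:5.27}.
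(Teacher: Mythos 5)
Your proposal is correct and follows essentially the same route as the paper: the same use of Proposition \ref{prop:3.4} and Lemma \ref{lem:3.2} to compare $\alpha_1$ and $\beta_1$, the same three-region splitting of the integral (with the region adjacent to the moving boundary carrying the residual jump $|U_B-U^{(\tau)}_B|=O(1)(1+|\omega_k|)\tau^2$), and the same subcase analysis on $\dot{y}_{\alpha_1}$ with the speed estimates \eqref{eq:5.29}--\eqref{eq:5.31} and the rarefaction-strength bound yielding the $\nu^{-1}$ term.
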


\begin{proof}
From the construction, 
$\mathcal{P}_{h}(\varsigma+s,\varsigma)(U^{(\tau),B}_{h,\nu}(x_k))$ near boundary $\Gamma_{h,k}$ satisfies 
\begin{eqnarray}\label{eq:5.28}
(1+\tau^2u^{(\tau)}_{B})\Phi^{(2)}_{1}(\beta_1; U_{L})=\Phi^{(2)}_{1}(\alpha_1; U_{L},\tau^2).
\end{eqnarray}

According to Proposition \ref{prop:3.4}, equation \eqref{eq:5.28} admits a unique solution $\beta_{1}(\alpha_1,\tau^2)$ near $\alpha_1=\tau=0$ and $U_{L}=\underline{U}$ with
\begin{eqnarray*}
\beta_1=\alpha_1+O(1)\big(1+|\alpha_1|\big)\tau^2.
\end{eqnarray*}

By Lemma \ref{lem:3.2}, we have $\alpha_1=\tilde{C}\omega_k$
with $\tilde{C}>0$ depending only on $\underline{U}$ and $a_{\infty}$. So, $\alpha_1<0$ (or $\alpha_1>0$) which is a shock front (or rarefaction front) if $\omega_{k}<0$ (or $\omega_k>0$), and it also implies that $\beta_1<0$ (or $\beta_1>0$) for $\tau>0$ sufficiently small.

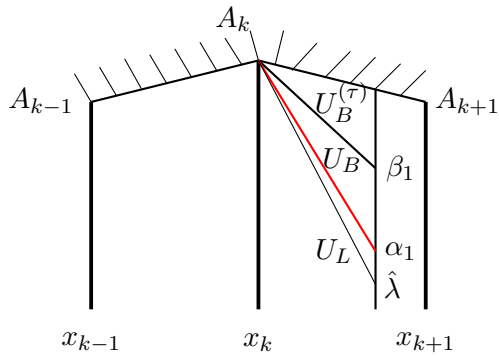
\begin{figure}[ht]
\begin{center}
\begin{tikzpicture}[scale=1.1]
\draw [thick](-5.0,1.5)--(-3,1);
\draw [thick](-7.0,1)--(-5,1.5);

\draw [line width=0.05cm](-7.0,1)--(-7.0,-1.5);
\draw [line width=0.05cm](-5.0,1.5)--(-5.0,-1.5);
\draw [line width=0.05cm](-3,1)--(-3,-1.5);
\draw[thick](-3.6,1.16)--(-3.6,-1.5);

\draw [thin](-7.0,1.00)--(-7.2, 1.34);
\draw [thin](-6.7,1.08)--(-6.9, 1.42);
\draw [thin](-6.4,1.16)--(-6.6, 1.52);
\draw [thin](-6.1,1.23)--(-6.3, 1.57);
\draw [thin](-5.8,1.30)--(-6.0, 1.64);
\draw [thin](-5.5,1.38)--(-5.7, 1.72);
\draw [thin](-5.2,1.45)--(-5.4, 1.77);
\draw [thin](-5.0,1.50)--(-5.1, 1.86);
\draw [thin](-4.8,1.45)--(-4.7, 1.86);
\draw [thin](-4.6,1.38)--(-4.3, 1.71);
\draw [thin](-4.2,1.30)--(-3.9, 1.61);
\draw [thin](-3.9, 1.23)--(-3.6,1.54);
\draw [thin](-3.6, 1.16)--(-3.3,1.46);
\draw [thin](-3.3, 1.08)--(-3.0,1.38);

\draw [thick](-5.0,1.5)--(-3.6, 0.2);
\draw [thick][red](-5.0,1.5)--(-3.6,-0.8);
\draw [thin](-5,1.5)--(-3.6,-1.2);

\node at (-7.6, 1.0) {$A_{k-1}$};
\node at (-5.3, 2.0) {$A_{k}$};
\node at (-2.5, 1.0) {$A_{k+1}$};
\node at (-3.3, 0.2) {$\beta_{1}$};
\node at (-3.3, -0.8) {$\alpha_{1}$};
\node at (-3.4, -1.2) {$\hat{\lambda}$};
\node at (-4.0, 1.0) {$U^{(\tau)}_{B}$};
\node at (-4.0, 0.3) {$U_{B}$};
\node at (-4.1, -0.8) {$U_{L}$};

\node at (-7.0, -1.9) {$x_{k-1}$};
\node at (-5.0, -1.9) {$x_{k}$};
\node at (-3.0, -1.9) {$x_{k+1}$};
\end{tikzpicture}
\caption{Case $\omega_{k}<0$}\label{fig5.6}
\end{center}
\end{figure}

$\bullet$\ Case \rm(1c): $\omega_{k}<0$. As shown above, we know that both $\alpha_1$ and $\beta_1$ are shock fronts (see Fig. \ref{fig5.6}).
Denoted by $\dot{\mathcal{S}}_{1}(\beta_1)$ the speed of the $1$-shock $\beta_{1}$. Then, we rewrite the left hand of \eqref{eq:5.26} as
\begin{eqnarray*}
\begin{split}
&\ \ \ \int^{b_{k}+\tan(\theta_{k+1})s}_{b_{k}-\hat{\lambda}h}\big|\mathcal{P}_{h}(x_k+s, x_{k})(U^{(\tau),B}_{h,\nu}(x_{k},y))-U^{(\tau),B}_{h,\nu}
 (x_{k}+h,y)\big|dy\\[5pt]
&=\int^{\min\big\{b_{k}+\dot{y}_{\alpha_1}s, b_{k}+\dot{\mathcal{S}}_{1}(\beta_1)s\big\}}_{b_{k}-\hat{\lambda}s}
\big|\mathcal{P}_{h}(x_k+s, x_{k})(U^{(\tau),B}_{h,\nu}(x_{k},y))-U^{(\tau),B}_{h,\nu}(x_{k}+s,y)\big|dy\\[5pt]
&\ \ \ +\int^{\max\big\{b_{k}+\dot{y}_{\alpha_1}s, b_{k}+\dot{\mathcal{S}}_{1}(\beta_1)s\big\}}_{\min\big\{b_{k}+\dot{y}_{\alpha_1}s, b_{k}+\dot{\mathcal{S}}_{1}(\beta_1)s\big\}}
\big|\mathcal{P}_{h}(x_k+s, x_{k})(U^{(\tau),B}_{h,\nu}(x_{k},y))-U^{(\tau),B}_{h,\nu}(x_{k}+s,y)\big|dy\\[5pt]
&\ \ \ +\int^{b_{k}+\tan(\theta_{k+1})s}_{\max\big\{b_{k}+\dot{y}_{\alpha_1}s, b_{k}+\dot{\mathcal{S}}_{1}(\beta_1)s\big\}}
\big|\mathcal{P}_{h}(x_k+s, x_{k})(U^{(\tau),B}_{h,\nu}(x_{k},y))-U^{(\tau),B}_{h,\nu}
(x_{k}+s,y)\big|dy\\[5pt]
&\doteq B'_{\mathcal{S}_1}+B''_{\mathcal{S}_1}+B'''_{\mathcal{S}_1}.
\end{split}
\end{eqnarray*}

Obviously, $B'_{\mathcal{S}_1}=0$. For $B'''_{\mathcal{S}_1}$, we have
\begin{eqnarray*}
\begin{split}
B'''_{\mathcal{S}_1}\leq |U^{(\tau)}_{B}-U_{B}|s=\big|\Phi_{1}(\alpha_1; U_{L},\tau^{2})-\Phi(\beta_1; U_{L})\big|s.
\end{split}
\end{eqnarray*}

Let $\mathcal{F}_{B}(\alpha_1,\tau^{2})=\Phi_{1}(\alpha_1; U_{L},\tau^{2})-\Phi(\beta_1; U_{L})$.
Notice that $\mathcal{F}_{B}(\alpha_1,0)=\mathcal{F}_{B}(0,0)=0$ and $\mathcal{F}_{B}(0,\tau^{2})=O(1)\tau^2$. Then
$$
\mathcal{F}_{B}(\alpha_1,\tau^{2})=O(1)(1+|\alpha_1|)\tau^{2}=O(1)(1+|\omega_{k})|\tau^{2}.$$

Therefore,
$$
B'''_{\mathcal{S}_1}\leq O(1)(1+|\omega_{k})\tau^{2}s.
$$

Finally, let us consider $B''_{\mathcal{S}_1}$. Via the argument as done in the proof of Lemma \ref{lem:5.1},
\begin{eqnarray}\label{eq:5.29}
\begin{split}
\dot{\mathcal{S}}_{1}(\beta_1)-\dot{\mathcal{S}}_{1}(\alpha_1,\tau^2)=O(1)\big(1+|\alpha_1|\big)\tau^{2}
=O(1)\big(1+|\omega_{k}|\big)\tau^{2}.
\end{split}
\end{eqnarray}

If $\dot{y}_{\alpha_1}>\dot{\mathcal{S}}_{1}(\beta_1)$, by \eqref{eq:5.29} and the estimates on $\beta_1$ and $\alpha_1$, we get
\begin{eqnarray*}
\begin{split}
B''_{\mathcal{S}_1}&=\int^{b_{k}+\dot{y}_{\alpha_1}s}_{b_{k}+\dot{\mathcal{S}}_{1}(\beta_1)s}
\big|\mathcal{P}_{h}(x_k+s, x_{k})(U^{(\tau),B}_{h,\nu}(x_{k},y))-U^{(\tau),B}_{h,\nu}(x_{k}+s,y)\big|dy\\[5pt]
&\leq|\dot{y}_{\alpha_1}-\dot{\mathcal{S}}_{1}(\beta_1)|\big|U_B-U_{L}\big|s\\[5pt]
&\leq \Big(\big|\dot{\mathcal{S}}_{1}(\beta_1)-\dot{\mathcal{S}}_{1}(\alpha_1,\tau^2)\big|
+2^{-\nu}\Big)\big|\Phi(\beta_1; U_{L})-U_{L}\big|s\\[5pt]
&\leq O(1)\Big((1+|\omega_{k}|)\tau^2+O(1)2^{-\nu})\Big)|\beta_1|s\\[5pt]
&\leq O(1)\Big(|\omega_{k}|\tau^2+2^{-\nu}(|\omega_{k}|+\tau^2)\Big)s.
\end{split}
\end{eqnarray*}

If $\dot{y}_{\alpha_1}<\dot{\mathcal{S}}_{1}(\beta_1)$, by \eqref{eq:5.29} and the estimate on $\alpha_1$, we have
\begin{eqnarray*}
\begin{split}
B''_{\mathcal{S}_1}&=\int^{b_{k}+\dot{\mathcal{S}}_{1}(\beta_1)s}_{b_{k}+\dot{y}_{\alpha_1}s}
\big|\mathcal{P}_{h}(x_k+s, x_{k})(U^{(\tau),B}_{h,\nu}(x_{k},y))-U^{(\tau),B}_{h,\nu}(x_{k}+s,y)\big|dy\\[5pt]
&\leq|\dot{y}_{\alpha_1}-\dot{\mathcal{S}}_{1}(\beta_1)|\big|U^{(\tau)}_{B}-U_{L}\big|s\\[5pt]
&\leq \Big(\big|\dot{\mathcal{S}}_{1}(\beta_1)-\dot{\mathcal{S}}_{1}(\alpha_1,\tau^2)\big|
+2^{-\nu}\Big)\big|\Phi(\alpha_1; U_{L},\tau^2)-U_{L}\big|s\\[5pt]
&\leq O(1)\big(|\omega_{k}|\tau^2+2^{-\nu}|\omega_k|\big)s.
\end{split}
\end{eqnarray*}

So we obtain $B''_{\mathcal{S}_1}\leq O(1)\Big(|\omega_{k}|\tau^2+2^{-\nu}(|\omega_{k}|+\tau^2)\Big)s$.
Hence, we can obtain estimate \eqref{eq:5.26}
by choosing a constant $C_{6}>0$ depending only on $\underline{U}$ and $a_{\infty}$. This completes the proof for this case. 

$\bullet$\ Case \rm(2c): $\omega_{k}>0$. In this case, it follows from Lemma \ref{lem:3.2} and Proposition \ref{prop:3.4} that
$\alpha_1>0, \beta_{1}>0$ (see Figs. \ref{fig5.7}-\ref{fig5.9} below). So both are rarefaction fronts.
Then 
\begin{eqnarray*}
\begin{split}
&\ \ \ \int^{b_{k}+\tan(\theta_{k+1})s}_{b_{k}-\hat{\lambda}s}\big|\mathcal{P}_{h}(x_k+s, x_{k})(U^{(\tau),B}_{h,\nu}(x_{k},y))-U^{(\tau),B}_{h,\nu}(x_{k}+s,y)\big|dy\\[5pt]
&=\int^{\min\big\{b_{k}+\dot{y}_{\alpha_1}s, b_{k}+\lambda_{1}(U_{L})s\big\}}_{b_{k}-\hat{\lambda}s}
\big|\mathcal{P}_{h}(x_k+s, x_{k})(U^{(\tau),B}_{h,\nu}(x_{k},y))-U^{(\tau),B}_{h,\nu}(x_{k}+s,y)\big|dy\\[5pt]
&\ \ \ +\int^{\max\big\{b_{k}+\dot{y}_{\alpha_1}s, b_{k}+\lambda_{1}(U_{B})s\big\}}_{\min\big\{b_{k}+\dot{y}_{\alpha_1}s, b_{k}+\lambda_{1}(U_{L})s\big\}}
\big|\mathcal{P}_{h}(x_k+s, x_{k})(U^{(\tau),B}_{h,\nu}(x_{k},y))-U^{(\tau),B}_{h,\nu}(x_{k}+s,y)\big|dy.
\end{split}
\end{eqnarray*}

Similarly as the proof done for case (1c), we can show that
\begin{eqnarray*}
\begin{split}
B'_{\mathcal{R}_1}+B'''_{\mathcal{R}_1}\leq O(1)\big|\Phi_{1}(\beta_1;U_{L})-\Phi_{1}(\alpha_1;U_{L},\tau^2)\big|=O(1)(1+|\omega_{k}|)\tau^2.
\end{split}
\end{eqnarray*}

Finally, for $B''_{\mathcal{R}_1}$, we have
\begin{eqnarray}\label{eq:5.30}
\begin{split}
\lambda_{1}(U_{B})-\lambda_{1}(U^{(\tau)}_{B}, \tau^{2})
&=\lambda_{1}(\Phi_{1}(\beta_1; U_{L}))-\lambda_{1}(\Phi_{1}(\alpha_1; U_{L},\tau^2), \tau^{2})\\[5pt]
&=\lambda_{1}(U_{L})-\lambda_{1}(U_{L},\tau^2)+O(1)(\beta_1-\alpha_1)\\[5pt]
&=O(1)(1+|\omega_{k}|)\tau^2,
\end{split}
\end{eqnarray}
and
\begin{eqnarray}\label{eq:5.31}
\begin{split}
\lambda_{1}(U^{(\tau)}_{B}, \tau^{2})-\lambda_{1}(U_{L})
&=\lambda_{1}(\Phi_{1}(\alpha_1; U_{L},\tau^2), \tau^{2})-\lambda_{1}(U_{L})
=O(1)\big(|\omega_{k}|+\tau^{2}\big).
\end{split}
\end{eqnarray}

\vspace{-3mm}
\begin{figure}[ht]
\begin{center}
\begin{tikzpicture}[scale=1.1]
\draw [thick](-5.0,1.5)--(-3,1)--(-1,1.8);

\draw [line width=0.05cm](-5.0,1.5)--(-5.0,-1.5);
\draw [line width=0.05cm](-3,1)--(-3,-1.5);
\draw [line width=0.05cm](-1.0,1.8)--(-1.0,-1.5);
\draw [thick](-1.8,1.5)--(-1.8,-1.5);

\draw [thin](-4.8,1.45)--(-4.5, 1.75);
\draw [thin](-4.5,1.38)--(-4.2, 1.68);
\draw [thin](-4.2,1.30)--(-3.9, 1.60);
\draw [thin](-3.9, 1.23)--(-3.6,1.53);
\draw [thin](-3.6, 1.16)--(-3.3,1.46);
\draw [thin](-3.3, 1.08)--(-3.0,1.38);
\draw [thin](-3.0, 1.0)--(-2.8,1.4);
\draw [thin](-2.7,1.12)--(-2.5, 1.52);
\draw [thin](-2.4,1.23)--(-2.2, 1.63);
\draw [thin](-2.1,1.35)--(-1.9, 1.75);
\draw [thin](-1.8,1.47)--(-1.6, 1.87);
\draw [thin](-1.5, 1.59)--(-1.3,1.99);
\draw [thin](-1.2,1.71)--(-1.0, 2.11);

\draw [thick](-3,1)--(-1.8,0.8);
\draw [thick](-3,1)--(-1.8,0.5);
\draw [thick][blue](-3,1)--(-1.8,-0.6);
\draw [thin](-3,1)--(-1.8,-1.0);

\node at (-5.4, 1.6) {$A_{k-1}$};
\node at (-3.0, 1.5) {$A_{k}$};
\node at (-0.6, 1.8) {$A_{k+1}$};
\node at (-1.5, 0.6) {$\beta_{1}$};
\node at (-1.5, -0.6) {$\alpha_{1}$};
\node at (-2.1, 1.1) {$U_{B}$};
\node at (-2.1, 0.3) {$U^{(\tau)}_{B}$};
\node at (-2.4, -0.4) {$U_{L}$};
\node at (1, 2) {$$};

\node at (-5.0, -1.9) {$x_{k-1}$};
\node at (-3.0, -1.9) {$x_{k}$};
\node at (-0.9, -1.9) {$x_{k+1}$};
\end{tikzpicture}
\caption{Case $\omega_{k}>0$ and $\dot{y}_{\alpha_1}<\lambda_{1}(U_{L})$}\label{fig5.7}
\end{center}
\end{figure}

If $\dot{y}_{\alpha_1}<\lambda_{1}(U_{L})$, then
\begin{eqnarray*}
\begin{split}
B''_{\mathcal{R}_1}&=\int^{ b_{k}+\lambda_{1}(U_{B})s}_{b_{k}+\dot{y}_{\alpha_1}s}
\big|\mathcal{P}_{h}(x_k+s, x_{k})(U^{(\tau),B}_{h,\nu}(x_{k},y))-U^{(\tau),B}_{h,\nu}(x_{k}+s,y)\big|dy\\[5pt]
&\leq \big|\lambda_{1}(U_{B})-\dot{y}_{\alpha_1}\big| \big|U_{B}-U^{(\tau)}_{B}\big|s\\[5pt]
&\leq \Big(\big|\lambda_{1}(U_{B})-\lambda_{1}(U^{(\tau)}_{B},\tau^2)\big|+2^{-\nu}\Big) \big|\Phi_{1}(\beta_1;U_{L})-\Phi_{1}(\alpha_1; U_{L},\tau^2)\big|s\\[5pt]
&\leq O(1)\big((1+|\omega_{k}|)\tau^2+2^{-\nu}\big)(1+|\omega_{k}|)\tau^2s.
\end{split}
\end{eqnarray*}

\vspace{-5mm}
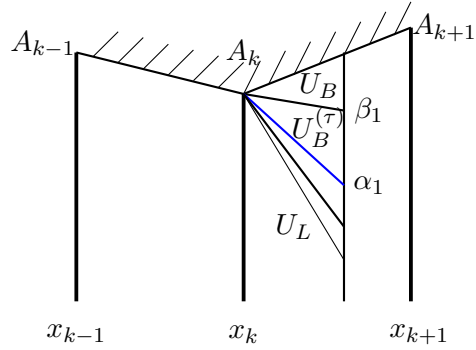
\begin{figure}[ht]
\begin{center}
\begin{tikzpicture}[scale=1.1]
\draw [thick](-5.0,1.5)--(-3,1)--(-1,1.8);

\draw [line width=0.05cm](-5.0,1.5)--(-5.0,-1.5);
\draw [line width=0.05cm](-3,1)--(-3,-1.5);
\draw [line width=0.05cm](-1.0,1.8)--(-1.0,-1.5);
\draw [thick](-1.8,1.5)--(-1.8,-1.5);

\draw [thin](-4.8,1.45)--(-4.5, 1.75);
\draw [thin](-4.5,1.38)--(-4.2, 1.68);
\draw [thin](-4.2,1.30)--(-3.9, 1.60);
\draw [thin](-3.9, 1.23)--(-3.6,1.53);
\draw [thin](-3.6, 1.16)--(-3.3,1.46);
\draw [thin](-3.3, 1.08)--(-3.0,1.38);
\draw [thin](-3.0, 1.0)--(-2.8,1.4);
\draw [thin](-2.7,1.12)--(-2.5, 1.52);
\draw [thin](-2.4,1.23)--(-2.2, 1.63);
\draw [thin](-2.1,1.35)--(-1.9, 1.75);
\draw [thin](-1.8,1.47)--(-1.6, 1.87);
\draw [thin](-1.5, 1.59)--(-1.3,1.99);
\draw [thin](-1.2,1.71)--(-1.0, 2.11);

\draw [thick](-3,1)--(-1.8,0.8);
\draw [thick][blue](-3,1)--(-1.8,-0.1);
\draw [thick](-3,1)--(-1.8,-0.6);
\draw [thin](-3,1)--(-1.8,-1.0);

\node at (-5.4, 1.6) {$A_{k-1}$};
\node at (-3.0, 1.5) {$A_{k}$};
\node at (-0.6, 1.8) {$A_{k+1}$};
\node at (-1.5, 0.8) {$\beta_{1}$};
\node at (-1.5, -0.1) {$\alpha_{1}$};
\node at (-2.1, 1.1) {$U_{B}$};
\node at (-2.1, 0.6) {$U^{(\tau)}_{B}$};
\node at (-2.4, -0.6) {$U_{L}$};
\node at (1, 2) {$$};

\node at (-5.0, -1.9) {$x_{k-1}$};
\node at (-3.0, -1.9) {$x_{k}$};
\node at (-0.9, -1.9) {$x_{k+1}$};
\end{tikzpicture}
\caption{Case $\omega_{k}>0$ and $\lambda_{1}(U_{L})<\dot{y}_{\alpha_1}\leq\lambda_{1}(U_{B})$}\label{fig5.8}
\end{center}
\end{figure}

If $\lambda_{1}(U_{L})<\dot{y}_{\alpha_1}\leq\lambda_{1}(U_{B})$, then
\begin{eqnarray*}
\begin{split}
B''_{\mathcal{R}_1}&=\int^{b_{k}+\lambda_{1}(U_{B})s}_{b_{k}+\lambda_{1}(U_{L})s}
\big|\mathcal{P}_{h}(x_k+s, x_{k})(U^{(\tau),B}_{h,\nu}(x_{k},y))-U^{(\tau),B}_{h,\nu}(x_{k}+s,y)\big|dy\\[5pt]
&=\Bigg(\int^{b_{k}+\dot{y}_{\alpha_1}s}_{b_{k}+\lambda_{1}(U_{L})s}+\int^{b_{k}+\lambda_{1}(U_{B})s}_{b_{k}+\dot{y}_{\alpha_1}s}\Bigg)
\big|\mathcal{P}_{h}(x_k+s, x_{k})(U^{(\tau),B}_{h,\nu}(x_{k},y))-U^{(\tau),B}_{h,\nu}(x_{k}+s,y)\big|dy\\[5pt]
&\leq \big|\dot{y}_{\alpha_1}-\lambda_{1}(U_{L})\big||U_B-U_{L}|s
+\big|\lambda_{1}(U_{B})-\dot{y}_{\alpha_1}\big||U_B-U^{(\tau)}_B|s\\[5pt]
&= \big|\dot{y}_{\alpha_1}-\lambda_{1}(U_{L})\big|\big|\Phi_{1}(\beta_1;U_{L})-U_{L}\big|s
+\big|\lambda_{1}(U_{B})-\dot{y}_{\alpha_1}\big|\big|\Phi_{1}(\beta_1;U_{L})-\Phi_{1}(\alpha_1;U_{L}, \tau^2)\big|s\\[5pt]
&\leq O(1)\Big(\big|\lambda_{1}(U^{(\tau)}_{B},\tau^2)-\lambda_{1}(U_{L})\big||\beta_1|
+\big|\lambda_{1}(U^{(\tau)}_{B},\tau^2)-\lambda_{1}(U_{B})\big||\alpha_1|\tau^2\Big)s\\[5pt]
&\quad +2^{-\nu}\big(|\beta_1|+|\alpha_1|\tau^2\big)s\\[5pt]
&\leq \Big(O(1)\big(|\alpha_1|+\tau^{2}\big)\big(|\alpha_1|+O(1)(1+|\alpha_1|)\tau^2\big)+O(1)(1+|\alpha_1|)|\alpha_1|\tau^4\Big)s\\[5pt]
&\quad +2^{-\nu}\big(|\alpha_1|+O(1)(1+|\alpha_1|)\tau^2\big)s\\[5pt]
&\leq O(1)\big( (1+|\omega_k|)\tau^2+(\nu^{-1}+2^{-\nu})(|\omega_k|+\tau^2)\big)s,
\end{split}
\end{eqnarray*}
where we have used \eqref{eq:4.10} for the rarefaction front in Proposition \ref{prop:4.1}.

\vspace{-5mm}
\begin{figure}[ht]
\begin{center}
\begin{tikzpicture}[scale=1.1]
\draw [thick](-5.0,1.5)--(-3,1)--(-1,1.8);

\draw [line width=0.05cm](-5.0,1.5)--(-5.0,-1.5);
\draw [line width=0.05cm](-3,1)--(-3,-1.5);
\draw [line width=0.05cm](-1.0,1.8)--(-1.0,-1.5);
\draw [thick](-1.8,1.5)--(-1.8,-1.5);

\draw [thin](-4.8,1.45)--(-4.5, 1.75);
\draw [thin](-4.5,1.38)--(-4.2, 1.68);
\draw [thin](-4.2,1.30)--(-3.9, 1.60);
\draw [thin](-3.9, 1.23)--(-3.6,1.53);
\draw [thin](-3.6, 1.16)--(-3.3,1.46);
\draw [thin](-3.3, 1.08)--(-3.0,1.38);
\draw [thin](-3.0, 1.0)--(-2.8,1.4);
\draw [thin](-2.7,1.12)--(-2.5, 1.52);
\draw [thin](-2.4,1.23)--(-2.2, 1.63);
\draw [thin](-2.1,1.35)--(-1.9, 1.75);
\draw [thin](-1.8,1.47)--(-1.6, 1.87);
\draw [thin](-1.5, 1.59)--(-1.3,1.99);
\draw [thin](-1.2,1.71)--(-1.0, 2.11);

\draw [thick][blue](-3,1)--(-1.8,0.8);
\draw [thick](-3,1)--(-1.8,-0.1);
\draw [thick](-3,1)--(-1.8,-0.6);
\draw [thin](-3,1)--(-1.8,-1.0);

\node at (-5.4, 1.6) {$A_{k-1}$};
\node at (-3.0, 1.5) {$A_{k}$};
\node at (-0.6, 1.8) {$A_{k+1}$};
\node at (-1.5, 0.8) {$\alpha_{1}$};
\node at (-1.5, -0.1) {$\beta_{1}$};
\node at (-2.1, 1.1) {$U^{(\tau)}_{B}$};
\node at (-2.1, 0.6) {$U_{B}$};
\node at (-2.4, -0.6) {$U_{L}$};
\node at (1, 2) {$$};

\node at (-5.0, -1.9) {$x_{k-1}$};
\node at (-3.0, -1.9) {$x_{k}$};
\node at (-0.9, -1.9) {$x_{k+1}$};
\end{tikzpicture}
\caption{Case $\omega_{k}>0$ and $\dot{y}_{\alpha_1}>\lambda_{1}(U_{B})$}\label{fig5.9}
\end{center}
\end{figure}

If $\dot{y}_{\alpha_1}>\lambda_{1}(U_{B})$, then
\begin{eqnarray*}
\begin{split}
B''_{\mathcal{R}_1}&=\int^{b_{k}+\dot{y}_{\alpha_1}s}_{b_{k}+\lambda_{1}(U_{L})s}
\big|\mathcal{P}_{h}(x_k+s, x_{k})(U^{(\tau),B}_{h,\nu}(x_{k},y))-U^{(\tau),B}_{h,\nu}(x_{k}+s,y)\big|dy\\[5pt]
&\leq \big|\dot{y}_{\alpha_1}-\lambda_{1}(U_L)\big||U_B-U_L|s\\[5pt]
&=\big|\dot{y}_{\alpha_1}-\lambda_{1}(U^{(\tau)}_B,\tau^2)+\lambda_{1}(U^{(\tau)}_B,\tau^2)-\lambda_{1}(U_L)\big|
\big|\Phi_1(\beta_1,U_{L})-U_L\big|s\\[5pt]
&\leq O(1)\big(|\lambda_{1}(U^{(\tau)}_B,\tau^2)-\lambda_{1}(U_L)|+2^{-\nu}\big)|\beta_1|s\\[5pt]
&\leq \big(O(1)(|\omega_{k}|+\tau^{2})+2^{-\nu}\big)\big(|\omega_k|+O(1)(1+|\omega_k|)\tau^2\big)s.
\end{split}
\end{eqnarray*}
Here, we also use estimate \eqref{eq:4.10} for the rarefaction front in Proposition \ref{prop:4.1}.
We thus obtain
$$B''_{R_1}\leq O(1)\big( (1+|\omega_k|)\tau^2+(\nu^{-1}+2^{-\nu})(|\omega_k|+\tau^2)\big)s.$$

Therefore, we can choose a constant $C_{6}>0$ depending only on $\underline{U}$
and $a_{\infty}$ so that estimate \eqref{eq:5.27} holds. This completes the proof of case \rm(2c).
\end{proof}

We are now ready 
to establish the following $L^1$ difference estimate between the Riemann solvers $\mathcal{P}_{h}(\varsigma+s,x)(U^{(\tau)}_{h,\nu}(\varsigma,y))$ and the approximate solution $U^{(\tau)}_{h,\nu}(\varsigma+s, y)$ for any $\varsigma>0$.
\begin{proposition}\label{prop:5.1}
Let $U^{(\tau)}_{h,\nu}(x,y)$ be the approximate solution to the initial-boundary value problem \eqref{eq:1.16}-\eqref{eq:1.18} with approximate boundary $y=b_h(x)$. Assume the initial data $U^{\nu}_{0}(y)$ satisfies the properties in Proposition \ref{prop:4.1}, and there is no wave interaction on the line $x=\varsigma$ and no wave reflection at $(\varsigma,b_{h}(\varsigma))$ for some $\varsigma>0$. Let $\mathcal{P}_{h}$ be the uniformly Lipschitz continuous map given by Proposition \ref{prop:4.2}.
Then, 
\begin{eqnarray}\label{eq:5.32}
\begin{split}
&\big\|\mathcal{P}_{h}(\varsigma+s,x)(U^{(\tau)}_{h,\nu}(\varsigma,\cdot))-U^{(\tau)}_{h,\nu}(\varsigma+s, \cdot)\big\|_{L^{1}((-\infty, b_{h}(\varsigma+s)))}\\[5pt]
&\ \ \  \leq O(1)\bigg(\Big(T.V.\{U_{0}(\cdot); \mathcal{I}\}+|b'(0)|+T.V.\{b'(\cdot);\mathbb{R}_{+}\}+1\Big)\big(\tau^2+\nu^{-1}+2^{-\nu}\big)+2^{-\nu}
\bigg)s,
\end{split}
\end{eqnarray}
where constant $O(1)$ depends only on $\underline{U}$ and $a_{\infty}$.
\end{proposition}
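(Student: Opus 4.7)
The plan is to localize the $L^{1}$-difference on narrow strips centered at each discontinuity of $U^{(\tau)}_{h,\nu}(\varsigma,\cdot)$, apply the one-front comparison results from Lemmas \ref{lem:5.1}--\ref{lem:5.3} and Corollary \ref{coro:5.1} on each strip, and then sum the local bounds using the uniform BV and non-physical-wave estimates provided by Proposition \ref{prop:4.1}. Because we assume no interaction on $\{x=\varsigma\}$ and no reflection at $(\varsigma,b_{h}(\varsigma))$, for $s>0$ sufficiently small no interaction, no reflection, and no crossing of a corner of $\Gamma_{h}$ occurs in the strip $\{\varsigma<x<\varsigma+s\}$.

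First, I would enumerate the discontinuities of $U^{(\tau)}_{h,\nu}(\varsigma,\cdot)$ as $y_{1}<y_{2}<\cdots<y_{N}<b_{h}(\varsigma)$, fix a common upper bound $\hat{\lambda}$ for all physical and non-physical speeds as in Lemma \ref{lem:5.1}, and consider the pairwise disjoint strips $J_{i}=(y_{i}-\hat{\lambda}s,\, y_{i}+\hat{\lambda}s)$. Outside $\bigcup_{i}J_{i}$, the trajectory $\mathcal{P}_{h}(\varsigma+s,\varsigma)(U^{(\tau)}_{h,\nu}(\varsigma,\cdot))$ coincides with $U^{(\tau)}_{h,\nu}(\varsigma+s,\cdot)$, since both are the same constant state carried along a direction transverse to characteristics. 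Thus the $L^{1}$-error decomposes as a finite sum of integrals over $J_{i}$, plus a contribution from the strip adjacent to $\Gamma_{h}$.

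Next, for each $J_{i}$ I would identify the type of the associated front and apply the matching lemma. Interior shock fronts $\alpha_{k}\in\mathcal{S}^{(\tau)}$ are handled by Lemma \ref{lem:5.1}(1) with the local bound $O(1)|\alpha_{k}|(\tau^{2}+2^{-\nu})s$; interior rarefaction fronts $\alpha_{k}\in\mathcal{R}^{(\tau)}$ by Lemma \ref{lem:5.1}(2) with the bound $O(1)|\alpha_{k}|(\tau^{2}+\nu^{-1}+2^{-\nu})s$; non-physical fronts $\alpha_{\mathcal{NP}}\in\mathcal{NP}^{(\tau)}$ by Lemma \ref{lem:5.1}(3) with the bound $O(1)\alpha_{\mathcal{NP}}s$; and the front at $y_{N}$ adjacent to the boundary, originating from an earlier corner of $\Gamma_{h}$, by Lemma \ref{lem:5.2} with the bound $O(1)|\omega_{k}|(\tau^{2}+\nu^{-1}+2^{-\nu})s$. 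When $\varsigma$ coincides with a corner $x_{k}$ of $\Gamma_{h}$, the new wave generated at $A_{k}$ is covered by Lemma \ref{lem:5.3}, whose bound carries the additional $(1+|\omega_{k}|)\tau^{2}$ term; this is precisely the source of the isolated ``$+1$'' appearing inside the bracket of \eqref{eq:5.32}.

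Finally, summing these contributions and invoking the BV control \eqref{eq:4.8} to dominate $\sum_{\alpha\in\mathcal{S}^{(\tau)}\cup\mathcal{R}^{(\tau)}}|\alpha|$ by $O(1)\bigl(T.V.\{U_{0}\}+|b'(0)|+T.V.\{b'\}\bigr)$, together with the obvious estimate $\sum_{k}|\omega_{k}|\leq|b'(0)|+T.V.\{b'\}$ and the non-physical bound $\sum_{\alpha\in\mathcal{NP}^{(\tau)}}|\alpha|\leq O(1)2^{-\nu}$ from \eqref{eq:4.10}, one recovers exactly \eqref{eq:5.32}. The delicate point will be the boundary book-keeping: one must verify that the Dirichlet-type solver at the $\tau=0$ level that defines $\mathcal{P}_{h}$ on $\Gamma_{h}$ is being compared against the Neumann-type solver at the $\tau>0$ level with matching propagation speeds, and that the case distinctions of Lemma \ref{lem:5.3} (sign of $\omega_{k}$, and location of $\dot{y}_{\alpha_{1}}$ relative to $\lambda_{1}(U_{L})$ and $\lambda_{1}(U_{B}^{(\tau)})$) correspond to the configurations actually produced by the modified front-tracking scheme. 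Once that compatibility is in hand, the term-by-term accounting above closes the estimate.
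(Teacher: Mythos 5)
Your proposal is correct and follows essentially the same route as the paper: the paper likewise decomposes the $L^{1}$-error into disjoint strips of half-width $\eta=\frac{1}{2}\min\{b_{h}(\varsigma)-y_{1},\,y_{j}-y_{j+1}\}$ around each jump, applies Lemma \ref{lem:5.1} and Corollary \ref{coro:5.1} to interior fronts, Lemma \ref{lem:5.2} to the boundary strip when $\varsigma$ is not a corner abscissa and Lemma \ref{lem:5.3} when $\varsigma=[\frac{\varsigma}{h}]h$, and then sums using \eqref{eq:4.8} and \eqref{eq:4.10}. Your identification of the isolated ``$+1$'' as coming from the $(1+|\omega_{k}|)\tau^{2}$ term in the corner-point estimate matches the paper's accounting exactly.
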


\begin{proof}
Suppose the jumps of the
$U^{(\tau)}_{h,\nu}(\varsigma,\cdot)$ are denoted as $y_{1}>y_{2}>\cdot\cdot\cdot y_{N}$ with $y_{1}\leq b_{h}(\varsigma)$. Let $\mathcal{S}^{(\tau)}$ (or $\mathcal{R}^{(\tau)}$)
be the set of indices $\alpha\in\{1,2, \cdot\cdot\cdot, N\}$ such that $U^{(\tau)}_{h,\nu}(\varsigma,y_{\alpha}+)$ and $U^{(\tau)}_{h,\nu}(\varsigma,y_{\alpha}-)$ are connected by a shock wave (or rarefaction front) with strength $\alpha$.
Similarly, denote by $\mathcal{NP}^{(\tau)}$ the set of indices $\alpha\in\{1,2, \cdot\cdot\cdot, N\}$ such that $U^{(\tau)}_{h,\nu}(\varsigma,y_{\alpha}+)$ and $U^{(\tau)}_{h,\nu}(\varsigma,y_{\alpha}-)$ are connected by a non-physical front with strength $\alpha_{\mathcal{NP}}$.

Since $U^{(\tau)}_{h,\nu}(x,\cdot)$ has no wave interactions on the interval $(\varsigma,\varsigma+s)$ and no wave reflection at $(x,b_{h}(x))$ for $x\in(\varsigma,\varsigma+s)$, if $\varsigma$ satisfies $\varsigma>[\frac{\varsigma}{h}]h$, then $(\varsigma,b_{h}(\varsigma))$ is not a corner point. Hence, by Lemma \ref{lem:5.1}, Corollary \ref{coro:5.1} and Lemma \ref{lem:5.2}, for sufficiently small $s>0$, we have
{\small\begin{eqnarray*}
\begin{split}
& \big\|\mathcal{P}_{h}(\varsigma+s,\varsigma)(U^{(\tau)}_{h,\nu}(\varsigma,\cdot))-U^{(\tau)}_{h,\nu}(\varsigma+s,\cdot)\big\|_{L^{1}((-\infty, b_{h}(\varsigma+s)))}\\[5pt]
=&\sum_{\alpha\in S^{(\tau)}\cup R^{(\tau)}\cup NP^{(\tau)}}\int^{y_{\alpha}+\eta}_{y_{\alpha}-\eta}
\big|\mathcal{P}_{h}(\varsigma+s,\varsigma)(U^{(\tau)}_{h,\nu}(\varsigma,\cdot))-U^{(\tau)}_{h,\nu}(\varsigma+s,\cdot)\big|dy \\[5pt]
&+\int^{b_{h}(\varsigma+s)}_{y_{1}-\eta}\big|\mathcal{P}_{h}(\varsigma+s,\varsigma)(U^{(\tau)}_{h,\nu}(\varsigma,\cdot))
-U^{(\tau)}_{h,\nu}(\varsigma+s,\cdot)\big|dy\\[5pt]
\leq& O(1)\big(\tau^2+\nu^{-1}+2^{-\nu}\big)\Big(\sum _{\alpha\in \mathcal{S}^{(\tau)}\cup \mathcal{R}^{(\tau)}}|\alpha|\Big)s+O(1)\Big(\sum _{\alpha\in \mathcal{NP}^{(\tau)}}\alpha_{\mathcal{NP}}\Big)s
+O(1)\big(\tau^2+\nu^{-1}+2^{-\nu}\big)\Big(\sum_{k\geq 0}|\omega_{k}|\Big)s\\[5pt]
\leq &O(1)\bigg(\Big(T.V.\{U^{(\tau)}_{h,\nu}(\varsigma, \cdot); (-\infty,b_{h}(\varsigma))\}+|b'(0)|+T.V.\{b'(\cdot);\mathbb{R}_{+}\}\Big)\big(\tau^2+\nu^{-1}+2^{-\nu}\big)+2^{-\nu}
\bigg)s\\[5pt]
\leq &O(1)\bigg(\Big(T.V.\{U_{0}(\cdot); \mathcal{I}\}+|b'(0)|+T.V.\{b'(\cdot);\mathbb{R}_{+}\}\Big)\big(\tau^2+\nu^{-1}+2^{-\nu}\big)+2^{-\nu}
\bigg)s,
\end{split}
\end{eqnarray*}}
where $k=[\frac{\varsigma}{h}]$, $\eta=\frac{1}{2}\min_{1\leq j\leq N-1}\big\{b_{h}(\varsigma)-y_1, y_j-y_{j+1}\big\}$,
and constant $O(1)$ depends only on $\underline{U}$ and $a_{\infty}$.

If $\varsigma=[\frac{\varsigma}{h}]h$, $\varsigma=x_{k}$ for $k=[\frac{\varsigma}{h}]$. So, by Lemma \ref{lem:5.1}, Corollary \ref{coro:5.1} and Lemma \ref{lem:5.3}, 
{\small\begin{eqnarray*}
\begin{split}
& \ \ \ \big\|\mathcal{P}_{h}(x_{k}+s,x_{k})(U^{(\tau)}_{h,\nu}(x_{k},\cdot))-U^{(\tau)}_{h,\nu}(x_{k}+s,\cdot)\big\|_{L^{1}((-\infty, b_{h}(x_{k}+s)))}\\[5pt]
&  =\sum_{\alpha\in \mathcal{S}^{(\tau)}\cup \mathcal{R}^{(\tau)}\cup \mathcal{NP}^{(\tau)}}\int^{y_{\alpha}+\eta}_{y_{\alpha}-\eta}
\big|\mathcal{P}_{h}(x_{k}+s,x_{k})(U^{(\tau)}_{h,\nu}(x_{k},\cdot))-U^{(\tau)}_{h,\nu}(x_{k}+s,\cdot)\big|dy \\[5pt]
&\ \ \  +\int^{b_{h}(x_{k}+s)}_{b_{k}-\eta}\big|\mathcal{P}_{h}(x_{k}+s,\varsigma)(U^{(\tau)}_{h,\nu}(x_{k},\cdot))
-U^{(\tau)}_{h,\nu}(x_{k}+s,\cdot)\big|dy\\[5pt]
&\leq O(1)\big(\tau^2+\nu^{-1}+2^{-\nu}\big)\Big(\sum _{\alpha\in \mathcal{S}^{(\tau)}\cup \mathcal{R}^{(\tau)}}|\alpha|\Big)s+O(1)\Big(\sum _{\alpha\in \mathcal{NP}^{(\tau)}}\alpha_{\mathcal{NP}}\Big)s\\[5pt]
&\ \ \  +O(1)\Bigg( \bigg(1+\Big(\sum_{k\geq 0}|\omega_{k}|\Big)\bigg)\tau^2+\bigg(\Big(\sum_{k\geq 0}|\omega_{k}|\Big)+\tau^{2}\bigg)(\nu^{-1}+2^{-\nu})\Bigg)s\\[5pt]
&\leq O(1)\bigg(\Big(T.V.\{U^{(\tau)}_{h,\nu}(\varsigma, \cdot); (-\infty,b_{h}(\varsigma))\}+|b'(0)|+T.V.\{b'(\cdot);\mathbb{R}_{+}\}\Big)\big(\tau^2+\nu^{-1}+2^{-\nu}\big)+2^{-\nu}
\bigg)s\\[5pt]
&\ \ \  +O(1)\bigg(1+\Big(\sum_{k\geq 0}|\omega_{k}|\Big)\bigg)\big(\tau^{2}+\nu^{-1}+2^{-\nu}\big)s\\[5pt]
&\leq O(1)\bigg(\Big(T.V.\{U_{0}(\cdot); \mathcal{I}\}+|b'(0)|+T.V.\{b'(\cdot);\mathbb{R}_{+}\}+1\Big)\big(\tau^2+\nu^{-1}+2^{-\nu}\big)+2^{-\nu}
\bigg)s,
\end{split}
\end{eqnarray*}}
where $\eta=\frac{1}{2}\min_{1\leq j\leq N-1}\big\{y_j-y_{j+1}\big\}$, and constant $O(1)$ depends only on $\underline{U}$ and $a_{\infty}$.

It thus yields estimate \eqref{eq:5.32}.
\end{proof}

\subsection{Global $L^1$ difference estimates between the approximate solution $U^{(\tau)}_{h,\nu}(x,y)$ and the trajectory $\mathcal{P}_{h}(x,0)(U^{\nu}_{0}(y))$}
First, we will present several lemmas for the later applications.
\begin{lemma}\label{lem:5.4}
Let $W(x): [0,X]\mapsto \mathcal{D}_{h,x}$ be a Lipschitz continuous map with a finite number of wave fronts for some $X>0$. Assume that the boundary $y=b_{h}(x)$ is a straight line and any fronts of $W$ do not interact with each other or hit the boundary when $[t, x]\subseteq[0,X]$. Then, for the map $\mathcal{P}_{h}$ which is defined by Proposition \ref{prop:4.1},
\begin{eqnarray}\label{eq:5.33}
\begin{split}
&\big\|\mathcal{P}_{h}(X, x)(W(x))-\mathcal{P}_{h}(X, t)(W(t))\big\|_{L^{1}((-\infty, b_{h}(X)))}\\[5pt]
&\ \ \ \ \  \leq L\int^{x}_{t}\overline{\lim}_{h\rightarrow0^{+}}\frac{\big\|\mathcal{P}_{h}(\varsigma+h, \varsigma)(W(\varsigma))-W(\varsigma+h)\big\|_{L^{1}((-\infty, b_{h}(\varsigma+h)])}}{h}d\varsigma,
\end{split}
\end{eqnarray}
where $\mathcal{D}_{h,x}$ is given by Proposition \ref{prop:4.1}, and constant $L>0$ depends only on $\underline{U}$.
\end{lemma}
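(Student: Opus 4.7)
The plan is to reduce the estimate to a standard Bressan-type error formula for semigroup trajectories. Define the auxiliary map
\begin{equation*}
\Phi(\sigma) \doteq \mathcal{P}_{h}(X,\sigma)(W(\sigma)), \qquad \sigma\in[t,x],
\end{equation*}
so that the left-hand side of \eqref{eq:5.33} equals $\|\Phi(x)-\Phi(t)\|_{L^{1}((-\infty,b_{h}(X)))}$. The strategy is to bound $\Phi$ in the spirit of the fundamental theorem of calculus: first I would show that $\Phi$ is Lipschitz continuous from $[t,x]$ into $L^{1}((-\infty,b_{h}(X)))$ (using the Lipschitz property \eqref{eq:2.66} of $\mathcal{P}_{h}$ together with the standing assumption that $W$ itself is Lipschitz and has finitely many non-interacting fronts), and then estimate its one-sided Dini derivative pointwise.

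For the pointwise bound, fix $\sigma\in(t,x)$ and a small $h'>0$ with $\sigma+h'\leq x$. By the semigroup property \eqref{eq:2.65},
\begin{equation*}
\mathcal{P}_{h}(X,\sigma)=\mathcal{P}_{h}(X,\sigma+h')\circ \mathcal{P}_{h}(\sigma+h',\sigma),
\end{equation*}
so that
\begin{equation*}
\Phi(\sigma+h')-\Phi(\sigma)=\mathcal{P}_{h}(X,\sigma+h')\bigl(W(\sigma+h')\bigr)-\mathcal{P}_{h}(X,\sigma+h')\bigl(\mathcal{P}_{h}(\sigma+h',\sigma)(W(\sigma))\bigr).
\end{equation*}
Applying the uniform Lipschitz estimate \eqref{eq:2.66} with constant $L$ (here the boundary functions coincide and $b_{h}=b_{h'}$, so the $\|b'_{h}-b'_{h'}\|_{L^{1}}$ and $|x'-x''|$ contributions both vanish) yields
\begin{equation*}
\bigl\|\Phi(\sigma+h')-\Phi(\sigma)\bigr\|_{L^{1}((-\infty,b_{h}(X)))}\leq L\bigl\|W(\sigma+h')-\mathcal{P}_{h}(\sigma+h',\sigma)(W(\sigma))\bigr\|_{L^{1}((-\infty,b_{h}(\sigma+h')))}.
\end{equation*}
Dividing by $h'$ and taking $\limsup_{h'\to 0^{+}}$ gives the desired pointwise inequality.

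The final step is to promote the pointwise Dini estimate to an integral inequality on $[t,x]$. Since $\Phi$ is Lipschitz into $L^{1}$, it is absolutely continuous, and a standard real-variable lemma (see, e.g., \cite{bressan}) ensures that
\begin{equation*}
\|\Phi(x)-\Phi(t)\|_{L^{1}}\leq \int_{t}^{x}\overline{\lim}_{h'\to 0^{+}}\frac{\|\Phi(\sigma+h')-\Phi(\sigma)\|_{L^{1}}}{h'}\,d\sigma,
\end{equation*}
from which \eqref{eq:5.33} follows upon inserting the pointwise estimate obtained above.

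The main obstacle I anticipate is verifying Lipschitz (or at least absolutely continuous) dependence of $\Phi$ on $\sigma$ with values in the fixed space $L^{1}((-\infty,b_{h}(X)))$. The construction of $\mathcal{P}_{h}$ in Proposition \ref{prop:4.2} provides Lipschitz continuity with respect to initial data and to the two temporal arguments jointly, and the hypothesis that $W$ has only finitely many non-interacting fronts and does not touch the boundary on $[t,x]$ allows one to bound $\|W(\sigma+h')-W(\sigma)\|_{L^{1}}$ linearly in $h'$ by simply tracking the finitely many constant-speed discontinuities. Combined with \eqref{eq:2.66}, this gives the regularity needed to invoke the absolute-continuity argument cleanly; the straight-line assumption on $b_{h}$ is exactly what removes any contribution from a varying reference boundary.
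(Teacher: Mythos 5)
Your proposal is correct and follows essentially the same route as the paper: both arguments rest on the semigroup identity \eqref{eq:2.65} and the uniform Lipschitz bound \eqref{eq:2.66} to reduce the global difference to the local error $\|\mathcal{P}_{h}(\varsigma+s,\varsigma)(W(\varsigma))-W(\varsigma+s)\|_{L^{1}}$, and then invoke the standard error formula for Lipschitz trajectories. The only cosmetic difference is that you phrase the last step via the Dini derivative and absolute continuity of $\Phi(\sigma)=\mathcal{P}_{h}(X,\sigma)(W(\sigma))$, whereas the paper telescopes over a partition and passes to the limit in the resulting Riemann sum; these are equivalent formulations of the same real-variable lemma.
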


\begin{proof}
Consider a partition $\{\tilde{x}_{i}\}^{k}_{i=1}$ of the interval $[t,x]$:
$
t\doteq \tilde{x}_{0}<\tilde{x}_{1}<\cdot\cdot\cdot<\tilde{x}_{k}\doteq x,
$
and let $\tilde{\lambda}=\max_{1\leq j\leq k}\{\tilde{x}_{j}-\tilde{x}_{j-1}\}$.
Then, 
\begin{eqnarray*}
\begin{split}
&\ \ \ \big\|\mathcal{P}_{h}(X, x)(W(x))-\mathcal{P}_{h}(X, t)(W(t))\big\|_{L^{1}((-\infty, b_{h}(X)))}\\[5pt]
&\leq\sum^{k}_{j=1}\big\|\mathcal{P}_{h}(X, \tilde{x}_{j-1})(W(\tilde{x}_{j-1}))-\mathcal{P}_{h}(X, \tilde{x}_{j})(W(\tilde{x}_{j}))\big\|_{L^{1}((-\infty, b_{h}(X)))}\\[5pt]
&\leq\sum^{k}_{j=1}\big\|\mathcal{P}_{h}(X, \tilde{x}_{j})\mathcal{P}_{h}( \tilde{x}_{j}, \tilde{x}_{j-1})(W(\tilde{x}_{j-1}))-P_{h}(X, \tilde{x}_{j})(W(\tilde{x}_{j}))\big\|_{L^{1}((-\infty, b_{h}(X)))}\\[5pt]
&\leq L\sum^{k}_{j=1}\big\|\mathcal{P}_{h}( \tilde{x}_{j}, \tilde{x}_{j-1})(W(\tilde{x}_{j-1}))-W(\tilde{x}_{j})\big\|_{L^{1}((-\infty, b_{h}(\tilde{x}_{j})))}\\[5pt]
&\leq L\sum^{k}_{j=1}\frac{\big\|\mathcal{P}_{h}( \tilde{x}_{j}, \tilde{x}_{j-1})(W(\tilde{x}_{j-1}))-W(\tilde{x}_{j})\big\|_{L^{1}((-\infty, b_{h}(\tilde{x}_{j})))}}{ \tilde{x}_{j-1}- \tilde{x}_{j}}(\tilde{x}_{j-1}- \tilde{x}_{j}).
\end{split}
\end{eqnarray*}
Taking $\tilde{\lambda}\rightarrow 0$ in the above inequality, we can get estimate \eqref{eq:5.33}.
\end{proof}

Next, let us consider more general case for boundary function $b_{h}$ and $W$. 
\begin{lemma}\label{lem:5.5}
Let $W(x): [0,X]\mapsto \mathcal{D}_{h,x}$ be a Lipschitz continuous map for some $X>0$ with finite number of wave fronts. Let boundary function $b_{h}(x)$ be defined by \eqref{eq:4.2}-\eqref{eq:4.2x}. Let $\mathcal{P}_{h}(x,x'_{0}): \mathbb{R}^2_{+}\times\mathcal{D}_{h,x'_{0}}\mapsto \mathcal{D}_{h,x}$ be a uniformly Lipschitz continuous map given by Proposition \ref{prop:4.2}. 
Then,
\begin{eqnarray}\label{eq:5.34}
\begin{split}
&\big\|\mathcal{P}_{h}(x, 0)(W(0))-W(x)\big\|_{L^{1}((-\infty, b_{h}(x)))}\\[5pt]
&\ \ \ \ \  \leq L\int^{x}_{0}\overline{\lim}_{s\rightarrow0^{+}}\frac{\big\|P_{h}(\varsigma+s, \varsigma)(W(\varsigma))-W(\varsigma+s)\big\|_{L^{1}((-\infty, b_{h}(\varsigma+h)))}}{s}d\varsigma,
\end{split}
\end{eqnarray}
where domain $\mathcal{D}_{h,x}$ is given by Proposition \ref{prop:4.1}, and constant $L>0$ depends only on $\underline{U}$.
\end{lemma}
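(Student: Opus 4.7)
The plan is to reduce Lemma \ref{lem:5.5} to the straight-boundary/no-interaction setting of Lemma \ref{lem:5.4} through a partition-telescoping argument. First, I would introduce the finite exceptional set $\mathcal{E}\subset[0,x]$ consisting of (i) the corner abscissas $x_k=kh$ of $b_h$ lying in $[0,x]$, (ii) the abscissas at which two wave fronts of $W$ interact in the interior, and (iii) the abscissas at which a front of $W$ hits the boundary $\Gamma_h$. Because $W$ has only finitely many fronts on $[0,X]$ and $b_h$ has only finitely many corners on $[0,x]$, $\mathcal{E}$ is finite; list its elements as $0=s_0<s_1<\cdots<s_N=x$.

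On every open subinterval $(s_{i-1},s_i)$, the boundary $b_h$ restricts to a single straight segment and no front of $W$ interacts with another front or touches the boundary, so Lemma \ref{lem:5.4} applies verbatim. Using the $L^1$-Lipschitz continuity of $W$ in $x$ to pass to the closed subinterval endpoints, this yields
\begin{equation*}
\big\|\mathcal{P}_h(s_i,s_{i-1})(W(s_{i-1}))-W(s_i)\big\|_{L^1((-\infty,b_h(s_i)))}\leq L\int_{s_{i-1}}^{s_i}\overline{\lim}_{s\to 0^+}\frac{\|\mathcal{P}_h(\varsigma+s,\varsigma)(W(\varsigma))-W(\varsigma+s)\|_{L^1}}{s}\,d\varsigma.
\end{equation*}

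Next, I would telescope globally using the semigroup identity \eqref{eq:2.65} and the uniform Lipschitz bound \eqref{eq:2.66} from Proposition \ref{prop:4.2}:
\begin{align*}
\big\|\mathcal{P}_h(x,0)(W(0))-W(x)\big\|_{L^1}
&\leq \sum_{i=1}^{N}\big\|\mathcal{P}_h(x,s_{i-1})(W(s_{i-1}))-\mathcal{P}_h(x,s_i)(W(s_i))\big\|_{L^1}\\
&= \sum_{i=1}^{N}\big\|\mathcal{P}_h(x,s_i)\mathcal{P}_h(s_i,s_{i-1})(W(s_{i-1}))-\mathcal{P}_h(x,s_i)(W(s_i))\big\|_{L^1}\\
&\leq L\sum_{i=1}^{N}\big\|\mathcal{P}_h(s_i,s_{i-1})(W(s_{i-1}))-W(s_i)\big\|_{L^1((-\infty,b_h(s_i)))}.
\end{align*}
Inserting the per-subinterval estimate and using that $\mathcal{E}$ has Lebesgue measure zero so that the sum of the subinterval integrals equals $\int_0^x$, one obtains \eqref{eq:5.34} after absorbing the product of constants into a redefined $L$.

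The main obstacle I anticipate is verifying that at each $s_i\in\mathcal{E}$ the one-sided traces $W(s_i\pm)$ genuinely lie in $\mathcal{D}_{h,s_i}$ so that $\mathcal{P}_h(\cdot,s_i)$ is well defined and \eqref{eq:2.66} may be invoked. For interior interactions and for non-corner boundary reflections this follows from the standard monotonicity of the Glimm-type functional $\mathbf{V}+\mathcal{K}\mathbf{Q}$ on $\mathcal{D}_{h,x}$; for a corner $s_i=x_k$ it requires the estimate $|\alpha_1|=O(|\omega_k|)$ of Lemma \ref{lem:3.2} together with the fact that the contribution $\mathcal{K}_c\mathbf{V}_c$ decreases by exactly $|\omega_k|$ upon crossing the corner. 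These ingredients are available from Proposition \ref{prop:4.2} and the interaction estimates of Appendix A, and they guarantee both that the telescoping is legitimate and that the constant $L$ in \eqref{eq:5.34} depends only on $\underline{U}$.
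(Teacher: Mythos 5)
Your proposal is correct and follows essentially the same route as the paper: partition $[0,x]$ at the finitely many exceptional abscissas (interactions, boundary reflections, corners), apply Lemma \ref{lem:5.4} on the good subintervals, and telescope via the semigroup identity \eqref{eq:2.65} and the uniform Lipschitz bound \eqref{eq:2.66}. The only cosmetic difference is in handling the exceptional points: the paper excises $\hat{\epsilon}$-neighborhoods around each one and shows their total contribution is $O(\hat{\epsilon})$ before letting $\hat{\epsilon}\to 0$, whereas you pass to the closed endpoints by the $L^{1}$-Lipschitz continuity of $W$ and $\mathcal{P}_h$ --- these are equivalent, and your explicit inclusion of the corner abscissas in the exceptional set is, if anything, slightly more careful than the paper's definition of $\Lambda$.
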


\begin{proof}
Define 
\begin{eqnarray*}
\begin{split}
\Lambda&\doteq\Big\{x|\ at\ the\ "time"\ x, \ there\ are\ at\ least\ two\ wave\ fronts\ in\ W\ interacted\ or\ a\ wave \\[5pt]
&\qquad \quad   \ front \ hitting\ the\ boundary\  y=b_{h}(x)  \Big\}.
\end{split}
\end{eqnarray*}

Obviously, the set $\Lambda$ is finite. Let $a_{k}\in \Lambda $ for $k=1,2,\cdot\cdot\cdot l$, satisfying $a_{1}<a_{2}<\cdot\cdot\cdot<a_{l}$. For any $\hat{\epsilon}>0$, define
\begin{eqnarray*}
\begin{split}
\tilde{x}_{0}\doteq0, \ \tilde{x}_{1}=a_{1}-\frac{\hat{\epsilon}}{2l}, \ \tilde{x}_{2}=a_{1}+\frac{\hat{\epsilon}}{2 l},
\cdot\cdot\cdot,  \tilde{x}_{2l-1}=a_{l}-\frac{\hat{\epsilon}}{2l}, \ \tilde{x}_{2l}=a_{l}+\frac{\hat{\epsilon}}{2l},\
\tilde{x}_{2l+1}\doteq X.
\end{split}
\end{eqnarray*}

Then, we have
\begin{eqnarray*}
\begin{split}
&\ \ \ \big\|\mathcal{P}_{h}(X,0)(W(0))-W(X)\big\|_{L^1((-\infty, b_{h}(X)))}\\[5pt]
&\leq \sum^{2l+1}_{k=1}\big\|\mathcal{P}_{h}(X,\tilde{x}_{k-1})(W(\tilde{x}_{k-1}))
-\mathcal{P}_{h}(X,\tilde{x}_{k})(W(\tilde{x}_{k}))\big\|_{L^1((-\infty, b_{h}(X)))}\\[5pt]
&=\sum^{l}_{k=0}\big\|\mathcal{P}_{h}(X,\tilde{x}_{2k})(W(\tilde{x}_{2k}))
-\mathcal{P}_{h}(X,\tilde{x}_{2k+1})(W(\tilde{x}_{2k+1}))\big\|_{L^1((-\infty, b_{h}(X)))}\\[5pt]
&\ \ \ + \sum^{l}_{k=1}\big\|\mathcal{P}_{h}(X,\tilde{x}_{2k-1})(W(\tilde{x}_{2k-1}))
-\mathcal{P}_{h}(X,\tilde{x}_{2k})(W(\tilde{x}_{2k}))\big\|_{L^1((-\infty, b_{h}(X)))}
\\[5pt]
&\doteq J_{1}+J_{2}.
\end{split}
\end{eqnarray*}

For $J_{1}$, we further have 
\begin{eqnarray*}
\begin{split}
J_{1}&\leq \sum^{l}_{k=0}\big\|\mathcal{P}_{h}(X,\tilde{x}_{2k+1})\circ\mathcal{P}_{h}(\tilde{x}_{2k+1},\tilde{x}_{2k})(W(\tilde{x}_{2k}))
-\mathcal{P}_{h}(X,\tilde{x}_{2k+1})(W(\tilde{x}_{2k+1}))\big\|_{L^1((-\infty, b_{h}(X)))}\\[5pt]
&\leq L\sum^{l}_{k=0}\big\|\mathcal{P}_{h}(\tilde{x}_{2k+1},\tilde{x}_{2k})(W(\tilde{x}_{2k}))
-W(\tilde{x}_{2k+1})\big\|_{L^1((-\infty, b_{h}(\tilde{x}_{2k+1})))}\\[5pt]
&=L\sum^{l}_{k=0}\big\|\mathcal{P}_{h}(\tilde{x}_{2k+1},\tilde{x}_{2k})(W(\tilde{x}_{2k}))
-\mathcal{P}_{h}(\tilde{x}_{2k+1},\tilde{x}_{2k+1})(W(\tilde{x}_{2k+1}))\big\|_{L^1((-\infty, b_{h}(\tilde{x}_{2k+1})))}.
\end{split}
\end{eqnarray*}

Since the fronts in $W(x)$ do not interact with each other, or hit the boundary, or issue from the corner points of the boundary, and since the boundary $y=b_{h}(x)$ is straight in $[\tilde{x}_{2k}, \tilde{x}_{2k+1}]$, we can apply Lemma \ref{lem:5.1} to get that
\begin{eqnarray*}
\begin{split}
J_{1}\leq L\sum^{l}_{k=0}\int^{\tilde{x}_{2k+1}}_{\tilde{x}_{2k}}\frac{\big\|\mathcal{P}_{h}(\mu+s,\mu)(W(\mu))
-W(\mu+s)\big\|_{L^1((-\infty, b_{h}(\mu+s)))}}{s}d\mu.
\end{split}
\end{eqnarray*}

For the second term $J_{2}$, we have 
\begin{eqnarray*}
\begin{split}
J_{2}&= \sum^{l}_{k=0}\big\|\mathcal{P}_{h}(X,\tilde{x}_{2k})\circ\mathcal{P}_{h}(\tilde{x}_{2k},\tilde{x}_{2k-1})(W(\tilde{x}_{2k-1}))
-\mathcal{P}_{h}(X,\tilde{x}_{2k})(W(\tilde{x}_{2k}))\big\|_{L^1((-\infty, b_{h}(X)))}\\[5pt]
&\leq L\sum^{l}_{k=0}\big\|\mathcal{P}_{h}(\tilde{x}_{2k},\tilde{x}_{2k-1})(W(\tilde{x}_{2k-1}))
-W(\tilde{x}_{2k})\big\|_{L^1((-\infty, b_{h}(\tilde{x}_{2k})))}\\[5pt]
&\leq L\sum^{l}_{k=0}\big\|\mathcal{P}_{h}(\tilde{x}_{2k},\tilde{x}_{2k-1})(W(\tilde{x}_{2k-1}))
-W(\tilde{x}_{2k-1})\big\|_{L^1((-\infty, b_{h}(\tilde{x}_{2k}))}\\[5pt]
&\ \ \ +L\sum^{l}_{k=0}\big\|W(\tilde{x}_{2k-1})
-W(\tilde{x}_{2k})\big\|_{L^1((-\infty, b_{h}(\tilde{x}_{2k})))}\\[5pt]
&\doteq J_{21}+J_{22}.
\end{split}
\end{eqnarray*}

For $J_{21}$, 
\begin{eqnarray*}
\begin{split}
J_{21}&=L\sum^{l}_{k=0}\big\|\mathcal{P}_{h}(\tilde{x}_{2k},\tilde{x}_{2k-1})(W(\tilde{x}_{2k-1}))
-\mathcal{P}_{h}(\tilde{x}_{2k-1},\tilde{x}_{2k-1})(W(\tilde{x}_{2k-1}))\big\|_{L^1((-\infty, b_{h}(\tilde{x}_{2k})])}\\[5pt]
& \leq O(1)\sum^{l}_{k=0}(\tilde{x}_{2k}-\tilde{x}_{2k-1})\\[5pt]
&\leq O(1)l \frac{\hat{\epsilon}}{2l}=O(1)\hat{\epsilon}.
\end{split}
\end{eqnarray*}

For $J_{22}$, 
\begin{eqnarray*}
\begin{split}
J_{22}\leq O(1)\sum^{l}_{k=0}(\tilde{x}_{2k}-\tilde{x}_{2k-1})\leq O(1)\hat{\epsilon}.
\end{split}
\end{eqnarray*}

Thus, 
we obtain that $J_{2}\leq O(1)\hat{\epsilon}$.
Finally, by combing the estimates on $J_{1}$ and $J_{2}$, and by letting $\hat{\epsilon}\rightarrow 0$, we complete the proof of this lemma.
\end{proof}

By Lemma \ref{lem:5.5}, we can get the global $L^1$ error estimate between the approximate solutions $U^{(\tau)}_{h,\nu}$
and the trajectory of $\mathcal{P}_{h}$ as follows.
\begin{proposition}\label{prop:5.2}
Let $\mathcal{P}_{h}$ be the uniformly Lipschitz continuous map. 
Let $U^{(\tau)}_{h,\nu}(x,y)$ be the approximate solution of initial-boundary value problem \eqref{eq:1.16}-\eqref{eq:1.18}
with initial data $U^{\nu}_{0}(y)$ to the approximate boundary $y=b_h(x)$. Then
\begin{eqnarray}\label{eq:5.35}
\begin{split}
&\big\|\mathcal{P}_{h}(x,0)(U^{\nu}_{0}(\cdot))-U^{(\tau)}_{h,\nu}(x,\cdot)\big\|_{L^{1}((-\infty, b_{h}(x)))}\\[5pt]
&\ \ \  \leq O(1)\bigg(\Big(T.V.\{U_{0}(\cdot); \mathcal{I}\}+|b'(0)|+T.V.\{b'(\cdot);\mathbb{R}_{+}\}+1\Big)\big(\tau^2+\nu^{-1}+2^{-\nu}\big)+2^{-\nu}
\bigg)x,
\end{split}
\end{eqnarray}
where the constant $O(1)$ depends only on $\underline{U}$ and $a_{\infty}$.
\end{proposition}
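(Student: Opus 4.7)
The plan is to set $W(x)\doteq U^{(\tau)}_{h,\nu}(x,\cdot)$ in Lemma \ref{lem:5.5} and reduce the global error estimate to an integral of the local one-sided error rate, which is then controlled pointwise (for almost every $\varsigma$) by Proposition \ref{prop:5.1}. By construction via the wave front tracking scheme (together with the use of the simplified Riemann solver governed by the threshold parameter $\varrho$), $W$ is piecewise constant with only finitely many fronts on any bounded $x$-interval, and $W(0)=U^{\nu}_0$ lies in $\mathcal{D}_{h,0}$ by \eqref{eq:4.4} and \eqref{eq:4.7}. Hence the hypotheses of Lemma \ref{lem:5.5} are met, and we obtain
\begin{equation*}
\bigl\|\mathcal{P}_{h}(x,0)(U^{\nu}_{0})-U^{(\tau)}_{h,\nu}(x,\cdot)\bigr\|_{L^{1}((-\infty, b_{h}(x)))}
\leq L\int^{x}_{0}\overline{\lim}_{s\rightarrow0^{+}}\frac{\bigl\|\mathcal{P}_{h}(\varsigma+s,\varsigma)(W(\varsigma))-W(\varsigma+s)\bigr\|_{L^{1}((-\infty,b_h(\varsigma+s)))}}{s}\,d\varsigma.
\end{equation*}

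Next, I would identify the ``bad'' set $\Lambda\subset[0,x]$ consisting of those $\varsigma$ at which either two fronts of $U^{(\tau)}_{h,\nu}$ interact, a front is reflected off the approximate boundary $\Gamma_h$, or $\varsigma$ is a corner abscissa $x_k$. Since the wave front tracking scheme produces only finitely many fronts and the boundary corners are discrete, $\Lambda$ is finite and hence of Lebesgue measure zero. For every $\varsigma\in[0,x]\setminus\Lambda$, the hypotheses of Proposition \ref{prop:5.1} are satisfied, so
\begin{equation*}
\overline{\lim}_{s\rightarrow0^{+}}\frac{\bigl\|\mathcal{P}_{h}(\varsigma+s,\varsigma)(U^{(\tau)}_{h,\nu}(\varsigma,\cdot))-U^{(\tau)}_{h,\nu}(\varsigma+s,\cdot)\bigr\|_{L^{1}((-\infty,b_h(\varsigma+s)))}}{s}
\leq O(1)\Bigl(\mathcal{E}_{0}\bigl(\tau^{2}+\nu^{-1}+2^{-\nu}\bigr)+2^{-\nu}\Bigr),
\end{equation*}
where $\mathcal{E}_{0}\doteq T.V.\{U_{0};\mathcal{I}\}+|b'(0)|+T.V.\{b'(\cdot);\mathbb{R}_{+}\}+1$. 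Substituting this bound (which is independent of $\varsigma$) into the integral and multiplying by the length $x$ of the interval of integration yields \eqref{eq:5.35}.

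The only non-routine point is justifying that $\Lambda$ is a null set, which is why the wave front tracking construction in Section 4 was carefully arranged so that distinct fronts do not interact simultaneously (achieved by perturbing speeds by amounts less than $2^{-\nu}$) and so that the number of fronts stays finite (achieved via the $(SRS)$). Once this is granted, the remainder of the argument is a direct composition of Lemma \ref{lem:5.5} with Proposition \ref{prop:5.1}: the former converts the global $L^{1}$-error into an integral of local rates, while the latter supplies the uniform pointwise bound on that rate. Note also that in applying Proposition \ref{prop:5.1} we implicitly use that the total variation bound \eqref{eq:4.8} holds uniformly in $\varsigma$, so the constants absorbed into $O(1)$ depend only on $\underline{U}$ and $a_{\infty}$, as required.
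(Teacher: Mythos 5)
Your proposal is correct and follows essentially the same route as the paper: apply Lemma \ref{lem:5.5} with $W=U^{(\tau)}_{h,\nu}$ to convert the global $L^1$ error into an integral of local rates, then bound the integrand uniformly (away from the finitely many interaction/reflection/corner times) by Proposition \ref{prop:5.1} and integrate. Your additional remark on the finiteness of the exceptional set $\Lambda$ is a reasonable elaboration of a point the paper leaves implicit, but it does not change the argument.
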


\begin{proof}
By Proposition \ref{prop:5.1} and Lemma \ref{lem:5.5}, we have
\begin{eqnarray*}
\begin{split}
&\big\|\mathcal{P}_{h}(x,0)(U^{\nu}_{0}(\cdot))-U^{(\tau)}_{h,\nu}(x,\cdot)\big\|_{L^{1}((-\infty, b_{h}(x)))}\\[5pt]
& \leq
L\int^{x}_{0}\overline{\lim}_{s\rightarrow0^{+}}\frac{\big\|\mathcal{P}_{h}(\varsigma+s,\varsigma)(U^{(\tau)}_{h,\nu}(\varsigma,\cdot))
-U^{(\tau)}_{h,\nu}(\varsigma+s,\cdot)\big\|_{L^{1}((-\infty, b_{h}(\varsigma+h)))}}{s}d\varsigma\\[5pt]
& \leq L\int^{x}_{0} O(1)\bigg(\Big(T.V.\{U_{0}(\cdot); \mathcal{I}\}+|b'(0)|+T.V.\{b'(\cdot);\mathbb{R}_{+}\}+1\Big)\big(\tau^2+\nu^{-1}+2^{-\nu}\big)+2^{-\nu}
\bigg)d\varsigma \\[5pt]
&\leq O(1)\bigg(\Big(T.V.\{U_{0}(\cdot); \mathcal{I}\}+|b'(0)|+T.V.\{b'(\cdot);\mathbb{R}_{+}\}+1\Big)\big(\tau^2+\nu^{-1}+2^{-\nu}\big)+2^{-\nu}
\bigg) x.
\end{split}
\end{eqnarray*}
\end{proof}

\subsection{Proof of Theorem \ref{thm:1.1}}
Now, we are ready to prove Theorem \ref{thm:1.1} in this subsection. We devide the proof into two steps.

\emph{{Step 1}}. As shown in Proposition \ref{prop:4.1}-\ref{prop:4.3},
for any given $\nu\in \mathbb{N}_{+}$ and $h>0$, we can construct a global approximate solutions $U^{(\tau)}_{h,\nu}$ to problem \eqref{eq:1.16}-\eqref{eq:1.18}
and $U_{h,\nu}$ to problem \eqref{eq:1.20}-\eqref{eq:1.21} and \eqref{eq:1.18}, with initial data $U^{\nu}_{0}$ and approximate boundary $y=b_{h}(x)$. Let $\mathcal{P}_{h}$ be the uniformly Lipschitz continuous map  generated by wave front tracking scheme to problem \eqref{eq:1.16}-\eqref{eq:1.18}.
Let $U^{(\tau)}$ and $U$ be the entropy solutions to problem \eqref{eq:1.16}-\eqref{eq:1.18}
and problem \eqref{eq:1.20}-\eqref{eq:1.21} and \eqref{eq:1.18} with initial data $U_{0}$ and boundary $y=b(x)$.
Then, by the triangle inequality, we have
\begin{eqnarray}\label{eq:5.36}
\begin{split}
&\ \ \ \big\|U^{(\tau)}(x,\cdot)-U(x,\cdot)\big\|_{L^{1}((-\infty, b(x)))}\\[5pt]
&\leq \big\|U^{(\tau)}(x,\cdot)-U^{(\tau)}_{h,\nu}(x,\cdot)\big\|_{L^{1}((-\infty, b(x)))}
 +\big\|U^{(\tau)}_{h,\nu}(x,\cdot)-\mathcal{P}_{h}(x,0)(U^{\nu}_{0}(\cdot))\big\|_{L^{1}((-\infty, b(x)))}\\[5pt]
&\ \ \ +\big\|\mathcal{P}_{h}(x,0)(U^{\nu}_{0}(\cdot))-\mathcal{P}_{h}(x,0)(U_{0}(\cdot))\big\|_{L^{1}((-\infty, b(x)))}\\[5pt]
&\ \ \ +\big\|\mathcal{P}_{h}(x,0)(U_{0}(\cdot))-\mathcal{P}(x,0)(U_{0}(\cdot))\big\|_{L^{1}((-\infty, b(x)))}.
\end{split}
\end{eqnarray}

Notice that, by Proposition \ref{prop:4.1} and Proposition \ref{prop:4.3}, we have
\begin{eqnarray*}
\begin{split}
&U^{(\tau)}_{h,\nu}(x,\cdot)\rightarrow U^{(\tau)}(x,\cdot),\quad \mbox{in}\quad L^{1}_{loc}(\Omega),\quad \mbox{as}\quad  h\rightarrow0,\ \nu\rightarrow+\infty,
\end{split}
\end{eqnarray*}
and
\begin{eqnarray*}
\begin{split}
\mathcal{P}_{h}(x,0)(U_{0}(\cdot))\rightarrow \mathcal{P}(x,0)(U_{0}(\cdot)), \quad \mbox{in}\ L^{1}((-\infty,b(x)))=U(x,\cdot)\quad  \mbox{as}\quad h\rightarrow0.
\end{split}
\end{eqnarray*}

For the third term on the right hand of \eqref{eq:5.36}, by Proposition \ref{prop:4.2} and estimates \eqref{eq:4.3}-\eqref{eq:4.4},
we can choose $\epsilon^{*}<\min\{\epsilon_0, \epsilon^{*}_0\}$ and $\tau^{*}\in(0,\tau^{*}_{0})$ sufficiently small depending only on $\underline{U}$ and $a_{\infty}$,
such that if $\tau\in(0,\tau^{*})$ and $T.V.\{U_{0}(\cdot);\mathcal{I}\}+|b'(0)|+T.V.\{b'(\cdot);\mathcal{I}\}<\epsilon$ for $\epsilon\in (0,\epsilon^{*})$,
\begin{eqnarray*}
\begin{split}
&\ \ \ \ \big\|\mathcal{P}_{h}(x,0)(U^{\nu}_{0}(\cdot))-\mathcal{P}_{h}(x,0)(U_{0}(\cdot))\big\|_{L^{1}((-\infty, b(x)))}\\[5pt]
&\leq \big\|\mathcal{P}_{h}(x,0)(U^{\nu}_{0}(\cdot))-\mathcal{P}_{h}(x,0)(U_{0}(\cdot))\big\|_{L^{1}((-\infty, b_h(x)))}\\[5pt]
&\ \ \ +\big\|\mathcal{P}_{h}(x,0)(U^{\nu}_{0}(\cdot))-\mathcal{P}_{h}(x,0)(U_{0}(\cdot))\big\|_{L^{1}(( \min\{b_{h}(x), b(x)\}\ \max\{b_{h}(x), b(x)\}))}\\[5pt]
&\leq L\|U^{\nu}_0(\cdot)-U_0(\cdot)\|_{L^1(\mathcal{I})}
+\big\|\mathcal{P}_{h}(x,0)(U^{\nu}_{0}(\cdot))-\underline{U}\big\|_{L^{\infty}((-\infty,b_h(x)))}
\big\|b_{h}(\cdot)-b(\cdot)\big\|_{L^{\infty}(\mathbb{R}_{+})}\\[5pt]
&\ \ \ +\big\|\mathcal{P}_{h}(x,0)(U_{0}(\cdot))-\underline{U}\big\|_{L^{\infty}((-\infty,b_h(x)))}
\big\|b_{h}(\cdot)-b(\cdot)\big\|_{L^{\infty}(\mathbb{R}_{+})}\\[5pt]
&\longrightarrow 0, \quad \mbox{as}\quad \nu\rightarrow +\infty,\ h \rightarrow 0.
\end{split}
\end{eqnarray*}

For the second term on the right hand side of \eqref{eq:5.36}, by Proposition \ref{prop:5.2}, we obtain
\begin{eqnarray*}
\begin{split}
&\ \ \ \ \big\|U^{(\tau)}_{h,\nu}(x,\cdot)-\mathcal{P}_{h}(x,0)(U^{\nu}_{0}(\cdot))\big\|_{L^{1}((-\infty, b(x)))}\\[5pt]
&\leq \big\|U^{(\tau)}_{h,\nu}(x,\cdot)-\mathcal{P}_{h}(x,0)(U^{\nu}_{0}(\cdot))\big\|_{L^{1}((-\infty, b_h(x)))}\\[5pt]
&\ \ \ +\big\|U^{(\tau)}_{h,\nu}(x,\cdot)-\mathcal{P}_{h}(x,0)(U^{\nu}_{0}(\cdot))\big\|_{L^{1}(( \min\{b_{h}(x), b(x)\}, \max\{b_{h}(x), b(x)\}))}\\[5pt]
&\leq O(1)\bigg(\Big(T.V.\{U_{0}(\cdot); \mathcal{I}\}+|b'(0)|+T.V.\{b'(\cdot);\mathbb{R}_{+}\}+1\Big)\big(\tau^2+\nu^{-1}+2^{-\nu}\big)+2^{-\nu}
\bigg) x\\[5pt]
&\ \ \ +\Big(\big\|U^{(\tau)}_{h,\nu}(x,\cdot)-\underline{U}\big\|_{L^{\infty}((-\infty,b_{h}(x)))}
+\big\|\mathcal{P}_{h}(x,0)(U^{\nu}_{0}(\cdot))-\underline{U}\big\|_{L^{\infty}((-\infty,b_{h}(x)))}\Big)
\big\|b_{h}(\cdot)-b(\cdot)\big\|_{L^{\infty}(\mathbb{R}_{+})}\\[5pt]
&\rightarrow O(1)\Big(T.V.\{U_{0}(\cdot); \mathcal{I}\}+|b'(0)|+T.V.\{b'(\cdot);\mathbb{R}_{+}\}+1\Big)x\tau^2,
\end{split}
\end{eqnarray*}
as $\nu\rightarrow +\infty,\ h \rightarrow 0$.

Take $C^*_1=O(1)\Big(T.V.\{U_{0}(\cdot); \mathcal{I}\}+|b'(0)|+T.V.\{b'(\cdot);\mathbb{R}_{+}\}+1\Big)$ and
choose $\epsilon^{*}>0$, $\tau^*>0$ small, so that if $U_{0}$ and $b(x)$ satisfy \eqref{eq:1.26} for $\epsilon\in(0,\epsilon^{*})$ and $\tau\in(0,\tau^*)$, then we can get estimate \eqref{eq:1.27} by combining the above estimates together.

For estimate \eqref{eq:1.28}, we need only to control $\big\|u^{(\tau)}(x,\cdot)-u(x,\cdot)\big\|_{L^{1}((-\infty,b(x)))}$.
By the third equation in \eqref{eq:1.10}, formula \eqref{eq:1.12}, Proposition \ref{prop:4.1}, Proposition \ref{prop:4.3}, and estimate \eqref{eq:1.27}, we can deduce that
\begin{eqnarray*}
\begin{split}
&\ \ \ \ \big\|u^{(\tau)}(x,\cdot)-u(x,\cdot)\big\|_{L^{1}((-\infty,b(x)))}\\[5pt]
&=\bigg\|\displaystyle\frac{1}{\tau^2}\Bigg(-1+\sqrt{1-\tau^2\Big((v^{(\tau)})^2+\frac{2((\rho^{(\tau)})^{\gamma-1}-1)}{(\gamma-1)a^{2}_{\infty}}\Big)}\Bigg)
+\displaystyle\frac{v^2}{2}+\frac{\rho^{\gamma-1}-1}{(\gamma-1)a^2_{\infty}}\bigg\|_{L^{1}((-\infty,b(x)))}\\[5pt]
&=\Bigg\|\displaystyle\frac{2\Big(\frac{(v^{(\tau)})^2}{2}+\frac{(\rho^{(\tau)})^{\gamma-1}-1}{(\gamma-1)a^{2}_{\infty}}\Big)}
{1+\sqrt{1-\tau^2\Big((v^{(\tau)})^2+\frac{2((\rho^{(\tau)})^{\gamma-1}-1)}{(\gamma-1)a^{2}_{\infty}}\Big)}}
-\frac{v^2}{2}-\frac{\rho^{\gamma-1}-1}{(\gamma-1)a^2_{\infty}}\Bigg\|_{L^{1}((-\infty,b(x)))}\\[5pt]
&\leq \hat{C}^*_{1}\big\|(\rho^{\tau}-\rho, v^{(\tau)}-v)(x,\cdot)\big\|_{L^{1}((-\infty,b(x)))}\\[5pt]
&\leq  \hat{C}^*_{1}C_1x\tau^2,
\end{split}
\end{eqnarray*}
by choosing $\tau^*>0$ and $\epsilon^*>0$ sufficiently small, where $ \hat{C}^*_{1}$ depends only on $\underline{U}$ and $a_{\infty}$. Taking $C^*_{2}= (\hat{C}^*_{1}+1)C_{1}$, we can get estimate \eqref{eq:1.28}.
This completes the proof of Theorem \ref{thm:1.1}. $\Box$

\subsection{Proof of Theorem \ref{thm:1.2}}
We are now going to prove Theorem \ref{thm:1.2}. Let us first consider solutions $U^{(\tau)}_{w}$ to the initial-boundary value problem $\eqref{eq:1.34}_{1}$ and \eqref{eq:1.33} with initial data $U^{(\tau)}_{w}(0,y)=U_{w,0}(y)$, and solutions $U_{w}$ to the initial-boundary value problem $\eqref{eq:1.37}_{1}$ and \eqref{eq:1.36} with initial data $U_{w}(0,y)=U_{w,0}(y)$.

Because the perturbation $U_{w,0}(y)-\underline{U}_{w}$ is compactly supported, one only needs to consider the problems governed by equations $\eqref{eq:1.34}_{1}$ and $\eqref{eq:1.37}_{1}$, respectively, with the same initial data $U_{w,0}$.
To this end, let $U^{(\tau), \nu}_{w}$ and $U^{\nu}_{w}$ be the approximate solutions to problems $\eqref{eq:1.34}_{1}$ and $\eqref{eq:1.37}_{1}$ with the same initial data $U_{0}$, respectively,
where the properties as stated in Remarks \ref{rem:4.2}-\ref{rem:4.3} hold.

Let $\mathcal{P}^{(\tau)}_{*}$ be the standard Riemann semigroup generated by Cauchy problem for system $\eqref{eq:1.34}_{1}$, that is, problem
\begin{eqnarray*}
\left\{
\begin{array}{llll}
Equations \quad \eqref{eq:1.34}_{1}, & in\quad\ \Omega_{w}\cap \{x>\underline{x}\}\\[5pt]
U^{(\tau)}=U^{(\tau)}_1,  & on\quad\ \Omega_{w}\cap \{x=\underline{x}\},
\end{array}
\right.
\end{eqnarray*}
has a unique solution $U^{(\tau)}(x)=\mathcal{P}^{(\tau)}_{*}(x-\underline{x})(U^{(\tau)}_1)$ for $\underline{x}\geq\ell_{w}$.

Denote by $O(1)$ the universal positive constant depending only on $\underline{U}_{w}$ and $a_{\infty}$ throughout this subsection. We claim that
\begin{eqnarray}\label{eq:5.39}
\begin{split}
\|\mathcal{P}^{(\tau)}_{*}(s)(U^{\nu}_{w}(\xi,\cdot))-U^{\nu}_{w}(\xi+s,\cdot)\|_{L^{1}}
\leq O(1)s\big(T.V.\{U^{\nu}_{w}(\xi, \cdot); \mathbb{R}\}(\tau^2+\nu^{-1})+2^{-\nu}\big),
\end{split}
\end{eqnarray}
for $s>0$ sufficiently small and $\xi\geq \ell_{w}$.

Suppose that $U^{\nu}_{w}(x,y)$ has only one jump at the point $(\xi, y_{I})$.
Let
\begin{eqnarray}\label{eq:5.40}
U_{L}\doteq U^{\nu}_{w}(\xi, y_{I}-),\quad U_{R}\doteq U^{\nu}_{w}(\xi, y_{I}+).
\end{eqnarray}

We define
\begin{eqnarray}\label{eq:5.41}
U^{\nu}_{w}(\xi,y)=
\left\{
\begin{array}{llll}
U_{R}, \quad & y>y_{I},\\[5pt]
U_L, \quad & y<y_{I},
\end{array}
\right.
\end{eqnarray}
and define
\begin{eqnarray}\label{eq:5.42}
U^{\nu}_{w}(\xi+s, y)=
\left\{
\begin{array}{llll}
U_{R}, \quad & y>y_{I}+\dot{y}_{\alpha_k}s,\\[5pt]
U_L, \quad & y<y_{I}+\dot{y}_{\alpha_k}s,
\end{array}
\right.
\end{eqnarray}
where $\alpha_{k}\in \mathcal{S}\cup\mathcal{R}$ is the 
physical wave of the $U^{\nu}_{w}(\xi+s, y)$ with speed $\dot{y}_{\alpha_k}$ satisfying $ |\dot{y}_{\alpha_k}|\leq \hat{\lambda}$ for $k=1,2$.

Then, by Remark \ref{rem:4.2}, we know that $\mathcal{P}^{(\tau)}_{*}(\xi+s)(U^{\nu}_{w}(\xi,y))$
is the approximate solution at $x=\xi+s$ obtained by the Riemann solver with Riemann data $U_{L}$ and $U_{R}$, satisfying
\begin{eqnarray}\label{eq:5.43}
\Phi(\beta_1,\beta_2; U_L,\tau^2)=\Phi_{k}(\alpha_k;U_L).
\end{eqnarray}

Then, following the proof of Proposition \ref{prop:3.1}, we know that the solutions to equation \eqref{eq:5.43} satisfy
\begin{eqnarray}\label{eq:5.44}
\begin{split}
\beta_{j}=\delta_{jk}\alpha_{k}+O(1)|\alpha_{k}|\tau^{2}, \qquad j,\ k=1\ \mbox{or}\ 2,
\end{split}
\end{eqnarray}
where $\delta_{jk}$ is the Kronecker symbol.

If $U_{L}$ and $U_{R}$ are connected by non-physical wave $\alpha_{NP}\in\mathcal{NP}$ with strength $\alpha_{NP}=|U_{L}-U_{R}|$, then we have
\begin{eqnarray}\label{eq:5.45}
\begin{split}
\beta_{j}=O(1)\alpha_{NP}, \qquad \mbox{for}\quad j=1,2.
\end{split}
\end{eqnarray}

Then, following the proof of Lemma \ref{lem:5.1} and by estimates \eqref{eq:5.44}-\eqref{eq:5.45}, for $s>0$ sufficiently small, we obtain
\begin{eqnarray}\label{eq:5.46}
\begin{split}
&\big\|\mathcal{P}^{(\tau)}_{*}(s)(U^{\nu}_{w}(\xi,\cdot))-U^{\nu}_{w}(\xi+s,\cdot)\big\|_{L^1(y_{I}-\hat{\lambda}s,y_{I}+\hat{\lambda}s)}\\[5pt]
&\qquad\qquad\leq O(1)\big((\tau^{2}+\nu^{-1})|\alpha_{k}|+\alpha_{NP}\big)s.
\end{split}
\end{eqnarray}

Then by an induction argument, if $U^{\nu}_{w}(\xi+s, y)$ contains more than one waves, by the same way as done in the proof of Proposition \ref{prop:3.2}, we get
\begin{eqnarray}\label{eq:5.47}
\begin{split}
\beta_{j}=\alpha_{j}+O(1)\big(\sum_{k=1,2}|\alpha_{k}|\big)\tau^{2}+O(1)\alpha_{NP}, \qquad \mbox{for}\quad j=1,2.
\end{split}
\end{eqnarray}

Thus, by \eqref{eq:5.47}, we can derive
\begin{eqnarray}\label{eq:5.48}
\begin{split}
&\big\|\mathcal{P}^{(\tau)}_{*}(s)(U^{\nu}_{w}(\xi,\cdot))-U^{\nu}_{w}(\xi+s,\cdot)\big\|_{L^1(y_{I}-\hat{\lambda}s,y_{I}+\hat{\lambda}s)}\\[5pt]
&\qquad\qquad\quad\quad  \leq O(1)\Big(\big(\tau^{2}+\nu^{-1}\big)\big(|\alpha_{1}|+|\alpha_2|\big)+\alpha_{NP}\Big)s.
\end{split}
\end{eqnarray}

Now, let us consider the more general case. Suppose that the jumps of
$U^{\nu}_{w}(\xi,\cdot)$ locate at $-\infty<y_{N}<\cdot\cdot\cdot <y_{2}<y_{1}<+\infty$ . Denoted by $\mathcal{J}(U^{\nu}_{w})=\mathcal{S}\cup\mathcal{R}\cup\mathcal{NP}$ the set of indices $\alpha\in\{1,2, \cdot\cdot\cdot, N\}$ such that $U^{\nu}_{w}(\xi,y_{\alpha}+)$ and $U^{\nu}_{w}(\xi,y_{\alpha}-)$ are connected by a shock wave (or rarefaction front) with strength $\alpha$, or
are connected by a non-physical front with strength $\alpha_{NP}$.
Take $s>0$ sufficiently small so that there is no wave interaction on $(\xi,\xi+s)$ for $U^{\nu}_{w}$. Then, by estimates \eqref{eq:5.46} and \eqref{eq:5.48}, we obtain
\begin{eqnarray*}
\begin{split}
&\quad \big\|\mathcal{P}^{(\tau)}_{*}(s)(U^{\nu}_{w}(\xi,\cdot))-U^{\nu}_{w}(\xi+s,\cdot)\big\|_{L^{1}(\mathbb{R})}\\[5pt]
=&\sum_{\alpha\in J(U^{\nu}_{w})}\big\|\mathcal{P}^{(\tau)}_{*}(s)(U^{\nu}_{w}(\xi,\cdot))-U^{\nu}_{w}(\xi+s,\cdot)\big\|_{L^{1}(y_{\alpha}-\eta, y_{\alpha}+\eta)} \\[5pt]
\leq& O(1)\big(\tau^2+\nu^{-1}\big)\Big(\sum _{\alpha\in \mathcal{S}\cup \mathcal{R}}|\alpha|\Big)s+O(1)\Big(\sum _{\alpha\in \mathcal{NP}}\alpha_{NP}\Big)s\\[5pt]
\leq& O(1)s\big(T.V.\{U^{\nu}_{w}(\xi, \cdot); \mathbb{R}\}(\tau^2+\nu^{-1})+2^{-\nu}\big),
\end{split}
\end{eqnarray*}
where $\eta=\frac{1}{2}\min_{1\leq j\leq N-1}\big\{ y_j-y_{j+1}\big\}$.

Therefore, via the semigroup formula for $\mathcal{P}^{(\tau)}_{*}$ as stated in \cite[Theorem 2.9]{bressan} and with the estimate \eqref{eq:5.39}, we can establish that for $x>\ell_{w}$,
\begin{eqnarray}\label{eq:5.49}
\begin{split}
&\|\mathcal{P}^{(\tau)}_{*}(x-\ell_{w})(U^{\nu}_{w}(\ell_{w},\cdot))-U^{\nu}_{w}(x,\cdot)\|_{L^{1}(\mathbb{R})}\\[5pt]
&\quad\leq L^{*}\int^{x}_{\ell_{w}}\lim\inf_{s\rightarrow 0+}\frac{\|\mathcal{P}^{(\tau)}_{*}(s)(U^{\nu}_{w}(\xi,\cdot))-U^{\nu}_{w}(s+\xi,\cdot)\|_{L^{1}(\mathbb{R})}}{s}\rm d\xi \\[5pt]
&\quad\leq O(1)L^{*}\int^{x}_{\ell_{w}}\Big(T.V.\{U^{\nu}_{w}(\xi, \cdot); \mathbb{R}\}(\tau^2+\nu^{-1})+2^{-\nu}\Big)\rm d\xi.
\end{split}
\end{eqnarray}

From Remark \ref{rem:4.2} and estimate \eqref{eq:5.49}, we derive that
\begin{eqnarray}\label{eq:5.50}
\begin{split}
&\quad\ \|\mathcal{P}^{(\tau)}_{*}(x-\ell_{w})(U_{w}(\ell_{w},\cdot))-U_{w}(x,\cdot)\|_{L^{1}(\mathbb{R})}\\[5pt]
&\leq \|\mathcal{P}^{(\tau)}_{*}(x-\ell_{w})(U_{w}(\ell_{w},\cdot))-\mathcal{P}^{(\tau)}_{*}(x-\ell_{w})(U^{\nu}_{w}(\ell_{w},\cdot))\|_{L^{1}(\mathbb{R})}\\[5pt]
&\quad+\|\mathcal{P}^{(\tau)}_{*}(x-\ell_{w})(U^{\nu}(\ell_{w},\cdot))-U^{\nu}(x,\cdot)\|_{L^{1}(\mathbb{R})}
+\|U_{w}(x,\cdot)-U^{\nu}_{w}(x,\cdot)\|_{L^{1}(\mathbb{R})}\\[5pt]
&\leq L^{*}\|U_{w}(\ell_{w},\cdot)-U^{\nu}_{w}(\ell_{w},\cdot)\|_{L^{1}(\mathbb{R})}+\|U_{w}(x,\cdot)-U^{\nu}_{w}(x,\cdot)\|_{L^{1}(\mathbb{R})}\\[5pt]
&\quad+O(1)L^{*}\int^{x}_{\ell_{w}}\Big(T.V.\{U^{\nu}_{w}(\xi, \cdot); \mathbb{R}\}(\tau^2+\nu^{-1})+2^{-\nu}\Big)\rm d\xi.
\end{split}
\end{eqnarray}

Taking $\nu\rightarrow+\infty$ in \eqref{eq:5.50}, it follows from Remark \ref{rem:4.3} that
\begin{eqnarray}\label{eq:5.51}
\begin{split}
\|\mathcal{P}^{(\tau)}_{*}(x-\ell_{w})(U_{w}(\ell_{w},\cdot))-U_{w}(x,\cdot)\|_{L^{1}(\mathbb{R})}
&\leq O(1)\tau^2\int^{x}_{\ell_{w}}T.V.\{U_{w}(\xi, \cdot); \mathbb{R}\}{\rm d}\xi\\[5pt]
&\leq O(1)\tau^2\int^{x}_{\ell_{w}}\xi^{-\frac{1}{2}}{\rm d\xi} \\[5pt]
&\leq O(1)x^{\frac{1}{2}}\tau^2.
\end{split}
\end{eqnarray}

On the other hand, under the assumptions in Theorem \ref{thm:1.2} and in \eqref{eq:1.40}, we can apply Theorem \ref{thm:1.1} for $\epsilon>0$ and $\tau>0$ sufficiently small, to get that
\begin{eqnarray}\label{eq:5.52}
\begin{split}
& \|U^{(\tau)}_{w}(x,\cdot)-\mathcal{P}^{(\tau)}_{*}(x-\ell_{w})(U_{w}(x,\cdot))\|_{L^{1}(\mathbb{R})}\\[5pt]
&\quad\leq \|\mathcal{P}^{(\tau)}_{*}(x-\ell_{w})\mathcal{P}^{(\tau)}_{*}(\ell_{w})(U^{(\tau)}_{w}(0,\cdot))
-\mathcal{P}^{(\tau)}_{*}(x-\ell_{w})(U_{w}(\ell_{w},\cdot))\|_{L^{1}(\mathbb{R})}\\[5pt]
&\quad \leq L^{*}\|\mathcal{P}^{(\tau)}_{*}(\ell_{w})(U^{(\tau)}_{w}(0,\cdot))-U_{w}(\ell_{w},\cdot)\|_{L^{1}(\mathbb{R})}\\[5pt]
&\quad \leq O(1)\ell_{w}\tau^2.
\end{split}
\end{eqnarray}

Therefore, for $\epsilon>0$ and $\tau>0$ sufficiently small, it holds
\begin{eqnarray}\label{eq:5.53}
\begin{split}
\|U^{(\tau)}_{w}(x,\cdot)-U_{w}(x,\cdot)\|_{L^{1}}&\leq \|U^{(\tau)}_{w}(x,\cdot)-\mathcal{P}^{(\tau)}_{*}(x-\ell_{w})(U_{w}(x,\cdot))\|_{L^{1}(\mathbb{R})}\\[5pt]
&\qquad+\|\mathcal{P}^{(\tau)}_{*}(x-\ell_{w})(U_{w}(\ell_{w},\cdot))-U_{w}(x,\cdot)\|_{L^{1}(\mathbb{R})}\\[5pt]
&\leq O(1)\ell_{w}\tau^2+O(1)x^{\frac{1}{2}}\tau^2.
\end{split}
\end{eqnarray}

Thus, by estimate \eqref{eq:5.53}, we can further obtain for $\ell_{w}<x< O(\tau^{-1})$ that
\begin{eqnarray}\label{eq:5.54}
\begin{split}
\sup_{\ell_{w}<x< O(\tau^{-1})}\|U^{(\tau)}_{w}(x,\cdot)-U_{w}(x,\cdot)\|_{L^{1}(\mathbb{R})}
\leq O(1)\ell_{w}\tau^2+O(1)\tau^{-\frac{1}{2}}\tau^2\leq O(1)\tau^{\frac{3}{2}}.
\end{split}
\end{eqnarray}

Then, by estimates \eqref{eq:5.54}, and equations $\eqref{eq:1.34}_{2}$ and $\eqref{eq:1.37}_{2}$, we can establish estimate \eqref{eq:1.41} by choosing constant $C^{*}_{3}>0$ independent on $\tau$ and $\ell_{w}$. This completes the proof of Theorem \ref{thm:1.2}. $\Box$

\bigskip
\appendix
\section{Reflection of weak waves on the boundary}

In the appendix, we will state some results on the reflection of weak waves on the boundary. In the following, we will use the notations given in Section 3.2.
\vspace{-3mm}
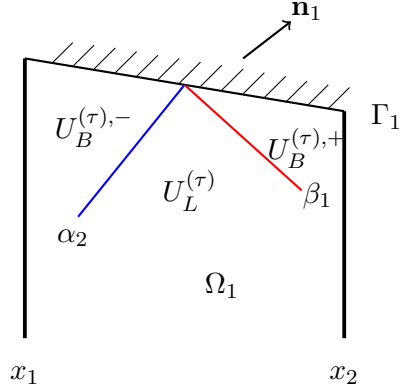
\begin{figure}[ht]
\begin{center}
\begin{tikzpicture}[scale=0.7]
\draw [line width=0.05cm] (-3.5,-3.8) --(-3.5,1.5);
\draw [line width=0.05cm] (2.5,-3.8) --(2.5,0.5);
\draw [thick] (-3.5,1.5)--(2.5,0.5);

\draw [thin] (-3, 1.4) --(-2.6, 1.8);
\draw [thin] (-2.6, 1.35) --(-2.2, 1.75);
\draw [thin] (-2.2, 1.30) --(-1.8, 1.70);
\draw [thin] (-1.8, 1.23) --(-1.4, 1.63);
\draw [thin] (-1.4, 1.16) --(-1.0, 1.56);
\draw [thin] (-1.0, 1.10) --(-0.6, 1.50);
\draw [thin] (-0.6, 1.03) --(-0.2, 1.43);
\draw [thin] (-0.2, 0.97) --(0.2, 1.37);
\draw [thin] (0.2, 0.9) --(0.6, 1.30);
\draw [thin] (0.6, 0.83) --(1, 1.23);
\draw [thin] (1, 0.76) --(1.4, 1.16);
\draw [thin] (1.4, 0.67) --(1.8, 1.07);
\draw [thin] (1.8, 0.60) --(2.2, 1.0);
\draw [thin] (2.2, 0.55) --(2.6, 0.95);

\draw [thick][blue](-2.5,-1.5)--(-0.5,1);
\draw [thick][red](-0.5,1)--(1.7,-1.0);

\draw [thick][<-] (1.5,2.2)--(0.6,1.5);

\node at (1.8, 2.4){$\mathbf{n}_{1}$};

\node at (0.2, -2.8){$\Omega_{1}$};
\node at (3.3, 0.4){$\Gamma_{1}$};

\node at (-2.6, -1.9){$\alpha_2$};
\node at (2.0, -1.1){$\beta_1$};

\node at (-0.4, -1.0){$U^{(\tau)}_{L}$};
\node at (-2.2, 0.2){$U^{(\tau),-}_{B}$};
\node at (1.8, -0.2){$U^{(\tau),+}_{B}$};

\node at (-3.5, -4.5){$x_{1}$};
\node at (2.5, -4.5){$x_{2}$};
\end{tikzpicture}
\end{center}
\caption{Reflection of weak waves on the boundary for system \eqref{eq:1.16}}\label{fig-A1}
\end{figure}
Let us first consider the reflection of weak wave for system \eqref{eq:1.16} on the boundary. 
\begin{lemma}\label{lem-A1}
Let $U^{(\tau),-}_{B}=(\rho^{(\tau),-}_{B}, v^{(\tau),-}_{B})^{\top}$, $U^{(\tau)}_L=(\rho^{(\tau)}_{L}, v^{(\tau)}_{L})^{\top}\in \mathcal{O}_{\epsilon'_{2}}(\underline{U})$
be two constant states near $\Gamma_{1}$ satisfying
\begin{equation}\label{eq-A1}
U^{(\tau),-}_{B}=\Phi_{2}(\alpha_2; U^{(\tau)}_L,\tau^2), \qquad
\Big((1+\tau^{2}u^{(\tau),-}_{B}), v^{(\tau),-}_{B}\Big)\cdot\mathbf{n}_1=0,
\end{equation}
for $\tau\in(0,\tau'_{2})$, where $u^{(\tau),-}_{B}=u^{(\tau),-}_{B}(\rho^{(\tau),-}_{B},v^{(\tau),-}_{B},\tau^2)$ through the relation \eqref{eq:1.12},
where $\mathbf{n}_1=(\sin\theta_1, -\cos\theta_1)$.
Then, for constant state $U^{(\tau),+}_{B}=(\rho^{(\tau),+}_{B}, v^{(\tau),+}_{B})^{\top}\in \mathcal{O}_{\epsilon'_{2}}(\underline{U})$ and $\tau\in(0,\tau'_{2})$ with
\begin{eqnarray}\label{eq-A2}
\begin{split}
U^{(\tau),+}_{B}=\Phi_{1}(\beta_1;U^{(\tau)}_L,\tau^2), \quad \Big((1+\tau^{2}u^{(\tau),+}_{B}), v^{(\tau),+}_{B}\Big)\cdot\mathbf{n}_1=0,
\end{split}
\end{eqnarray}
and $u^{(\tau),+}_{B}=u^{(\tau),+}_{B}(\rho^{(\tau),+}_{B},v^{(\tau),+}_{B},\tau^2)$ through relation \eqref{eq:1.12}, it holds that
\begin{equation}\label{eq-A3}
\beta_{1}=K^{(\tau)}_{b} \alpha_{2},
\end{equation}
where $K^{(\tau)}_{b}$ is a $C^{2}$-function of $(\alpha_{2}, \tau^{2}, U^{(\tau)}_L)$ satisfying
\begin{equation}\label{eq-A4}
K^{(\tau)}_{b}|_{\beta_1=\alpha_2=0,\ U^{(\tau)}_{L}=\underline{U}}=1.
\end{equation}
\end{lemma}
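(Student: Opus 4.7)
The plan is to use the implicit function theorem after eliminating the boundary angle $\theta_1$ from the two Neumann-type conditions. First, both conditions in \eqref{eq-A1} and \eqref{eq-A2} can be rewritten as $\tan\theta_1 = v/(1+\tau^2 u)$, evaluated at $U^{(\tau),-}_B$ and $U^{(\tau),+}_B$ respectively. Equating the two expressions for $\tan\theta_1$ gives a single scalar relation
\begin{equation*}
F(\beta_1,\alpha_2,\tau^2;U^{(\tau)}_L) := \frac{\Phi^{(2)}_1(\beta_1;U^{(\tau)}_L,\tau^2)}{1+\tau^2 u(\Phi_1(\beta_1;U^{(\tau)}_L,\tau^2),\tau^2)} - \frac{\Phi^{(2)}_2(\alpha_2;U^{(\tau)}_L,\tau^2)}{1+\tau^2 u(\Phi_2(\alpha_2;U^{(\tau)}_L,\tau^2),\tau^2)} = 0,
\end{equation*}
in which $\theta_1$ has been eliminated. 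The key observation is that $F(0,0,\tau^2;U^{(\tau)}_L)\equiv 0$ identically in $(\tau,U^{(\tau)}_L)$, since $\Phi_k(0;U^{(\tau)}_L,\tau^2)=U^{(\tau)}_L$ for $k=1,2$, so both quotients coincide.

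Next I would apply the implicit function theorem to $F=0$ at the base point $(\beta_1,\alpha_2,\tau^2,U^{(\tau)}_L)=(0,0,\tau^2,\underline{U})$. At $\underline{U}$, formula \eqref{eq:2.4} together with the normalization \eqref{eq:2.7} and Remark \ref{rem:2.2} yield an explicit form for $r_k(\underline{U},\tau^2)$; in particular, the $v$-component equals $\tfrac{2(a_\infty^2-\tau^2)}{(\gamma+1)a_\infty^2}$ for \emph{both} $k=1,2$. Meanwhile Lemma \ref{lem:2.1} shows $\partial u/\partial v=0$ at $\underline{U}$, so the derivative of $1+\tau^2 u$ along the wave curve contributes nothing to $\partial F/\partial\beta_1$ at $v=0$. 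A direct computation then yields
\begin{equation*}
\frac{\partial F}{\partial \beta_1}\bigg|_{(0,0,\tau^2,\underline{U})} = \frac{2(a_\infty^2-\tau^2)}{(\gamma+1)a_\infty^2}, \qquad \frac{\partial F}{\partial \alpha_2}\bigg|_{(0,0,\tau^2,\underline{U})} = -\frac{2(a_\infty^2-\tau^2)}{(\gamma+1)a_\infty^2},
\end{equation*}
both nonzero for $\tau\in(0,\tau'_2)\subset(0,a_\infty)$. Hence the implicit function theorem produces a unique $C^2$ solution $\beta_1=\beta_1(\alpha_2,\tau^2,U^{(\tau)}_L)$ in a neighborhood of $(0,0,\tau^2,\underline{U})$, with $\beta_1(0,\tau^2,U^{(\tau)}_L)\equiv 0$.

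Finally, I would factor out $\alpha_2$ via the fundamental theorem of calculus by setting
\begin{equation*}
K^{(\tau)}_b(\alpha_2,\tau^2,U^{(\tau)}_L) \doteq \int_0^1 \frac{\partial \beta_1}{\partial \alpha_2}(s\alpha_2,\tau^2,U^{(\tau)}_L)\,ds,
\end{equation*}
so that $\beta_1=K^{(\tau)}_b\,\alpha_2$ with $K^{(\tau)}_b$ automatically of class $C^2$. From $\partial\beta_1/\partial\alpha_2=-(\partial F/\partial\alpha_2)/(\partial F/\partial\beta_1)$ and the two derivatives computed above, the ratio equals exactly $1$, yielding \eqref{eq-A4}.

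The main technical step is the derivative computation at the background state: one must carefully differentiate the quotient defining $F$ and verify the cancellations that reduce $\partial F/\partial\beta_1|_0$ to just $r_1^{(v)}(\underline U,\tau^2)$. The crucial algebraic fact is the equality of the $v$-components of $r_1(\underline{U},\tau^2)$ and $r_2(\underline{U},\tau^2)$ (visible in the symmetric form \eqref{eq:2.4} at $v=0$ and the common normalization from Remark \ref{rem:2.2}); it is this symmetry that forces the reflection coefficient to equal $1$ at the background, uniformly in $\tau$.
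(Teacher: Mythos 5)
Your proof is correct and follows essentially the same route as the paper: both reduce the two Neumann conditions to a single scalar compatibility equation (the paper uses the cross-multiplied form $\mathcal{L}_B=(1+\tau^2 u^{(\tau),-}_B)\Phi^{(2)}_1(\beta_1;\cdot)-(1+\tau^2 u^{(\tau),+}_B)\Phi^{(2)}_2(\alpha_2;\cdot)=0$ rather than your quotient form, which is equivalent near $\underline{U}$), apply the implicit function theorem, and read off $K^{(\tau)}_b|_0=r^{(2)}_2(\underline{U},\tau^2)/r^{(2)}_1(\underline{U},\tau^2)=1$ from the equality of the $v$-components of the normalized eigenvectors. One small correction to your justification: the term involving the derivative of $1+\tau^2 u$ along the wave curve drops out of $\partial F/\partial\beta_1$ at the base point because in the quotient rule it is multiplied by $\Phi^{(2)}_1=v_L=0$, not because $\partial u/\partial v=0$; indeed $\partial u/\partial\rho=-1/a^2_\infty\neq 0$ at $\underline{U}$, so the derivative of $u$ along the curve is itself nonzero, but the conclusion and the stated values of the two partial derivatives are unaffected.
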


\begin{proof}
By \eqref{eq-A1} and \eqref{eq-A2}, we know that $\beta_1$ can be solved as a $C^2$-function of $\alpha_2, U^{(\tau)}_L, \tau^2$ from the following equation
\begin{eqnarray*}
\begin{split}
\mathcal{L}_{B}(\alpha_2,\beta_1;U^{(\tau)}_L,\tau^2)&\doteq\Big(1+\tau^{2}u^{(\tau),-}_{B}\Big)\Phi^{(2)}_{1}(\beta_{1}; U^{(\tau)}_{L},\tau^2)\\[5pt]
&\qquad -\Big(1+\tau^{2}u^{(\tau),+}_{B}\Big)\Phi^{(2)}_{2}(\alpha_2; U^{(\tau)}_{L},\tau^2)=0.
\end{split}
\end{eqnarray*}

Then, by the Taylor formula, we have estimate \eqref{eq-A3}. To estimate  $K^{(\tau)}_b$, notice that
\begin{eqnarray*}
\begin{split}
&K^{(\tau)}_{b}\Big|_{\beta_1=\alpha_2=0,\ U^{(\tau)}_{L}=\underline{U}}\\[5pt]
&=-\frac{\partial_{\alpha_{2}}\mathcal{L}_{B}(\alpha_2,\beta_1;U^{(\tau)}_L,\tau^2)}
{\partial_{\beta_{1}}\mathcal{L}_{B}(\alpha_2,\beta_1;U^{(\tau)}_L,\tau^2)}\Bigg|_{\beta_1=\alpha_2=0, U^{(\tau)}_{L}=\underline{U}}\\[5pt]
&=-\frac{(1+\tau^{2}\partial_{\alpha_2}u^{(\tau),-}_{B})\Phi^{(2)}_1(\beta_1;U^{(\tau)}_L,\tau^2)
-(1+\tau^{2}u^{(\tau),+}_{B})\partial_{\alpha_2}\Phi^{(2)}_2(\alpha_2;U^{(\tau)}_L,\tau^2)}
{(1+\tau^{2}u^{(\tau),-}_{B})\partial_{\beta_1}\Phi^{(2)}_1(\beta_1;U^{(\tau)}_L,\tau^2)
-(1+\tau^{2}\partial_{\beta_1}u^{(\tau),+}_{B})\Phi^{(2)}_2(\alpha_2;U^{(\tau)}_L,\tau^2)}\Bigg|_{\beta_{1}=\alpha_{2}=0, U^{(\tau)}_{L}=\underline{U}}\\[5pt]
&=\frac{\mathbf{r}^{(2)}_{2}(\underline{U},\tau^{2})}{\mathbf{r}^{(2)}_{1}(\underline{U},\tau^{2})}\\[5pt]
&=1.
\end{split}
\end{eqnarray*}
 This completes the proof of the lemma.
\end{proof}

\vspace{-3mm}
\begin{figure}[ht]
\begin{center}
\begin{tikzpicture}[scale=0.7]
\draw [line width=0.05cm] (-3.5,-3.8) --(-3.5,1.5);
\draw [line width=0.05cm] (2.5,-3.8) --(2.5,0.5);
\draw [thick] (-3.5,1.5)--(2.5,0.5);

\draw [thin] (-3, 1.4) --(-2.6, 1.8);
\draw [thin] (-2.6, 1.35) --(-2.2, 1.75);
\draw [thin] (-2.2, 1.30) --(-1.8, 1.70);
\draw [thin] (-1.8, 1.23) --(-1.4, 1.63);
\draw [thin] (-1.4, 1.16) --(-1.0, 1.56);
\draw [thin] (-1.0, 1.10) --(-0.6, 1.50);
\draw [thin] (-0.6, 1.03) --(-0.2, 1.43);
\draw [thin] (-0.2, 0.97) --(0.2, 1.37);
\draw [thin] (0.2, 0.9) --(0.6, 1.30);
\draw [thin] (0.6, 0.83) --(1, 1.23);
\draw [thin] (1, 0.76) --(1.4, 1.16);
\draw [thin] (1.4, 0.67) --(1.8, 1.07);
\draw [thin] (1.8, 0.60) --(2.2, 1.0);
\draw [thin] (2.2, 0.55) --(2.6, 0.95);

\draw [thick][blue](-2.5,-1.5)--(-0.5,1);
\draw [thick][red](-0.5,1)--(1.7,-1.0);

\draw [thick][<-] (1.5,2.2)--(0.6,1.5);

\node at (1.8, 2.4){$\mathbf{n}_{1}$};

\node at (0.2, -2.8){$\Omega_{1}$};
\node at (3.3, 0.4){$\Gamma_{1}$};

\node at (-2.6, -1.9){$\alpha_2$};
\node at (2.0, -1.1){$\beta_1$};

\node at (-0.4, -1.0){$U_{L}$};
\node at (-2.2, 0.2){$U^{-}_{B}$};
\node at (1.8, -0.2){$U^{+}_{B}$};

\node at (-3.5, -4.5){$x_{1}$};
\node at (2.5, -4.5){$x_{2}$};
\end{tikzpicture}
\end{center}
\caption{Reflection of weak wave on the boundary for the system \eqref{eq:1.20}}\label{fig-A2}
\end{figure}

Next, in the similar way as done in the proof of Lemma \ref{lem-A1}, we can establish a lemma on the reflection of weak wave for system \eqref{eq:1.20} on the boundary. 
\begin{lemma} \label{lem-A2}
Let $U^{-}_{B}=(\rho^{-}_{B}, v^{-}_{B})^{\top},\ U_L=(\rho_{L}, v_{L})^{\top}\in \mathcal{O}_{\epsilon'_{2}}(\underline{U})$
be two constant states 
satisfying that
\begin{equation*}
U^{-}_{B}=\Phi_{1}(\tilde{\alpha}_{2}; U_L), \quad   v^{-}_{B}=\tan(\theta_{1}).
\end{equation*}
Then, for the constant state $U^{+}_{B}\in \mathcal{O}_{\epsilon'_{2}}(\underline{U})$ with
\begin{eqnarray*}
\begin{split}
U^{+}_{B}=\Phi_{1}(\tilde{\beta}_{1};U_b), \ \ \ v^{+}_{B}=\tan(\theta_{1}),
\end{split}
\end{eqnarray*}
it holds that 
\begin{equation}\label{eq-A5}
\beta_{1}=\tilde{K}_{b} \alpha_{2},
\end{equation}
where $\tilde{K}_{b}$ is a $C^{2}$-function of $(\alpha_{2}, U_L)$ satisfying
\begin{equation}\label{eq-A6}
\tilde{K}_{b}|_{\beta_1=\alpha_{2}=0,\ U_{L}=\underline{U}}=1.
\end{equation}
\end{lemma}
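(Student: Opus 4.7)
\medskip
\noindent\textbf{Proof proposal.} The plan is to mirror the proof of Lemma \ref{lem-A1} in the simpler setting $\tau=0$, exploiting the fact that along $\Gamma_1$ the boundary condition for system \eqref{eq:1.20} reduces, by \eqref{eq:1.11}, to the purely Dirichlet constraint $v=\tan(\theta_1)$. First I would encode the two boundary conditions $v^{-}_{B}=v^{+}_{B}=\tan(\theta_1)$, together with $U^{-}_{B}=\Phi_{2}(\tilde{\alpha}_{2};U_{L})$ and $U^{+}_{B}=\Phi_{1}(\tilde{\beta}_{1};U_{L})$, into a single scalar functional equation
\begin{equation*}
\tilde{\mathcal{L}}_{B}(\tilde{\alpha}_{2},\tilde{\beta}_{1};U_{L})\doteq \Phi^{(2)}_{1}(\tilde{\beta}_{1};U_{L})-\Phi^{(2)}_{2}(\tilde{\alpha}_{2};U_{L})=0,
\end{equation*}
obtained by equating the $v$-components of $U^{+}_{B}$ and $U^{-}_{B}$.

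Next, at the reference point $\tilde{\alpha}_{2}=\tilde{\beta}_{1}=0$, $U_{L}=\underline{U}$, one has $\tilde{\mathcal{L}}_{B}=0$ and, by Lemma \ref{lem:2.4} together with the explicit formula \eqref{eq:2.12} for the right eigenvector $\boldsymbol{r}_{1}(\underline{U})$, the derivative
\begin{equation*}
\partial_{\tilde{\beta}_{1}}\tilde{\mathcal{L}}_{B}\Big|_{\tilde{\alpha}_{2}=\tilde{\beta}_{1}=0,\,U_{L}=\underline{U}}
=\boldsymbol{r}^{(2)}_{1}(\underline{U})=\frac{2}{\gamma+1}\neq 0.
\end{equation*}
The implicit function theorem then yields a unique $C^{2}$ map $\tilde{\beta}_{1}=\tilde{\beta}_{1}(\tilde{\alpha}_{2},U_{L})$ defined in a neighborhood of $(\tilde{\alpha}_{2},U_{L})=(0,\underline{U})$. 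Since $\tilde{\alpha}_{2}=0$ forces $U^{-}_{B}=U_{L}$ hence $v_{L}=\tan(\theta_{1})$, and then $\tilde{\beta}_{1}=0$ already solves the equation, uniqueness gives $\tilde{\beta}_{1}(0,U_{L})\equiv 0$. Consequently the Taylor expansion at $\tilde{\alpha}_{2}=0$ furnishes the representation \eqref{eq-A5}, with
\begin{equation*}
\tilde{K}_{b}(\tilde{\alpha}_{2},U_{L})=\int_{0}^{1}\partial_{\tilde{\alpha}_{2}}\tilde{\beta}_{1}(s\tilde{\alpha}_{2},U_{L})\,ds \in C^{2}.
\end{equation*}

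Finally, to evaluate $\tilde{K}_{b}$ at the reference state, I would apply the implicit function formula,
\begin{equation*}
\tilde{K}_{b}\Big|_{\tilde{\alpha}_{2}=\tilde{\beta}_{1}=0,\,U_{L}=\underline{U}}
=-\frac{\partial_{\tilde{\alpha}_{2}}\tilde{\mathcal{L}}_{B}}{\partial_{\tilde{\beta}_{1}}\tilde{\mathcal{L}}_{B}}\Bigg|_{\tilde{\alpha}_{2}=\tilde{\beta}_{1}=0,\,U_{L}=\underline{U}}
=\frac{\boldsymbol{r}^{(2)}_{2}(\underline{U})}{\boldsymbol{r}^{(2)}_{1}(\underline{U})}=1,
\end{equation*}
where the last identity uses that, by \eqref{eq:2.12}, the second components of $\boldsymbol{r}_{1}(\underline{U})$ and $\boldsymbol{r}_{2}(\underline{U})$ are equal. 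This gives \eqref{eq-A6}. No real obstacle is anticipated: the proof is structurally identical to that of Lemma \ref{lem-A1}, with the $\tau$-dependent factors $(1+\tau^{2}u^{(\tau),\pm}_{B})$ collapsing to $1$ and with $\boldsymbol{r}_{k}(\underline{U},0)=\boldsymbol{r}_{k}(\underline{U})$ by Remark \ref{remark:2.3}, so the only item that must be checked carefully is the verification $\tilde{\beta}_{1}(0,U_{L})\equiv 0$, which ensures that the $C^{2}$ function $\tilde{K}_{b}$ is well-defined on the full parameter range and not merely the ratio in the Taylor formula.
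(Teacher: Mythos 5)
Your proposal is correct and is exactly the argument the paper intends: the paper gives no separate proof of Lemma \ref{lem-A2}, stating only that it follows ``in the similar way'' as Lemma \ref{lem-A1}, and your write-up is precisely that $\tau=0$ specialization (including the correct reading of the statement's typos $\Phi_{1}(\tilde{\alpha}_{2};U_L)\mapsto\Phi_{2}(\tilde{\alpha}_{2};U_L)$ and $U_b\mapsto U_L$), with the scalar equation in the $v$-components, the implicit function theorem, the check $\tilde{\beta}_{1}(0,U_{L})\equiv 0$, and the evaluation $\tilde{K}_{b}=\boldsymbol{r}^{(2)}_{2}(\underline{U})/\boldsymbol{r}^{(2)}_{1}(\underline{U})=1$ via \eqref{eq:2.12}.
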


\bigskip
\section*{Acknowledgements}
The research of Jie Kuang was supported in part by the NSFC Project 11801549, NSFC Project 11971024 and the Start-Up Research Grant from Wuhan Institute of Physics and Mathematics, Chinese Academy of Sciences, Project No.Y8S001104.
The research of Wei Xiang was supported in part by the Research Grants Council of the HKSAR, China (Project No.CityU 11303518, Project CityU 11304820 and Project CityU 11300021).
The research of Yongqian Zhang was supported in part by the NSFC Project 11421061, NSFC Project 11031001, NSFC Project 11121101,
the 111 Project B08018(China) and the Shanghai Natural Science Foundation 15ZR1403900.

\end{document}